\numberwithin{equation}{section}
\theoremstyle{plain}
\newtheorem{theorem}{Theorem}[section]
\newtheorem{corollary}[theorem]{Corollary}
\newtheorem{lemma}[theorem]{Lemma}
\newtheorem{proposition}[theorem]{Proposition}
\theoremstyle{definition}
\newtheorem{definition}[theorem]{Definition}
\newtheorem{example}[theorem]{Example}
\theoremstyle{remark}
\newtheorem{remark}[theorem]{Remark}
\DeclareMathOperator{\lw}{\rm LW}
\DeclareMathOperator{\uHom}{\underline{Hom}}
\DeclareMathOperator{\uExt}{\underline{Ext}}
\DeclareMathOperator{\gkdim}{\rm GKdim }
\DeclareMathOperator{\id}{\rm id }
\DeclareMathOperator{\sh}{\rm Sh }
\DeclareMathOperator{\lex}{\rm lex }
\DeclareMathOperator{\glex}{\rm glex }
\DeclareMathOperator{\gr}{\rm gr }
\DeclareMathOperator{\ad}{\rm ad }
\newcommand{\B}{\mathcal{B}}
\newcommand{\C}{\mathcal{C}}
\newcommand{\D}{\mathcal{D}}
\newcommand{\N}{\mathcal{N}}
\newcommand{\F}{\mathcal{F}}
\newcommand{\OO}{\mathcal{O}}
\renewcommand{\L}{\mathbb{L}}
\begin{document}
	
	\title{Coideal subalgebras of pointed and connected Hopf algebras}
	
	\author{G.-S. Zhou}

	\address{\rm School of Mathematics and Statistics, Ningbo University, Ningbo 315211, China
		\newline \indent Shanghai Key Labratory of Pure Mathematics and Mathematical Practice
		\newline \indent E-mail: zhouguisong@nbu.edu.cn}
	
\begin{abstract}
Let $H$ be a pointed Hopf algebra with abelian coradical. Let $A\supseteq B$ be left (or right) coideal subalgebras of $H$ that contain the coradical of $H$. We show that $A$ has a PBW basis over $B$, provided that $H$ satisfies certain mild conditions. In the case that $H$ is a connected graded Hopf algebra of characteristic zero and $A$ and $B$ are both homogeneous of finite Gelfand-Kirillov dimension, we show that $A$ is a graded iterated Ore extension of $B$. These results turn out to be conceptual consequences of a structure theorem for each pair $S\supseteq T$ of homogeneous coideal subalgebras of a connected graded braided bialgebra $R$ with braiding satisfying certain mild conditions. The structure theorem claims the existence of a well-behaved PBW basis of $S$ over $T$. The approach to the structure theorem is constructive by means of a combinatorial method based on Lyndon words and braided commutators, which is originally developed by V. K. Kharchenko \cite{Kh} for primitively generated braided Hopf algebras of diagonal type. Since in our context we don't priorilly assume $R$ to be primitively generated, new methods and ideas are introduced to handle the corresponding difficulties, among others.
\end{abstract}
	
\subjclass[2010]{16Txx, 68R15, 16P90, 16W50, 16S15}

	% 14A22 (1991-now) Noncommutative algebraic geometry
	% 16E10 (1991-now) Homological dimension
	% 16E45  (2000-now) Differential graded algebras and applications
	% 16E65 Homological conditions on rings (generalizations of regular, Gorenstein, Cohen-Macaulay rings, etc.)
	% 16P90 Growth rate, Gelfand-Kirillov dimension
	% 16W50 Graded rings and modules
	% 16W70 Filtered rings; filtrational and graded techniques
	% 16S15 Finite generation, finite presentability, normal forms (diamond lemma, term-rewriting)
	% 16S30 Universal enveloping algebras of Lie algebras
	% 16T05 Hopf algebras and their applications
	% 17B01 Identities, free Lie (super)algebras
	% 17B70 Graded Lie (super)algebras
	% 68R15 Combinatorics on words

	\keywords{pointed Hopf algebra, connected Hopf algebra, braided bialgebra, coideal subalgebra, Lyndon word}
	
	\maketitle

	%\bigskip
	
	\tableofcontents
	
	%\bigskip
	
	%\newpage
	
\section*{Introduction}

\newtheorem{maintheorem}{\bf{Theorem}}
\renewcommand{\themaintheorem}{\Alph{maintheorem}}
	
The (one-sided) coideal subalgebras of a Hopf algebra $H$ have been an important research focus since the classical work on the commutative case in the last century \cite{DeGa}. Recall that a \emph{left ({\rm resp.} right) coideal subalgebra}  of  $H$ is a subalgebra $A$ of $H$ with $\Delta_H(A) \subseteq H\otimes A$ (resp. $\Delta_H(A) \subseteq A\otimes H$). They are conceptually the right context to define and understand quantum homogeneous spaces. Many studies of coideal subalgebras have concentrated on this aspect. Among others, freeness and faithfully flatness of a Hopf algebra over its coideal subalgebras are investigated \cite{Take1, Masu1, MuSc, Skry}; fundamental homological properties and invariants of coideal subalgebras are characterized \cite{Krae, LiWu}. Nevertheless, a big reason that coideal subalgebras became more and more important is that many Hopf algebras, prominently the quantized enveloping algebras $U_q(\mathfrak{g})$, do not have ``enough'' Hopf subalgebras. For example, coideal subalgebras are employed to develop a Galois theory for Hopf algebra actions \cite{FMP, Yana}. Particularly noteworthy is the theory of quantum symmetric pairs originally developed by G. Letzter as coideal subalgebras of quantized enveloping algebras \cite{Letz0, Letz}. This theory has been evolving rapidly over the past decade, led to a flurry of work aiming to extend many quantum group related constructions to the setting of quantum symmetric pairs, see the recent survey by W. Wang \cite{Wang} and the references there. 

In this paper, we consider the coideal subalgebras of pointed Hopf algebras with abelian coradical. This class of Hopf algebras includes many important examples, and under some finiteness conditions, the general structure of them have been explored in deep via the lifting method introduced  by N. Andruskiewitsch and H.-J. Schneider in \cite{AnSc1}. This paper will focus on the construction of PBW bases for the coideal subalgebras. A well-behaved PBW basis or even the existence of a PBW basis is manifestly useful in the understanding of an algebra. There are many publications on the construction of PBW bases for Hopf algebras, notably for quantized enveloping algebras \cite{Lusz, Ring, Lec}. By means of a remarkable combinatorial method based on Lyndon words, V. K. Kharchenko has managed to construct a PBW basis for each character Hopf algebra \cite{Kh} and later for each of their coideal subalgebras that containing the coradical \cite{Kh1}. Note that character Hopf algebras are all pointed with abelian coradical. Kharchenko's PBW basis is one of  the keystones in the study of Nichols algebras of diagonal type. There is a complete classification of right coideal subalgebras of the Borel part  of  $U_q(\mathfrak{g})$ containing the coradical   in the case that $q$ is not a root of unity \cite{HS0}. When  $\mathfrak{g}$ is of type $A_n$, $B_n$ and $G_2$, it has been already obtained in \cite{KhSa}, \cite{Kh2} and \cite{Pogo} respectively by similar methods in a parallel way. Kharchenko's PBW basis also plays a significant role in all of these works. Following the idea of Kharchenko, we prove the following general statement on the PBW bases of the coideal subalgebras.

\begin{maintheorem}[Theorem \ref{pointed-Hopf-1}]\label{maintheorem-pointed} 	
Let $H$ be a pointed Hopf algebra with  $G=G(H)$ abelian. Assume that one of the following two conditions hold: 
(1) $H$ is locally finite as a $kG$-module under the adjoint action of $kG$ on $H$, and the base field $k$ is algebraically closed; (2) $H$ is generated over  $kG$ by a set of semi-invariants of $H$.
Let $A\supseteq B$ be left (resp. right) coideal subalgebras of $H$ that contain $G$. Then there exists a family $\{z_\xi\}_{\xi\in \Xi}$ of elements in $A$ and a map $h:\Xi\to \mathbb{N}_\infty:=\mathbb{N}\cup \{\infty\}$ such that $(\{z_\xi\}_{\xi\in \Xi}, h, <)$ is a system of PBW generators of $A$ over $B$ for each total order $<$ on $\Xi$.
\end{maintheorem}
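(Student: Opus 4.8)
The plan is to reduce the statement to the structure theorem for homogeneous coideal subalgebras of a connected graded braided bialgebra, by passing to the associated graded algebra of the coradical filtration. First I would equip $H$ with its coradical filtration $kG=H_0\subseteq H_1\subseteq \cdots$ and form $\gr H=\bigoplus_{n\ge 0}H_n/H_{n-1}$, which is a graded Hopf algebra with $\gr_0 H=kG$. Since $G$ is abelian, the Radford--Majid bosonization gives a decomposition $\gr H\cong R\# kG$, where $R=\bigoplus_{n\ge 0}R_n$ is the diagram of $H$: a connected graded braided Hopf algebra in the braided monoidal category ${}^{kG}_{kG}\YD$ of Yetter--Drinfeld modules over $kG$. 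The two hypotheses enter precisely here, to guarantee that the braiding on $R_1$ (hence on $R$) lies within the mild class required by the structure theorem: under (2) the semi-invariant generators are weight vectors for the adjoint $kG$-action, so the braiding is of diagonal type; under (1), local finiteness together with algebraic closedness forces $H$, and hence $R_1$, to decompose into generalized weight spaces, yielding a triangular braiding, which is the more general situation the structure theorem is designed to cover. Note that $R$ is not assumed primitively generated, consistently with the generality of the structure theorem.

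Next I would transport the coideal data to the braided picture. The inclusions $A\supseteq B\supseteq kG$ are compatible with the coradical filtration, so $\gr A\supseteq \gr B$ are homogeneous left (resp. right) coideal subalgebras of $\gr H$ containing $kG$. Under the bosonization isomorphism these correspond to homogeneous coideal subalgebras $S\supseteq T$ of the braided bialgebra $R$. At this point the structure theorem applies directly and produces a well-behaved PBW basis of $S$ over $T$: a family of homogeneous PBW generators together with a height function, whose ordered monomials form a free $T$-basis of $S$ for every total order on the index set.

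The final step is to lift this PBW basis from the graded level back to $A$. For each braided PBW generator I would choose a homogeneous representative in $\gr A$ and then a lift $z_\xi\in A$ of the corresponding element of $\gr A$, and set $h(\xi)$ to be the height read off from the structure theorem. A standard filtered-to-graded comparison then shows that the ordered monomials in the $z_\xi$ of bounded height form a free $B$-basis of $A$: their leading terms with respect to the coradical filtration are exactly the PBW basis of $\gr A$ over $\gr B$, so linear independence and spanning are inherited degree by degree. Because the structure theorem delivers a basis valid for every total order on the index set, the same order-independence passes to $A$, giving a system of PBW generators $(\{z_\xi\}_{\xi\in\Xi},h,<)$ for each total order $<$.

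The main obstacle I anticipate is twofold. First, one must verify carefully that hypotheses (1) and (2) really do force the braiding on the diagram into the mild class handled by the structure theorem; the diagonal case under (2) is transparent, but the triangular case under (1) requires controlling the generalized weight-space decomposition of the adjoint action and checking it is compatible with both the braiding and the grading of $R$. Second, the lifting argument must be organized so that the height function and, crucially, the order-independence survive the passage from $\gr A$ to $A$; this is where the well-behavedness built into the structure theorem's PBW basis is essential, since an arbitrary PBW basis of $\gr A$ over $\gr B$ need not lift compatibly with all total orders at once.
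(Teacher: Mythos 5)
Your proposal follows essentially the same route as the paper: coradical filtration and passage to $\gr H$, bosonization $\gr H\cong R\,\#\,kG$ with the two hypotheses used exactly to force the braiding on the diagram $R$ to be diagonal (case (2)) or categorical/triangular (case (1)), transfer of the coideal subalgebras to $R$ via the coinvariants correspondence, application of the braided structure theorem, and a standard filtered-to-graded lift back to $A$. The only refinement worth noting is that in the paper the triangular case is not covered directly by the bicharacter structure theorem (Theorem \ref{braided-bialgebra-1}) but requires a second filtration of $R$ by a free abelian monoid that degenerates the categorical braiding to a diagonal one (Lemma \ref{filtration-braided} and Proposition \ref{braided-bialgebra-3}), which is precisely the obstacle you flag at the end.
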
	

Here, $G=G(H)$ is the group of group-like elements of $H$, and by a \emph{semi-invariant of  $H$} we mean an element $x\in H$ such that $kx$ is stable under the adjoint action of $kG$ on $H$. For the notion of systems of PBW generators of an algebra over its subalgebras, see Definition \ref{PBW-definition}. The setting of condition (1) covers the case that $G$ is finite and $k$ is algebraically closed; and the setting of condition (2) includes character Hopf algebras, which in turn covers quantized enveloping algebras, all their generalizations, bosonizations of Nichols algebras of diagonal type, and so on. Note that the setting of condition (1) is more general than that of condition (2) when $k$ is algebraically closed. Theorem \ref{maintheorem-pointed} generalizes and strengthens the main results of \cite{Kh1} in several aspects. First of all, we greatly expand the class of Hopf algebras; secondly, we have considerably more freedom on the pairs $A\supseteq B$, while in loc. cit. either $A=H$ or $B=kG$; finally and most importantly, we get rid of the ad-invariant condition on coideal subalgebras, which is assumed in  loc. cit. and seems to be very restrictive.

A \emph{connected Hopf algebra} is a Hopf algebra with coradical of dimension one. Clearly, they are pointed Hopf algebras with abelian coradical. Though the condition of being connected is quite restrictive, it has rich examples from diverse fields of mathematics, such as  universal enveloping algebras of Lie algebras, the coordinate rings of unipotent algebraic groups, the  Hopf algebras of symmetric functions, of quasi-symmetric functions and of permutations, and so on. Recently, connected Hopf algebras of finite Gelfand-Kirillov dimension (GK dimension, for short) have been the subject of a series of papers. Those of GK dimension up to  $4$ were completely classified, when the base field is algebraically closed of characteristic zero \cite{Zh,WZZ}; the subclass of iterated Hopf Ore extensions has been characterized \cite{BOZZ, BZ2, LZ, ZSL3}; some fundamental properties of themself are extended to  their coideal subalgebras \cite{BG}; and an example of connected graded Hopf algebra of GK dimension $5$ is founded which is neither commutative nor cocommutative \cite{BGZ}. Finite-dimensional connected Hopf algebras in positive characteristic  also have been studied \cite{ESW19, NWW19}. As a motivation of this work, the author and his collaborators have constructed a well-behaved PBW basis for any connected graded Hopf algebra over any of its homogeneous Hopf subalgebras when the base field is of characteristic zero; and consequently it turns out  that those of finite GK dimension are   iterated Hopf Ore extensions of their homogeneous Hopf subalgebras \cite{LZ, ZSL3}. In this paper, we extend the PBW basis to the setting of coideal subalgebras (Theorem \ref{connected-graded-Hopf-algebra-PBW}), and prove the following result in the viewpoint of Ore extensions.

\begin{maintheorem}[Theorem \ref{IHOE-graded}]\label{maintheorem-connected} 
Assume that $k$ is of characteristic $0$. Let $H$ be a connected $\Gamma$-graded Hopf algebra, where $\Gamma$ is a  nontrivial free abelian monoid. Let $A\supseteq B$ be homogeneous left (resp. right) coideal subalgebras of $H$, which  are of finite GK dimension $m$ and $n$ respectively. Then there is a sequence  $B=K_0 \subset K_1\subset \cdots \subset K_{m-n} = A$ of homogeneous left (resp. right) coideal subalgebras of $H$ such that each $K_i$ is a graded Ore extension of $K_{i-1}$ of derivation type. Moreover, if $A$ and $B$ are Hopf subalgebras of $H$ then $K_i$ can be chosen to be Hopf subalgebras of $H$.
\end{maintheorem}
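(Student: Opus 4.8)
The plan is to deduce the statement from the PBW basis theorem for coideal subalgebras of connected graded Hopf algebras (Theorem~\ref{connected-graded-Hopf-algebra-PBW}) and then to repackage the resulting basis as a tower of one-variable Ore extensions. I treat the left-coideal case, the right-coideal case being symmetric. First I would apply Theorem~\ref{connected-graded-Hopf-algebra-PBW} to the pair $A\supseteq B$ to obtain a system $(\{z_\xi\}_{\xi\in\Xi},h,<)$ of PBW generators of $A$ over $B$ in the sense of Definition~\ref{PBW-definition}, with every $z_\xi$ homogeneous. The decisive first point is that all heights are infinite: since $H$ is connected, its group of group-likes is trivial, so the braiding governing the construction is the ordinary flip and every PBW generator is bosonic, with self-braiding scalar $1$; as $\mathrm{char}\,k=0$, no power relation $z_\xi^{h}\in(\text{lower terms})$ can occur, whence $h(\xi)=\infty$ for all $\xi$. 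Thus the PBW basis of $A$ over $B$ consists of the ordered monomials $\prod_\xi z_\xi^{a_\xi}$ with unrestricted exponents.

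Next I would determine $|\Xi|$ by a Hilbert-series computation. Because $A$ is a free left $B$-module on these ordered monomials and all the data are homogeneous, the Hilbert series of $A$ equals that of $B$ multiplied by $\prod_{\xi\in\Xi}(1-t^{\deg z_\xi})^{-1}$; passing to growth rates gives $\gkdim A=\gkdim B+|\Xi|$, so that $|\Xi|=m-n$ and in particular $\Xi$ is finite. Enumerating $\Xi=\{1<2<\cdots<m-n\}$ by the order furnished by the construction, I set $K_i:=B\langle z_1,\dots,z_i\rangle$, the homogeneous subalgebra of $A$ generated by $B$ and $z_1,\dots,z_i$, so that $B=K_0\subseteq K_1\subseteq\cdots\subseteq K_{m-n}=A$.

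It remains to check that this chain has the asserted properties, and here lies the main work. From the PBW property, $K_i$ is a free left $K_{i-1}$-module on $\{z_i^{a}:a\ge 0\}$, so writing each product $z_i r$ (with $r\in K_{i-1}$) in normal form exhibits $K_i$ as an Ore extension $K_{i-1}[z_i;\sigma_i,\delta_i]$, graded because $z_i$ is homogeneous. I expect the crux to be proving $\sigma_i=\id$, i.e.\ that the extension is of derivation type rather than genuinely skew: this is exactly where connectedness and $\mathrm{char}\,k=0$ are indispensable, for over a trivial group of group-likes the braided commutator defining the straightening relation collapses to the ordinary commutator $z_i r-r z_i$, which by the PBW property lies in $K_{i-1}$ and, as a function of $r$, is a homogeneous derivation $\delta_i$ of $K_{i-1}$ (compare the Hopf-subalgebra case treated in \cite{LZ,ZSL3}). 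Finally, the well-behaved PBW generators produced by the structure theorem have triangular coproducts, $\Delta(z_\xi)\in H\otimes K_\xi\subseteq H\otimes K_i$ whenever $\xi\le i$; combined with $\Delta(B)\subseteq H\otimes B$ and the fact that $\Delta$ is an algebra map, this forces $\Delta(K_i)\subseteq H\otimes K_i$, so that every $K_i$ is a homogeneous left coideal subalgebra. The same argument, run with the Hopf-subalgebra refinement of the PBW basis, keeps each $K_i$ closed under the coproduct on both sides and under the antipode, yielding the final ``moreover'' clause whenever $A$ and $B$ are Hopf subalgebras.
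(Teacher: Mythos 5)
Your overall route coincides with the paper's: apply Theorem \ref{connected-graded-Hopf-algebra-PBW} to get homogeneous PBW generators with all heights infinite, order them, let $K_i$ be generated by $B$ and the first $i$ generators, and read off the Ore-extension, derivation-type, and coideal properties from parts (2), (4) and (3) of that theorem. There is, however, one genuine gap: the step where you ``determine $|\Xi|$ by a Hilbert-series computation'' and conclude $\gkdim A=\gkdim B+|\Xi|$. First, no local finiteness is assumed anywhere (and $\Gamma$ may be free of infinite rank), so the Hilbert series need not even be defined. More seriously, even granting local finiteness, the identification of $\gkdim B$ with the growth rate of $\dim B_{\leq n}$ requires $B$ to be finitely generated; without that one only has $\gkdim B\leq\overline{\lim}\log_n\dim B_{\leq n}$, and your convolution argument then yields $m\leq\overline{\lim}\log_n\dim B_{\leq n}+|\Xi|$, which does not give $m\leq n+|\Xi|$. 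The paper's Remark following Lemma \ref{PBW-GK-dimension} explicitly warns that the equality $\gkdim A=\gkdim B+\#(\Xi)$ fails in general, with counterexamples already among Ore extensions, so this cannot be waved through.

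The paper closes this gap in a different order: it first uses only the inequality of Lemma \ref{PBW-GK-dimension} to get $\#(\Xi)\leq m-n<\infty$, builds the tower of length $l=\#(\Xi)$ of derivation-type graded Ore extensions, and only then proves $l=m-n$. For the last step it applies Theorem \ref{connected-graded-Hopf-algebra-PBW} and Lemma \ref{PBW-GK-dimension} once more to the pair $B\supseteq k$: since every height is infinite and $\gkdim B=n<\infty$, the PBW generating set of $B$ over $k$ is finite, so $B$ is finitely generated; then \cite[Proposition 3.5]{KL} gives $\gkdim K_i=\gkdim K_{i-1}+1$ at each derivation-type step, whence $m=n+l$. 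You should either adopt this argument or, if you insist on the Hilbert-series route, first establish finite generation of $B$ (which itself requires the PBW theorem applied to $B\supseteq k$) and local finiteness. A second, more minor point: your justification that $\sigma_i=\id$ ``by the PBW property'' is too loose --- the commutator $z_i r-rz_i$ lands in $K_{i-1}$ because of the specific straightening statement, Theorem \ref{connected-graded-Hopf-algebra-PBW}(4), together with the left/right $B$-module basis statement in part (2), not merely because $K_i$ is free over $K_{i-1}$ on the powers of $z_i$.
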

Here, an Ore extension $S=R[x; \phi, \delta]$ is called \emph{of derivation type} if $\phi=\id_R$. In addition to Theorem \ref{maintheorem-connected}, we also generalize some of the main results of \cite{BG, Zh}, either from connected Hopf algebras to their coideal subalgebras or from algebraically closed fields of characteristic zero to arbitrary fields of characteristic zero. For example, coideal subalgebras of connected Hopf algebras of characteristic zero are shown to be deformations of polynomial algebras (Theorem \ref{GKdim-transfer}). Particularly noteworthy is that our argument is purely algebraic, while in \cite{BG, Zh} some geometric facts are employed.

The contexts of the above two theorems seem to be of different nature. However, we manage to handle them in a same front by taking advantage of the theory of braided Hopf algebras. Obviously, connected graded Hopf algebras may be considered to be braided by the flipping map. For a pointed Hopf algebra $H$ with coradical $K$, it is well-known that the associated graded Hopf algebra $\gr(H)$ with respect to the coradical filtration of $H$ is isomorphic to the bosonization (or the Radford biproduct) of $R:=\gr(H)^{{\rm co}\, K}$ and $K$ \cite{Rad85}. Note that $R$ is a connected graded braided Hopf algebra, and the structure as well as the coideal subalgebras  of $H$ are closely related to that of $R$. It turns out that Theorem \ref{maintheorem-pointed} and Theorem \ref{maintheorem-connected} are conceptual consequences of the following result.

\begin{maintheorem}[Theorem \ref{braided-bialgebra-1}]\label{maintheorem-braided} 
	Let $R=(R,\tau)$ be a connected $\Gamma$-graded braided bialgebra, where $\Gamma$ is a  nontrivial free abelian monoid. Assume that $\tau=\tau_\chi$ for some bicharacter $\chi$ of $\Gamma$.
	Let $A\supseteq B$ be homogeneous left (resp. right) coideal subalgebras of $R$. Then there exists a family $\{z_\xi\}_{\xi\in \Xi}$ of homogeneous elements of $A$, a map $h:\Xi\to \mathbb{N}_\infty$ and a total order $\vartriangleleft$ on $\Xi$ such that
	\begin{enumerate}	
		\item  $(\{z_\xi\}_{\xi\in \Xi}, h, <)$ is a system of PBW generators of $A$ over $B$ for each total order $<$ on $\Xi$.
		%\item The braiding of $R$ is diagonal with respect $\{z_\xi\}_{\xi\in \Xi}$.
		\item  For each $\xi\in \Xi$, the subalgebra $A^{\trianglelefteq \xi}$ of $A$ generated by $\{~z_\eta~|~\eta\in \Xi,~ \eta \trianglelefteq \xi ~\}$ over $B$ has  $$\{~ z_{\xi_1}^{r_1}\cdots z_{\xi_m}^{r_m} ~|~ m\geq 0,\, \xi_1\vartriangleleft \cdots \vartriangleleft \xi_m \trianglelefteq \xi,\, \xi_i \in \Xi,  \, r_i< h(\xi_i),  \, i=1,\ldots, m ~\}$$ as a basis of left $B$-module as well as of right $B$-module. 
		\item  For each $\xi\in \Xi$, the subalgebra $A^{\trianglelefteq \xi}$ is a left (resp. right) coideal of $R$, and  it is a subcoalgebra of $R$ in the case that $A, B$ are both subcoalgebras of $R$  and $\chi^2=\varepsilon_\Gamma$.
		\item For each $\xi\in \Xi$ with $h(\xi)<\infty$,  it follows that  $z_\xi^{h(\xi)}\in \bigcup_{\delta\vartriangleleft \xi} A^{\trianglelefteq \delta}$.
		\item For all $\xi,\eta\in \Xi$ with $\eta \vartriangleleft \xi$, it follows that  $[z_\xi, z_\eta]_\tau \in \bigcup_{\delta\vartriangleleft \xi} A^{\trianglelefteq \delta}$ (resp. $[z_\xi, z_\eta]_{\tau^{-1}} \in \bigcup_{\delta\vartriangleleft \xi} A^{\trianglelefteq \delta}$).
		\item  For each $\xi \in \Xi$ with $h(\xi) <\infty$, the scalar $a_\xi=\chi(\deg(z_\xi),\deg(z_\xi))$ is a root of unity. More precisely,  if  $k$ is of characteristic $0$ then $a_{\xi}$ is of order $h(\xi)$, and in particular, $a_{\xi}\neq 1$; and if $k$ is of characteristic $p>0$ then  $a_{\xi} $ is of order $h(\xi)/p^s$ for some integer $s\geq 0$.
	\end{enumerate}
\end{maintheorem}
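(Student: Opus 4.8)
The plan is to run a Lyndon-word/leading-word machine adapted to the braided setting, carving out the PBW generators of $A$ over $B$ from among the ``hard'' Lyndon words. First I would fix a $\Gamma$-homogeneous basis of $R$ indexed by an ordered alphabet $X$, realize $R$ as a quotient of the tensor algebra on $X$, and order all words by $\Gamma$-degree first, breaking ties lexicographically; this is a multiplicative well-order, so via Gr\"obner--Shirshov normal forms every nonzero homogeneous $a$ acquires a well-defined leading word $\mathrm{LW}(a)$. Because $\tau=\tau_\chi$ is diagonal on the $\Gamma$-grading, the braided commutator of homogeneous elements is the $\chi$-twisted commutator $[x,y]_\tau = xy - \chi(\deg x,\deg y)\,yx$, whose leading word is the larger of $xy,yx$; consequently Shirshov's standard bracketing $u\mapsto[u]$ of Lyndon words produces monic elements with $\mathrm{LW}([u])=u$, and the Lyndon factorization theorem shows the resulting bracketed monomials span the ambient structure. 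These are the raw materials from which the $z_\xi$ will be assembled.

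For the selection step I would let $\Xi$ be the set of Lyndon words $u$ that occur as leading words of elements of $A$ but are not forced by $B$ together with strictly smaller such generators; for each $\xi\in\Xi$ I would choose a homogeneous $z_\xi\in A$ with $\mathrm{LW}(z_\xi)=\xi$, define $h(\xi)$ as the least power $s$ with $z_\xi^{\,s}$ reducible modulo $\bigcup_{\delta\vartriangleleft\xi}A^{\trianglelefteq\delta}$ (and $\infty$ otherwise), and take $\vartriangleleft$ to be the restriction of the word order. A standard straightening argument on leading words then yields (1) and (2): the ordered monomials in the $z_\xi$ reduce every element of $A$ and are $B$-linearly independent, on both sides since the $\chi$-twisted commutators let one transport $B$ past each generator. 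Properties (4) and (5) are the height and commutator closure relations, which hold because $z_\xi^{\,h(\xi)}$ and $[z_\xi,z_\eta]_\tau$ lie in $A$ with leading word strictly below the relevant threshold, hence are reducible. Property (6) comes from analyzing precisely \emph{why} $z_\xi^{\,h(\xi)}$ collapses: the braided binomial expansion forces a quantum integer built from $a_\xi=\chi(\deg z_\xi,\deg z_\xi)$ to vanish, which pins $a_\xi$ to a root of unity of the stated order in each characteristic.

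The hard part will be property (3) --- that each $A^{\trianglelefteq\xi}$ is a left (resp. right) coideal, and a subcoalgebra under the extra hypothesis --- together with the underlying obstruction that $R$ is \emph{not} assumed primitively generated. In Kharchenko's original setting the generators are skew-primitive, so $\Delta$ is triangular for free; here I would instead induct along the connected grading, proving simultaneously that each $z_\xi$ can be chosen so that $\Delta(z_\xi)$ lies in $\sum_{\delta\trianglelefteq\xi}A^{\trianglelefteq\delta}\otimes A^{\trianglelefteq\xi}$ modulo lower filtration, so that the coideal property propagates to the generated subalgebras. Controlling these lower comultiplication terms --- showing that correcting a bracketed Lyndon element to land in $A$ destroys neither its leading word nor the triangular shape of its coproduct --- is where the genuinely new work lies, and where the condition $\chi^2=\varepsilon_\Gamma$ enters to force $A^{\trianglelefteq\xi}$ to be closed under $\Delta$ rather than merely one-sided. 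Finally, the left/right dichotomy and the appearance of $\tau^{-1}$ in place of $\tau$ in (5) I would dispatch once and for all by a co-opposite symmetry argument.
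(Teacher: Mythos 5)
Your overall strategy---realize $R$ as a quotient of a free algebra on an ordered alphabet, use the graded-lex order and Shirshov bracketing of Lyndon words, take $\Xi$ to be a set of irreducible Lyndon words with a height function, and read off (4)--(6) from the reduction relations---is indeed the skeleton of the paper's argument. But there is a genuine gap at the step you dismiss as ``a standard straightening argument on leading words then yields (1) and (2).'' In the paper's notation, linear independence of the proposed PBW monomials is \emph{equivalent} to the statement $\C_I=\D_I$, i.e.\ that every $I$-restricted word (a nondecreasing product $w_1^{r_1}\cdots w_m^{r_m}$ of powers of Lyndon words with each $w_i^{r_i}$ individually irreducible) is itself $I$-irreducible. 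This is false for general ideals $I$, and it is precisely here---not only in part (3)---that the coalgebra structure must be used: the paper proves it (Proposition \ref{PBW-diagonal}(1)) by lifting $\Delta_R$ to a one-sided bounded comultiplication on $k\langle X\rangle$, computing $\Delta([wu^n]_\rho)$ explicitly (Proposition \ref{comultiplication-diagonal}), and applying ``killing functionals'' $(\phi\otimes\id)\circ\Delta$ to derive a contradiction from a hypothetical relation. The same mechanism is needed for your property (4): knowing that $z_\xi^{h(\xi)}$ is reducible with small leading word does \emph{not} by itself place it in $\bigcup_{\delta\vartriangleleft\xi}A^{\trianglelefteq\delta}$, because a priori the reduction could involve bracketed words whose Lyndon atoms are $\geq_{\lex}\xi$ while still being $\prec\xi^{h(\xi)}$; ruling this out is Proposition \ref{PBW-diagonal}(2), again a coproduct argument. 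Since spanning in (2) depends on (4), your proof of (1) and (2) is circular without this input. You correctly sense that the absence of primitive generators is the new difficulty, but you localize it entirely in (3), whereas it already undermines the basis statement.

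A second, smaller gap concerns the relative structure over $B$. To get a two-sided free $B$-module basis and the localization statements in (2), the paper chooses the alphabet in layers $X_1\subseteq X_1\cup X_2\subseteq X$ generating $B\subseteq A\subseteq R$, orders it so that $Y_1=X_1$ and $Y_2=X_1\cup X_2$ are \emph{closed} (initial segments), and then exploits the combinatorial fact that Lyndon words pseudo-lexicographically below a Lyndon word on $Y_i$ stay inside $\langle Y_i\rangle$; this gives $\N_{J_1}=\N_{J_2}\cap\langle Y_1\rangle$, matching heights, and the identification $\Xi=\N_{J_2}\setminus\langle Y_1\rangle$ (Corollary \ref{Lyndon-compare}, Corollary \ref{PBW-relative}). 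Your description of $\Xi$ as leading words ``not forced by $B$'' and the remark that twisted commutators ``let one transport $B$ past each generator'' do not substitute for this: without the closedness of the sub-alphabets you cannot conclude that the reduction of a product $b\,z_{\xi_1}^{r_1}\cdots z_{\xi_m}^{r_m}$, or of $z_{\xi_1}^{r_1}\cdots z_{\xi_m}^{r_m}\,b$, lands back in the $B$-span of the same monomial set. Your ideas for (3) and (6) are in the right spirit (triangularity of the coproduct along the Lyndon order, and vanishing of a quantum binomial coefficient), and the co-opposite symmetry for the left/right dichotomy is essentially how the paper trades $\tau$ for $\tau^{-1}$; but as written the proposal does not close the central combinatorial claims.
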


We refer to Example \ref{bicharacter} and the paragraph before it for the notions and notation related to bicharacters of abelian monoids. Particularly, $\varepsilon_\Gamma$ denotes the trivial bicharacter of $\Gamma$.

Now let us sketch the proof of Theorem \ref{maintheorem-braided}. The approach is constructive, following the ideas used in \cite{Kh, Uf, ZSL3, LZ}. First we choose a set $X$ of homogeneous generators for $R$ such that $Y_1:= X\cap B$ and $Y_2:= X\cap A$ generate $B$ and $A$ respectively; and then fix an appropriate well order $<$ on $X$ so that  $Y_1$ and $Y_2$ are both closed (see Definition \ref{definition-closed}), among others. Let $k\langle X\rangle$ be the free algebra on $X$. Note that there is a canonical braiding on $k\langle X\rangle$ induced from that of $R$. It is also denoted by $\tau$ and  makes  $k\langle X\rangle$ a braided algebra. Denote by $I$ the ideal of defining relations  on $X$ for  $R$. Then we  may identify  $R$  with $k\langle X\rangle /I$ as graded algebras canonically.  In addition, let $\N_I$ be the set of $I$-irreducible  Lyndon words on $X$ with respect to the graded lex order associated to $<$. By the standard Gr\"{o}bner basis theory, we may further define a height map $h_I:\mathbb{L} \to \mathbb{N}_\infty$ (see Definition \ref{definition-height}), where $\mathbb{L}$ is the set of Lyndon words on $X$. By the combinatorial properties of Lyndon words and the braided commutator of polynomials, one may construct from $X$ a new family of homogeneous generators $(z_\gamma)_{\gamma\in \N_I}$ for $R$ (Lemma \ref{basis-quotient-algebra}).  Next we employ the coalgebra structure of $R$. Since $A$ and $B$ are  left (resp. right) coideals of $R$, it is easy to lift  $\Delta_R$ into a graded algebra homomorphism $\Delta: k\langle X\rangle \to  k\langle X\rangle \otimes^{\tau}  k\langle X\rangle$ that  is right (resp. left) bounded  (see Definition \ref{definition-bounded-comultiplication}).  Generally, $\Delta(x) \neq 1\otimes x+x\otimes 1$ because $x$ is not necessarily primitive in $R$; and moreover $\Delta$ is not necessarily coassociative in the sense that $(\Delta \otimes \id ) \circ \Delta = (\id \otimes \Delta) \circ \Delta.$  Nevertheless, as an immediate consequence of  a  technical result (Proposition \ref{PBW-diagonal}), the one-sided boundedness of $\Delta$ is sufficient to deduce that this new  family of  generators satisfy some desirable conditions. Due to the combinatorial feature of Lyndon words and the closedness of $Y_1$ and $Y_2$, we further show that  $(z_\gamma)_{\gamma\in \Xi_1}$ and $(z_\gamma)_{\gamma \in \Xi_2}$, where $\Xi_i:= \N_I\cap \langle Y_i\rangle$, are  well-behaved families of generators of $B$ and $A$ respectively (Corollary \ref{Lyndon-compare}). Combine Proposition \ref{PBW-diagonal} and Corollary \ref{Lyndon-compare}, as well as some other ideas (particularly, Proposition \ref{PBW-generator}), we conclude that the family $(z_\gamma)_{\gamma\in \Xi}$, where $\Xi:= \Xi_2\backslash \Xi_1$, together with the restrictions of the height map $h_I$ and the pseudo-lexicographic order $<_{\lex}$ to $\Xi$ satisfy the requirements.

The paper is organized as follows. In section 1, we collect some basic notions that will be used in this paper, including those of PBW generators, braided structures and Lyndon words. Sections 2-3 are the technical part of this paper. In Section 2, we construct a system of PBW generators for algebras that satisfy certain conditions by a combinatorial method based on Lyndon words and braided commutators. In Section 3, we introduce the notion of bounded comultiplications on free algebras and study their properties. Some keystone observations are presented in this section, with the aim to prove Theorem \ref{maintheorem-braided}.  Sections 4-6 are the main part of this paper. They are devoted to study the coideal subalgebras of connected graded braided bialgebras, pointed Hopf algebras with abelian coradical and connected Hopf algebras respectively. Particularly, the above three main theorems are proved there. The final two sections are devoted to address the proof of two technical results (Proposition \ref{PBW-diagonal} and Proposition \ref{braided-bialgebra-3}), one for each. In Appendix A, we introduce the class of Lyndon ideals for free algebras and study the combinatorial and homological properties of the corresponding quotient algebras. It is considered as an organic part of the combinatorial method that  used in this paper.

\subsection*{Notation and conventions} Throughout the paper, we work over a fixed field  denoted by $k$. All vector spaces, algebras, coalgebras and unadorned tensors are over $k$.  For an algebra $A$, we denote by $\mu_A$ and $\eta_A$  the multiplication map and the unit map of $A$ respectively; for a coalgebra $C$, we denote by $\Delta_C$ and $\varepsilon_C$  the comultiplication map and the counit map of $C$ respectively; and for a Hopf algebra $H$, we denote by $\mathcal{S}_H$ the antipode of $H$. The notation $\mathbb{N}$ denotes the set of natural numbers including $0$. Let $\mathbb{N}_\infty=\mathbb{N}\cup \{\infty\}$. For a set $X$, let $\#(X)$ be the number of elements of $X$. By convention, $\#(X)=\infty$ if $X$ is infinite. For all positive integers $i\leq n$ and all scalars $q\in k$, let
$$\binom{n}{i}_q:=\frac{(n)_q\cdots (n-i+1)_q}{(i)_q\cdots (1)_q},$$ where $(r)_q:=\sum_{j=0}^{r-1}q^r$ for $r\geq 1$. By convention, we write $\binom{n}{0}_q:=1$ for $n\geq 0$.

\section{Preliminaries}
	\label{section-Lyndon}

In this section, we collect some basic notions that will be used in the sequel. We refer to \cite{HS, Take} for those on braided structures, and to \cite{GF, Kh, Lo, ZSL3} for those on Lyndon words.

\subsection{PBW generators}\hfill

\begin{definition}\label{PBW-definition}
Let $A$ be an algebra and $B$ a subalgebra of $A$. A \emph{system of PBW generators of $A$ over $B$} is a triple $(\{z_\xi\}_{\xi\in \Xi}, h, <)$, where $\{z_\xi\}_{\xi\in \Xi}$ is a family  of elements of $A$, $h$ is a map from $\Xi$ to  $\mathbb{N}_\infty$  and  $<$ is a total order on $\Xi$, such that $h(\xi)\geq 2$ for each $\xi\in \Xi$ and the set
\[
B(\{z_\xi\}_{\xi\in \Xi}, h, <):=\{~ z_{\xi_1}^{r_1}\cdots z_{\xi_m}^{r_m} ~|~ m\geq 0,\, \xi_1< \cdots < \xi_m, \, \xi_i \in \Xi, \, r_i< h(\xi_i), \, i=1,\ldots, m ~\}
\]
is a basis of $A$ as a left and right $B$-module. If $B=k$ is the base field, then we shall call the triple $(\{z_\xi\}_{\xi\in \Xi}, h, <)$   a  \emph{system of PBW generators} of $A$. In the case that $h(\xi)=\infty$ for all $\xi\in \Xi$, we simply call the pair $(\{z_\xi\}_{\xi\in \Xi}, <)$ a system of PBW generators of $A$ over $B$.
\end{definition}

\begin{example}
Let  $\mathfrak{g}$ be a Lie algebra and $\mathfrak{h}$ a Lie subalgebra of $\mathfrak{g}$. Let  $\{x_i\}_{i\in I}$ be a family of elements of $\mathfrak{g}$  which spans a complement of $\mathfrak{h}$ in $\mathfrak{g}$. Then by the well-known PBW theorem, $(\{x_i\}_{i\in I}, <)$ is a system of PBW generators of $U(\mathfrak{g})$ over $U(\mathfrak{h})$  for any total order $<$ on $I$. 
\end{example}

\begin{example}
Let $\mathcal{A}$ be a finitary  exact category which is Krull-Schmidt. Let $H_{\mathcal{A}}$ be the Hall algebra of $\mathcal{A}$ over the field of rational numbers, which as a  vector space is freely spanned by the isomorphism classes of objects of $\mathcal{A}$. Then for any total order $<$ on the set ${\rm Ind } (\mathcal{A})$ of isomorphism classes of indecomposable objects of $\mathcal{A}$, the pair $({\rm Ind } (\mathcal{A}), <)$ is a system of PBW generators of $H_{\mathcal{A}}$. We refer to \cite{BeGr} for the undefined notions, and to Theorem 2.4 of loc. cit. for this fact.
\end{example}

\begin{lemma}\label{PBW-GK-dimension}
Let $A$ be an algebra and $B$ a subalgebra of $A$. Assume that $A$ has a system of PBW generators $(\{z_\xi\}_{\xi\in \Xi}, h, <)$ over $B$. Then $\gkdim A\geq \gkdim B + \#(\{\xi\in \Xi ~|~h(\xi)=\infty\})$.
\end{lemma}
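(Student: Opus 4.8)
The plan is to compute (or rather, lower-bound) the Gelfand--Kirillov dimension via the standard characterization $\gkdim A=\sup_V\limsup_n\frac{\log\dim_k V^n}{\log n}$, the supremum running over finite-dimensional subspaces $V\ni 1$, equivalently over finitely generated subalgebras. Writing $d:=\#(\{\xi\in\Xi\mid h(\xi)=\infty\})$, the first step is to reduce to a finitely generated situation. Since $\gkdim A=\sup\gkdim C$ over finitely generated subalgebras $C\subseteq A$ and $\gkdim B=\sup\gkdim B_0$ over finitely generated $B_0\subseteq B$, it suffices to show: for every finitely generated $B_0\subseteq B$ and every finite set $\xi_1<\cdots<\xi_t$ of indices with $h(\xi_i)=\infty$, the subalgebra $C$ generated by $B_0$ and $z_{\xi_1},\ldots,z_{\xi_t}$ satisfies $\gkdim C\geq \gkdim B_0+t$. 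Taking suprema then gives the inequality when $d<\infty$, and letting $t\to\infty$ forces $\gkdim A=\infty$ when $d=\infty$, which is again the assertion.

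The structural input I would extract from the hypothesis is that $B(\{z_\xi\}_{\xi\in\Xi},h,<)$ is a \emph{basis} of $A$ as a left $B$-module. For $\mathbf r=(r_1,\ldots,r_t)\in\mathbb N^t$ set $z^{\mathbf r}:=z_{\xi_1}^{r_1}\cdots z_{\xi_t}^{r_t}$; since the $\xi_i$ have infinite height, $\{z^{\mathbf r}\mid\mathbf r\in\mathbb N^t\}$ is precisely the subfamily of standard monomials supported on $\xi_1,\ldots,\xi_t$, hence a subfamily of the left $B$-module basis. Consequently $\bigoplus_{\mathbf r}B\,z^{\mathbf r}$ is a free left $B$-submodule of $A$: in particular $b\mapsto b\,z^{\mathbf r}$ is injective on $B$ and the subspaces $B\,z^{\mathbf r}$ are $k$-linearly independent. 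This is exactly the point that converts ``infinitely many generators of infinite height'' into genuine polynomial growth, and it is handed to us directly by the module-freeness of the PBW basis, so no real obstacle remains there.

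For the growth estimate I would fix a finite-dimensional generating subspace $W\ni 1$ of $B_0$ and put $V:=W+\sum_i k\,z_{\xi_i}$, a finite-dimensional generating subspace of $C$. Whenever $|\mathbf r|:=\sum_i r_i\leq\lfloor n/2\rfloor$ one has $W^{\lfloor n/2\rfloor}z^{\mathbf r}\subseteq V^{\lfloor n/2\rfloor}V^{|\mathbf r|}\subseteq V^n$, and by the directness above these pieces sit inside $V^n$ in direct sum, so
\[
\dim_k V^n\ \geq\ \dim_k W^{\lfloor n/2\rfloor}\cdot\#\{\mathbf r\in\mathbb N^t\mid |\mathbf r|\leq\lfloor n/2\rfloor\}\ =\ \dim_k W^{\lfloor n/2\rfloor}\cdot\binom{\lfloor n/2\rfloor+t}{t}.
\]
The combinatorial factor is a polynomial in $n$ of degree exactly $t$, so $\log\binom{\lfloor n/2\rfloor+t}{t}/\log n\to t$.

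The only place demanding a little care is the interaction of the two factors with the $\limsup$. I would observe that the polynomial factor contributes a genuine \emph{limit} $t$, so that the $\limsup$ of the sum of logarithms splits as $t$ plus $\limsup_n\frac{\log\dim_k W^{\lfloor n/2\rfloor}}{\log n}$; the latter equals $\gkdim B_0$ once one absorbs the harmless shift $\log n=\log\lfloor n/2\rfloor+\log 2+o(1)$. This yields $\gkdim C\geq\limsup_n\frac{\log\dim_k V^n}{\log n}\geq \gkdim B_0+t$, completing the reduction and hence the proof. The remaining steps are routine asymptotic bookkeeping.
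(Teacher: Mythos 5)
Your proposal is correct and follows essentially the same route as the paper: both arguments place the products $W^{\lfloor n/2\rfloor}z^{\mathbf r}$ (the paper's $U^n\Omega^{(n)}\subseteq V^{2n}$) inside a power of a fixed finite-dimensional subspace, invoke the left $B$-module freeness of the PBW monomials to get the dimension as a product, and extract the extra $t$ from the binomial count of standard monomials. The only cosmetic difference is your explicit reduction to finitely generated subalgebras, which the paper handles implicitly by taking the supremum over $U$ and $\Omega$ at the end.
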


\begin{proof}
Let $U$ be any finite-dimensional subspace of $B$ containing $1_B$. Let $\Omega$ be any finite subset of $\{\xi\in \Xi ~|~h(\xi)=\infty\}$. Let $V=U+k\Omega$. Then for each integer $n\geq 0$,
\[
U^n \Omega^{(n)} \subseteq (U+k\Omega)^{2n} =V^{2n}, 
\]
where $\Omega^{(n)} =\{z_{\xi_1}\cdots z_{\xi_m}~| ~ m\leq n, ~ \xi_1\leq \cdots \leq \xi_m, ~ \xi_i\in \Omega\}$. Since  $\Omega^{(n)}$ is linearly independent over $B$, one has $\dim (U^n \Omega^{(n)}) =\dim (U^n) \cdot \#(\Omega^{(n)})$. Thus
\begin{eqnarray*}
&&\gkdim A \geq \overline{\lim_{n\to \infty}} \log_n \dim (V^{2n}) \geq \overline{\lim_{n\to \infty}} \log_n \big(\dim (U^n) \cdot \#(\Omega^{(n)})\big) \\
&& \quad \quad  \quad   \quad  = \overline{\lim_{n\to \infty}} \log_n \dim (U^n) + \lim_{n\to \infty} \log_n \binom{n+\#(\Omega)}{\#(\Omega)} \\
&& \quad \quad  \quad   \quad  = \overline{\lim_{n\to \infty}} \log_n \dim (U^n) + \#(\Omega).
\end{eqnarray*}
Here, note that $\#(\Omega^{(n)})=\binom{n+\#(\Omega)}{\#(\Omega)}$. Since $U$ is arbitrary, the result follows.
\end{proof}

\begin{remark}
The equality of Lemma \ref{PBW-GK-dimension} fails in general. Counterexamples already occur in the context of Ore extensions, see \cite[Proposition 3.9]{KL}. We expect to find some efficient sufficient conditions on $A$, $B$ or the system of PBW generators itself to make the equality happen.
\end{remark}

%\subsection{Filtered-graded method}\hfill

\begin{definition}
A partial order $<$ on an abelian monoid $\Gamma=(\Gamma, +)$ is said to be \emph{admissible} if  
$$\gamma_1<\gamma_2\Longrightarrow \gamma+\gamma_1<\gamma+\gamma_2, \quad \gamma_1,\, \gamma_2,\, \gamma\in \Gamma.$$
A \emph{well-ordered abelian monoid} is an abelian monoid  with a specific admissible well order. 

Note that  free abelian monoids all have admissible well orders, and the neutral element $0$ is the smallest element in a well-ordered abelian monoid.
\end{definition}

Let $\Gamma= (\Gamma,<)$ be a well-ordered abelian monoid. Let  $\mathcal{F}=(F_\gamma (V))_{\gamma\in \Gamma}$ be a \emph{$\Gamma$-filtration} of a vector space $V$. So by definition, it is a  family of subspaces of $V$ such that  
\begin{eqnarray*}
F_\gamma (V)\subseteq F_\delta (V) ~\text{ for all }~ \gamma, \delta \in \Gamma  ~ \text{ with } ~ \gamma<\delta, \quad \text{ and } \quad  V=\bigcup_{\gamma\in \Gamma}F_\gamma (V).\end{eqnarray*} 
The \emph{associated  $\Gamma$-graded vector space of $V$ with respect to $\mathcal{F}$} is defined to be
\[
\gr(V,\mathcal{F}):= \bigoplus_{\gamma\in \Gamma}F_\gamma (V)/F_\gamma^-(V), \quad \text{ where } F_\gamma^- (V):= \bigcup_{\delta<\gamma} F_\delta (V).
\]
For each element $a\in V$, we shall write $$\overline{a}:= a+ F_{\gamma(a)}^- (V)\in \gr(V,\mathcal{F}(V)),$$ where $\gamma(a)$ is the  smallest element $\gamma\in \Gamma$ such that $a\in F_\gamma (V)$. 
Note that every subspace $U$ of $V$ has a natural $\Gamma$-filtration $\mathcal{F}|_U:=(U\cap F_\gamma (V))_{\gamma\in \Gamma}$. We identify $\gr(U,\mathcal{F}|_U)$ with a homogeneous (i.e. $\Gamma$-graded) subspace of $\gr(V,\mathcal{F})$ under the natural injection $\gr(U,\mathcal{F}|_U) \to \gr(V,\mathcal{F})$. 

An \emph{algebra $\Gamma$-filtration} of an algebra $A$ is a $\Gamma$-filtration $\mathcal{F}=(F_\gamma (A))_{\gamma\in \Gamma}$ of $A$  such that  
\[
1_A\in F_0(A) \quad \text{and} \quad F_\alpha(A)F_\beta(A) \subseteq F_{\alpha+\beta} (A), \quad \alpha,\beta\in \Gamma.
\]
A \emph{coalgebra  $\Gamma$-filtration} of a coalgebra $C$ is a $\Gamma$-filtration  $\mathcal{F}=(F_\gamma (C))_{\gamma\in \Gamma}$ of $C$ such that  
\[
\Delta_C(F_\gamma(C)) \subseteq \sum_{\beta_1,\, \beta_2\in \Gamma, ~\beta_1+\beta_2 \leq \gamma} F_{\beta_1}(C) \otimes F_{\beta_2}(C), \quad \gamma \in \Gamma.
\]
Note that $\gr(A,\mathcal{F})$ (resp. $\gr(C,\mathcal{F})$) is naturally a $\Gamma$-graded algebra (resp. coalgebra).

\begin{lemma}\label{PBW-tranfer}
Let $B$ be a subalgebra of an algebra $A$. Let $\mathcal{F}=(F_\gamma(A))_{\gamma\in \Gamma}$ be a $\Gamma$-filtration  of $A$, where $\Gamma$ is a  nontrivial well-ordered abelian monoid. Given a family $\{z_\xi\}_{\xi\in \Xi}$ of elements of $A$,  a map $h:\Xi\to \mathbb{N}_\infty$ and  a total order $<$  on $\Xi$, if  $(\{\overline{z_\xi}\}_{\xi\in \Xi}, h, <)$ is a system of PBW generators of  $\gr(A, \mathcal{F})$ over $\gr(B,\mathcal{F}|_B)$, then $(\{z_\xi\}_{\xi\in \Xi}, h, <)$ is a system of PBW generators of $A$ over $B$.
\end{lemma}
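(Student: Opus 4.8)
The plan is to lift a module basis from the associated graded algebra to the filtered algebra, carried out by a transfinite recursion over the well-ordered monoid $\Gamma$. Write $\gr A := \gr(A,\mathcal{F})$ and $\gr B := \gr(B,\mathcal{F}|_B)$, the latter regarded as a graded subalgebra of $\gr A$ via the natural injection fixed in the preamble; under this identification the leading term $\overline{b}$ of any $b\in B$ (computed in $\gr A$) lies in $\gr B$. For each PBW monomial $w = z_{\xi_1}^{r_1}\cdots z_{\xi_m}^{r_m}$ in $B(\{z_\xi\}, h, <)$, set $\bar w := \overline{z_{\xi_1}}^{\,r_1}\cdots\overline{z_{\xi_m}}^{\,r_m}\in \gr A$, the corresponding element of $B(\{\overline{z_\xi}\}, h, <)$. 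The requirement $h(\xi)\geq 2$ is part of the hypothesis on the graded system, so it transfers verbatim, and it remains only to show that $B(\{z_\xi\}, h, <)$ is a basis of $A$ as a left and as a right $B$-module.

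The key preliminary step is the multiplicativity of leading terms along these monomials. Since $z_{\xi_i}\in F_{\gamma(z_{\xi_i})}(A)$, the monomial $w$ lies in $F_d(A)$ with $d = \sum_i r_i\gamma(z_{\xi_i})$, and by the definition of the multiplication in $\gr A$ the image of $w$ in $F_d(A)/F_d^-(A)$ is exactly $\bar w$. As $\bar w$ is an element of the graded PBW basis, it is nonzero; hence $w\notin F_d^-(A)$, so $\gamma(w)=d$ and $\overline{w}=\bar w$. The same computation, now using that $B(\{\overline{z_\xi}\}, h, <)$ is a \emph{free} $\gr B$-basis, shows for any nonzero $b\in B$ that $\gamma(bw)=\gamma(b)+\gamma(w)$ and $\overline{bw}=\overline{b}\cdot\bar w\neq 0$, and symmetrically on the right. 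In particular $w\mapsto \bar w$ is a bijection from $B(\{z_\xi\}, h, <)$ onto $B(\{\overline{z_\xi}\}, h, <)$, since distinct index tuples yield distinct graded basis elements. The crucial point is that no cancellation can occur within a single term $bw$: this is precisely where freeness of the graded PBW basis, rather than its spanning property alone, is indispensable, and I expect this to be the main technical subtlety.

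For spanning, I would prove that $B(\{z_\xi\}, h, <)$ generates $A$ as a left $B$-module by transfinite induction on $\gamma(a)$, using that $\Gamma$ is well-ordered. Given $a\in A$ with $d=\gamma(a)$, its leading term $\overline{a}\in F_d(A)/F_d^-(A)$ is homogeneous of degree $d$, so expanding it in the graded left $\gr B$-basis gives $\overline{a}=\sum_j \overline{b_j}\cdot\bar w_j$ with $b_j\in B$, $w_j\in B(\{z_\xi\}, h, <)$ and $\gamma(b_j)+\gamma(w_j)=d$ for every $j$. By the leading-term step, $a-\sum_j b_j w_j$ then lies in $F_d^-(A)$, hence has $\gamma$-value strictly below $d$ (or is zero), so it lies in the left $B$-span by the induction hypothesis; therefore so does $a$. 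Well-foundedness is guaranteed by the well-order, with $0$ as least element giving the base case $F_0^-(A)=0$. Right-module spanning is identical, using the right graded basis.

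Finally I would establish left $B$-linear independence. Suppose $\sum_i b_i w_i = 0$ is a finite relation with distinct $w_i\in B(\{z_\xi\}, h, <)$ and not all $b_i=0$. Among the indices with $b_i\neq 0$ let $d$ be the largest value of $\gamma(b_i)+\gamma(w_i)$, which exists as the maximum of a finite subset of the totally ordered $\Gamma$, and let $S\neq\varnothing$ be the set of indices attaining it. Reducing the relation modulo $F_d^-(A)$ annihilates every term of degree strictly below $d$ and, by the leading-term step, leaves $\sum_{i\in S}\overline{b_i}\cdot\bar w_i = 0$ in $\gr A$ with each $\overline{b_i}\neq 0$ and the $\bar w_i$ for $i\in S$ distinct elements of $B(\{\overline{z_\xi}\}, h, <)$; this contradicts the freeness of the graded left $\gr B$-basis. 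Right independence is symmetric, and combining the two spanning and two independence statements shows that $(\{z_\xi\}_{\xi\in\Xi}, h, <)$ is a system of PBW generators of $A$ over $B$.
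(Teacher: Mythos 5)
Your proposal is correct and is precisely the ``standard application of the filtered-graded method'' that the paper invokes without writing out: you establish multiplicativity of leading terms on the PBW monomials via freeness of the graded basis, then get spanning by transfinite induction along the well order on $\Gamma$ and independence by passing a putative relation to its top filtration degree. All the steps check out (including the identification of $\gr(B,\mathcal{F}|_B)$ inside $\gr(A,\mathcal{F})$ and the fact that homogeneous elements of $\gr B$ are leading terms of elements of $B$), so this is a faithful, correct expansion of the paper's one-line proof.
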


\begin{proof}
	It follows by a standard application of the filtered-graded  method.
\end{proof}

% A \emph{bialgebra  $\Gamma$-filtration} of a bialgebra $H$ is a $\Gamma$-filtration  $\mathcal{F}=(F_\gamma (H))_{\gamma\in \Gamma}$ of $R$ which is both an algebra $\Gamma$-filtration and a coalgebra $\Gamma$-filtration of $H$. A  \emph{Hopf algebra $\Gamma$-filtration} of a Hopf algebra  $H$ is a bialgebra $\Gamma$-filtration of $H$ such that $$\mathcal{S}_H(F_\gamma(H))\subseteq F_\gamma(H) \quad \gamma\in \Gamma.$$  In both cases,  $\gr(H,\mathcal{F})$ is  a   $\Gamma$-graded  bialgebra and a $\Gamma$-graded Hopf algebra repectively.

\subsection{Braided structures}\hfill

\begin{definition} A \emph{braiding} on a vector space $V$ is a linear automorphism $\tau:V\otimes V \to V\otimes V$ which satisfies the braid equation (i.e. quantum Yang-Baxter equation):
	\[
	(\tau\otimes \id_V )\circ (\id_V \otimes \tau) \circ (\tau\otimes \id_V ) = (\id_V \otimes \tau)\circ(\tau\otimes \id_V )\circ(\id_V \otimes \tau).
	\]
A \emph{braided vector space} is a vector space together with a braiding on it. 
A \emph{homomorphism of braided vector spaces} $f:(V, \tau_V)\to (W, \tau_W)$ is a linear map   such that $\tau_W( f \otimes f ) = ( f \otimes f )\tau_V$.

A \emph{braided subspace} of a braided vector space $(V,\tau)$ is a subspace $U$ of $V$ such that $\tau(U\otimes U) =U\otimes U$. In this case, $U$ itself is a braided vector space with braiding the restriction of $\tau$ on $U\otimes U$.
\end{definition}

\begin{example}
Let $\mathcal{X}$ be a basis of a vector space $V$. Let $\tau: V\otimes V\to V\otimes V$ be a linear map such that $\tau(x\otimes y) \in k^\times\cdot (y\otimes x)$ for all $x, y\in \mathcal{X}$, where $k^\times:=k\backslash\{0\}$. Clearly, $\tau$ is a braiding on $V$. We call such braidings on $V$ \emph{diagonal with respect to $\mathcal{X}$} (or simply \emph{$\mathcal{X}$-diagonal}).
\end{example}

Let $\Gamma$ be an abelian monoid with binary operation $+$ and neutral element $0$. Recall that a \emph{bicharacter} of $\Gamma$
is  map $\chi: \Gamma\times \Gamma \to k^\times$ such that  for all $\alpha, \beta,\gamma\in \Gamma$,
\[
\chi(\alpha, \beta+\gamma) = \chi(\alpha, \beta) ~ \chi(\alpha, \gamma) \quad \text{and} \quad \chi(\alpha+\beta, \gamma) = \chi(\alpha, \gamma)~\chi(\beta, \gamma).
\]
Note that a bicharacter $\chi$ satisfies $\chi(0,\gamma)=\chi(\gamma, 0) =1$ for all $\gamma\in \Gamma$. Let $\varepsilon_\Gamma$ be the  \emph{trivial bicharacter} given by $\varepsilon_\Gamma(\alpha,\beta)=1$. For bicharacters $\chi,\chi'$ of $\Gamma$, the bicharacter $\chi\chi'$ of $\Gamma$ is defined by
\[
(\chi\chi') (\alpha, \beta) = \chi(\alpha,\beta) ~ \chi'(\alpha,\beta), \quad \alpha,\beta\in \Gamma.
\]
With this binary operation, the set of all bicharacters of $\Gamma$ is an abelian monoid with unit $\varepsilon_\Gamma$.

\begin{example}\label{bicharacter}
Let $V=\bigoplus_{\gamma\in \Gamma} V_\gamma$ be a $\Gamma$-graded vector space, where $\Gamma$ is an abelian monoid. For each bicharacter $\chi$ of $\Gamma$, there is associated with a canonical braiding $\tau_\chi$ on $V$ defined by 
\[
\tau_{\chi} (u\otimes v)=\chi(\alpha, \beta) \cdot v\otimes u, \quad u\in V_\alpha,~ v\in V_\beta, ~ \alpha, ~ \beta\in \Gamma.
\]
Note that $\tau_\chi$ is diagonal with respect to each homogeneous basis of $V$.
\end{example}

Let  $\tau$ be a braiding on a vector space $V$. For all integers $m,n\geq1$, we define the isomorphisms $\tau_{m,n}: V^{\otimes m}\otimes V^{\otimes n} \to  V^{\otimes n}\otimes V^{\otimes m}$ inductively by $\tau_{1,1} :=\tau$, 
\begin{eqnarray*}
	\tau_{1,n} &:=& (\id_V\otimes \tau_{1,n-1})\circ (\tau\otimes \id_{V^{\otimes n-1}})\quad \text{for} \quad n\geq 2, \quad \text{and} \\
	\tau_{m,n} &:=& (\tau_{m-1,n}\otimes \id_V)\circ (\id_{V^{\otimes m-1}}\otimes \tau_{1,n}) \quad \text{for}\quad  m\geq 2,\, n\geq 1.
\end{eqnarray*}
Let $V^{\otimes 0}=k$, and denote  for all $n\geq 0$ by $\tau_{0,n}:  k\otimes V^{\otimes n} \to  V^{\otimes n}\otimes k$ and $\tau_{n,0}: V^{\otimes n}\otimes k \to  k\otimes V^{\otimes n}$ the canonical isomorphisms. We say that a linear map $f:V^{\otimes m} \to V^{\otimes n}$  \emph{commutes with $\tau$} if $$(f\otimes \id_V)\circ \tau_{1,m} =  \tau_{1,n}\circ (\id_V\otimes f) \quad \text{and} \quad (\id_V\otimes f)\circ \tau_{m,1} = \tau_{n,1} \circ (f\otimes \id_V).$$
The braid equation tells us that $\tau$ commutes with itself.

\begin{definition}
A \emph{braided algebra (resp. coalgebra)} is an algebra (resp. coalgebra) $R$ together with a braiding $\tau$ on $R$ such that the structure maps of $R$ all commute with $\tau$. 

A \emph{homomorphism of braided algebras (resp.  coalgebras)} $R\to S$ is a homomorphism of algebras (resp. coalgebras) which is also a homomorphism of braided vector spaces.
\end{definition}

For each braided algebra $(A, \tau)$, there is an algebra $A\otimes^\tau A$. It is $A\otimes A$ as a vector space, but  with multiplication  $\mu_{A\otimes^\tau A}:= (\mu_A \otimes \mu_A) \circ (\id_A\otimes \tau \otimes id_A) $   and  unit   $\eta_{A\otimes^\tau A} =\eta_A\otimes \eta_A$. Dually, for each braided coalgebra $(C, \tau)$, there is a  coalgebra $C\otimes^\tau C$. It is $C\otimes C$ as a vector space, but with comultiplication   $\Delta_{C\otimes^\tau C}:= (\id_C\otimes \tau\otimes \id_C)\circ (\Delta_C\otimes \Delta_C)$ and counit  $\varepsilon_{C\otimes^\tau C}:= \varepsilon_C\otimes \varepsilon_C$.

%A \emph{braided coalgebra} is a coalgebra $C$ together with a braiding $\tau$ on $C$ such that the comultiplication map $\Delta_C$ and the counit map $\varepsilon_C$ of $C$ both commute with $\tau$. For a braided coalgebra $(C, \tau)$, define $C\otimes^\tau C:=C\otimes C$ as a vector space. It is a coalgebra with the comultiplication map $$\Delta_{C\otimes^\tau C}:= (\id_C\otimes \tau\otimes \id_C)\circ (\Delta_C\otimes \Delta_C)$$ and the counit map $\varepsilon_{C\otimes^\tau C}:= \varepsilon_C\otimes \varepsilon_C$. 

\begin{definition}
A \emph{braided bialgebra} is a tuple $(R, \mu, \eta, \Delta, \varepsilon, \tau)$ such that $(R, \mu, \eta, \tau)$ is a braided algebra, $(R, \Delta, \varepsilon, \tau)$ is a braided coalgebra, and one of the following equivalent conditions holds:
\begin{itemize}
	\item $\Delta: R\to R\otimes^\tau R$ and $\varepsilon: R\to k$ are homomorphisms of algebras.
	\item $\mu: R\otimes^\tau R \to R$ and $\eta: k\to R$ are homomorphisms of coalgebras.
\end{itemize}
A \emph{braided Hopf algebra} is a braided bialgebra $R$ such that $\id_R$ is convolution invertible. In this case, the 
convolution inverse of $\id_R$ is called the antipode of $R$. 
	
A \emph{homomorphism of braided bialgebras (Hopf algebras)} $R\to S$ is a homomorphism of braided algebras which is also a homomorphism of braided coalgebras.
\end{definition}

The above braided structures  have graded version. Let $\Gamma$ be an abelian monoid.  A \emph{$\Gamma$-graded braided vector space} is a braided vector space $(V,\tau)$ with a  $\Gamma$-grading $V=\bigoplus_{\gamma\in \Gamma}V_\gamma$ such that 
$$\tau(V_\alpha\otimes V_\beta)= V_\beta \otimes V_\alpha, \quad \alpha,~ \beta\in \Gamma.$$
A \emph{homomorphism of $\Gamma$-graded braided vector spaces} $V\to W$ is a homomorphism of braided vector spaces which is also a homomorphism of $\Gamma$-graded vector spaces. 

\begin{definition}
A \emph{$\Gamma$-graded braided algebra (coalgebra, bialgebra, respectively)} is a braided algebra (coalgebra, bialgebra, respectively)  together with a $\Gamma$-grading  such that it is a   $\Gamma$-graded algebra (coalgebra, algebra and coalgebra, respectively) and a $\Gamma$-graded braided vector space. A \emph{$\Gamma$-graded braided Hopf algebra} is a $\Gamma$-graded braided bialgebra with an antipode.

A \emph{homomorphism of $\Gamma$-graded braided algebras (coalgebras, bialgebras, Hopf algebras, respectively)} $R\to S$ is a homomorphism of braided algebras  (coalgebras, bialgebras, Hopf algebras, respectively) which is also a homomorphism of $\Gamma$-graded vector spaces. 
\end{definition}

\begin{remark}
Let $K$ be a Hopf algebra with bijective antipode. Let ${}^K_K\mathcal{Y}{D}$ be the category of  (left)  Yetter-Drinfeld modules over  $K$. It is a braided monoidal category.  The \emph{braiding of $V, W\in {}^K_K\mathcal{Y}{D}$ } is defined to be the linear map $\tau_{V,W}:V\otimes W\to W\otimes V$ given by 
\[
\tau_{V,W} (v\otimes w) = \sum (v_{[-1]}\cdot w)\otimes v_{[0]}, \quad  v\in V, ~w\in W.
\]
In particular, every  object in ${}^K_K\mathcal{Y}{D}$ is naturally a braided vector space. Moreover, every algebra ( coalgebra, etc.) in ${}^K_K\mathcal{Y}{D}$ is naturally a braided algebra (coalgebra, etc.) respectively in the sense of above definitions. Similar remarks apply to the graded versions of these structures. % We refer to \cite{HS,Take} for more details. 
\end{remark}

%\subsection{Coideal subalgebras}

\subsection{Lyndon words} \hfill

Let $X$ stand for a well-ordered alphabet.  If $X=\{x_1,\ldots, x_\theta\}$ for some positive integer $\theta$, then $X$ is tacitly equipped with the order $x_1<\cdots <x_\theta$. 
We denote by $\langle X\rangle$ the set of all words on $X$, by $1$ the empty word and by $\langle X\rangle_+$ the set of nonempty words on $X$. Note that $\langle X\rangle$ is a monoid under the concatenation of words. The length of a word $u$ is denoted by $|u|$. 

Let $u, \, v$ be two words on $X$. We call $v$ a \textit{factor}  of  $u$  if there exist words $w_1,w_2$ on $X$ such that  $w_1vw_2=u$. If $w_1$ (resp. $w_2$) can be chosen to be the empty word then $v$ is called a \emph{prefix} (resp. \emph{suffix}) of $u$.  A  factor (resp. prefix, resp. suffix) $v$ of $u$ is called  \textit{proper} if $v\neq u$. 

One may extend the order $<$ on $X$ to a partial order $\prec$ on $\langle X\rangle$  as follows:
\[
u \prec v \Longleftrightarrow u=rxs,\, v=ryt,\, \text{ with } x,y\in X;\, x<y;  \, r,s,t\in \langle X\rangle.
\]
It is easy to see that $\prec$ is well-founded and for all words $u, v, w_1,w_2, w_3\in \langle X\rangle$,
\[
u\prec v \Longrightarrow w_1uw_2\prec w_1v w_3.
\]
The \emph{pseudo-lexicographic order} on $\langle X\rangle$, denoted by $<_{\lex}$, is defined as follows:
\begin{eqnarray*}
	\label{definition-deglex}
	u <_{\lex} v\,\, \Longleftrightarrow \,\, \left\{
	\begin{array}{llll}
		v \text{ is a  proper prefix of } u,\quad \text{or}&& \\
		u\prec v.
	\end{array}\right.
\end{eqnarray*}
Note that it is a total order compatible with the concatenation of words from left but not from right.  For example if $x,y\in X$ with $x>y$, one has $x>_{\lex}x^2$ but $xy<_{\lex} x^2y$. %However, one shall bear in mind that if $u<_{\lex}v$ but $v$ is not a prefix of $u$ then $uw<_{\lex}vw'$ for all words $w,w'$.

\begin{lemma}\label{lex-order-1}
Let $\Gamma$ be a  well-ordered abelian monoid and  $l:\langle X\rangle \to \Gamma$ a monoid homomorphism such that $l(x)>0$ for each $x\in X$. Then for  words $u, v\in \langle X\rangle$, if $l(u)\leq l(v)$ and $u<_{\lex} v$ then $u\prec v$.
\end{lemma}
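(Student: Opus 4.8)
The plan is to unwind the definition of the pseudo-lexicographic order and to discard one of its two alternatives by means of the weight map $l$. By definition, $u<_{\lex} v$ holds precisely when either $v$ is a proper prefix of $u$, or $u\prec v$. The second alternative is exactly the conclusion we seek, so the entire content of the lemma is to exclude the first alternative under the assumption $l(u)\leq l(v)$.

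First I would establish the auxiliary fact that $l$ is strictly positive on every nonempty word. Write such a word as $w=x_1\cdots x_n$ with $n\geq 1$ and $x_i\in X$; since $l$ is a monoid homomorphism, $l(w)=l(x_1)+\cdots+l(x_n)$. Setting $s:=l(x_2\cdots x_n)$, we have $s\geq 0$ because $0$ is the smallest element of $\Gamma$, and applying admissibility to the inequality $0<l(x_1)$ gives $s=s+0<s+l(x_1)=l(w)$; hence $l(w)>s\geq 0$, so $l(w)>0$.

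With this in hand, suppose toward a contradiction that $v$ is a proper prefix of $u$, say $u=vw$ with $w$ nonempty. Then $l(u)=l(v)+l(w)$, and since $l(w)>0$, admissibility yields $l(v)=l(v)+0<l(v)+l(w)=l(u)$, contradicting $l(u)\leq l(v)$. Thus the prefix alternative is impossible, and we are forced into $u\prec v$, as required. I do not anticipate any serious obstacle; the only step demanding care is the strict positivity of $l$ on nonempty words, which is where the admissibility of the well-order on $\Gamma$---rather than merely the positivity of $l$ on letters---is genuinely used. The hypothesis $l(u)\leq l(v)$ serves solely to rule out the prefix case, after which the conclusion is immediate.
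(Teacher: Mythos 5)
Your proof is correct and follows the same route as the paper's (one-line) proof: the hypothesis $l(u)\leq l(v)$ is used solely to rule out the proper-prefix alternative in the definition of $<_{\lex}$, leaving $u\prec v$. You merely make explicit the strict positivity of $l$ on nonempty words via admissibility, which the paper leaves implicit.
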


\begin{proof}
If $l(u) \leq l(v)$ then it follows necessarily that $v$ is not a proper prefix of $u$.
\end{proof}
	
\begin{proposition}\label{Proposition-character-Lyndon}
 The following statements are equivalent for a word $u\in \langle X\rangle_+$:
\begin{enumerate}
\item $u>_{\lex} wv$ for every factorization $u=vw$ with $v, w\neq 1$.
\item $u >_{\lex} w$ for every factorization $u=vw$ with $v,w \neq 1$.
\item $v >_{\lex} w$ for every factorization $u = vw$ with $v,w\neq1$.
\end{enumerate}
\end{proposition}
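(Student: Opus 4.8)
The plan is to isolate the two elementary facts about $\prec$ and $<_{\lex}$ that do all the work, and then to run the short cycle of implications $(2)\Rightarrow(3)\Rightarrow(1)\Rightarrow(2)$. The first fact is the append-stability of the strict order $\prec$: from the displayed monotonicity $u\prec v\Rightarrow w_1uw_2\prec w_1vw_3$ one gets, by taking $w_1=1$, that $u\prec v$ implies $us\prec vt$ for \emph{all} words $s,t$. The second is the trichotomy packaged into the definition of $<_{\lex}$: for words $a,b$ exactly one of the following holds and it settles the comparison — $a=b$; $a$ is a proper prefix of $b$, whence $b<_{\lex}a$; $b$ is a proper prefix of $a$, whence $a<_{\lex}b$; or neither is a prefix of the other, in which case $<_{\lex}$ agrees with $\prec$ on $a,b$. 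In particular two distinct words of equal length are prefix-incomparable, so there $<_{\lex}$ coincides with $\prec$; and $<_{\lex}$ is left-cancellative ($ca<_{\lex}cb\Leftrightarrow a<_{\lex}b$), since prepending a fixed word is an order-embedding. I will use these without further comment.

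The two ``downward'' implications are one-line transitivities once the prefix clause is read correctly. For $(2)\Rightarrow(3)$, write $u=vw$ with $v,w\neq1$; as $v$ is a proper prefix of $vw$ we have $vw<_{\lex}v$, while $(2)$ gives $w<_{\lex}vw=u$, so $w<_{\lex}v$, i.e. $v>_{\lex}w$. (Symmetrically, $w$ is a proper prefix of $wv$, so $wv<_{\lex}w<_{\lex}vw=u$, giving $(2)\Rightarrow(1)$ for free, though the cycle does not need it.) For $(3)\Rightarrow(1)$ I combine the first-letter comparison with a finite descent. Let $u=vw$; I must show $wv\prec vw$, the lengths being equal. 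If $v,w$ are prefix-incomparable, then $(3)$ reads $w\prec v$ and append-stability yields $wv\prec vw$. The case ``$w$ a proper prefix of $v$'' is excluded outright, since it forces $v<_{\lex}w$, against $(3)$. In the remaining case $w=vw'$, left-cancelling the common prefix $v$ reduces the goal to $w'v\prec vw'$, the same problem for the strictly shorter $w'$, now fed by $(3)$ at the factorization $u=v^{2}w'$; the wrong overlap ``$w'$ prefix of $v$'' is again excluded (it would give $v^2<_{\lex}w'$), so the descent strictly shortens the trailing factor and terminates.

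The heart of the argument, and the step I expect to be the main obstacle, is the last implication $(1)\Rightarrow(2)$. The naive first-letter argument (using $wv\prec vw$ from $(1)$ and locating their first disagreement within the first $|w|$ positions) handles every factorization $u=vw$ \emph{except} the one in which the suffix $w$ is itself a prefix of $u$ — equivalently, in which $u$ has a nonempty proper border. In that configuration the prefix clause forces $u<_{\lex}w$, so $(2)$ would fail; thus the whole implication rests on a Key Lemma: condition $(1)$ forbids $u$ from having a nonempty proper border. This is precisely where the quantification over \emph{all} factorizations is indispensable — a one-variable check shows the equivalence is false factorization-by-factorization, so no local argument can succeed.

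I will prove the Key Lemma by descent. A border may be taken non-overlapping (a long border produces a strictly shorter one), reducing $u$ to the shape $u=w_0s'w_0$. The square case $s'=1$ gives $u=w_0w_0$, which equals one of its own rotations and so is not strictly maximal, contradicting $(1)$. When $s'\neq1$, I compare $u$ with its rotations $w_0w_0s'$ and $s'w_0w_0$: the first comparison yields $w_0s'<_{\lex}s'w_0$ via $(1)$, and if $w_0$ and $s'$ began with different letters this would put a smaller letter at the front of $u$ than of $s'w_0w_0$, forcing $u<_{\lex}s'w_0w_0$ and contradicting $(1)$ once more; hence $w_0$ and $s'$ share a first letter and a further left-cancellation shortens the instance. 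With the Key Lemma in hand the border configuration cannot occur, the residual prefix-incomparable case is dispatched by the first-letter comparison, and $(1)\Rightarrow(2)$ follows, closing the cycle.
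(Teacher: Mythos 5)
Your architecture is sound and in fact more self-contained than the paper's: the paper disposes of $(2)\Rightarrow(3)$ as clear, cites Kharchenko's Lemma~2 for $(1)\Leftrightarrow(2)$, and only writes out $(3)\Rightarrow(2)$ (by peeling off the maximal power $v^s$ prefixing $w$), whereas you prove the whole cycle from scratch. Your $(2)\Rightarrow(3)$, the bonus $(2)\Rightarrow(1)$, and the terminating descent for $(3)\Rightarrow(1)$ are all correct, and your reduction of $(1)\Rightarrow(2)$ to the Key Lemma (condition $(1)$ forces $u$ to have no nonempty proper border) is both correct and correctly identified as the crux.

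The gap is in the proof of the Key Lemma itself. After reducing to $u=w_0s'w_0$, disposing of $s'=1$, and showing that $w_0$ and $s'$ share a first letter, you end with ``a further left-cancellation shortens the instance.'' That is not a proof: you never say what the instance is, what quantity decreases, what the inductive hypothesis asserts, or what happens at the boundary where one of the two words being compared is exhausted (when $w_0$ or $s'$ is a prefix of the other, the ``differing first letters'' dichotomy you rely on breaks down and the comparison no longer localizes to a single position). As written, the step you yourself flag as the heart of $(1)\Rightarrow(2)$ is asserted rather than proved. The good news is that no descent is needed: the two rotation comparisons you already wrote down contradict each other outright. From $u>_{\lex}w_0w_0s'$, left-cancelling $w_0$ gives $w_0s'\prec s'w_0$; from $u>_{\lex}s'w_0w_0$ (equal lengths, distinct words) you get $s'w_0w_0\prec w_0s'w_0$, and since the first disagreement of this pair occurs within the first $|w_0|+|s'|$ letters it is exactly the first disagreement of the pair $(s'w_0,\,w_0s')$, so this says $s'w_0\prec w_0s'$ --- contradiction. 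Even simpler, you need neither ``shortest'' nor ``non-overlapping'': for an arbitrary nonempty proper border $w$, write $u=wv=v'w$; condition $(1)$ at the factorization $u=v'\cdot w$ gives $wv=u>_{\lex}wv'$, hence $v'\prec v$, while condition $(1)$ at $u=w\cdot v$ gives $v'w=u>_{\lex}vw$, hence $v\prec v'$, and $\prec$ is asymmetric. With either repair the Key Lemma, and with it your whole cycle, goes through.
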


\begin{proof}
$(2) \Rightarrow (3)$ is clear, and by \cite[Lemma 2]{Kh} one has $(1) \Leftrightarrow (2)$. In the following we assume $(3)$ and to see (2). Let $u = vw$ with $v,w\neq1$. Then there is an integer $s\geq0$ and a word $w'\in \langle X\rangle$ such that $v^sw'=w$ and $v$ is not a prefix of $w'$. It follows that $v\geq_{\lex} v^{s+1} >_{\lex} w'$ and therefore $vw'>_{\lex}w'$. Now we can conclude that $u=v^svw'>_{\lex} v^sw'=w$ as desired.
\end{proof}

\begin{definition}
A word $u\in \langle X\rangle$ is called \textit{Lyndon}  if $u$ is nonempty and satisfies the equivalent conditions listed in Proposition \ref{Proposition-character-Lyndon}. The set of all Lyndon words on $X$ is denoted by $\L=\mathbb{L}(X)$.
\end{definition}
	
\begin{remark}
We follow \cite{Kh} for the definition of  Lyndon words, where they are called \emph{standard words} after Shirshov. In \cite{GF,Lo}, Lyndon words are defined in terms of the lexicographic order (denoted $<_{\rm lex}$ here for convenience). By definition, $u<_{\rm lex} v$ means that  either $u$ is a proper prefix of $v$ or  $u\prec v$; and a word $u\in \langle X\rangle$ is Lyndon if $u\neq1$ and $u<_{\rm lex}wv$ for every factorization $u=vw$ with $v,w\neq1$. So the results in \cite{GF,Lo} need to be changed accordingly in our context.
\end{remark}
	
%\subsection{Basic facts of Lyndon words}\hfill

For a word  $u$ of length $\geq2$, define   $u_R\in \langle X\rangle $ to be  the pseudo-lexicographically largest proper suffix of $u$ and define  $u_L\in \langle X\rangle $ by the decomposition $u=u_Lu_R$.  The pair of words
$$\sh(u):=(u_L,u_R)$$ is called  the {\em Shirshov factorization} of $u$. As an example,  $\sh(x_2^2x_1x_2x_1)=(x_2^2x_1,x_2x_1)$.

Lyndon words enjoy many excellent combinatorial properties, we collect below some of them for reader's interests. Due to the difference between  the lexicographic order and the pseudo-lexicographic order, the  statements that we present are adjusted accordingly from the given references.

\begin{proposition}\label{fact-Lyndon} \hfill
\begin{enumerate}
		%\item[(L1)] (\cite[Proposition 5.1.2]{Lo}, \cite[Lemma 2]{Kh}) A nonempty word $u$ is Lyndon if and only if it is pseudo-lexicographically greater than any one of  its proper nonempty suffices.
		\item[(L1)]  (\cite[Proposition 5.1.3]{Lo} $\&$ \cite[Lemma 4.3 (3)]{GF}) Let $u=w_1w_2,\, v=w_2w_3$ be Lyndon words. If $u >_{\lex} v$ (which holds in priori when $w_2\neq1$), then $w_1w_2w_3$ is a Lyndon word.
		%\item[(L2)](\cite[Lemma 4]{Kh}) Let  $u_1>_{\lex}u_2>_{\lex} u'$ be nonempty words. If  $u_1u_2$ and $u'$ are Lyndon words, then $u_1u_2u' >_{\lex} u_1 u' >_{\lex} u'$ and $u_1u_2u' >_{\lex} u_2u' >_{\lex} u'$.
		\item[(L2)] (\cite[Proposition 5.1.3]{Lo}) Let $u$ be word of length $\geq2$. Then $u$ is a Lyndon word if and only if $u_L$ and $u_R$ are both Lyndon words and $u_L>_{\lex} u_R$.
		\item[(L3)](\cite[Proposition 5.1.4]{Lo}) Let $u,v$ be Lyndon words. Then $\sh(uv)=(u,v)$ if and only if either $u$ is a letter or  $u$ is of length $\geq 2$ with $u_R\leq_{\lex} v$.
		\item[(L4)](\cite[Theorem 5.1.5]{Lo}) Every word $u$ can be written uniquely as a  nondecreasing product of Lyndon words 
 (the \emph{Lyndon decomposition}): 
$$u=u_1u_2\cdots u_r,\quad u_i\in \mathbb{L}, \, \, u_1\leq_{\lex} u_2\leq_{\lex}\cdots\leq_{\lex} u_r.$$ 
The words $u_i\in\mathbb{L}$ appearing in the decomposition are called the \emph{Lyndon atoms} of $u$.
		\item[(L5)] (\cite[Lemma 4.5]{GF})  If a Lyndon word $v$ is a factor of a word  $u$ then it  is a factor of some Lyndon atom of $u$.   
           \item[(L6)] (\cite[Lemma 4]{Kh})
Let  $u_1>_{\lex}u_2>_{\lex} u'$ be nonempty words. If  $u_1u_2$ and $u'$ are Lyndon words, then $u_1u_2u' >_{\lex} u_1 u' >_{\lex} u'$ and $u_1u_2u' >_{\lex} u_2u' >_{\lex} u'$.
           \item[(L7)] (\cite[Lemma 5]{Kh}) 
Let $u=u_1\cdots u_m$ and $v=v_1\cdots v_n$ be two nonempty words in Lyndon decomposition. Then $u<_{\lex} v$ if and only if either $n< m$ and $u_i= v_i$ for $i\leq n$, or there is an integer $l \leq \min\{m, n\}$ such that $u_i=v_i$ for $i< l$ but $u_l <_{\lex } v_l$. \hfill $\Box$
	\end{enumerate}	
\end{proposition}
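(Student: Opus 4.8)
The plan is to treat the seven items by reducing them to the cited classical statements, the only obstruction being that \cite{GF,Lo} work with the lexicographic order $<_{\rm lex}$ whereas here we use the pseudo-lexicographic order $<_{\lex}$. First I would make the comparison between these two orders completely precise. Writing $X^{\mathrm{op}}$ for the alphabet $X$ equipped with the opposite well order, I claim that $<_{\lex}$ on $\langle X\rangle$ is exactly the opposite total order of the classical lexicographic order $<_{\rm lex}$ taken over $\langle X^{\mathrm{op}}\rangle$; that is, for all $u,v\in\langle X\rangle$,
\[
u<_{\lex}v \iff v<_{\rm lex}u \quad\text{over } \langle X^{\mathrm{op}}\rangle .
\]
This is proved by a short case analysis on whether one of $u,v$ is a prefix of the other. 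In the prefix case, both the clause ``$v$ is a proper prefix of $u$'' defining $<_{\lex}$ and the rule ``the shorter word is smaller'' defining $<_{\rm lex}$ are independent of the order on $X$ and are evidently opposite to one another; in the remaining case both orders are decided by the single letter at the first position where $u$ and $v$ differ, and passing from $X$ to $X^{\mathrm{op}}$ reverses exactly that comparison.

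From this dictionary the relevant notions match up. Since condition (2) of Proposition \ref{Proposition-character-Lyndon} characterizes a Lyndon word as one that is $<_{\lex}$-larger than each of its proper suffixes, while the classical definition asks for a word $<_{\rm lex}$-smaller than each of its proper suffixes, the displayed equivalence shows that the set $\L=\L(X)$ of Lyndon words on $(X,<)$ coincides with the set of classical Lyndon words over $X^{\mathrm{op}}$. Likewise the pseudo-lex-\emph{largest} proper suffix $u_R$ used to define $\sh(u)$ is the classical-lex-\emph{smallest} proper suffix over $X^{\mathrm{op}}$, so the Shirshov factorization $\sh(u)=(u_L,u_R)$ agrees with the standard (right) factorization of \cite{Lo} read over $X^{\mathrm{op}}$, and the Lyndon decomposition in (L4) is the classical one read over $X^{\mathrm{op}}$, with each $\leq_{\lex}$ turning into the corresponding $\leq_{\rm lex}$.

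It then remains to transcribe each cited statement through this dictionary, systematically replacing $X$ by $X^{\mathrm{op}}$ and reversing every comparison of words: (L1) and (L5) come from \cite[Lemma 4.3(3), Lemma 4.5]{GF}, while (L2)--(L4) come from \cite[Proposition 5.1.3, Proposition 5.1.4, Theorem 5.1.5]{Lo}, and the inequalities $u>_{\lex}v$, $u_L>_{\lex}u_R$, $u_R\leq_{\lex}v$ appearing in our statements are precisely the reversals of the hypotheses and conclusions there. Finally (L6) and (L7) are \cite[Lemma 4, Lemma 5]{Kh}, which are already phrased with the pseudo-lexicographic order (Kharchenko's ``standard words''), so they transfer verbatim with no change. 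I expect the only genuine work to lie in the bookkeeping for the middle group: checking that after the double reversal each translated inequality lands \emph{exactly} on the asserted form, in particular the Shirshov-factorization criterion of (L3) and the parenthetical remark in (L1) that $u>_{\lex}v$ holds automatically once $w_2\neq1$. These are the places where a careless sign could slip in, so I would validate them against small instances such as $\sh(x_2^2x_1x_2x_1)=(x_2^2x_1,x_2x_1)$.
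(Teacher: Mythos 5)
Your proposal is correct and follows essentially the same route as the paper, which offers no proof of this proposition beyond the citations and the preceding remark that the statements from \cite{GF,Lo} "need to be changed accordingly" for the pseudo-lexicographic order; your opposite-alphabet dictionary ($u<_{\lex}v$ over $(X,<)$ iff $v<_{\rm lex}u$ over $X^{\mathrm{op}}$, which I checked is correct since $\prec$ reverses under reversing the alphabet order while the prefix clauses of the two orders are already opposite) is precisely the rigorous formalization of that adjustment, and (L6)--(L7) indeed transfer verbatim from \cite{Kh}.
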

	
\begin{remark}
Due to Proposition \ref{fact-Lyndon} (L1, L2), one may obtain every Lyndon word by starting with $X$ and concatenating inductively each pair of Lyndon words $v,w$ with $v>_{\lex}w$.
\end{remark}

\begin{lemma}\label{lex-order-2}
Let $u,\, v \in \langle X \rangle_+$  with $v$ Lyndon. The following statements are equivalent:
	\begin{enumerate}
		\item $u\prec v^n$ for some $n\geq 1$.
		\item $u<_{\lex} v^n$ for all $n\geq 1$.
		\item The first Lyndon atom of $u$ is $<_{\lex} v$.
	\end{enumerate} 
\end{lemma}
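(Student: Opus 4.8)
The plan is to prove the cyclic chain of implications $(3)\Rightarrow(2)\Rightarrow(1)\Rightarrow(3)$. Throughout I would lean on three elementary facts. First, the Lyndon decomposition of $v^n$ is simply $v^n=\underbrace{v\cdots v}_{n}$: this is immediate from (L4) of Proposition~\ref{fact-Lyndon}, since the $n$ factors are equal Lyndon words and hence trivially nondecreasing. Second, $\prec$ is contained in the total order $<_{\lex}$, so that $w>_{\lex}w'$ rules out $w\prec w'$. Third, if one of two words is a prefix of the other (equality included), then they are $\prec$-incomparable, because $\prec$ demands a position, present in both words, at which their letters genuinely differ.

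The two easy implications go as follows. For $(3)\Rightarrow(2)$, write $u=u_1\cdots u_r$ in Lyndon decomposition, so that the hypothesis reads $u_1<_{\lex}v$. Comparing $u$ with $v^n$ by means of (L7), the clause with $l=1$ applies at once: there is no constraint for indices $i<1$, and the first atoms satisfy $u_1<_{\lex}v$, whence $u<_{\lex}v^n$; as $n\ge1$ is arbitrary, (2) follows. For $(2)\Rightarrow(1)$, choose any $n$ with $n|v|>|u|$. Then $v^n$ is strictly longer than $u$, so $v^n$ cannot be a proper prefix of $u$; consequently the relation $u<_{\lex}v^n$ granted by (2) can hold only through the second clause in the definition of $<_{\lex}$, that is $u\prec v^n$, which is exactly (1).

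The substantive step is $(1)\Rightarrow(3)$, which I would establish in contrapositive form: assuming $u_1\geq_{\lex}v$, I show $u\not\prec v^n$ for every $n\ge1$. Let $j\ge0$ be the length of the initial run of atoms of $u$ equal to $v$, so that either $u=v^{j}$ is a power of $v$, or $u=v^{j}w$ with the first atom of $w$ strictly $>_{\lex}v$ by maximality of $j$ (the subcase $u_1>_{\lex}v$ being $j=0$). In the power case, $u$ and any $v^n$ are prefix-comparable, hence $\prec$-incomparable. In the remaining case, for $n\le j$ the word $v^n$ is a proper prefix of $u$, so again $u\not\prec v^n$; while for $n>j$ both $u$ and $v^n$ have their first $j$ atoms equal to $v$ and differ at the $(j+1)$-st, where $u$ carries an atom $>_{\lex}v$, so (L7) yields $v^n<_{\lex}u$ and once more $u\not\prec v^n$. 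This exhausts all $n$. The main obstacle is precisely the borderline configuration $u_1=v$ within this last step: here $u$ and $v^n$ share a nontrivial block of leading atoms, and the idiosyncrasy of the pseudo-lexicographic order—that a proper prefix counts as the \emph{larger} word—must be handled carefully. One has to keep apart the regime in which $v^n$ sits inside the common block, where the two words are merely prefix-related and thus $\prec$-incomparable, from the regime in which the Lyndon decompositions have already diverged at an atom, where (L7) delivers a genuine $<_{\lex}$-comparison; conflating prefixhood with letterwise difference is the one trap to avoid, and the rest is routine bookkeeping with (L4) and (L7).
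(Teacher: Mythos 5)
Your proof is correct and follows essentially the same route as the paper's: both arguments rest on the Lyndon decomposition (L4), the atom-by-atom comparison (L7), and the observation that prefix-comparable words are $\prec$-incomparable, with (2)$\Rightarrow$(1) obtained by taking $n$ large enough that $v^n$ cannot be a prefix of $u$. The only difference is organizational — you prove (1)$\Rightarrow$(3) in contrapositive form with an explicit case split on the initial run of atoms equal to $v$ (including the power case $u=v^j$), whereas the paper argues directly and derives a contradiction from $u_1=v$; the mathematical content is identical.
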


\begin{proof}
Let $u= u_1\cdots u_r$ be the Lyndon decomposition of $u$. The implication (3) $\Rightarrow$ (2) is a direct consequence of Proposition \ref{fact-Lyndon} (L7). Note that  $v^n$ is not a prefix of $u$ for $n\gg 0$, the  implication (2) $\Rightarrow$ (1) follows readily. Next we show  (1) $\Rightarrow$ (3). Assume $u\prec v^n$ for some $n\geq 1$. By Proposition  \ref{fact-Lyndon} (L7), one has $u_1\leq_{\lex} v$. If $u_1=v$, then $v=u_1=\cdots =u_i <_{\lex} u_{i+1}$ for some $i<r$.
Note that $v^n$ is not a prefix of $u$, it follows that $u_{i+1}\leq_{\lex} v$ as well by Proposition \ref{fact-Lyndon} (L7), which is absurd.
\end{proof}

%\begin{lemma}(\cite[Lemma 5]{Kh})\label{lex-order-Lyndon}Let $u=u_1\cdots u_m$ and $v=v_1\cdots v_n$ be two nonempty words in Lyndon decomposition. Then $u<_{\lex} v$ if and only if either $n< m$ and $u_i= v_i$ for $i\leq n$, or there is an integer $l \leq \min\{m, n\}$ such that $u_i=v_i$ for $i< l$ but $u_l <_{\lex } v_l$. \hfill $\Box$\end{lemma}

%\subsection{Closed sets of Lyndon words}\hfill

\section{Braided calculus on free algebras} 
In this section, we construct a system of PBW generators for algebras that satisfy certain conditions by the combinatorial method based on Lyndon words and braided commutators.

Throughout, let $X$ be a well-ordered alphabet. The free algebra on $X$ is denoted by $k\langle X\rangle$. It has $\langle X\rangle$ as a linear basis and its elements are called (noncommutative) polynomials on $X$. The subspace of $k\langle X\rangle$ spanned by  a subset $U \subseteq \langle X\rangle$ is denoted by $kU$. For each word $u$ on $X$,
\begin{itemize}
%\item let $\langle X\rangle_{(n)}$ be the set of words of length $n$;
\item let $\langle X\rangle_u$ be the set of words having the same number of occurrences of each letter as $u$;
\item let $\langle X\rangle^{\prec u}$ (resp. $\langle X\rangle^{\preceq u}$) be the set of words that $\prec u$ (resp. $\preceq u$);
\item let $\langle X\rangle^{\vartriangleleft u}$ (reps. $\langle X\rangle^{\trianglelefteq u}$) be the set of words with Lyndon atoms all $<_{\lex} u$ (resp. $\leq_{\lex} u$).
\end{itemize}
We write $\langle X\rangle^{\prec u}_v:= \langle X\rangle^{\prec u} \cap \langle X\rangle_v$ for words $u,v\in \langle X\rangle$. The notation  $\langle X\rangle^{\vartriangleleft u}_v$, $\langle X\rangle^{\preceq u}_v$  and $\langle X\rangle^{\trianglelefteq u}_v$ are defined similarly. These subsets of $\langle X\rangle$ will be useful in the sequel.

\begin{definition}
	Let $\tau$ be a braiding on an algebra $A$. The \emph{$\tau$-commutator} of $x,y\in A$ is the element
	$$[x,y]_\tau:= (\mu_A-\mu_A\circ \tau) (x\otimes y).$$ When $\tau$ is the flipping map of $A\otimes A$, we simply write $[x,y]:=[x,y]_\tau=xy-yx$.
\end{definition}

Let $\tau: kX\otimes kX \to kX\otimes kX$ be a braiding on $kX$. It  extends uniquely to a braiding on $k\langle X\rangle$, which is also denoted by $\tau$, so that $(k\langle X\rangle, \tau)$ is a braided algebra.

The \emph{$\tau$-bracketing} on $k\langle X\rangle$ is the linear endomorphism $[-]_\tau: k\langle X\rangle \to k\langle X\rangle$ defined as follows.  First set $[1]_\tau =1$ and  $[x]_\tau:=x$ for  $x\in X$; and then for  words $u$ of length $\geq 2$, inductively  set
\begin{eqnarray*}
		[u]_\tau=
		\left\{
		\begin{array}{ll}
			~ [[u_L]_\tau, [u_R]_\tau]_\tau, & u \text{ is Lyndon},  \\
			~ [u_L]_\tau  [u_R]_\tau, &  u \text{ is not Lyndon};
		\end{array}
		\right.
\end{eqnarray*}
finally extend it by linearity to all polynomials on $X$.
Note that $[u_1u_2\cdots u_n]_\tau = [u_1]_\tau[u_2]_\tau\cdots [u_n]_\tau$ for any nondecreasing sequence of Lyndon words $u_1\leq_{\lex}u_2\leq_{\lex} \cdots \leq_{\lex} u_n$. 

\begin{remark}
We  simply write $[-] :=[-]_\tau$ when  $\tau$ is the flipping map of $k X\otimes kX$. It is well-known that $[\mathbb{L}]:= \{~ [u]~|~ u\in \mathbb{L}~\}$ is a basis of the free Lie algebra on $X$. Thereof, by the classical PBW theorem on the universal enveloping algebra of Lie algebras,  $[\langle X\rangle]$ is a basis of $k\langle X\rangle$.
\end{remark}

Let $\rho$ be an $X$-diagonal braiding on $kX$. Then for each pair of words $u, v\in \langle X\rangle$, there is a unique nonzero scalar $\rho_{u,v}\in k$ such that
$\rho(u\otimes v) = \rho_{u,v}  v\otimes u.$
These scalars bear the identities that $\rho_{1,u} =\rho_{u,1} =1$, $\rho_{u,vw} = \rho_{u,v} ~\rho_{u,w} $ and $\rho_{uv,w} =\rho_{u,w}~ \rho_{v,w}$ for any words $u,v,w\in \langle X\rangle$. So
$$
\rho(f\otimes g) = \rho_{u,v} g\otimes f
$$
for any words $u,v \in \langle X\rangle$ and any polynomials $f\in k\langle X\rangle_u$, $g\in k\langle X\rangle_v$. Moreover, the $\rho$-commutator satisfies two ``braided'' derivation equations and a ``braided'' Jacobi identity  as follows:
\begin{eqnarray*}
[f,gh]_\rho &=& [f,g]_\rho h  + \rho_{u,v} g[f,h]_\rho,\\ 
~ [fg,h]_\rho &=& f[g,h]_\rho + \rho_{v,w} [f,h]_\rho g,\\ 
~  [[f,g]_\rho, h]_\rho&=&  [f,[g, h]_\rho]_\rho - \rho_{u,v} g[f,h]_\rho +\rho_{v,w}[f,h]_\rho g,
\end{eqnarray*}
for any words $u,v,w \in \langle X\rangle$ and any polynomials $f\in k\langle X\rangle_u, g\in k\langle X\rangle_v, h\in k\langle X\rangle_w$.

\begin{lemma}\label{bracketing-diagonal}
	Let $\rho$ be an $X$-diagonal braiding on $kX$. Then for each word $u\in \langle X\rangle$,  $$[u]_\rho\in u +k\langle X\rangle_u^{\prec u}.$$
Moreover, the set $[\langle X\rangle]_\rho := \{~ [u]_\rho~|~ u\in \langle X\rangle~\}$ is a basis of $k\langle X\rangle$.
\end{lemma}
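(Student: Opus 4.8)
The plan is to prove the first assertion by induction on the length $|u|$, exploiting the recursive definition of the $\rho$-bracketing together with the fact that both the $\rho$-commutator and plain multiplication preserve the letter content of words. Indeed, for content-homogeneous polynomials $f\in k\langle X\rangle_v$ and $g\in k\langle X\rangle_w$ one has $[f,g]_\rho = fg-\rho_{v,w}\,gf$, so both $fg$ and $[f,g]_\rho$ lie in $k\langle X\rangle_{vw}$. A straightforward induction on $|u|$ thus shows $[u]_\rho\in k\langle X\rangle_u$, and the entire argument takes place inside the \emph{finite} set $\langle X\rangle_u$ of rearrangements of $u$. What I actually want to prove is that, with respect to the order $\prec$ restricted to $\langle X\rangle_u$, the word $u$ is the unique $\prec$-leading word of $[u]_\rho$ and occurs with coefficient $1$, i.e. $[u]_\rho-u\in k\langle X\rangle_u^{\prec u}$.

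For the inductive step I would write $\sh(u)=(u_L,u_R)$, so that $u=u_Lu_R$ with $|u_L|,|u_R|<|u|$, and apply the induction hypothesis to obtain $[u_L]_\rho=u_L+P$ and $[u_R]_\rho=u_R+Q$ with $P\in k\langle X\rangle_{u_L}^{\prec u_L}$ and $Q\in k\langle X\rangle_{u_R}^{\prec u_R}$. If $u$ is \emph{not} Lyndon, then $[u]_\rho=[u_L]_\rho[u_R]_\rho=u_Lu_R+(u_LQ+Pu_R+PQ)$; using the compatibility $a\prec b\Rightarrow w_1aw_2\prec w_1bw_3$ displayed just after the definition of $\prec$, each of the three correction terms is a combination of words $\prec u_Lu_R=u$ lying in $\langle X\rangle_u$, which is exactly what is needed. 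If $u$ \emph{is} Lyndon, then $[u]_\rho=[[u_L]_\rho,[u_R]_\rho]_\rho=[u_L]_\rho[u_R]_\rho-\rho_{u_L,u_R}\,[u_R]_\rho[u_L]_\rho$; expanding, the first product contributes $u$ plus terms $\prec u$ exactly as above, while every one of the four terms of the second product is $\preceq u_Ru_L$ by the same compatibility.

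The crux of the Lyndon case, and the step I expect to require the most care, is the strict comparison $u_Ru_L\prec u$. This is where being Lyndon enters: by the characterization in Proposition \ref{Proposition-character-Lyndon}(1) one has $u>_{\lex}u_Ru_L$, and since $u_Ru_L$ and $u=u_Lu_R$ have the same length, neither can be a proper prefix of the other, so $u>_{\lex}u_Ru_L$ upgrades to $u_Ru_L\prec u$ by the very definition of $<_{\lex}$. Combined with the compatibility of $\prec$, this shows that all four terms of the second product satisfy $\preceq u_Ru_L\prec u$, disposing of the remaining cross terms $\rho_{u_L,u_R}u_RP$, $\rho_{u_L,u_R}Qu_L$ and $\rho_{u_L,u_R}QP$ and completing the induction.

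For the basis statement I would argue by unitriangularity. Since $k\langle X\rangle=\bigoplus_v k\langle X\rangle_v$ decomposes as a direct sum over the content classes, it suffices to show that $\{\,[u]_\rho\mid u\in\langle X\rangle_v\,\}$ is a basis of $k\langle X\rangle_v$ for each $v$. As $\langle X\rangle_v$ is finite, I would fix any linear extension of the partial order $\prec$ on it; by the first assertion the transition matrix from the word basis $\{\,u\mid u\in\langle X\rangle_v\,\}$ to $\{\,[u]_\rho\mid u\in\langle X\rangle_v\,\}$ is then upper unitriangular, hence invertible. Taking the union over all content classes yields that $[\langle X\rangle]_\rho$ is a basis of $k\langle X\rangle$, as claimed.
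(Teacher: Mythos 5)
Your proof is correct and is exactly the ``easy induction'' that the paper's proof alludes to (the paper otherwise just cites Kharchenko's Lemma 5 together with Lemma \ref{lex-order-1}): the recursive Shirshov decomposition, the compatibility of $\prec$ with concatenation, the key strict inequality $u_Ru_L\prec u$ for Lyndon $u$ obtained from Proposition \ref{Proposition-character-Lyndon}(1), and unitriangularity on the finite content classes $\langle X\rangle_v$ for the basis claim. No gaps; the only cosmetic difference is that you induct on $|u|$ rather than on $\prec$ directly.
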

\begin{proof}
The first statement is by  \cite[Lemma 5]{Kh} and Lemma \ref{lex-order-1}; and the second one is by \cite[Theorem 1]{Kh}. One may obtain them directly by an easy induction on words with respect to $\prec$.
\end{proof}

\begin{lemma}\label{bracketing-diagonal-more}
Let $\rho$ be an $X$-diagonal braiding on $kX$. Then 
\begin{enumerate}
	\item $[k\langle X\rangle^{\trianglelefteq u}]_\rho$ and $[k\langle X\rangle^{\vartriangleleft u}]_\rho$ are subalgebras of $k\langle X\rangle$ for each word $u\in \langle X\rangle$.
	\item $[[u]_\rho, [v]_\rho]_\rho\in  [k\langle X\rangle_{uv}^{\trianglelefteq uv}]_\rho$ for each pair of  Lyndon words $u>_{\lex}v$.
	\item $[[u]_\rho, [k\langle X\rangle^{\trianglelefteq v}]_\rho]_\rho \subseteq [k\langle X\rangle^{\trianglelefteq uv}]_\rho$ for each pair of  Lyndon words $u>_{\lex}v$.
	\item $[[u]_\rho, [k\langle X\rangle^{\vartriangleleft v}]_\rho]_\rho \subseteq [k\langle X\rangle^{\vartriangleleft uv}]_\rho$ for each pair of  Lyndon words $u>_{\lex}v$.
\end{enumerate}
\end{lemma}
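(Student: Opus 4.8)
Since $\rho$ is $X$-diagonal we are in Kharchenko's diagonal setting, and I read the four assertions as the closure properties that make the super-letters $[w]_\rho$ (with $w$ Lyndon) behave like PBW generators. The plan is to run one induction fuelled by the two braided derivation equations and the braided Jacobi identity displayed above, steered by the Lyndon combinatorics of Proposition \ref{fact-Lyndon} (L1)--(L3) and by the triangularity $[w]_\rho\in w+k\langle X\rangle^{\prec w}_w$ of Lemma \ref{bracketing-diagonal}. Everything in sight is multihomogeneous, so the refinement ``$\in k\langle X\rangle_{uv}$'' in (2) is automatic and only the Lyndon atoms of the words occurring in the $[\,\cdot\,]_\rho$-expansions need to be controlled. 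Abbreviating $S_w:=[k\langle X\rangle^{\trianglelefteq w}]_\rho$ and $S_w^{\circ}:=[k\langle X\rangle^{\vartriangleleft w}]_\rho$, I record the monotonicity $S_{wz}\subseteq S_w$, valid because $w$ is a proper prefix of $wz$ whence $wz<_{\lex}w$, which will be used constantly.

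I treat (2) as the engine. When $\sh(uv)=(u,v)$ --- by Proposition \ref{fact-Lyndon} (L3) exactly when $u$ is a letter or $u_R\leq_{\lex}v$, which covers the base case $|u|=1$ --- the definition of the $\rho$-bracketing gives $[[u]_\rho,[v]_\rho]_\rho=[uv]_\rho$, and $uv$ is Lyndon by (L1), so (2) holds on the nose. The real case is $u_R>_{\lex}v$: here I expand $[u]_\rho=[[u_L]_\rho,[u_R]_\rho]_\rho$ and apply the braided Jacobi identity with $f=[u_L]_\rho$, $g=[u_R]_\rho$, $h=[v]_\rho$, obtaining
\[
[[u]_\rho,[v]_\rho]_\rho=[[u_L]_\rho,[[u_R]_\rho,[v]_\rho]_\rho]_\rho-\rho_{u_L,u_R}\,[u_R]_\rho[[u_L]_\rho,[v]_\rho]_\rho+\rho_{u_R,v}\,[[u_L]_\rho,[v]_\rho]_\rho[u_R]_\rho .
\]
The inner brackets $[[u_R]_\rho,[v]_\rho]_\rho$ and $[[u_L]_\rho,[v]_\rho]_\rho$ fall under the inductive hypothesis for (2), since $u_L>_{\lex}u_R>_{\lex}v$. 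The combinatorial heart is to check, via (L1)--(L2), that $u_Rv$ is Lyndon, that $u_L>_{\lex}u_Rv$ with $u_Lu_Rv=uv$, and that $u_R\leq_{\lex}uv$ and $u_Lv\leq_{\lex}uv$; granting these, the first term is contracted by (3) into $S_{uv}$, while the last two are one-sided products of the super-letter $[u_R]_\rho$ against elements of $S_{u_Lv}$, which land in $S_{uv}$ after re-sorting by (1).

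Assertion (1) is the PBW straightening: I claim the $[\,\cdot\,]_\rho$-expansion of any product $[v_1]_\rho\cdots[v_r]_\rho$ of super-letters with all $v_i\leq_{\lex}u$ involves only words with atoms $\leq_{\lex}u$. Applying $[a]_\rho[b]_\rho=\rho_{a,b}\,[b]_\rho[a]_\rho+[[a]_\rho,[b]_\rho]_\rho$ to an adjacent inverted pair $a>_{\lex}b$, the sorted term has a strictly $\prec$-smaller leading word, and by (2) the commutator equals $[ab]_\rho$ plus $\prec$-lower terms, all with atoms $\leq_{\lex}ab<_{\lex}a\leq_{\lex}u$; the surviving top monomial $[ab]_\rho$ keeps the same leading word but drops one factor. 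Thus the pair (leading word in the well-founded $\prec$, number of factors) strictly decreases, the rewriting terminates in sorted monomials --- i.e. basis elements of $S_u$ --- and (1) follows. Assertions (3) and (4) are then obtained by expanding a general element of $[k\langle X\rangle^{\trianglelefteq v}]_\rho$ (resp. $[k\langle X\rangle^{\vartriangleleft v}]_\rho$) as a combination of sorted super-letter monomials in atoms $\leq_{\lex}v$ (resp. $<_{\lex}v$), distributing $[[u]_\rho,-]_\rho$ over products by the first braided derivation equation into single commutators $[[u]_\rho,[v_i]_\rho]_\rho$ governed by (2), and re-sorting by (1); the bounds $v_i\leq_{\lex}v<_{\lex}uv$ and $uv_i\leq_{\lex}uv$, the latter from $v_i\leq_{\lex}v$, keep every resulting atom $\leq_{\lex}uv$.

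The genuine difficulty is not any one identity but the non-circularity of the whole scheme: (2) calls (3), (3) calls (1), and (1) calls (2), while both the Jacobi recursion and the straightening preserve multidegree and need not lower the $\prec$-leading word, and the super-letters manufactured along the way can be longer than $u$. Hence a naive induction on $|u|$ is circular, and the crux is to exhibit a single well-founded measure --- I expect the lexicographic tuple consisting of the multidegree in the well-ordered monoid of degrees, then the $\prec$-leading word (which restricts to a total well-order within a fixed multidegree), then the length of the larger Lyndon word, then the factor count --- against which every recursive call in (1)--(4) strictly decreases. Verifying this, together with the package of Lyndon inequalities invoked above (that $u_Rv$ is Lyndon, that $u_L>_{\lex}u_Rv$, and that all the concatenations that arise have atoms $\leq_{\lex}uv$), which hinges on the careful distinction between $\prec$ and $<_{\lex}$ in Proposition \ref{fact-Lyndon}, is where I expect essentially all the work to lie.
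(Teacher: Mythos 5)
Your derivation of parts (3) and (4) from parts (1) and (2) via the braided derivation equation is exactly what the paper does, with the same bookkeeping ($w_i\leq_{\lex}v<_{\lex}uv$ and $uw_i\leq_{\lex}uv$, using left-compatibility of $<_{\lex}$ with concatenation). The gap is in parts (1) and (2): the paper does not reprove these at all, but imports them from Kharchenko (\cite[Lemmas 6 and 7]{Kh}), whereas you set up a mutual recursion in which (2) calls (3), (3) calls (1), and (1) calls (2), and then explicitly leave its termination unverified ("where I expect essentially all the work to lie"). That is not a deferral of routine checking --- it is the entire mathematical content of (1) and (2) --- and the measure you propose (multidegree, then $\prec$-leading word, then the length of the larger Lyndon word, then factor count) does not in fact decrease along the recursion. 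Concretely, take $X=\{x>y>z\}$, $u=xy$ (so $u_L=x$, $u_R=y$) and $v=yz^5$. Then $u>_{\lex}v$ and $u_R>_{\lex}v$, so your Jacobi step applies, and its first term is $[[x]_\rho,[[y]_\rho,[yz^5]_\rho]_\rho]_\rho=[[x]_\rho,[y^2z^5]_\rho]_\rho$ (since $\sh(y^2z^5)=(y,yz^5)$, the inner bracket is literally the single super-letter $[y^2z^5]_\rho$). This sub-call has the same multidegree and the same $\prec$-leading word $xy^2z^5$ as the original $[[xy]_\rho,[yz^5]_\rho]_\rho$, while the longer of the two Lyndon words has grown from $6$ letters to $7$. (This particular call happens to terminate because $x$ is a letter, but it shows your tuple is not a decreasing measure as stated; the alternative reading "length of the $<_{\lex}$-larger word" still has to be checked against the calls generated inside the straightening step (1), which you also do not do.)

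If you want a self-contained argument you should follow Kharchenko's actual scheme: prove (2) by induction on the length $|uv|$, using the fact that every super-letter $[w]_\rho$ occurring in the inner bracket $[[u_R]_\rho,[v]_\rho]_\rho$ satisfies $v<_{\lex}w\leq_{\lex}u_Rv<_{\lex}u_R<_{\lex}u_L$, so that the residual outer pairs are controlled by a secondary induction on $|u_L|$ rather than on your four-component tuple. Alternatively, simply cite \cite[Lemmas 6 and 7]{Kh} for (1) and (2), as the paper does, and keep your (correct) deduction of (3) and (4).
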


\begin{proof}
Part (1) is \cite[Lemma 7]{Kh}; Part  (2) is by  \cite[Lemma 6, Lemma 7]{Kh};  Next we  show Part (3). Let $w_1, \ldots, w_r$ be a finite sequence of Lyndon words that $\leq_{\lex} v$. One has
	\[
	[[u]_\rho, [w_1]_\rho\cdots [w_r]_\rho]_\rho \in \sum_{i=1}^{r} k\cdot \Big( [w_1]_\rho\cdots [w_{i-1}]_\rho \cdot  [[u]_\rho, [w_i]_\rho ]_\rho \cdot [w_{i+1}]_\rho \cdots [w_r]_\rho \Big)
	\]
	by the braided derivation equation. Note that $w_i\leq_{\lex} v <_{\lex} uv$. In addition,  $$[[u]_\rho, [w_i]_\rho]_\rho \in [k\langle X\rangle^{\trianglelefteq uw_i}]_\rho \subseteq [k\langle X\rangle^{\trianglelefteq uv}]_\rho$$ by Part (2) and the observation that $uw_i\leq_{\lex} uv$. The result then follows immediately by Part (1). One may obtain Part (4) in a similar way as that of Part (3).
\end{proof}

%\begin{definition}A partial order $<$ on an abelian monoid $\Gamma$ is said to be \emph{admissible} if $$\gamma_1<\gamma_2\Longrightarrow \gamma+\gamma_1<\gamma+\gamma_2, \quad \gamma_1,\, \gamma_2,\, \gamma\in \Gamma.$$ A \emph{well-ordered abelian monoid} is an abelian monoid  with an admissible well order on it. Note that the  neutral element $0$ of a well-ordered abelian monoid  is clearly the smallest element.\end{definition}

%In the remaining of this section, we fix a \emph{well-ordered abelian monoid} $(\Gamma,<)$. It means that $\Gamma$ is an abelian monoid equipped with a well order  $<$  which is \emph{admissible} in the following sense: The neutral element $0$ is clearly the smallest element of $\Gamma$. We assume that $\Gamma$ is nontrivial and  the free algebra $k\langle X\rangle=\bigoplus_{\gamma\in \Gamma} k\langle X\rangle_\gamma$ is a $\Gamma$-graded algebra with each letter homogeneous of positive degree.

In the remaining of this section, we fix a nontrivial well-ordered abelian monoid $(\Gamma,<)$, and assume that  the free algebra $k\langle X\rangle=\bigoplus_{\gamma\in \Gamma} k\langle X\rangle_\gamma$ is a $\Gamma$-graded algebra with each letter homogeneous of positive degree.  The degree of a homogeneous polynomial $f$ is denoted by $\deg(f)$.  In addition, we write $\langle X\rangle_\gamma$ for the set of words of degree $\gamma$. Note that $k\langle X\rangle_+ =\bigoplus_{\gamma\neq 0} k\langle X\rangle_\gamma$.

The \emph{graded lex order} on $\langle X\rangle$, denoted by $<_{\glex}$, is defined as follows. For words $u, \, v$, 
\begin{eqnarray*}
	\label{definition-deglex}
	u <_{\rm \glex} v\,\, \Longleftrightarrow \,\, \left\{
	\begin{array}{llll}
		\deg(u)<\deg(v), \quad \text{or} && \\
		\deg(u) =\deg(v)\, \text{ but } u\prec v.
	\end{array}\right.
\end{eqnarray*}
Clearly, it is a well order that is compatible with concatenation of words from both sides. We write $\lw(f)$ for the largest word that occurs in a nonzero polynomial $f$   with respect to  $<_{\glex}$.

Let $I$ be an ideal of $k\langle X\rangle$. A word on $X$ is called \emph{$I$-reducible}  if it is the leading word of some nonzero polynomial in $I$. A word that is not $I$-reducible is called \emph{$I$-irreducible}. A  word is called \emph{$I$-restricted} if its Lyndon decomposition  is of the form $w_1^{r_1}\cdots w_m^{r_m}$ with $w_1<_{\lex} \cdots <_{\lex} w_m \in \mathbb{L}$ and $w_{1}^{r_1}, \cdots, w_m^{r_m}$ all $I$-irreducible. In addition, an \emph{obstruction} of $I$ is an $I$-reducible word of which the proper factors are all $I$-irreducible. In the sequel, we will employ the following notations:
\begin{itemize}
	\item let $\B_I$ be the set of words with $I$-irreducible Lyndon atoms;
	\item let $\C_I$ be the set of $I$-restricted words;
	\item let $\D_I$ be the set of all $I$-irreducible words;
	\item let $\N_I$ be the set of $I$-irreducible Lyndon words;
	\item let $\OO_I$ be the set of obstructions of $I$.
\end{itemize}
Clearly, $\B_I \supseteq \C_I \supseteq \D_I$. By Proposition \ref{fact-Lyndon} (L4, L5), it is not hard to see that $\B_I=\D_I$ (resp. $\C_I=\D_I$) if and only if $\mathcal{O}_I$ consists of Lyndon words (resp. powers of Lyndon words).

%\begin{proposition}Let $I$ be an ideal of $k\langle X\rangle$. Then $\B_I=\D_I$ (resp. $\C_I=\D_I$) if and only if $\mathcal{O}_I$ consists of Lyndon words (resp. powers of Lyndon words) \end{proposition}\begin{proof}It follows readilly from Proposition \ref{fact-Lyndon} (L4, L5).\end{proof}

%\begin{remark}	Let $X=\{x_1,x_2,x_3\}$ and let $f=[x_2x_1]-x_3=x_2x_1-x_1x_2-x_3$. If $k\langle X\rangle$ is connected graded by $\deg(x_1)=\deg(x_2)=\deg(x_3) =1$ then $\lw(f) =x_2x_1$; but if $k\langle X\rangle$ is connected graded by $\deg(x_1)=\deg(x_2)=1$ and $\deg(x_3) =2$ then $\lw(f) =x_3$. So for a given ideal $I$, $\N_I$ may include $X$ with respect to one grading structure on $k\langle X\rangle$ but does not with respect to another.\end{remark}

\begin{lemma}\label{basis-quotient-algebra}
Let $I$ be an ideal of $k\langle X\rangle$ and $A:= k\langle X\rangle/I$. Let $\rho$ be an $X$-diagonal braiding on $kX$. Then for every index $\gamma\in \Gamma$, the set $\{~[w]_\rho+I~| ~ w\in \D_I, \, \deg(w)\leq \gamma~ \}$  is a basis of the subspace  $F_\gamma (A):=(k\langle X\rangle_{\leq \gamma} +I)/ I$ of   $A$. Moreover,  the set $\{~[w]_\rho+I~| ~ w\in \D_I~ \}$ is  a basis of $A$. In particular, $A$ is generated by the set $\{~[w]_\rho+I~| ~ w\in \N_I~ \}$.
\end{lemma}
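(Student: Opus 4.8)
The plan is to combine the triangularity of the $\rho$-bracketing with respect to $<_{\glex}$ with the standard Gr\"obner-type fact that $I$-irreducible words descend to a basis, transferring the latter to the bracketed family by a change-of-basis argument. First I would record the one structural input I need from Lemma \ref{bracketing-diagonal}: for each word $w$ one has $[w]_\rho\in w+k\langle X\rangle_w^{\prec w}$, and every word in $k\langle X\rangle_w^{\prec w}$ has the same letter content as $w$ (hence the same degree) while being $\prec w$, so it is $<_{\glex} w$. Thus $[w]_\rho$ is homogeneous of degree $\deg(w)$, with $\lw([w]_\rho)=w$ and leading coefficient $1$. This ``unitriangularity'' of $\{[u]_\rho\}_{u\in\langle X\rangle}$ against the plain word basis, with respect to the well order $<_{\glex}$, is the engine of the whole argument.

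For linear independence in $A$, suppose $f:=\sum_{w} c_w[w]_\rho\in I$ with finitely many nonzero $c_w$ indexed by $w\in\D_I$, not all zero. Since $\{[u]_\rho\}_u$ is a basis of $k\langle X\rangle$ (Lemma \ref{bracketing-diagonal}), $f\neq 0$. Let $w_0$ be the $<_{\glex}$-largest word with $c_{w_0}\neq 0$; since each $[w]_\rho$ involves only words $\leq_{\glex} w$, the word $w_0$ occurs in $f$ exactly with coefficient $c_{w_0}$ and no word $>_{\glex} w_0$ occurs, so $\lw(f)=w_0$. But $f\in I\setminus\{0\}$ forces $\lw(f)=w_0$ to be $I$-reducible, contradicting $w_0\in\D_I$. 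Hence all $c_w=0$. This argument is insensitive to the degree, so it also gives independence inside each $F_\gamma(A)$.

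For spanning I would proceed by Noetherian induction along the well order $<_{\glex}$, proving that every word $u$ satisfies $u+I\in\Span\{[w]_\rho+I\mid w\in\D_I,\ \deg(w)\leq \deg(u)\}$. If $u$ is $I$-reducible, write $u=\lw(f)$ for some $0\neq f\in I$ with leading coefficient $c$; then $u+I=(u-c^{-1}f)+I$ is a combination of words $<_{\glex} u$, all of degree $\leq\deg(u)$, to which the induction hypothesis applies. If $u\in\D_I$, then by triangularity $u+I=([u]_\rho+I)-(g+I)$ with $g\in k\langle X\rangle_u^{\prec u}$ a combination of words $<_{\glex} u$ of degree $\deg(u)$, and again the induction hypothesis handles $g$ while $[u]_\rho+I$ is already in the target span. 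Restricting to degrees $\leq\gamma$ yields spanning of $F_\gamma(A)=(k\langle X\rangle_{\leq\gamma}+I)/I$, and passing to the union over $\gamma$ gives spanning of $A$; combined with the previous paragraph, both sets are bases.

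Finally, for the generation statement, I would note that any factor of an $I$-irreducible word is again $I$-irreducible: if a factor $v$ of $u=avb$ equaled $\lw(g)$ for some $0\neq g\in I$, then $agb\in I$ has $\lw(agb)=a\lw(g)b=u$ by compatibility of $<_{\glex}$ with concatenation on both sides, making $u$ reducible. Hence for $w\in\D_I$ with Lyndon decomposition $w=w_1\cdots w_r$ (Proposition \ref{fact-Lyndon} (L4)) every atom $w_i$ lies in $\N_I$, and since $[w]_\rho=[w_1]_\rho\cdots[w_r]_\rho$, the element $[w]_\rho+I$ lies in the subalgebra generated by $\{[w]_\rho+I\mid w\in\N_I\}$; as the $[w]_\rho+I$ with $w\in\D_I$ span $A$, that subalgebra is all of $A$. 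The step to watch is spanning: when $X$ is infinite the pieces $F_\gamma(A)$ need not be finite-dimensional, so one cannot pass from independence to basis by counting dimensions, and the constructive Noetherian-induction argument of the third paragraph is exactly what circumvents this.
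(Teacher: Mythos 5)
Your proof is correct and follows essentially the same route as the paper's: unitriangularity of the $\rho$-bracketing with respect to $<_{\glex}$ gives linear independence via the leading-word argument, and spanning is obtained by induction on words along $<_{\glex}$, degree by degree. The only addition is your explicit verification (via two-sided compatibility of $<_{\glex}$ with concatenation) that Lyndon atoms of $I$-irreducible words lie in $\N_I$, a point the paper leaves as "clear"; this is a welcome but not essentially different elaboration.
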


\begin{proof}
We prove the first statement, the others are  clear.
Firstly, we show that these residue classes are linearly independent. Suppose not, then there exists a polynomial  $f=\sum_{i=1}^r \lambda_i [u_i]_\rho \in I$, with $u_i$ pairwise distinct $I$-irreducible words of degree $\leq \gamma$ and $\lambda_i\in k\backslash\{0\}$. We may assume that $u_1> _{\glex} \cdots >_{\glex} u_r$. By Lemma \ref{bracketing-diagonal}, the leading word of $f$ is $u_1$, which is impossible.
	
Now we show these residue classes span $F_\gamma A$.  It suffices to show they span the residue classes of all words of degree $\leq \gamma$. We show this by induction on words with respect to the graded lex  order. Clearly, it is true for the empty word, which is the smallest element with respect to $<_{\glex}$. Let $u$ be a nonempty word of degree $\leq \gamma$. If $u$ is $I$-reducible, then $u+I= f_u+I $ with $f_u$ a linear combination of words that $<_{\glex} u$; and if $u$ is $I$-irreducible, then $u+I= ([u]_\rho+I) + (f_u +I)$ with  $f_u:=u-[u]_\rho$, which  by Lemma \ref{bracketing-diagonal} is also a  linear combination of words that $<_{\glex} u$. So by the induction hypothesis,  $u+I$ is a linear combination  of  $\{~[w]_\rho+I~| ~ w\in \D_I, \, \deg(w)\leq \gamma~ \}$.
\end{proof}

\begin{definition}\label{definition-height}
Let $I$ be an ideal of $k\langle X\rangle$. For a Lyndon  word $u\in \mathbb{L}$, the \emph{height} of $u$ is defined by
	\[
	h_I(u):= \min\{ ~ n\geq 1 ~ | ~  u^n  ~ \text{is $I$-reducible} ~ \}.
	\]
By convention,
$h_I(u) :=\infty$ if there is no integer $n$ such that $u^n$ is $I$-reducible.  Note that $h_I(u)=1$ if  $u$ is $I$-reducible; and $h_I(u)\geq 2$ for every $I$-irreducible Lyndon word $u$. 
\end{definition}

\begin{proposition}\label{PBW-generator}
Let $I$ be an ideal of $k\langle X\rangle$ with $\C_I=\D_I$. Let $A:=k\langle X\rangle/ I$. Let $\rho$ be an $X$-diagonal braiding on $kX$ such that for any Lyndon word $u\in \mathbb{L}$ with finite height $n:=h_I(u)$, 
\begin{eqnarray*}\label{soft}
[u]^{n}_\rho \in [k\langle X\rangle^{\vartriangleleft u}_{\deg(u^{n})}]_\rho +k\langle X\rangle_{<\deg(u^{n})}+I.
\end{eqnarray*} 
Then $(\{[u]_\rho+I\}_{u\in \N_I}, h_I, <)$ is a system of PBW generators of   $A$ for each  total order  $<$ on $\N_I$.
\end{proposition}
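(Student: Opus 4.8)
The plan is to prove that $(\{[u]_\rho+I\}_{u\in \N_I}, h_I, <)$ is a system of PBW generators of $A$, which by Definition \ref{PBW-definition} (with $B=k$) amounts to showing that the set of ordered monomials
\[
\{~ [u_1]_\rho^{r_1}\cdots [u_m]_\rho^{r_m}+I ~|~ u_1<\cdots<u_m\in \N_I,~ r_i<h_I(u_i) ~\}
\]
is a $k$-basis of $A$. The natural strategy is to compare this candidate basis with the known basis of $A$ furnished by Lemma \ref{basis-quotient-algebra}, namely $\{~[w]_\rho+I~|~w\in \D_I~\}$, and to exploit the combinatorial bijection between the $I$-restricted words and the ordered monomials in $\N_I$. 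First I would observe that since $\C_I=\D_I$, the $I$-irreducible words are exactly the $I$-restricted words, so each $w\in \D_I$ has Lyndon decomposition of the form $w=u_1^{r_1}\cdots u_m^{r_m}$ with $u_1<_{\lex}\cdots<_{\lex}u_m\in \N_I$ and each $r_i<h_I(u_i)$ (by the very definition of height). This sets up a bijection between $\D_I$ and the index set of the candidate PBW monomials.

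The heart of the argument is a triangularity/leading-term analysis with respect to $<_{\glex}$. For an ordered monomial in the candidate basis with $u_1<_{\lex}\cdots<_{\lex}u_m$, I would compare the ordering $<$ chosen on $\N_I$ against the pseudo-lexicographic order; since the statement must hold for \emph{every} total order $<$, the key point is that the candidate set $B(\{[u]_\rho+I\},h_I,<)$ is the same set of residue classes regardless of $<$, because the braided commutator relations let one reorder factors modulo lower terms. Concretely, I would want to show that for any fixed $<$,
\[
[u_1]_\rho^{r_1}\cdots [u_m]_\rho^{r_m} \equiv [u_{\sigma(1)}]_\rho^{r_{\sigma(1)}}\cdots [u_{\sigma(m)}]_\rho^{r_{\sigma(m)}} \pmod{\text{lower terms and } I}
\]
for the permutation $\sigma$ sorting the factors into $<_{\lex}$-increasing order. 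This reordering is controlled by Lemma \ref{bracketing-diagonal-more}(2), which shows $[[u]_\rho,[v]_\rho]_\rho$ lands in $[k\langle X\rangle_{uv}^{\trianglelefteq uv}]_\rho$ for Lyndon $u>_{\lex}v$, so swapping two adjacent bracketed factors produces correction terms whose bracketed-word indices are strictly $\vartriangleleft$ the product. Thus it suffices to prove the claim for the canonically $<_{\lex}$-ordered monomials, i.e. precisely the residue classes $[w]_\rho+I$ for $w\in \C_I=\D_I$.

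The final step is to handle the height condition: I must verify that raising a generator to its full height does not produce anything new. For $u\in \N_I$ with $n=h_I(u)<\infty$, the word $u^n$ is $I$-reducible, hence $[u]_\rho^n=[u^n]_\rho$ is not part of the $\D_I$-basis; the hypothesis
\[
[u]^{n}_\rho \in [k\langle X\rangle^{\vartriangleleft u}_{\deg(u^{n})}]_\rho +k\langle X\rangle_{<\deg(u^{n})}+I
\]
is exactly what is needed to rewrite $[u]_\rho^n$ modulo $I$ in terms of bracketed words that are $\vartriangleleft u$ (hence built from strictly smaller Lyndon atoms) together with lower-degree terms, all of which can be expanded in the candidate basis by a Noetherian induction on $<_{\glex}$. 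I expect the main obstacle to be the bookkeeping in this induction: one must simultaneously track the degree (controlled by $<_{\glex}$) and the Lyndon-atom order $\vartriangleleft$ to guarantee that the rewriting terminates and produces a genuine triangular change of basis rather than a circular dependence. The combinatorial inputs (L4, L5, L7) guaranteeing that $\vartriangleleft u$ words expand into strictly smaller PBW monomials, combined with the filtration $F_\gamma(A)$ from Lemma \ref{basis-quotient-algebra}, should close this induction and establish both linear independence and spanning, completing the proof.
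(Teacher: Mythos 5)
Your plan matches the paper's proof in all essentials: the paper likewise compares the candidate PBW monomials with the $\D_I$-basis of Lemma \ref{basis-quotient-algebra} via the bijection with $I$-restricted words, reorders factors using Lemma \ref{bracketing-diagonal-more}(2) and reduces over-height powers using the displayed hypothesis, and closes the triangularity argument by a well-founded induction. The bookkeeping you flag as the main obstacle is exactly what the paper's Lemma \ref{rearragement} packages, using two well-founded orders $<_L$ and $<_R$ on finite sequences of Lyndon words to make the induction terminate.
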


Before we prove the above proposition, we need some preparations.

Let $\mathcal{P}$ be the set of all finite sequences of Lyndon words on $X$. Define partial orders $<_L$ and $<_R$  on $\mathcal{P}$ as follows. For $V=(v_1,\ldots, v_r)$ and $W= (w_1,\ldots, w_s)$ in $\mathcal{P}$, we write  $V<_L W$ (resp. $V<_R W) $ if and only if   $\sum_{i=1}^r \deg(v_i) = \sum_{j=1}^s\deg(w_j)$ but there is an integer $l\leq \min\{r, s\}$ such that $v_i =w_i$ for $i< l$ and $v_l<_{\lex} w_l$ (resp. $v_{r+1-i} =w_{s+1-i}$ for $i< l$ and $v_{r+1-l}<_{\lex} w_{s+1-l}$).  Clearly, $<_L$ and $<_R$ are both compatible with left and right concatenation of finite sequences. In addition,  they  both satisfy the descending chain condition, because the restriction of $<_{\lex}$ on the set of words of degree less than a fixed index is a well order (by a similar argument of \cite[Lemma 1.1]{LZ}). For an ideal $I$ of $k\langle X\rangle$, let $$\mathcal{P}_I:= \{~(w_1,\ldots, w_s) \in \mathcal{P}~|~ w_1\leq_{\lex} \cdots \leq_{\lex} w_s, \,\, w_1\cdots w_s\in \D_I ~ \}.$$ Note that the canonical map $\mathcal{P}_I \to \D_I$ given by $(w_1,\ldots, w_s) \mapsto w_1\cdots w_s$ is bijective.

\begin{lemma}\label{rearragement}
Assume the notations and conditions of Proposition \ref{PBW-generator}. Let  $(w_1,\ldots, w_s)\in \mathcal{P}$ be a finite sequence of Lyndon words on $X$ and let  $\gamma:= \sum_{i=1}^s\deg(w_i)$. Let  $i_1,\ldots, i_s$ be a permutation of $1,\ldots, s$ such that $w_{i_1} \leq_{\lex}  \cdots \leq_{\lex} w_{i_s}$.  
If $w_{i_1}\cdots w_{i_s} \in \D_I$, that is $(w_{i_1},\ldots, w_{i_s}) \in \mathcal{P}_I$, then
\begin{eqnarray*}
	[w_1]_\rho\cdots [w_s]_\rho  &\in & (\prod_{1\leq p<q\leq s \atop w_{i_q}<_{\lex}w_{i_p}} \rho_{w_{i_p},w_{i_q}}) \cdot [w_{i_1}]_\rho\cdots [w_{i_s}]_\rho  \\ && + \sum_{(u_1,\ldots, u_t) ~ \in ~ \mathcal{P}_I \atop  (u_1,\ldots, u_t) ~<_R ~ (w_{i_1},\ldots, w_{i_s})} k\cdot [u_1]_\rho \cdots [u_t]_\rho + k\langle X\rangle_{<\gamma} + I;
\end{eqnarray*}
otherwise, 	if $w_{i_1}\cdots w_{i_s} \not\in \D_I$, that is $(w_{i_1},\ldots, w_{i_s}) \not\in \mathcal{P}_I$,  then
	\begin{eqnarray*}
		[w_1]_\rho\cdots [w_s]_\rho  &\in &  \sum_{(u_1,\ldots, u_t) ~ \in ~ \mathcal{P}_I \atop  (u_1,\ldots, u_t) ~<_R ~ (w_{i_1},\ldots, w_{i_s})} k\cdot [u_1]_\rho \cdots [u_t]_\rho + k\langle X\rangle_{<\gamma} + I.
	\end{eqnarray*}
\end{lemma}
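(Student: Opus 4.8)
The plan is to put the product $[w_1]_\rho\cdots[w_s]_\rho$ into the asserted normal form by repeatedly swapping adjacent factors, and, whenever the sorted word turns out to be $I$-reducible, invoking the soft condition carried over from Proposition \ref{PBW-generator}. The elementary move is the braided commutator identity
$[u]_\rho[v]_\rho=\rho_{u,v}\,[v]_\rho[u]_\rho+[[u]_\rho,[v]_\rho]_\rho$
for Lyndon words $u,v$; when $u>_{\lex}v$, Lemma \ref{bracketing-diagonal-more}(2) places the correction $[[u]_\rho,[v]_\rho]_\rho$ in $[k\langle X\rangle^{\trianglelefteq uv}_{uv}]_\rho$, i.e. it is a $k$-combination of products $[p_1]_\rho\cdots[p_r]_\rho$ with $p_1\leq_{\lex}\cdots\leq_{\lex}p_r\leq_{\lex}uv$ of total degree $\deg(u)+\deg(v)$. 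Since $u$ is a proper prefix of $uv$ one has $uv<_{\lex}u$, so every correction atom is $<_{\lex}u$; this is the observation that drives termination.

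I would then run a double induction, everything being computed modulo $k\langle X\rangle_{<\gamma}+I$ so that the total degree $\gamma$ is preserved. The outer induction is on the sorted sequence $\bar w:=(w_{i_1},\dots,w_{i_s})$ with respect to $<_R$, which is legitimate since $<_R$ satisfies the descending chain condition on sequences of fixed degree $\gamma$, as recalled just before the lemma. The inner induction is on the number of inversions of $(w_1,\dots,w_s)$, that is, the number of pairs $a<b$ with $w_a>_{\lex}w_b$. The base of the whole induction is the case where $(w_1,\dots,w_s)$ is already sorted and $w_{i_1}\cdots w_{i_s}\in\D_I$: then the product is the standard $\mathcal{P}_I$-monomial $[w_{i_1}]_\rho\cdots[w_{i_s}]_\rho$ itself, there are no inversions, and the stated coefficient (the empty product) is $1$, so we are in Case A.

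For the inductive step, if $(w_1,\dots,w_s)$ is not sorted, pick an adjacent inversion $w_j>_{\lex}w_{j+1}$ and apply the swap. The main term $\rho_{w_j,w_{j+1}}\,[w_1]_\rho\cdots[w_{j+1}]_\rho[w_j]_\rho\cdots[w_s]_\rho$ has the same $\bar w$ but exactly one fewer inversion, so the inner hypothesis applies; the factor $\rho_{w_j,w_{j+1}}$ times the scalar of the swapped sequence is precisely the product of $\rho$'s over all inversions of the original, matching the claimed coefficient. The inserted correction is a combination of products whose Lyndon multiset is obtained from that of $\bar w$ by deleting one copy each of $w_j,w_{j+1}$ and inserting atoms all $<_{\lex}w_j$; reading sorted sequences from the right, the block of atoms $>_{\lex}w_j$ is untouched while the $w_j$-block strictly shrinks, whence each correction sorts to a sequence $<_R\bar w$ and is absorbed by the outer hypothesis into a $\mathcal{P}_I$-combination.

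The remaining, and I expect hardest, case is when $(w_1,\dots,w_s)$ is sorted but $w_{i_1}\cdots w_{i_s}\notin\D_I$. Here $\C_I=\D_I$ forces this $I$-reducible word to fail to be $I$-restricted, so some Lyndon word $p$ occurring among the $w_{i_l}$ appears with multiplicity $r\geq n:=h_I(p)<\infty$, and the sorted product contains the factor $[p]_\rho^{\,r}$. Writing $[p]_\rho^{\,r}=[p]_\rho^{\,r-n}[p]_\rho^{\,n}$ and substituting the soft condition $[p]_\rho^{\,n}\in[k\langle X\rangle^{\vartriangleleft p}_{\deg(p^n)}]_\rho+k\langle X\rangle_{<\deg(p^n)}+I$ replaces $n$ copies of $p$ by atoms strictly $<_{\lex}p$; the lower-degree and $I$ parts fall into their respective buckets, and by the same right-to-left multiset argument the surviving products sort to sequences $<_R\bar w$, which the outer hypothesis disposes of. The main obstacle throughout is exactly this bookkeeping: guaranteeing a strict $<_R$-decrease for every correction and every substitution term. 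Once the two elementary facts $uv<_{\lex}u$ and ``a deleted copy of the pivot lies to the right of every newly created atom'' are recorded, the $<_R$-decrease is forced and the double induction closes.
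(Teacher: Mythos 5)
Your proof is correct and follows essentially the same route as the paper's: rewrite by adjacent braided transpositions, use Lemma \ref{bracketing-diagonal-more}(2) to confine each commutator correction to products of Lyndon atoms $\leq_{\lex}w_jw_{j+1}<_{\lex}w_j$ (hence a strict $<_R$-drop after sorting), and invoke $\C_I=\D_I$ together with the height condition of Proposition \ref{PBW-generator} to resolve sorted sequences whose concatenation is $I$-reducible. The only cosmetic difference is the induction scheme — your double induction (outer on the sorted sequence under $<_R$, inner on the inversion count) versus the paper's single induction on the unsorted sequence with respect to $<_L$ — which changes nothing of substance.
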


\begin{proof}
	We proceed  by induction on $(w_1,\ldots, w_s)$ with respect to the well order $<_L$. First assume $(w_1,\cdots, w_s)$ is  nondecreasing.  If $w_1\cdots w_s\in \D_I$, there is nothing to prove.  Otherwise, assume $w_1\cdots w_s\not \in\D_I$. 
	Write $w_1\cdots w_s= v_1^{p_1}\cdots v_r^{p_r}$ with  $p_i\geq 1$, $v_i:=w_{p_1+\cdots +p_{i-1}+1}$ and $v_1<_{\lex} \cdots <_{\lex} v_r$. By the assumption, one may fix an integer $i$ such that $p_i\geq h_I(v_i)$, and then one has that 
	\begin{eqnarray*}
		[v_1]_\rho^{p_1}\cdots [v_r]_\rho^{p_r}
		&\in&  \sum_{(b_1,\ldots, b_t)}k\cdot [v_1]^{p_1}_\rho\cdots[v_{i-1}]^{p_{i-1}}_\rho[b_1]_\rho\cdots [b_t]_\rho [v_i]_\rho^{p_i - h_I(v_i)} [v_{i+1}]^{p_{i+1}}_\rho\cdots [v_r]_\rho^{p_r} \\ && +k\langle X\rangle_{<\gamma} + I,
	\end{eqnarray*}
where $(b_1,\ldots, b_t)$ runs over nondecreasing finite sequences of Lyndon words on $X$ such that $b_1,\ldots, b_t<_{\lex} v_i\leq_{\lex} u$ and  $\sum_{j=1}^t\ \deg(b_j) = \deg(v_i^{h_I(v_i)})$. Clearly, the finite sequences 
	\[
	( {\small \overbrace{v_{1},\ldots, v_{1}}^{p_{1}}}, \ldots, {\small \overbrace{v_{i-1},\ldots, v_{i-1}}^{p_{i-1}}}, b_1,\ldots, b_t,  {\small \overbrace{v_{i},\ldots, v_{i}}^{p_i-h_I(v_i)}},  {\small \overbrace{v_{i+1},\ldots, v_{i+1}}^{p_{i+1}}},\ldots, {\small \overbrace{v_{r},\ldots, v_{r}}^{p_{r}}},) ~ \in ~ \mathcal{P}
	\]
	are smaller than $(w_1,\ldots, w_s)$ with respect to $<_L$, and the nondecreasing rearrangement of them  are smaller than $(w_{i_1},\ldots, w_{i_s})$ with respect to $<_R$. Therefore, if $(w_1,\ldots, w_s)$ is a minimal element of $\mathcal{P}$ (with $w_1\cdots w_s\not\in \D_I$) with respect to $<_L$, which is necessarily nondecreasing, then  $$[w_1]_\rho\cdots [w_s]_\rho \in k\langle X\rangle_{<\gamma} + I,$$ and hence the initial step of the induction argument hold. If $(w_1,\ldots, w_s)$ is not minimal, then by the induction hypothesis and the above observation, the desired formula hold.
	
	Next we consider the another case, i.e. the finite sequence $(w_1,\ldots, w_s)$ is not nondecreasing. So assume $w_{i+1}<_{\lex} w_{i}$ for some $i$.  Then by Lemma \ref{bracketing-diagonal-more} (2),  
	\begin{eqnarray*}
		[w_1]_\rho\cdots [w_s]_\rho
		&\in&  \rho_{w_i, w_{i+1}} [w_1]_\rho\cdots [w_{i-1}]_\rho [w_{i+1}]_\rho[w_i]_\rho [w_{i+2}]_\rho \cdots [w_s]_\rho \\
		&& + \sum_{(c_1,\cdots, c_t)} k \cdot [w_1]_\rho\cdots [w_{i-1}]_\rho [c_1]_\rho\cdots [c_t]_\rho[w_{i+2}]_\rho \cdots [w_s]_\rho,
	\end{eqnarray*}
	where $(c_1,\ldots, c_t)$ runs over finite sequences of Lyndon words on $X$  such  that $c_1,\ldots, c_t\leq_{\lex} w_iw_{i+1} <_{\lex} w_i \leq_{\lex} u $ and  $\sum_{j=1}^t \deg(c_j) = \deg (w_iw_{i+1})$.  Clearly, the finite sequences
	\begin{eqnarray*}
		(w_1,\ldots, w_{i-1}, w_{i+1}, w_i, w_{i+2},\ldots, w_s),~~  	(w_1,\ldots, w_{i-1}, c_1, \ldots, c_t, w_{i+2},\ldots, w_s) ~ \in ~ \mathcal{P}
	\end{eqnarray*} 
	are both smaller than $(w_1,\ldots, w_s)$ with respect to $<_L$. In addition, $(w_{i_1},\ldots, w_{i_s})$ is also the nondecreasing rearrangement of $(w_1,\ldots, w_{i-1}, w_{i+1}, w_i, w_{i+2},\ldots, w_s)$, and it is bigger than that  of other ones with respect to $<_R$. The desired formula  then follows immediately by the induction hypothesis.
\end{proof}

\begin{proof}[Proof of Proposition \ref{PBW-generator}]
To simplify the notations, we write $\deg(W) := \sum_{i=1}^s \deg(w_i)$ and $[W]_\rho= [w_1]_\rho\cdots [w_s]_\rho$ for any sequence $W=(w_1,\ldots, w_s) \in \mathcal{P}$. Let $<$ be a total order on $\N_I$. Let 
$$\mathcal{P}_{<} := \{~ ( {\small \overbrace{v_{1},\ldots, v_{1}}^{p_{1}}}, \ldots,  {\small \overbrace{v_{l},\ldots, v_{l}}^{p_{l}}})~ |~ v_1< \cdots < v_l\in \N_I, \, p_i <h_I(v_i), \, i=1,\ldots, l ~ \}.$$
There is a natural bijection $\Pi: \mathcal{P}_{<} \to \mathcal{P}_I$ given by rearranging of finite sequences into pseudo-lexicographically nondecreasing form. It is asked to show  $\mathcal{X}= \{~ [W]_\rho+I ~ \}_{W \in \mathcal{P}_{<}}$ is a basis of $A=k\langle X\rangle/I$.

First we show that $\mathcal{X}$ is linearly independent. Otherwise, there is a positive integer $p\geq1$, distinct finite sequences $W_1, \ldots, W_p \in \mathcal{P}_{<}$ and nonzero scalars $\lambda_1,\ldots, \lambda_p\in k$ such that 
\[ 
F:= \sum_{i=1}^p\lambda_i\cdot [W_i]_\rho ~ \in ~  I.
\]
Let $\gamma = \max \{~ \deg(W_1), \ldots, \deg(W_p) ~\}.$
Among those sequences $\Pi(W_i)\in \mathcal{P}_{I}$ with  $\deg(W_i) =\gamma$, there is a biggest one with respect to $<_R$, which we denote by $V$. Then one has
\begin{eqnarray*}
	F & \in&  k^\times \cdot [V]_\rho  + \sum_{U~ \in ~ \mathcal{P}_I,\, U ~<_R ~ V} k \cdot [U]_\rho + k\langle X\rangle_{<\gamma} + I \\
	 &= &  k^\times \cdot [V]_\rho  + \sum_{U~ \in ~ \mathcal{P}_I,\, U ~<_R ~ V} k \cdot [U]_\rho +  \sum_{U ~ \in ~ \mathcal{P}_I,\, \deg(U) <\gamma } k \cdot [U]_\rho  + I
\end{eqnarray*}
Here, the belonging is by  Lemma \ref{rearragement} and the equality is by  Lemma \ref{basis-quotient-algebra}. It follows that
\begin{eqnarray*}
[V]_\rho &\in & \sum_{U ~ \in ~ \mathcal{P}_I,\, U ~<_R ~ V} k \cdot [U]_\rho +  \sum_{U ~ \in ~ \mathcal{P}_I,\, \deg(U) <\gamma } k \cdot [U]_\rho + I,
\end{eqnarray*}
which contradicts that $\{~[W]_\rho+I ~\}_{W\in \mathcal{P}_I ~}$ is a basis of $k\langle X\rangle/ I$ by Lemma  \ref{basis-quotient-algebra}.

In the following, we show  $\mathcal{X}$ spans  $A$. By Lemma \ref{basis-quotient-algebra}, it suffices to see  $[V]_\rho +I \in k\mathcal{X}$ for any sequence $ V \in \mathcal{P}_I$. We do it by induction on the index 
$\deg(V)$. If $\deg(V) =0$ then $V$ is the empty sequence, and therefore $[V]_\rho +I= 1+I \in k\mathcal{X}$. Let $\gamma$ be an arbitrary positive index. Suppose that $[V]_\rho +I \in k\mathcal{X}$ for every $V \in \mathcal{P}_I$ with $\deg(V)< \gamma$. 
By Lemma \ref{basis-quotient-algebra}, one has 
\[
k\langle X\rangle_{<\gamma} +I = \sum_{U \in \mathcal{P}_{I},\, \deg(U)<\gamma} k\cdot [U]_\rho +I \subseteq \sum_{W \in \mathcal{P}_{<}} k\cdot [W]_\rho +I.
\]
For $V\in \mathcal{P}_I$  with $\deg(V) = \gamma$, Lemma \ref{rearragement} tells us that
\begin{eqnarray*}
[V]_\rho &\in& k^\times \cdot [( \Pi^{-1}(V)]_\rho + \sum_{U\in \mathcal{P}_I,\, U<_RV} k\cdot [U]_\rho + k\langle X\rangle_{<\gamma} +I \\
& \subseteq &\sum_{U\in \mathcal{P}_I,\, U<_RV} k\cdot [U]_\rho + \sum_{W \in \mathcal{P}_{<}} k\cdot [W]_\rho +I.
\end{eqnarray*}
By induction on $V$ with respect to $<_R$, one readily sees that $[V]_\rho\in \sum_{W \in \mathcal{P}_{<}} k\cdot [W]_\rho +I$ and hence $[V]_\rho+I \in k \mathcal{X}$ for any finite sequence $V\in \mathcal{P}_I$ with $\deg(V)=\gamma$.
\end{proof}

In general, it is not easy to check the displayed formula of Proposition \ref{PBW-generator} for all Lyndon words of finite height. The next result makes this problem slightly easier to handle, particularly in the case that $X$ and $\N_I$ are both finite. Note that $I$-reducible Lyndon words are of height $1$. 

\begin{proposition}\label{Soft-criterion}
	Let $I$ be an ideal of $k\langle X\rangle$. Let $\rho$ be an $X$-diagonal braiding on $kX$ such that $\rho(k\langle X\rangle\otimes I + I\otimes k\langle X\rangle) \subseteq k\langle X\rangle\otimes I + I\otimes k\langle X\rangle$. Assume that $$[u]_\rho \in [k\langle X\rangle^{\vartriangleleft u}_{\deg(u)}]_\rho +k\langle X\rangle_{<\deg(u)}+I$$ for every  $I$-reducible Lyndon word  $u$ that is either of length $1$ or of length $\geq 2$ with $u_L, u_R\in N_I$. Then the displayed formula holds for every  $I$-reducible Lyndon word  $u$.
\end{proposition}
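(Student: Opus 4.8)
The plan is to prove the displayed formula by transfinite induction on $\deg(u)$ along the well-order of $\Gamma$, establishing
$$(\star_u):\quad [u]_\rho \in [k\langle X\rangle^{\vartriangleleft u}_{\deg(u)}]_\rho + k\langle X\rangle_{<\deg(u)} + I$$
for every $I$-reducible Lyndon word $u$ (recall such $u$ have height $1$). The two families singled out in the hypothesis — letters, and words with $\sh(u)=(u_L,u_R)$ and $u_L,u_R\in\N_I$ — are precisely the cases we are allowed to assume, so they form the base of the induction. For the inductive step I would use the Shirshov factorization $u=u_Lu_R$, so that $[u]_\rho=[[u_L]_\rho,[u_R]_\rho]_\rho$ with $u_L,u_R$ Lyndon of strictly smaller degree and $u_R<_{\lex}u<_{\lex}u_L$ (by Lemma \ref{fact-Lyndon}). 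Since the excluded configuration is $u_L,u_R$ both irreducible, I may assume at least one of them is $I$-reducible and split into two situations.

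If $u_R$ is reducible, I would invoke $(\star_{u_R})$ and write $[u_R]_\rho=A'+L'+J'$ with $A'\in[k\langle X\rangle^{\vartriangleleft u_R}_{\deg(u_R)}]_\rho$, $L'\in k\langle X\rangle_{<\deg(u_R)}$, $J'\in I$, giving $[u]_\rho=[[u_L]_\rho,A']_\rho+[[u_L]_\rho,L']_\rho+[[u_L]_\rho,J']_\rho$. The middle term has degree $<\deg(u)$; the last lies in $I$, because $[u_L]_\rho J'\in I$ while $\mu\circ\rho([u_L]_\rho\otimes J')\in\mu(k\langle X\rangle\otimes I+I\otimes k\langle X\rangle)\subseteq I$ by the standing hypothesis on $\rho$; and since $u_L>_{\lex}u_R$, Lemma \ref{bracketing-diagonal-more}(4) places $[[u_L]_\rho,A']_\rho$ in $[k\langle X\rangle^{\vartriangleleft u}]_\rho$, hence by homogeneity in $[k\langle X\rangle^{\vartriangleleft u}_{\deg(u)}]_\rho$. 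This yields $(\star_u)$.

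The delicate case is $u_R$ irreducible and $u_L$ reducible, since replacing $[u_L]_\rho$ via $(\star_{u_L})$ produces atoms bounded only by $u_L$, and $u_L>_{\lex}u$. The key observation that rescues this is combinatorial and degree-sensitive: if a word $a$ satisfies $a<_{\lex}u_L$ and $\deg(a)\le\deg(u_L)$, then in fact $a<_{\lex}u$. Indeed Lemma \ref{lex-order-1} (with the degree valuation) turns $a<_{\lex}u_L$ into $a\prec u_L$, and since $u_L$ is a prefix of $u$ this propagates to $a\prec u$, whence $a<_{\lex}u$. Consequently every word of degree $\deg(u_L)$ all of whose Lyndon atoms are $<_{\lex}u_L$ already has all atoms $<_{\lex}u$, so the degree-$\deg(u_L)$ part of $(\star_{u_L})$ lies in $[k\langle X\rangle^{\vartriangleleft u}_{\deg(u_L)}]_\rho$. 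Writing $[u_L]_\rho=A+L+J$ accordingly, the summands $[L,[u_R]_\rho]_\rho$ and $[J,[u_R]_\rho]_\rho$ are handled exactly as before (lower degree, resp.\ inside $I$ via the $\rho$-invariance hypothesis), while for $[A,[u_R]_\rho]_\rho$ I would note that $u_R<_{\lex}u$ makes $[u_R]_\rho\in[k\langle X\rangle^{\vartriangleleft u}]_\rho$, that this space is a subalgebra by Lemma \ref{bracketing-diagonal-more}(1), and that $A$ now lies in it too; hence so does the $\rho$-commutator, and a degree count lands it in $[k\langle X\rangle^{\vartriangleleft u}_{\deg(u)}]_\rho$. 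This closes the induction.

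The main obstacle is exactly this last case. The tempting shortcuts — swapping the arguments of $[[u_L]_\rho,[u_R]_\rho]_\rho$, or applying Lemma \ref{bracketing-diagonal-more}(4) with the reducible factor on the right — both fail, since the diagonal braided commutator is not antisymmetric (there is no relation $[f,g]_\rho=-\rho_{\cdot}[g,f]_\rho$ in general), and the atom bound inherited from $(\star_{u_L})$ is a priori too weak because $u_L>_{\lex}u$. The degree-sensitive comparison is what upgrades ``$<_{\lex}u_L$'' to the needed ``$<_{\lex}u$'', after which the subalgebra property and the commutator-lowering property finish the job. I expect the only genuinely delicate points to be this combinatorial comparison and the bookkeeping that keeps the $I$-valued and lower-degree remainders in place — the latter being precisely where the hypothesis that $\rho$ preserves $k\langle X\rangle\otimes I+I\otimes k\langle X\rangle$ is indispensable.
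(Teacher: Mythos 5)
Your proposal is correct and follows essentially the same route as the paper: induction (the paper uses $<_{\glex}$, you use degree, which is equivalent here since $u_L,u_R$ are proper factors), the same case split on which Shirshov factor is $I$-reducible, the same appeal to Lemma \ref{bracketing-diagonal-more} for the $u_R$-reducible case, and the same degree-sensitive upgrade $\langle X\rangle^{\vartriangleleft u_L}_{\deg(u_L)}\subseteq\langle X\rangle^{\vartriangleleft u}_{\deg(u_L)}$ via Lemma \ref{lex-order-1} for the $u_L$-reducible case. Your explicit discussion of why the $u_L$-reducible case cannot be handled by antisymmetry is a correct reading of the one genuinely delicate point, and matches what the paper does by expanding the commutator and bounding both products.
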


\begin{proof}
	We show the result by induction on  $I$-reducible Lyndon words $u$ with respect to $<_{\glex}$. If $u$ is the smallest one among all $I$-reducible Lyndon words, then $u$ is necessarily either of length $1$ or of length $\geq 2$ with $u_L, u_R\in N_I$, and so by he assumption the displayed formula holds.
	
	Now assume $u$ is  not the smallest one. By the assumption, we may assume  $u$ have length $\geq 2$, and either  $u_L$ or $u_R$ is $I$-reducible. If $u_L$ is $I$-reducible then by induction,
	\begin{eqnarray*}
		[u]_\rho& =& [u_L]_\rho[u_R]_\rho - \rho_{u_L,u_R} [u_R]_\rho[u_L]_\rho \\
		&\in&  [k\langle X\rangle_{\deg(u_L)}^{\vartriangleleft u_L}]_\rho\cdot  [u_R]_\rho + [u_R]_\rho  \cdot k\langle X\rangle_{\deg(u_L)}^{\vartriangleleft u_L}  + k\langle X\rangle_{<\deg(u)} +I \\
		&\subseteq&   k\langle X\rangle_{\deg(u)}^{\vartriangleleft u} + k\langle X\rangle_{<\deg(u)} +I.
	\end{eqnarray*}
	Here, the inclusion used two observations: one is  $\langle X\rangle_{\deg(u_L)}^{\vartriangleleft u_L} \subseteq \langle X\rangle_{\deg(u_L)}^{\vartriangleleft u}$ by Lemma \ref{lex-order-1}; the other one is $u_R <_{\lex} u$ by Proposition \ref{Proposition-character-Lyndon}.
	If $u_R$ is $I$-reducible then by induction,
	\begin{eqnarray*}
		[u]_\rho &=& [[u_L]_\rho, [u_R]_\rho]_\rho \\
		&\in&  [[u_L]_\rho, [k\langle X\rangle_{\deg(u_R)}^{<u_R}]_\rho] + k\langle X\rangle_{<\deg(u)} + [[u_L]_\rho, I]_\rho  \\
		&\subseteq&   [k\langle X\rangle_{\deg(u)}^{<u}]_\rho + k\langle X\rangle_{<\deg(u)} +I.
	\end{eqnarray*}
	Here  the inclusion is by Lemma \ref{bracketing-diagonal-more} (3) and the observation that $[k\langle X\rangle, I]_\rho\subseteq I$.
\end{proof}

\section{Bounded comultiplications on free algebras}
\label{section-PBW-generator}

We continue to use the notations and conventions employed in the previous two sections. So $X$ stands for a well-ordered alphabet, $(\Gamma,<)$ is a nontrivial well-ordered abelian monoid and the free algebra $k\langle X\rangle$ is $\Gamma$-graded with each letter homogeneous of positive degree. 

In this section, we will introduce a class of algebra homomorphisms $\Delta:k\langle X\rangle \to  k\langle X\rangle \otimes^\tau  k\langle X\rangle$ for each braiding $\tau$ on $kX$ and study their properties by the theory of Lyndon words and of braided bracketing on polynomials. An interesting observation is that if an ideal $I$ of $k\langle X\rangle$ satisfies that $\Delta(I) \subseteq  k\langle X\rangle \otimes  I + I \otimes  k\langle X\rangle$ for certain such pair $(\tau,\Delta)$, then it necessarily  meets some desirable conditions, including those assumed in Proposition \ref{PBW-generator}.  This observation is the key to explore the structure of connected graded braided bialgebras in the following section.

%In this section, we shall assume tacitly that braidings on $kX$ are graded linear maps, that is $\tau$ shall preserve the natural grading on $kX\otimes kX$ inherited from that of $kX$.

The following notion is a generalization of \cite[Definition 2.1]{ZSL3}, where the term ``triangular'' was used and only the flipping map on $kX$ was taken into account. 

\begin{definition}\label{definition-bounded-comultiplication}
Let $\tau$ be a braiding on $kX$. A  \emph{$\tau$-comultiplication} on $k\langle X\rangle$ is a  homomorphism  $\Delta: k\langle X\rangle  \to k\langle X\rangle \otimes^\tau k\langle X\rangle$ of  algebras. It is called  \emph{ left bounded}  if  for every letter $x\in X$, 
\[
\Delta(x)  ~ \in ~  1\otimes x + x\otimes 1 +   \big(k\langle X_{<x} \rangle_+ \otimes k\langle X \rangle_+\big)_{\deg(x)} +  \big(k\langle X\rangle \otimes k\langle X\rangle \big)_{<\deg(x)}, 
\]
where $X_{<x}:=\{~x'\in X~| ~ x'<x~\}$. The \emph{right bounded $\tau$-comultiplications} on $k\langle X\rangle$ is defined similarly. A $\tau$-comultiplication on $k\langle X\rangle$  is called \emph{bounded} if it is both left and right bounded. 
\end{definition}

\begin{remark}\label{remark-standard-comul}
The \emph{standard $\tau$-comultiplication} on $k\langle X\rangle$ is  the algebra homomorphism
	$$
	\Delta_s: k\langle X\rangle  \to k\langle X\rangle \otimes^\tau k\langle X\rangle, \quad X \ni x\mapsto 1\otimes x +x\otimes 1.
	$$
It  is   bounded  by definition. Note that $\Delta_s$ is coassociative in the sense that $(\Delta_s\otimes \id) \circ \Delta_s = (\id \otimes \Delta_s) \circ \Delta_s$, which does not necessarily hold for general bounded $\tau$-comultiplications. 
\end{remark}

The following result is well-known in the special case  that $k\langle X\rangle$ is length graded, $\tau$ is $X$-diagonal and $\Delta=\Delta_s$, see \cite{Kh}. The case that $\tau$ is the flipping map is dealt in \cite{ZSL3}.

\begin{proposition}\label{comultiplication-diagonal}
Let $\tau$ be an $X$-diagonal braiding on $kX$. Let $\Delta$ be a left (resp. right) bounded  $\tau$-comultiplication on $k\langle X\rangle$. Then for each  $u\in \mathbb{L}(X)$, each   $w\in \langle X\rangle^{\vartriangleleft u}$ and each  $n\geq 0$, 
	\begin{eqnarray*}
		\Delta([wu^n]_{\tau^{-1}})  &=& \sum_{i=0}^n \binom{n}{i}_{\tau_{u,u}} \tau_{w,u}^i~  [u^i]_{\tau^{-1}} \otimes [wu^{n-i}]_{\tau^{-1}}  + (1-\delta_{1,w}) [wu^n]_{\tau^{-1}}  \otimes 1 +f +g\\
		\Big( \text{resp. }\quad \Delta([wu^n]_\tau) &=& \sum_{i=0}^n \binom{n}{i}_{\tau_{u,u}} ~  [wu^i]_\tau \otimes [u^{n-i}]_\tau + (1-\delta_{1,w})  1\otimes[wu^n]_\tau  +f +g ~\Big)
	\end{eqnarray*}
for some  $f\in  \big(\sum_{i=0}^n [k\langle X \rangle_+^{\vartriangleleft u}u^i]_{\tau^{-1}} \otimes k\langle X\rangle_+\big)_{\gamma}$ (resp. $f\in \big( k\langle X \rangle_+\otimes \sum_{i=0}^n [k\langle X \rangle_+^{\vartriangleleft u}u^i]_\tau\big)_{\gamma} $)  and $g\in \big(k\langle X\rangle \otimes k\langle X\rangle \big)_{<\gamma}$, where $\gamma:=\deg(wu^n)$ and  $\delta_{1,w}$ is the Kronecker symbol.
\end{proposition}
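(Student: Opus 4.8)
The plan is to reduce the whole statement to a computation about a \emph{single} Lyndon word and then propagate it multiplicatively. Since $w\in\langle X\rangle^{\vartriangleleft u}$ has Lyndon decomposition $w=w_1\cdots w_r$ with $w_1\leq_{\lex}\cdots\leq_{\lex}w_r<_{\lex}u$, the word $wu^n$ is already in nondecreasing Lyndon form, so the multiplicativity of the bracketing on nondecreasing products (the identity recorded after the definition of the $\tau$-bracketing) gives $[wu^n]_{\tau^{-1}}=[w]_{\tau^{-1}}[u]_{\tau^{-1}}^n$, whence $\Delta([wu^n]_{\tau^{-1}})=\Delta([w]_{\tau^{-1}})\cdot\Delta([u]_{\tau^{-1}})^n$ inside $k\langle X\rangle\otimes^\tau k\langle X\rangle$ because $\Delta$ is an algebra homomorphism. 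In fact every word is of the form $wu^n$ with $u$ its largest Lyndon atom (of multiplicity $n$) and $w$ the product of the remaining atoms, so the proposition is really a formula for $\Delta([v]_{\tau^{-1}})$ for \emph{all} $v\in\langle X\rangle$, and I would run the argument by induction on $v$ with respect to the well order $<_{\glex}$.

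The heart is the Lyndon case $w=1$, $n=1$: for Lyndon $u$ one wants $\Delta([u]_{\tau^{-1}})=1\otimes[u]_{\tau^{-1}}+[u]_{\tau^{-1}}\otimes1+f_u+g_u$ with $f_u\in([k\langle X\rangle_+^{\vartriangleleft u}]_{\tau^{-1}}\otimes k\langle X\rangle_+)_{\deg u}$ and $g_u$ of lower degree. For a letter this is exactly left-boundedness (Definition \ref{definition-bounded-comultiplication}), once one notes $k\langle X_{<x}\rangle_+\subseteq k\langle X\rangle_+^{\vartriangleleft x}=[k\langle X\rangle_+^{\vartriangleleft x}]_{\tau^{-1}}$ via the triangularity of the bracketing (Lemma \ref{bracketing-diagonal}). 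For $|u|\geq2$ use the Shirshov factorization $u=u_Lu_R$ with $u_L,u_R$ Lyndon and $u_L>_{\lex}u_R$ (Proposition \ref{fact-Lyndon}, L2), so $[u]_{\tau^{-1}}=[[u_L]_{\tau^{-1}},[u_R]_{\tau^{-1}}]_{\tau^{-1}}$. Because $\tau$ is $X$-diagonal, on homogeneous elements the $\tau^{-1}$-commutator is the genuine $q$-commutator $[a,b]_{\tau^{-1}}=ab-\tau_{u_R,u_L}^{-1}ba$, so $\Delta$ sends it to $\Delta(a)\Delta(b)-\tau_{u_R,u_L}^{-1}\Delta(b)\Delta(a)$ computed in $k\langle X\rangle\otimes^\tau k\langle X\rangle$; this is the key point that lets me avoid ever asking $\Delta$ to respect the bracket. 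Substituting the inductive forms of $\Delta([u_L]_{\tau^{-1}})$ and $\Delta([u_R]_{\tau^{-1}})$ (both $<_{\glex}u$) and expanding, the four leading tensors recombine into $1\otimes[u]_{\tau^{-1}}+[u]_{\tau^{-1}}\otimes1$ plus a single surviving defect $(\tau_{u_L,u_R}-\tau_{u_R,u_L}^{-1})\,[u_R]_{\tau^{-1}}\otimes[u_L]_{\tau^{-1}}$. Here the choice of the $\tau^{-1}$-bracket is precisely what keeps $[u_R]_{\tau^{-1}}$, whose atom $u_R<_{\lex}u$, in the \emph{left} slot, matching left-boundedness, whereas a $\tau$-bracket would leave $[u_L]_{\tau^{-1}}$ there; this explains why the two formulas of the proposition are twisted by $\tau^{-1}$ resp.\ $\tau$, and the right-bounded statement is entirely dual.

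Granting this, I would compute $\Delta([wu^n]_{\tau^{-1}})$ by induction on $n$ (i.e.\ by peeling the last atom off $wu^n$) from $\Delta([wu^n]_{\tau^{-1}})=\Delta([wu^{n-1}]_{\tau^{-1}})\cdot\Delta([u]_{\tau^{-1}})$. In $k\langle X\rangle\otimes^\tau k\langle X\rangle$ the two leading tensors of $\Delta([u]_{\tau^{-1}})$ act on the inductively known sum $\sum_i\binom{n-1}{i}_{\tau_{u,u}}\tau_{w,u}^i[u^i]_{\tau^{-1}}\otimes[wu^{n-1-i}]_{\tau^{-1}}$ by, respectively, raising the $u$-power in the right slot (from $1\otimes[u]_{\tau^{-1}}$) and transporting a $[u]_{\tau^{-1}}$ into the left slot at the cost of the braiding scalar $\tau_{wu^{n-1-i},u}=\tau_{w,u}\tau_{u,u}^{\,n-1-i}$ (from $[u]_{\tau^{-1}}\otimes1$); the two contributions to the coefficient of $[u^j]_{\tau^{-1}}\otimes[wu^{n-j}]_{\tau^{-1}}$ add through the $q$-Pascal identity
\[
\binom{n}{j}_{q}=\binom{n-1}{j}_{q}+q^{\,n-j}\binom{n-1}{j-1}_{q},\qquad q=\tau_{u,u},
\]
reproducing exactly $\binom{n}{j}_{\tau_{u,u}}\tau_{w,u}^j$. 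The term $([wu^{n-1}]_{\tau^{-1}}\otimes1)([u]_{\tau^{-1}}\otimes1)=[wu^n]_{\tau^{-1}}\otimes1$ supplies the $(1-\delta_{1,w})$ summand, while $([wu^{n-1}]_{\tau^{-1}}\otimes1)(1\otimes[u]_{\tau^{-1}})=[wu^{n-1}]_{\tau^{-1}}\otimes[u]_{\tau^{-1}}$ already sits in $f$ when $w\neq1$, its left factor lying in $[k\langle X\rangle_+^{\vartriangleleft u}u^{n-1}]_{\tau^{-1}}$ and its right factor being positive. The $n=0$ base is handled by expanding $\Delta([w]_{\tau^{-1}})=\prod_i\Delta([w_i]_{\tau^{-1}})$ through the Lyndon case.

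The real work, and the step I expect to be hardest, is the \emph{error bookkeeping}: proving that every remaining term lands in $f+g$, that is, that after all braided multiplications the left tensor factor always lies in $\sum_i[k\langle X\rangle_+^{\vartriangleleft u}u^i]_{\tau^{-1}}$ up to strictly lower total degree (which is swept into $g$), with the right factor in $k\langle X\rangle_+$. Two features make this delicate. First, in the Lyndon step the inductive error $f_{u_L}$ only guarantees left-slot atoms $<_{\lex}u_L$, and since $u_L>_{\lex}u$ (as $u_L$ is a proper prefix of $u$) this a priori permits atoms in the range $[u,u_L)$; I must show such terms either cancel in $\Delta(a)\Delta(b)-\tau_{u_R,u_L}^{-1}\Delta(b)\Delta(a)$ or are forced below $\deg u$. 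Second, multiplying an error tensor $[p]_{\tau^{-1}}\otimes q$ (with $p$ of atoms $<_{\lex}u$) by $[u]_{\tau^{-1}}\otimes1$ produces the out-of-order product $[p]_{\tau^{-1}}[u]_{\tau^{-1}}$ in the left slot, which must be re-expanded as an element of $\sum_j[k\langle X\rangle_+^{\vartriangleleft u}u^j]_{\tau^{-1}}$ modulo lower degree. Both are controlled by Section 2: the subalgebra property of $[k\langle X\rangle^{\vartriangleleft u}]_{\tau^{-1}}$ and $[k\langle X\rangle^{\trianglelefteq u}]_{\tau^{-1}}$ and the containment $[[u]_{\tau^{-1}},[k\langle X\rangle^{\vartriangleleft v}]_{\tau^{-1}}]_{\tau^{-1}}\subseteq[k\langle X\rangle^{\vartriangleleft uv}]_{\tau^{-1}}$ (Lemma \ref{bracketing-diagonal-more}), together with the $<_R$-rearrangement of products of brackets modulo lower degree in the spirit of Lemma \ref{rearragement} and the Lyndon combinatorics of Proposition \ref{fact-Lyndon}. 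Carrying these reductions uniformly while leaving the explicit leading terms intact is the bulk of the proof; the right-bounded statement then follows by the symmetric argument with $\tau$ replacing $\tau^{-1}$ and the two tensor slots interchanged.
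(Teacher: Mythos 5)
Your strategy is essentially the paper's: induct on the Lyndon structure of $wu^n$, get the base case from left-boundedness, treat a Lyndon word of length $\geq 2$ via its Shirshov factorization and the expansion of $\Delta$ applied to the braided commutator, recover the Gaussian binomials from the $q$-Pascal identity, and sweep the remaining terms into $f+g$ using the subalgebra and commutator containments of Lemma \ref{bracketing-diagonal-more}. The only structural difference is the order of peeling (you append copies of $u$ on the right and reduce general $w$ through the case $n=0$, whereas the paper first does $w=1$ by multiplying $\Delta([u]_{\tau^{-1}})$ from the left and then strips the first Lyndon atom of $w$); your leading-term computation, including the cancellation of the $[u_L]_{\tau^{-1}}\otimes[u_R]_{\tau^{-1}}$ term and the surviving defect $(\tau_{u_L,u_R}-\tau_{u_R,u_L}^{-1})\,[u_R]_{\tau^{-1}}\otimes[u_L]_{\tau^{-1}}$, is correct and matches the paper.

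One concrete correction to your plan for the step you flag as hardest: the left-slot atoms that a priori lie in the range between $u$ and $u_L$ are not disposed of by cancellation between $\Delta(a)\Delta(b)$ and $\Delta(b)\Delta(a)$, nor by being pushed below $\deg u$ in total degree --- they simply never occur. Since the right tensor factor of the error term of $\Delta([u_L]_{\tau^{-1}})$ lies in $k\langle X\rangle_+$, its left factor has degree strictly smaller than $\deg(u_L)$; hence every Lyndon atom $v$ of a word appearing there satisfies $v<_{\lex}u_L$ together with $\deg(v)<\deg(u_L)$, so $v\prec u_L$ by Lemma \ref{lex-order-1}, and since $u_L$ is a prefix of $u$ this yields $v\prec u$, i.e. $v<_{\lex}u$. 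This elementary upgrade from $<_{\lex}$ to $\prec$ is the (implicit) mechanism in the paper's proof; with it in place, the rest of your error bookkeeping closes exactly as you describe via Lemma \ref{bracketing-diagonal-more}.
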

\begin{proof}
We prove the left version in full detail. The right version can be done similarly. We show the result by induction on the number $t$ of Lyndon atoms of $w$. To simplify the notation, let $\rho=\tau^{-1}$. 

First we prove the case $t=0$ by induction on $n$. For $n=1$, we do it by induction on $l=|u|$. For $l=1$, $u$ is a letter and the desired formula holds by assumption. Assume $l>1$.  Let 
\begin{eqnarray*}
\quad \Delta([u_L]_\rho) &= & 1\otimes [u_L]_\rho + [u_L]_\rho\otimes 1 +\sum f_i'\otimes f_i'' + h_L\\
\Delta([u_R]_\rho) &= &1\otimes [u_R]_\rho +[u_R]_\rho\otimes 1 + \sum g_j'\otimes g_j'' +h_R,
\end{eqnarray*}
where $f_i', f_i'', g_j', g_j''$ are homogeneous in each variable with $f_i'\otimes f_i''\in (k\langle X\rangle_+^{\vartriangleleft u_L}\otimes k\langle X\rangle_+)_{\deg(u_L)}$ and $g_j'\otimes g_j''\in k\langle X\rangle_+^{\vartriangleleft u_R} \otimes k\langle X\rangle_+)_{\deg(u_R)}$, $h_L\in (k\langle X\rangle_+\otimes k\langle X\rangle_+)_{<\deg(u_L)}$ and $h_R\in (k\langle X\rangle_+\otimes k\langle X\rangle_+)_{<\deg(u_R)}$. Note that $[k\langle X\rangle_+^{\vartriangleleft u}]_\rho$ is a subalgebra of $k\langle X\rangle$ and $[u_R]_\rho, f_i', g_j'\in [k\langle X\rangle_+^{\vartriangleleft u}]_\rho$, one obtains that 
\begin{eqnarray*}
\Delta([u]_\rho) &=&  \Delta([u_L]_\rho) \Delta([u_R]_\rho)-\rho_{u_L, u_R}\Delta([u_R]_\rho)\Delta([u_L]_\rho) \\
 &\in & 1\otimes[u]_\rho+ [u]_\rho\otimes 1+ \sum [[u_L]_\rho, g_j']_{\rho} \otimes g_j'' \\
 && + ([k\langle X\rangle_+^{\vartriangleleft u}]_\rho\otimes k\langle X\rangle_+)_{\deg(u)} + (k\langle X\rangle_+\otimes k\langle X\rangle_+)_{<\deg(u)}\\
 &\subseteq & 1\otimes[u]_\rho + [u]_\rho\otimes 1 +  ([k\langle X\rangle_+^{\vartriangleleft u}]_\rho\otimes k\langle X\rangle_+)_{\deg(u)} + (k\langle X\rangle_+\otimes k\langle X\rangle_+)_{<\deg(u)}.
\end{eqnarray*}
Here, we used $[[u_L]_\rho, g_j']_{\rho}\in [[u_L]_\rho, [k\langle X\rangle^{\vartriangleleft u_R}]_\rho] \subseteq [k\langle X\rangle^{\vartriangleleft u}]_\rho$ by Lemma \ref{bracketing-diagonal-more} (4).
The induction step for $n>1$ follows readily from the decomposition $\Delta([u^n]_\rho) = \Delta([u]_\rho)\Delta([u^{n-1}]_\rho)$ and the fact that  $[u]_\rho [k\langle X\rangle^{\vartriangleleft u}]_\rho \subseteq [k\langle X\rangle^{\vartriangleleft u}]_\rho + [k\langle X\rangle^{\vartriangleleft u}]_\rho[u]_\rho$, which is again by Lemma \ref{bracketing-diagonal-more} (4).

Now assume $t>0$.  Write $w=w_1w_2$ with $w_1$ the first Lyndon atom of $w$. The desired formula then follows readily from the decomposition $\Delta([wu^n]_\rho) = \Delta ([w_1]_\rho) \Delta([w_2u^n]_\rho) $.
\end{proof}

The next result is the keystone of this paper. For an ideal $I$ of $k\langle X\rangle$, a word $w\in \langle X\rangle$ and an index $\gamma\in \Gamma$, we write $\langle X|I\rangle^{\vartriangleleft w}_\gamma:=\D_I\cap \langle X\rangle_\gamma^{\vartriangleleft w}$ and $\langle X|I\rangle_\gamma^{\trianglelefteq w}:=\D_I\cap \langle X\rangle_\gamma^{\trianglelefteq w}$.

\begin{proposition}\label{PBW-diagonal}
Let $\tau$ be an $X$-diagonal braiding on $kX$. Let $I$ be an ideal of $k\langle X\rangle$ and $R:=k\langle X\rangle/ I$. Let $\Delta$ be a left (resp. right) bounded $\tau$-comultiplication on $k\langle X\rangle$ such that $\Delta(I) ~\subseteq~ I\otimes k\langle X\rangle + k\langle X\rangle \otimes I$. Let $\rho=\tau^{-1}$ (resp. $\rho=\tau$). Then
\begin{enumerate}
	\item Every $I$-restricted word on $X$ is $I$-irreducible, that is $\C_I=\D_I$.
	\item For each Lyndon word $u$ on $X$ of finite height $n\geq 1$, 
	\[
	[u]^n_\rho ~\in~ [k\langle X|I\rangle^{\vartriangleleft u}_{\deg(u^n)}]_\rho +k\langle X\rangle_{<\deg(u^n)}+I.
	\]
	\item For each pair of  $I$-irreducible Lyndon words  $u,v\in \N_I$ with $v <_{\lex} u$,
	\[
	[[u]_\rho,[v]_\rho]_\rho ~\in~ [k\langle X|I\rangle_{\deg(uv)}^{\trianglelefteq uv}]_\rho +k\langle X\rangle_{<\deg(uv)} + I.
	\]
	\item $(\{[u]_\rho+I\}_{u\in \N_I}, h_I, <)$ is a system of PBW generators of  $R$ for each  total order  $<$ on $\N_I$.
	\item For each Lyndon word $u\in \N_I$ with $h_I(u) <\infty$, the scalar $\tau_{u,u}$ is a root of unity. More precisely,  if  $k$ is of characteristic $0$ then $\tau_{u,u}$ is of order $h_I(u)$, and in particular, $\tau_{u,u}\neq 1$; and if $k$ is of characteristic $p>0$ then  $\tau_{u,u} $ is of order $h_I(u)/p^s$ for some integer $s\geq 0$.
\end{enumerate}
\end{proposition}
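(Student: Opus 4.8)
The whole proposition rests on two devices: the \emph{descended comultiplication} $\bar\Delta\colon R\to R\otimes^\tau R$, which is well defined precisely because $\Delta(I)\subseteq I\otimes k\langle X\rangle+k\langle X\rangle\otimes I$, and the explicit formula for $\Delta$ on bracketed monomials from Proposition \ref{comultiplication-diagonal}; the linear-algebra engine is Lemma \ref{basis-quotient-algebra}, which says that $\{\overline{[w]_\rho}\}_{w\in\D_I}$ is a basis of $R$ and hence that $\{\overline{[w]_\rho}\otimes\overline{[w']_\rho}\}_{w,w'\in\D_I}$ is a basis of $R\otimes R$. Before doing any work I would record the logical dependencies so as to isolate what is genuinely hard. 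Statement (4) is immediate from (1) and (2) via Proposition \ref{PBW-generator}: part (1) supplies the hypothesis $\C_I=\D_I$, and part (2) is even stronger than the ``soft'' formula required there, since it confines the support to $I$-irreducible words. Statement (5) will fall out of the computation proving (2). Statement (3) follows from (4): by Lemma \ref{bracketing-diagonal-more}(2) the element $[[u]_\rho,[v]_\rho]_\rho$ already lies in $[k\langle X\rangle^{\trianglelefteq uv}_{uv}]_\rho$ in the free algebra, and rewriting it in the PBW basis of $R$ only lowers Lyndon atoms, by the height formula (2) applied to the atoms of any reducible word together with $\C_I=\D_I$, so the $\trianglelefteq uv$ bound and the membership in $I$ plus lower degree are preserved. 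Thus the real content is (1) and (2), which I would prove together.

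For the core I would run a transfinite induction on the degree $\gamma\in\Gamma$, treating the left-bounded case $\rho=\tau^{-1}$ (the right-bounded case is symmetric with $\rho=\tau$). The observation that drives the induction is that in the formula of Proposition \ref{comultiplication-diagonal} every \emph{interior} tensor component $\overline{[a]_\rho}\otimes\overline{[b]_\rho}$ with $a,b\neq1$ occupies a pair of degrees each strictly below $\gamma$, so that the induction hypothesis already governs $\overline{[a]_\rho}$ and $\overline{[b]_\rho}$; the $\otimes1$ and $1\otimes$ components sit at the top degree but are tautological. Fix a Lyndon word $u$ of finite height $n=h_I(u)$ with $\deg(u^n)=\gamma$ and expand $\overline{[u^n]_\rho}=\overline{[u]_\rho^{\,n}}$ in the basis of Lemma \ref{basis-quotient-algebra}; since $u^n\notin\D_I$, its top-degree support consists of $w\in\D_I$ with $w\prec u^n$, i.e.\ with first Lyndon atom $<_{\lex}u$ (Lemma \ref{lex-order-2}). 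Now compute $\bar\Delta(\overline{[u^n]_\rho})$ in two ways — through Proposition \ref{comultiplication-diagonal} (in which left-boundedness pushes the top atom $u$ onto the \emph{left} tensor factor, so that every left factor has Lyndon atoms $\leq_{\lex}u$) and through the expansion $\sum_w a_w\,\bar\Delta(\overline{[w]_\rho})$ — and match coefficients against the tensor basis. Comparing the interior components forces, in turn: that no $w$ with a Lyndon atom $>_{\lex}u$ survives, since its top atom would create a forbidden left factor, impossible by the induction on the maximal atom; that $\binom{n}{i}_{\tau_{u,u}}=0$ for $0<i<n$, read off from the genuine basis components $\overline{[u^i]_\rho}\otimes\overline{[u^{n-i}]_\rho}$; and, feeding this vanishing back, that the coefficients of all words $w'u^{j}$ with $w'\vartriangleleft u$ and $j\geq1$ vanish, read off from the components $\overline{[u^{j}]_\rho}\otimes\overline{[w']_\rho}$. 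Together these confine the top-degree support of $\overline{[u^n]_\rho}$ to $\langle X\mid I\rangle^{\vartriangleleft u}_\gamma$, which is (2). Running the same analysis on $\overline{[v]_\rho}$ for a putative obstruction $v$ whose Lyndon decomposition has two or more distinct atoms produces a genuine split basis component $\overline{[a]_\rho}\otimes\overline{[b]_\rho}$, with $a,b$ proper factors of $v$ hence in $\D_I$, that cannot be cancelled, contradicting the reducibility of $v$; hence every obstruction is a power of a Lyndon word and $\C_I=\D_I$, which is (1).

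With $\binom{n}{i}_{\tau_{u,u}}=0$ for $0<i<n$ in hand, (5) is the classical vanishing criterion for $q$-binomial coefficients with $q=\tau_{u,u}$: all of these vanish in $k$ precisely when $q$ is a root of unity and $n=\mathrm{ord}(q)\,p^{s}$, where $p=\mathrm{char}\,k$ and $s\geq0$ (the degenerate case $q=1$, $\mathrm{ord}(q)=1$, occurring only when $p>0$). In characteristic $0$ this gives $\mathrm{ord}(q)=n=h_I(u)$, so in particular $q\neq1$; in characteristic $p>0$ it gives $\mathrm{ord}(q)=h_I(u)/p^{s}$. Statements (4) and (3) then follow as recorded in the first paragraph, via Proposition \ref{PBW-generator} and via Lemma \ref{bracketing-diagonal-more}(2) respectively.

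The main obstacle is the coefficient bookkeeping in the core induction: deciding exactly which elements of the tensor basis $\{\overline{[w]_\rho}\otimes\overline{[w']_\rho}\}$ can receive a contribution, and proving that the ``bad'' components — those with a left Lyndon atom exceeding $u$, and the offending split components in the obstruction argument — cannot be produced by cancellation among the remaining summands. This is exactly where the one-sided boundedness of $\Delta$ is indispensable, since it is what forces the dominant atoms onto a single tensor factor, and where the fine combinatorics of Lyndon words (Proposition \ref{fact-Lyndon}, Lemma \ref{lex-order-2}) together with the content grading must be invoked to limit the admissible splittings. In effect one must rebuild Kharchenko's PBW machinery so that it survives the loss of coassociativity, the merely one-sided bound, and the absence of any a priori primitive set of generators.
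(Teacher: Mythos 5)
Your proposal is correct and follows essentially the same route as the paper: (4) is reduced to (1)--(2) via Proposition \ref{PBW-generator}, (3) follows from Lemma \ref{bracketing-diagonal-more}(2) together with the rewriting procedure of Lemma \ref{rearragement}, and (1)--(2) are proved by expanding $\Delta$ of a relation through Proposition \ref{comultiplication-diagonal} and extracting coefficients against the basis of Lemma \ref{basis-quotient-algebra} --- the paper implements your ``component matching'' by applying maps $\phi\otimes \mathrm{id}$ to $\Delta(g)$ for $g\in I$, with $\phi$ the functional dual to $[u^l]_\rho$ (or $[v^l]_\rho$), exactly exploiting $\Delta(I)\subseteq I\otimes k\langle X\rangle + k\langle X\rangle\otimes I$ and the irreducibility of the resulting leading word. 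The only divergence is in (5): you deduce the full vanishing $\binom{n}{i}_{\tau_{u,u}}=0$ for all $0<i<n$ already inside the proof of (2) and then quote the classical $q$-binomial criterion, whereas the paper extracts only $\binom{n}{1}_{\tau_{u,u}}=0$ and afterwards one further coefficient $\binom{tp^sa}{tp^s}_{\tau_{u,u}}=a$ with a second tailored functional; both variants are sound and rest on the same mechanism.
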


\begin{proof}
To make the reading more fluent, the proof will be addressed in Section \ref{section-proof-PBW-generator}.
\end{proof}

\begin{remark}
In  \cite{ZSL3} and \cite{Kh}, there are similar results as above in the special cases that $\tau$ is the flipping map and that $k\langle X\rangle$ is length graded, $\tau$ is an $X$-diagonal braiding and $\Delta=\Delta_s$, respectively. The above proposition generalizes and strengthens these works in several aspects. 
\end{remark}
	
\begin{definition}\label{definition-closed}
A subset $Y$ of $X$ is called \emph{closed} if  $y<x$ for each $y\in Y$ and each $x\in X\backslash Y$.
\end{definition}

%Recall that a subspace  $U$ of a $\Gamma$-graded vector space  $V=\bigoplus_{\gamma\in \Gamma} V_\gamma$ is called \emph{homogeneous} if $U=\sum_{\gamma\in \Gamma} U\cap V_\gamma$, or equivalently, $U$ contains all homogeneous components of its elements. If $U$ is a homogeneous subspace of $V$ then $V/U$ is naturally a $\Gamma$-graded vector space.

\begin{corollary}\label{Lyndon-compare}
Assume the notations and conditions of Proposition \ref{PBW-diagonal}. Assume in addition that $I$ is homogeneous. Let $Y$ be a closed subset of $X$, $J:=I\cap k\langle Y\rangle$ and $A:=k\langle Y\rangle/ J$. Then 
\begin{enumerate}
\item $h_J(u)=h_I(u)$ for every Lyndon word $u$ on $Y$. 
\item  $\N_J=\N_I \cap \langle Y\rangle$,  $\C_J=\D_J= \D_I\cap  \langle Y\rangle$ and $\OO_J=\OO_I \cap \langle Y\rangle$.
\item For each Lyndon word $u$ on $Y$ of finite height $n\geq 1$ (with respect to $J$), 
\[
[u]^n_\rho ~\in~ [k\langle Y|J\rangle^{\vartriangleleft u}_{\deg(u^n)}]_\rho + J.
\]
\item For each pair of  $J$-irreducible Lyndon words  $u,v\in \N_J$ with $v<_{\lex} u$,
\[
[[u]_\rho,[v]_\rho]_\rho ~\in~ [k\langle Y|J\rangle_{\deg(uv)}^{\trianglelefteq uv}]_\rho  + J.
\]
\item $(\{[u]_\rho+J\}_{u\in \N_J}, h_J, <)$ is a system of PBW generators of  $A$ for each  total order  $<$ on $\N_J$.
\end{enumerate}
\end{corollary}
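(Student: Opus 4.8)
The plan is to deduce all five assertions from the corresponding statements for $R=k\langle X\rangle/I$ furnished by Proposition \ref{PBW-diagonal}, the only genuinely new ingredient being the \emph{closedness} of $Y$. Write $\mathbb{L}(Y):=\mathbb{L}\cap\langle Y\rangle$ for the Lyndon words on $Y$. The combinatorial engine I would isolate first is the following consequence of closedness: if $u\in\mathbb{L}(Y)$ then $\langle X\rangle^{\trianglelefteq u}\subseteq\langle Y\rangle$, and a fortiori $\langle X\rangle^{\vartriangleleft u}\subseteq\langle Y\rangle$. It suffices to check that a Lyndon word $p\leq_{\lex}u$ lies in $\langle Y\rangle$. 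If $p\notin\langle Y\rangle$, let $x'\in X\setminus Y$ be its first letter outside $Y$; as $Y$ is closed, every letter of $Y$ is $<x'$, so a nonempty $\langle Y\rangle$-prefix of $p$ would make the proper suffix beginning at $x'$ strictly $>_{\lex}p$, contradicting that $p$ is Lyndon. Hence $p$ begins with $x'$, whereas $u\in\langle Y\rangle$ begins with a letter $<x'$, forcing $p>_{\lex}u$. This is exactly where closedness enters, and it is what makes the comparison work at the level of $<_{\lex}$.

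Next I would prove the core equality $\D_J=\D_I\cap\langle Y\rangle$; granting it, parts (1) and (2) are formal. Since $J\subseteq I$ and $<_{\glex}$ is compatible with concatenation from both sides, a $J$-reducible word on $Y$ is $I$-reducible, giving $\D_I\cap\langle Y\rangle\subseteq\D_J$. The reverse inclusion is the crux and \textbf{the main obstacle}: one must show that an $I$-reducible $w\in\langle Y\rangle$ is already $J$-reducible. The naive hope that $<_{\glex}$ is an elimination order for $X\setminus Y$ fails, since in the graded setting a word carrying a large letter can be $\prec$ a word on $Y$ of the same degree (e.g.\ $x_1x_3\prec x_2^2$); so projecting a relation of $I$ onto $k\langle Y\rangle$ need not stay in $I$. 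Instead I would argue through obstructions. By Proposition \ref{PBW-diagonal}(1) one has $\C_I=\D_I$, whence $\OO_I=\{\,u^{h_I(u)}\mid u\in\N_I,\ h_I(u)<\infty\,\}$ consists of powers of Lyndon words. As $w$ is $I$-reducible it has a factor $o\in\OO_I$, and $o\in\langle Y\rangle$, so $o=u^{n}$ with $u\in\N_I\cap\langle Y\rangle$ and $n=h_I(u)$. Applying Proposition \ref{PBW-diagonal}(2) to $u$ gives $[u]_\rho^{\,n}\in[k\langle X|I\rangle^{\vartriangleleft u}_{\deg(u^n)}]_\rho+k\langle X\rangle_{<\deg(u^n)}+I$; by the Key Lemma the bracket term lies in $k\langle Y\rangle$, as does $[u]_\rho^{\,n}$, so comparing the homogeneous components of degree $\deg(u^n)$ (here $I$, hence $J$, is homogeneous) kills the lower-degree term and yields $[u]_\rho^{\,n}-b\in I\cap k\langle Y\rangle=J$ for the homogeneous bracket term $b$, whose words are all $<_{\glex}u^n$. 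Thus $u^n$, and therefore $w$, is $J$-reducible. This establishes $\D_J=\D_I\cap\langle Y\rangle$; hence $\N_J=\N_I\cap\langle Y\rangle$, and applying the same comparison to the powers $u^m$ gives $h_J(u)=h_I(u)$ for all $u\in\mathbb{L}(Y)$. Finally $\OO_J=\OO_I\cap\langle Y\rangle$ follows from the factor-characterization of reducibility, and $\C_J=\D_J$ follows since $\OO_J\subseteq\OO_I$ consists of powers of Lyndon words. This gives (1) and (2).

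For (3) and (4) I would restrict the formulas of Proposition \ref{PBW-diagonal} to $\langle Y\rangle$ via the Key Lemma and homogeneity. For (3), let $u\in\mathbb{L}(Y)$ have finite height $n=h_J(u)=h_I(u)$; Proposition \ref{PBW-diagonal}(2) writes $[u]_\rho^{\,n}=b+c+e$ with $b\in[k\langle X|I\rangle^{\vartriangleleft u}_{\deg(u^n)}]_\rho$, $c\in k\langle X\rangle_{<\deg(u^n)}$ and $e\in I$. Taking degree-$\deg(u^n)$ components drops $c$ and replaces $e$ by a homogeneous element of $I$; since $[u]_\rho^{\,n}$ and $b$ lie in $k\langle Y\rangle$ (Key Lemma) and $k\langle X|I\rangle^{\vartriangleleft u}\cap\langle Y\rangle=k\langle Y|J\rangle^{\vartriangleleft u}$ by (2), this gives $[u]_\rho^{\,n}\in[k\langle Y|J\rangle^{\vartriangleleft u}_{\deg(u^n)}]_\rho+J$. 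Part (4) is identical, starting from Proposition \ref{PBW-diagonal}(3) and using that $uv\in\mathbb{L}(Y)$ by Proposition \ref{fact-Lyndon}(L1), so that $\langle X\rangle^{\trianglelefteq uv}\subseteq\langle Y\rangle$.

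For (5) I would invoke Proposition \ref{PBW-generator} for the ideal $J$ of $k\langle Y\rangle$ with the $Y$-diagonal braiding $\rho|_Y$: its two hypotheses are precisely $\C_J=\D_J$, proved in (2), and the displayed ``soft'' identity for every Lyndon word on $Y$ of finite height, which is weaker than the identity established in (3). Hence $(\{[u]_\rho+J\}_{u\in\N_J},h_J,<)$ is a system of PBW generators of $A$ for every total order $<$ on $\N_J$. In summary, the only genuinely nontrivial point is the reverse reducibility inclusion of the second paragraph; all else is a matter of transporting the statements already proved for $R$ across $k\langle Y\rangle\hookrightarrow k\langle X\rangle$ using homogeneity and the closedness Key Lemma.
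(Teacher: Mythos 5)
Your proof is correct and follows essentially the same route as the paper's: the closedness observation that $\langle X\rangle^{\vartriangleleft u}\subseteq\langle Y\rangle$ for every Lyndon word $u$ on $Y$, the extraction of the top homogeneous component from Proposition \ref{PBW-diagonal} (2) so that the error term lands in $J=I\cap k\langle Y\rangle$, and the appeal to Proposition \ref{PBW-generator} for part (5). The only organizational difference is that you establish $\D_J=\D_I\cap\langle Y\rangle$ first by passing through obstructions (a harmless slip: when the obstruction is $u^n$ with $n=1$ one has $u\notin\N_I$, but Proposition \ref{PBW-diagonal} (2) still applies since it only requires finite height $\geq 1$), whereas the paper proves the height equality (1) first and then obtains $\D_J=\C_J=\C_I\cap\langle Y\rangle=\D_I\cap\langle Y\rangle$; these are interchangeable since $\C_I=\D_I$ is equivalent to $\OO_I$ consisting of powers of Lyndon words.
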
 

\begin{proof}
First note that the biggest letter in a Lyndon word must occur in the left-most place, so Lyndon words on $X$ pseudo-lexicographically smaller than a Lyndon word on $Y$ are necessarily Lyndon words on $Y$. Therefore, $\langle X\rangle^{\vartriangleleft u}_{\gamma}=\langle Y\rangle_{\gamma}^{\vartriangleleft u}$ for  every index $\gamma\in \Gamma$ and every Lyndon word $u$ on $Y$.

(1) Let $u$ be a Lyndon word on $Y$. Clearly, $h_I(u) \leq h_J(u)$ by definition. To see the equality, we may assume $h_I(u)=n$ is finite. By Proposition \ref{PBW-diagonal} (2) and the assumption that $I$ is homogeneous, there is an element $f\in I_{\deg(u^n)}$ such that $[u^n]_\rho -f \in  [k\langle X\rangle^{\vartriangleleft u}_{\deg(u^n)}]_\rho= [k\langle Y\rangle^{\vartriangleleft u}_{\deg(u^n)}]_\rho$. It follows that $f\in J$, and thereof $[u^n]_\rho \in [k\langle Y\rangle^{\vartriangleleft u}_{\deg(u^n)}]_\rho +J$. Thus $u^n$ is $J$-reducible and hence $h_J(u) \leq n$.
	
(2)	First we show $\N_J=\N_I \cap \langle Y\rangle$. Note that a Lyndon word on $Y$ is $I$-irreducible (resp. $J$-irreducible) if and only if $h_I(u) \geq 2$ (resp. $h_J(u) \geq 2$). So by Part (1), a  Lyndon word on $Y$ is $I$-irreducible if and only if it is $J$-irreducible. The desired equality  follows directly.  
	
Next we show the remaining equalities. Note that an $I$-irreducible word on $Y$ is necessarily $J$-irreducible, so $\D_I\cap \langle Y\rangle \subseteq \D_J$. By Proposition \ref{PBW-diagonal} (1), one has $\C_I \cap \langle Y\rangle = \D_I\cap \langle Y\rangle$.  In addition,  by Part (1) and the equality $\N_J=\N_I \cap \langle Y\rangle$, one gets $\C_J=\C_I \cap \langle Y\rangle$. Combine the above observations, one has $\D_J\subseteq \C_J =\D_I\cap \langle Y\rangle \subseteq \D_J$. Hence we have $\C_J=\D_J= \D_I\cap  \langle Y\rangle$.  Finally, one gets $\OO_J= \OO_I \cap \langle Y \rangle$ readily from the equality  $\D_J= \D_I\cap  \langle Y\rangle$, according to the definition of $\OO_J$ and $\OO_I$.

(3) Note that $\langle Y|J\rangle^{\vartriangleleft u}_{\deg(u^n)} =\D_J\cap \langle Y\rangle^{\vartriangleleft u}_{\deg(u^n)} = \D_I \cap \langle X\rangle^{\vartriangleleft u}_{\deg(u^n)} = \langle X|I\rangle^{\vartriangleleft u}_{\deg(u^n)}$. So
\[
	[u^n]_\rho ~\in~ \big ([k\langle X|I\rangle^{\vartriangleleft u}_{\deg(u^n)}]_\rho +I \big) \cap k\langle Y\rangle = \big ([k\langle Y|J\rangle^{\vartriangleleft u}_{\deg(u^n)}]_\rho +I \big) \cap k\langle Y\rangle = [k\langle Y|J\rangle^{\vartriangleleft u}_{\deg(u^n)}]_\rho +J
\]
by Part (1) and Proposition \ref{PBW-diagonal} (2). 

(4) The argument is similar to that of Part (3), by using  Proposition \ref{PBW-diagonal} (3).

(5) It is a direct consequence of the equality $\C_J=\D_J$, Part (3) and Proposition \ref{PBW-generator}.
\end{proof}

\begin{remark}
For an ideal $I$ of $k\langle X\rangle$ and a subset $Y \subseteq X$, knowing a generating set $G$ for $I$, it is not necessarily true that $G'=G\cap k\langle Y\rangle$ is also a generating set for $J=I\cap k\langle Y\rangle$.  To make it happen, a well-known choice of $G$ is an arbitrary Gr\"{o}bner basis of $I$ with respect to an arbitrary admissible order $\sigma$ on $\langle X\rangle$ that eliminates $X\backslash Y$ in the sense of \cite[Definition 17]{Nord}. Indeed, $G'$ is  a Gr\"{o}bner basis of $J$ with respect to the restriction of $\sigma$ to $\langle Y\rangle$ (see \cite[Proposition 19]{Nord}), hence $G'$ is a generating set of $J$. 
In the context of Corollary \ref{Lyndon-compare}, one may further obtains that 	
$\mathcal{G}_J= \mathcal{G}_I\cap k\langle Y\rangle,$
where $\mathcal{G}_I$ (resp. $\mathcal{G}_J$) denotes the reduced Gr\"{o}bner basis of $I$ (resp. $J$) with respect to $<_{\glex}$. So one gets more choices of $G$ when $I$ satisfy specific conditions. We refer to \cite{Nord} for those undefined notions.
\end{remark}

\begin{corollary}\label{PBW-relative}
Assume the notations and conditions of Proposition \ref{PBW-diagonal} with $I$ homogeneous. Let $Y_1\subseteq Y_2$ be two closed subsets of $X$ and let $J_i:=I\cap k\langle Y_i\rangle$ for $i=1,2$.
Consider  $B:=k\langle Y_1\rangle/ J_1$ as a subalgebra of $A := k\langle Y_2\rangle/ J_2$ under the natural map $B\to A$. Let $\Xi:= \N_{J_2}\backslash \langle Y_1\rangle$. Then  $(\{[u]_\rho+J_2\}_{u\in \Xi}, h_{J_2}, <)$ is a system of PBW generators of $A$ over $B$ for each total order $<$ on $\Xi$.
\end{corollary}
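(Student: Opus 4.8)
The plan is to reduce everything to the absolute (over-$k$) PBW bases supplied by Corollary \ref{Lyndon-compare}, applied once to the closed subset $Y_2$ and once to the closed subset $Y_1$, and then to extract the relative module bases by choosing two opposite extensions of the given order on $\Xi$. First I would record the preliminary identifications. Since $J_1 = I\cap k\langle Y_1\rangle = (I\cap k\langle Y_2\rangle)\cap k\langle Y_1\rangle = J_2\cap k\langle Y_1\rangle$, the natural map $B\to A$ is injective, so $B$ is genuinely a subalgebra of $A$ and the bracketings $[u]_\rho$ (for $u\in\langle Y_1\rangle$) agree whether computed in $k\langle Y_1\rangle$ or in $k\langle Y_2\rangle$. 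Applying Corollary \ref{Lyndon-compare} to $Y_2$ gives $\N_{J_2}=\N_I\cap\langle Y_2\rangle$ and the fact that, for every total order on $\N_{J_2}$, the set $B(\{[u]_\rho+J_2\}_{u\in\N_{J_2}},h_{J_2},<)$ is a $k$-basis of $A$; applying it to $Y_1$ gives $\N_{J_1}=\N_I\cap\langle Y_1\rangle$, the compatibility $h_{J_1}=h_{J_2}|_{\N_{J_1}}$ (both equal $h_I$ on Lyndon words of $Y_1$), and that $W_1:=B(\{[u]_\rho+J_1\}_{u\in\N_{J_1}},h_{J_1},<)$ is a $k$-basis of $B$. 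Because $\langle Y_1\rangle\subseteq\langle Y_2\rangle$, these combine to the crucial partition $\N_{J_2}=\N_{J_1}\sqcup\Xi$.

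Next I would treat the left-module statement. Fix any total order $<$ on $\Xi$ and extend it to $\N_{J_2}$ by declaring every element of $\N_{J_1}$ to be smaller than every element of $\Xi$ (with any chosen order inside $\N_{J_1}$). For this extended order the standard monomials of $A$ from Corollary \ref{Lyndon-compare} are indexed by increasing sequences in $\N_{J_2}=\N_{J_1}\sqcup\Xi$, which split uniquely into an $\N_{J_1}$-prefix followed by an $\Xi$-suffix; using $h_{J_2}|_{\N_{J_1}}=h_{J_1}$, this yields a bijection $W_1\times W_2\to B(\{[u]_\rho+J_2\}_{u\in\N_{J_2}},h_{J_2},<)$, $(w_1,w_2)\mapsto w_1w_2$, where $W_2:=B(\{[u]_\rho+J_2\}_{u\in\Xi},h_{J_2},<)$. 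Thus $\{w_1w_2\}$ is a $k$-basis of $A$ while $W_1$ is a $k$-basis of $B$. Spanning of $A$ as a left $B$-module by $W_2$ is then immediate, and linear independence follows by expanding any relation $\sum_j b_jw_2^{(j)}=0$ with $b_j=\sum_i c_{ij}w_1^{(i)}$ into a relation among the distinct basis elements $w_1^{(i)}w_2^{(j)}$, forcing all $c_{ij}=0$. Hence $W_2$ is a left $B$-basis of $A$.

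Then I would handle the right-module statement by the mirror choice: extend the same fixed order on $\Xi$ to $\N_{J_2}$ by declaring $\Xi$ smaller than $\N_{J_1}$. The key observation is that $W_2$ depends only on the order restricted to $\Xi$, hence is literally the same set as before; now the standard monomials of $A$ factor as $w_2w_1$ with $w_2\in W_2$, $w_1\in W_1$, and the identical linear-algebra argument shows $W_2$ is a right $B$-basis of $A$. Since moreover $h_{J_2}(u)\ge 2$ for every $u\in\Xi\subseteq\N_{J_2}$ by Definition \ref{definition-height}, the triple $(\{[u]_\rho+J_2\}_{u\in\Xi},h_{J_2},<)$ satisfies all requirements of Definition \ref{PBW-definition} and is a system of PBW generators of $A$ over $B$.

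The only genuine content, and the step to get right, is the factorization of a PBW basis attached to a \emph{separated} order into the product of the two sub-bases, together with the remark that the two opposite separated extensions of one and the same order on $\Xi$ produce simultaneously the left and the right module bases out of the single set $W_2$. There is no serious combinatorial obstacle beyond careful bookkeeping of the height function across $\N_{J_1}\subseteq\N_{J_2}$ and the uniqueness of the prefix/suffix split, all of which are furnished by Corollary \ref{Lyndon-compare}.
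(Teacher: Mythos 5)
Your proposal is correct and follows essentially the same route as the paper: the paper likewise splits $\N_{J_2}=\N_{J_1}\sqcup\Xi$ via Corollary \ref{Lyndon-compare}(2), extends the given order on $\Xi$ to $\N_{J_2}$ in the two opposite ways (one putting $\N_{J_1}$ below $\Xi$, one above), and reads off the left and right $B$-module bases from the resulting two $k$-bases of $A$ together with the $k$-basis of $B$ from Corollary \ref{Lyndon-compare}(1,5). Your extra remarks on the prefix/suffix factorization and the height compatibility are just a more explicit rendering of the same bookkeeping.
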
 

\begin{proof}
Note that $\N_{J_i}=\N_I \cap \langle Y_i\rangle$ for $i=1, 2$ by Proposition \ref{Lyndon-compare} (2). So $\N_{J_2}$ is a disjoint union of $\N_{J_1}$ and $\Xi$. Let $<$ be a total order on $\Xi$. Fix a  total order $<'$ on $\N_{J_1} $. Define two total orders $<_1$ and $<_2$ on $\N_{J_2}$ as follows. The restriction of them to $\Xi$  and $\N_{J_1}$ are exactly $<$ and $<'$ respectively, but we require $u<_1 v$ and $v<_2 u$ respectively for every pair of words $u\in \N_{J_1}$ and $v\in \Xi$. For simplicity,   we write $z_u = [u]_\rho+ J_1 \in A$ for $u \in \N_{J_2}$.  Clearly, $[u]_\rho+J_1\in B$ is identified with $z_u\in A$ for every Lyndon word $u\in \N_{J_1}$, under the natural map $B\to A$. By  Corollary \ref{Lyndon-compare} (1, 5), 
\begin{eqnarray*}\label{PBW-1}
\{~ z_{u_1}^{r_1}\cdots z_{u_m}^{r_m} ~|~ m\geq 0,\, u_1<' \cdots <' u_m, \, u_i \in \N_{J_1}, \, r_i< h_{J_2}(u_i), \, i=1,\ldots, m ~\}
\end{eqnarray*}
is a basis of $B$ as a vector space, and for $t=1,2$ the set
\begin{eqnarray*}\label{PBW-2}
\{~ z_{u_1}^{r_1}\cdots z_{u_m}^{r_m} ~|~ m\geq 0,\, u_1<_t \cdots <_t u_m,\, u_i \in \N_{J_2}, \, r_i< h_{J_2}(u_i), \, i=1,\ldots, m ~\}
\end{eqnarray*}
are both bases of $A$ as a vector space. It follows readily that $\{~ z_{u_1}^{r_1}\cdots z_{u_m}^{r_m} ~|~ m\geq 0,\, u_1< \cdots < u_m,\, u_i \in \Xi, \, r_i< h_{J_2}(u_i), \, i=1,\ldots, m ~\}$ is a basis of $A$ as a left and right $B$-module.
\end{proof}

\section{Coideal subalgebras of connected graded braided bialgebras}

This section is devoted to study the structure of homogeneous  coideal subalgebras of connected graded braided bialgebras with braiding satisfying certain mild conditions. 

Let $\Gamma=(\Gamma, +, 0)$ be an abelian monoid. Recall that a $\Gamma$-graded algebra (coalgebra, braided algebra, etc.) is called \emph{connected} if its $0$-th component is  one-dimensional. 

\begin{remark}
Assume that $\Gamma$ is \emph{finitely decomposable}, which means that for each $\gamma\in \Gamma$, the set $$\{~(n,\gamma_1, \ldots, \gamma_n) ~|~ n\geq 1, ~ \gamma_1,\ldots,\gamma_n\in \Gamma\backslash\{0\},~ \gamma_1+\cdots+\gamma_n =\gamma~\}$$ is finite. Then every connected $\Gamma$-graded (braided) bialgebra $R$ is necessarily a  $\Gamma$-graded (braided) Hopf algebra. The antipode is given by $\mathcal{S}_R := \sum_{n\geq 0} (\eta_R\varepsilon_R-\id_R)^{*n}$, where $*$ denotes the convolution product of ${\rm Hom}(R,R)$.  Note that it is well-defined by the assumption of $\Gamma$.
\end{remark}

\begin{definition}
A \emph{left ({\rm resp.} right) coideal subalgebra} of  a braided bialgebra $R$ is a subalgebra $K$ of $R$ such that $\Delta_R(K) \subseteq R\otimes K$ (resp. $\Delta_R(K) \subseteq K\otimes R$).  Note that we require no compatible condition on $K$ with respect to the braiding of the  braided bialgebra $R$.
\end{definition}

In the sequel, we write $V_+:=\bigoplus_{\gamma\in \Gamma\backslash\{0\}}V_\gamma$ for a $\Gamma$-graded vector space $V=\bigoplus_{\gamma\in \Gamma}V_\gamma$.

The following theorem is a crucial result of this paper. As a matter of fact, most of the definitions and results developed in the previous two sections aim to prove it.

\begin{theorem}\label{braided-bialgebra-1}
Let $R=(R,\tau)$ be a connected $\Gamma$-graded braided bialgebra, where $\Gamma$ is a  nontrivial abelian monoid that has admissible well orders. Assume that $\tau=\tau_\chi$ for some bicharacter $\chi$ of $\Gamma$.
Let $A\supseteq B$ be homogeneous left (resp. right) coideal subalgebras of $R$. Then there exists a family $\{z_\xi\}_{\xi\in \Xi}$ of homogeneous elements of $A_+$, a map $h:\Xi\to \mathbb{N}_\infty$ and a total order $\vartriangleleft$ on $\Xi$ such that
	\begin{enumerate}	
		\item  $(\{z_\xi\}_{\xi\in \Xi}, h, <)$ is a system of PBW generators of $A$ over $B$ for each total order $<$ on $\Xi$.
		%\item The braiding of $R$ is diagonal with respect $\{z_\xi\}_{\xi\in \Xi}$.
		\item  For each $\xi\in \Xi$, the subalgebra $A^{\trianglelefteq \xi}$ of $A$ generated by $\{~z_\eta~|~\eta\in \Xi,~ \eta \trianglelefteq \xi ~\}$ over $B$ has  $$\{~ z_{\xi_1}^{r_1}\cdots z_{\xi_m}^{r_m} ~|~ m\geq 0,\, \xi_1\vartriangleleft \cdots \vartriangleleft \xi_m \trianglelefteq \xi,\, \xi_i \in \Xi,  \, r_i< h(\xi_i),  \, i=1,\ldots, m ~\}$$ as a basis of left $B$-module as well as of right $B$-module. 
		\item  For each $\xi\in \Xi$, the subalgebra $A^{\trianglelefteq \xi}$ is a left (resp. right) coideal of $R$, and  it is a subcoalgebra of $R$ in the case that $A, B$ are both subcoalgebras of $R$  and $\chi^2=\varepsilon_\Gamma$.
		\item For each $\xi\in \Xi$ with $h(\xi)<\infty$,  it follows that  $z_\xi^{h(\xi)}\in \bigcup_{\delta\vartriangleleft \xi} A^{\trianglelefteq \delta}$.
		\item For all $\xi,\eta\in \Xi$ with $\eta \vartriangleleft \xi$, it follows that  $[z_\xi, z_\eta]_\tau \in \bigcup_{\delta\vartriangleleft \xi} A^{\trianglelefteq \delta}$ (resp. $[z_\xi, z_\eta]_{\tau^{-1}} \in \bigcup_{\delta\vartriangleleft \xi} A^{\trianglelefteq \delta}$).
		\item  For each $\xi \in \Xi$ with $h(\xi) <\infty$, the scalar $a_\xi=\chi(\deg(z_\xi),\deg(z_\xi))$ is a root of unity. More precisely,  if  $k$ is of characteristic $0$ then $a_{\xi}$ is of order $h(\xi)$, and in particular, $a_{\xi}\neq 1$; and if $k$ is of characteristic $p>0$ then  $a_{\xi} $ is of order $h(\xi)/p^s$ for some integer $s\geq 0$.
	\end{enumerate}
\end{theorem}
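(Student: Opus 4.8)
The plan is to reduce Theorem \ref{braided-bialgebra-1} to the machinery already built in Sections 2--3, so that almost all of the conclusions become formal consequences of Proposition \ref{PBW-diagonal} and its corollaries. First I would choose a set $X$ of homogeneous generators for $R$ together with an admissible well order on $\Gamma$, arranged so that $Y_1:=X\cap B$ and $Y_2:=X\cap A$ are \emph{closed} subsets (Definition \ref{definition-closed}) which generate $B$ and $A$ respectively; this is possible by building $X$ in stages, putting the generators of $B$ lowest, then those of $A$, then the rest of $R$. Writing $R=k\langle X\rangle/I$ with $I$ the homogeneous ideal of relations, the hypothesis $\tau=\tau_\chi$ makes the induced braiding on $k\langle X\rangle$ diagonal with respect to the basis $\langle X\rangle$, so the entire apparatus of $\rho$-bracketing and Lyndon words applies with $\rho=\tau^{-1}$ (resp. $\rho=\tau$ in the right-coideal case).

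The crucial input is the coalgebra structure. Because $A$ and $B$ are \emph{left} coideals, $\Delta_R$ lifts to a \emph{left bounded} $\tau$-comultiplication $\Delta$ on $k\langle X\rangle$ satisfying $\Delta(I)\subseteq I\otimes k\langle X\rangle + k\langle X\rangle\otimes I$; constructing this lift and verifying the boundedness estimate from Definition \ref{definition-bounded-comultiplication} is the first real step, and it is where the one-sidedness of the coideal condition is used. Once this is in place, Proposition \ref{PBW-diagonal} is directly applicable, and I would then set $z_\gamma:=[\gamma]_\rho+I$ for $\gamma\in\N_I$, with $\Xi:=\N_{J_2}\setminus\langle Y_1\rangle$ where $J_i:=I\cap k\langle Y_i\rangle$. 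Conclusion (1) is then exactly Corollary \ref{PBW-relative}; conclusion (6) on the orders of the scalars $a_\xi=\chi(\deg z_\xi,\deg z_\xi)$ is Proposition \ref{PBW-diagonal}(5) combined with $\tau_{\gamma,\gamma}=\chi(\deg\gamma,\deg\gamma)$; and by taking $\vartriangleleft$ to be the restriction of $<_{\lex}$ to $\Xi$, conclusions (4) and (5) follow from parts (2) and (3) of Proposition \ref{PBW-diagonal}, since the right-hand bracketing members lie in $[k\langle X|I\rangle^{\vartriangleleft u}]_\rho+I$ and thus, after identifying the relevant $z_\eta$, inside $\bigcup_{\delta\vartriangleleft\xi}A^{\trianglelefteq\delta}$. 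Here I would use the combinatorial fact (Proposition \ref{fact-Lyndon}) that a product of Lyndon words strictly $<_{\lex} u$ corresponds to a monomial in the $z_\eta$ with $\eta\vartriangleleft\xi$, so that the $\langle X\rangle^{\vartriangleleft u}$ filtration matches the $A^{\trianglelefteq\xi}$ filtration on the nose.

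For conclusion (2) I would observe that $A^{\trianglelefteq\xi}$ is precisely the span of the displayed PBW monomials; this is a refinement of Corollary \ref{PBW-relative} obtained by tracking, for each $\xi$, that the subalgebra generated by $\{z_\eta:\eta\trianglelefteq\xi\}$ over $B$ has as its $\N$-basis exactly the restricted products with all atoms $\trianglelefteq\xi$, which again is the content of parts (2)--(4) of Proposition \ref{PBW-diagonal} applied level by level along $\vartriangleleft$. The coideal statement (3) is the genuinely new point not covered verbatim by the earlier propositions. I expect the main obstacle to be showing that each $A^{\trianglelefteq\xi}$ is itself a \emph{left coideal} of $R$: one must verify $\Delta_R(A^{\trianglelefteq\xi})\subseteq R\otimes A^{\trianglelefteq\xi}$, which requires that the left-bounded comultiplication, when applied to the generator $[u]_\rho$ with $u\trianglelefteq\xi$, produces a right-hand tensor factor lying in $A^{\trianglelefteq\xi}$. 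This is exactly what the explicit formula in Proposition \ref{comultiplication-diagonal} delivers: for $u\in\N_I$ the second tensor legs are of the form $[wu^{n-i}]_\rho$ with $w\in\langle X\rangle^{\vartriangleleft u}$, all of whose Lyndon atoms are $\trianglelefteq u\trianglelefteq\xi$, so they remain in $A^{\trianglelefteq\xi}$ modulo lower terms and $I$; an induction on the degree filtration $F_\gamma$ then closes the argument. For the subcoalgebra refinement under $\chi^2=\varepsilon_\Gamma$, I would note that this condition forces $\tau^2=\id$ so that the braided comultiplication becomes symmetric enough to send $A^{\trianglelefteq\xi}$ into $A^{\trianglelefteq\xi}\otimes A^{\trianglelefteq\xi}$ once both-sided boundedness (available when $A,B$ are subcoalgebras, hence two-sided coideals) is invoked; balancing the left- and right-bounded estimates simultaneously is the delicate part here.
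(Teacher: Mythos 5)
Your overall strategy is the same as the paper's (layered generating set with $Y_1\subseteq Y_2$ closed, lift $\Delta_R$ to a bounded $\tau$-comultiplication, invoke Proposition \ref{PBW-diagonal}, Corollary \ref{Lyndon-compare} and Corollary \ref{PBW-relative}, take $\Xi=\N_{J_2}\setminus\langle Y_1\rangle$ and $\vartriangleleft$ the restriction of $<_{\lex}$). However, there is a concrete error at the step you yourself single out as ``the first real step'': you assert that the \emph{left} coideal condition produces a \emph{left} bounded comultiplication and hence that one should work with $\rho=\tau^{-1}$. This is backwards. By Definition \ref{definition-bounded-comultiplication}, left boundedness constrains the \emph{first} tensor slot of $\Delta(x)$ to $k\langle X_{<x}\rangle_+$, whereas the left coideal condition $\Delta_R(A)\subseteq R\otimes A$ constrains the \emph{second} tensor slot; combined with the closedness of $Y_1,Y_2$ and the degree ordering, this yields a \emph{right} bounded lift, which by the hypotheses of Proposition \ref{PBW-diagonal} forces $\rho=\tau$ (so $z_\xi=[\xi]_\tau+J_2$, as in the paper). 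If you tried to verify left boundedness from the left coideal hypothesis as you propose, the verification would fail, and Proposition \ref{PBW-diagonal} would not be applicable as you have set things up.

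This swap propagates: with $\rho=\tau^{-1}$ your version of conclusion (5) would produce $[z_\xi,z_\eta]_{\tau^{-1}}\in\bigcup_{\delta\vartriangleleft\xi}A^{\trianglelefteq\delta}$ in the left coideal case, which is the statement for the \emph{right} coideal case. More seriously, your argument for conclusion (3) quotes the wrong half of Proposition \ref{comultiplication-diagonal}: in the left bounded formula the terms whose legs are confined to $[k\langle X\rangle_+^{\vartriangleleft u}u^i]_{\tau^{-1}}$ sit in the \emph{first} tensor slot, while the error term $f$ has an uncontrolled second slot in $k\langle X\rangle_+$; so the second legs do \emph{not} all land in $A^{\trianglelefteq\xi}$, and the claimed inclusion $\Delta_R(A^{\trianglelefteq\xi})\subseteq R\otimes A^{\trianglelefteq\xi}$ does not follow from that formula. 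It is precisely the right bounded formula (the ``resp.''\ case, with $\rho=\tau$) whose controlled legs occupy the second slot and give the left coideal property. Once the left/right assignment and the choice of $\rho$ are corrected throughout, your outline does match the paper's proof; a smaller additional quibble is that $\chi^2=\varepsilon_\Gamma$ gives $\chi(\alpha,\beta)^2=1$, which is not the same as $\tau^2=\id$ (the latter needs $\chi(\alpha,\beta)\chi(\beta,\alpha)=1$), so the subcoalgebra refinement in (3) should be argued via the two-sided boundedness and Proposition \ref{comultiplication-diagonal} rather than via involutivity of $\tau$.
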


\begin{proof}
	One may choose homogeneous subspaces $U_1\subseteq B_+$, $U_2\subseteq A_+$ and $U_3\subseteq R_+$ such that $U_1$ generates $B$, $U_2\cap B=0$ and $U_1+U_2$ generates $A$, $U_3\cap A=0$ and $V:=U_1+U_2+U_3$ generates $R$. Choose a homogeneous basis $X_i$ of $U_i$ for $i=1,2,3$. Let $X=X_1\cup X_2\cup X_3$. The braiding on $kX=V$  (denoted by $\tau_V$) is clearly $X$-diagonal. Further, equip $X$ with a well order $<$ as follows. 
	First fix an admissible well order $<_\Gamma$ on $\Gamma$ and a well order $<_{i,\gamma}$ on $X_{i,\gamma}:=\{~x\in X_i~|~ \deg(x) =\gamma ~\}$ for each integer $i=1,2,3$ and each  $\gamma \in \Gamma$; then for each $x_1,x_2\in X$, 
	\begin{eqnarray*}
		\label{definition-deglex}
		x_1 < x_2\,\, \Longleftrightarrow \,\, \left\{
		\begin{array}{llll}
			x_1\in X_i \text{ and } x_2\in X_j \text{ with } i<j, \quad \text{or} \\
			x_1, x_2\in X_i \text{ and }	 \left\{
			\begin{array}{llll}
				\deg(x_1)<_\Gamma\deg(x_2), \quad \text{or} && \\
				\deg(x_1) =\deg(x_2)\, \text{ and } x_1<_{i,\deg(x_1)} x_2.
			\end{array}\right. 
		\end{array}\right.
	\end{eqnarray*}
	Clearly, $Y_1:=X_1$ and $Y_2:=X_1\cup X_2$ are closed subsets of $X$.
	Consider $R$ as a $\Gamma$-graded quotient algebra $k\langle X\rangle/ I$ for some homogeneous ideal $I$ of $k\langle X\rangle$. By the counitity of $R$,  one may lift the comultiplication map of $R$ to a $\tau$-comultiplication $\Delta: k\langle X\rangle\to k\langle X\rangle \otimes^{\tau_V} k\langle X\rangle$ such that 
	\[
	\Delta(x)  ~ \in ~  1\otimes x + x\otimes 1 +   \big(k\langle X \rangle_+ \otimes k\langle X \rangle_+\big)_{\deg(x)}, \quad x\in X.
	\]
	and $\Delta(I)\subseteq k\langle X \rangle\otimes I + I\otimes k\langle X \rangle$. In addition, one may require for $i=1,2$ that
	\begin{eqnarray*}
		\Delta(x) \in 1\otimes x+x\otimes 1+ (k\langle X\rangle_+ \otimes k\langle Y_i\rangle_+)_{\deg(x)} \quad  \Big(\text{resp. } (k\langle Y_i\rangle_+ \otimes k\langle X\rangle_+)_{\deg(x)}  \Big ), \quad x\in Y_i,
	\end{eqnarray*}
	since $A$ and $B$ are left (resp. right) coideals of $R$. Clearly, $\Delta$ is right (resp. left) bounded. If $A, B$ are also both subcoalgebras of $R$, then we may assume for  $i=1,2$ that
	$$\Delta(x) \in 1\otimes x+x\otimes 1+ (k\langle Y_i\rangle_+ \otimes k\langle Y_i\rangle_+)_{\deg(x)}, \quad x\in Y_i $$
	and so in this case $\Delta$ is bounded. Let $J_i:=k\langle Y_i\rangle \cap I$ for $i=1,2$. We may identify $B$ and $A$ with $k\langle Y_1\rangle/ J_1$ and $k\langle Y_2\rangle/ J_2$ respectively under the isomorphisms of $\Gamma$-graded algebras induced from the canonical projection $k\langle X\rangle \to R$. Now let $\Xi:=\N_{J_2} \backslash \langle Y_1\rangle$ and for each $\xi\in \N_{J_2}$, let
	\[
	z_\xi:= [\xi]_\tau +J_2 \in A \quad \Big(~ \text{resp. } z_\xi:= [\xi]_{\tau^{-1}} +J_2 \in A ~\Big).
	\]
	Let $h:\Xi\to \mathbb{N}_\infty$ and $\vartriangleleft$ be the restriction of  $h_{J_2}:\mathbb{L}(Y_2) \to\mathbb{N}_\infty$ and $<_{\lex}$ to $\Xi$ respectively. 
	
	Part (1) is a direct consequence of Corollary \ref{PBW-relative}. Let $\xi\in \Xi$. Note that $[k\langle Y_2|J_2\rangle^{\trianglelefteq \xi}]_\tau+J_2$ (resp. $[k\langle Y_2|J_2\rangle^{\trianglelefteq \xi}]_{\tau^{-1}}+J_2$) is closed under multiplication  by  Corollary \ref{Lyndon-compare} (2,3) and Lemma \ref{rearragement}. Also,  $\langle Y_2|J_2\rangle^{\trianglelefteq \xi} :=\D_{J_2} \cap \langle Y_2\rangle^{\trianglelefteq \xi} \supseteq \D_{J_2} \cap \langle Y_1\rangle = \D_{J_1}$ by Corollary \ref{Lyndon-compare} (2). So  $A^{\trianglelefteq \xi}$ is the image of $[k\langle Y_2|J_2\rangle^{\trianglelefteq \xi}]_\tau+J_2$ (resp. $[k\langle Y_2|J_2\rangle^{\trianglelefteq \xi}]_{\tau^{-1}}+J_2$)  under the canonical projection $k\langle Y_2\rangle \to A$. Therefore, one has Part (2)  since the displayed set is  linearly independent over $B$ both from left and right according to  Part (1). Furthermore, by Lemma \ref{bracketing-diagonal-more} (1) and Corollary \ref{Lyndon-compare} (3),    
	$$[k\langle Y_2\rangle^{\trianglelefteq \xi}]_\tau +J_2= [k\langle Y_2|J_2\rangle^{\trianglelefteq \xi}]_\tau+J_2 \quad  (\text{resp.~} [k\langle Y_2\rangle^{\trianglelefteq \xi}]_{\tau^{-1}} +J_2= [k\langle Y_2|J_2\rangle^{\trianglelefteq \xi}]_{\tau^{-1}} + J_2 ).$$ 
	Then  Part (3) is easy to see by Proposition \ref{comultiplication-diagonal}. Parts (4, 5) are direct consequence of Corollary \ref{Lyndon-compare} (3, 4) respectively. Finally, Part (5) is easy to see by Corollary \ref{Lyndon-compare} (1) and Proposition \ref{PBW-diagonal} (5).
\end{proof}

Next, we  consider connected graded braided bialgebras with braidings that are more general than that of Theorem \ref{braided-bialgebra-1}. Similar but weaker results are obtained, as we shall see. 

A braided vector space $(V,\tau)$ is called \emph{of diagonal type} if $\tau$ is $\mathcal{X}$-diagonal for some basis $\mathcal{X}$ of $V$.
\begin{lemma}\label{homogenous-diagonal}
	Let $(V,\tau)$ be a $\Gamma$-graded braided vector space of diagonal type, where $\Gamma$ is an abelian monoid. Then $\tau$ is  diagonal with respect to  some homogeneous basis  of $V$.
\end{lemma}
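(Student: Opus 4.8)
The plan is to produce the desired homogeneous basis by taking the homogeneous components of an existing (not necessarily homogeneous) diagonalizing basis, and then extracting an honest basis from them. Fix a basis $\mathcal{X}$ of $V$ with respect to which $\tau$ is diagonal, so that $\tau(x\otimes y)=q_{x,y}\,(y\otimes x)$ with $q_{x,y}\in k^\times$ for all $x,y\in\mathcal{X}$. For $x\in\mathcal{X}$ and $\gamma\in\Gamma$ write $x_\gamma\in V_\gamma$ for the $\gamma$-homogeneous component of $x$, so that $x=\sum_\gamma x_\gamma$.

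The key observation is that each pair of homogeneous components obeys the very same diagonal relation. Indeed, since $(V,\tau)$ is a $\Gamma$-graded braided vector space, $\tau$ maps $V_\mu\otimes V_\nu$ into $V_\nu\otimes V_\mu$; hence comparing the $V_\nu\otimes V_\mu$-components on both sides of $\tau(x\otimes y)=q_{x,y}\,(y\otimes x)$ yields $\tau(x_\mu\otimes y_\nu)=q_{x,y}\,(y_\nu\otimes x_\mu)$ for all $\mu,\nu\in\Gamma$ and all $x,y\in\mathcal{X}$. Thus the nonzero homogeneous components of the elements of $\mathcal{X}$ already satisfy the diagonal condition, with scalars inherited from the $q_{x,y}$.

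I would then build the basis degree by degree. For each $\gamma\in\Gamma$ the set $S_\gamma:=\{\,x_\gamma\mid x\in\mathcal{X}\,\}\setminus\{0\}$ spans $V_\gamma$, because projecting any expansion of a vector of $V$ in terms of $\mathcal{X}$ onto degree $\gamma$ expresses its $\gamma$-component as a combination of elements of $S_\gamma$. Choose a maximal linearly independent subset $\mathcal{Y}_\gamma\subseteq S_\gamma$; being a maximal independent subset of a spanning set, it is a basis of $V_\gamma$. Set $\mathcal{Y}:=\bigcup_{\gamma}\mathcal{Y}_\gamma$, which is a homogeneous basis of $V$ as the $V_\gamma$ are in direct sum. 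For $u\in\mathcal{Y}_\alpha$ and $v\in\mathcal{Y}_\beta$, writing $u=x_\alpha$ and $v=y_\beta$ for suitable $x,y\in\mathcal{X}$, the key observation gives $\tau(u\otimes v)=q_{x,y}\,(v\otimes u)\in k^\times\cdot(v\otimes u)$, so $\tau$ is diagonal with respect to $\mathcal{Y}$.

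The one delicate point—and the step I would flag as the crux—is that one must \emph{select a subset} of $S_\gamma$ rather than manufacture a basis by arbitrary linear combinations: a generic combination $\sum a_x x_\gamma$ need not satisfy the diagonal relation, since distinct $x$ may pair with a fixed component through distinct scalars $q_{x,y}$. Restricting to a subfamily of $S_\gamma$ is harmless precisely because the diagonal relations above hold componentwise and are visibly preserved on passing to any subset. One should also note in passing that the scalar attached to a pair $(u,v)$ is well defined independently of the chosen representatives $x,y$, since $y_\nu\otimes x_\mu\neq0$ forces any two candidate scalars to agree; but this is not strictly needed, as the displayed computation already holds for each fixed choice of representatives.
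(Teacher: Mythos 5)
Your proof is correct and follows essentially the same route as the paper: take the homogeneous components of a diagonalizing basis, observe that the diagonal relation descends to components because $\tau$ respects the grading, and extract a maximal linearly independent subset to obtain a homogeneous diagonalizing basis. Your version merely spells out the componentwise computation and the degree-by-degree selection that the paper leaves as ``easy to see.''
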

\begin{proof}
	Let $\mathcal{X}$ be a basis of $V$ such that $\tau$ is $\mathcal{X}$-diagonal. Let $\mathcal{X}_h$ be the set of all homogeneous components of elements of $\mathcal{X}$.  Let $\mathcal{X}_h'$ be an arbitrary maximal linearly independent subset of $\mathcal{X}_h$. It is easy  to see that $\mathcal{X}_h'$ is a homogeneous basis of $V$, and $\tau$ is $\mathcal{X}_h'$-diagonal.
\end{proof}

\begin{proposition}\label{braided-bialgebra-2}
Let $R=(R,\tau)$ be a connected $\Gamma$-graded braided bialgebra, where $\Gamma$ is a nontrivial abelian monoid that has admissible well orders. Assume that $R$ is generated by a homogeneous braided subspace  of diagonal type.  Then there exists a family $\{z_\xi\}_{\xi\in \Xi}$ of homogeneous elements in $R_+$, a map $h:\Xi\to \mathbb{N}_\infty$ and a total order $\vartriangleleft$ on $\Xi$ such that
	\begin{enumerate}	
		\item  $(\{z_\xi\}_{\xi\in \Xi}, h, <)$ is a system of PBW generators of $R$ for each total order $<$ on $\Xi$; and moreover, $\tau$ is diagonal with respect to the basis $B(\{z_\xi\}_{\xi\in \Xi}, h, <)$ of $R$.
		%\item The braiding of $R$ is diagonal with respect $\{z_\xi\}_{\xi\in \Xi}$.
		\item  For each $\xi\in \Xi$, the subalgebra $R^{\trianglelefteq \xi}$ of $R$ generated by $\{~z_\eta~|~\eta\in \Xi,~ \eta \trianglelefteq \xi ~\}$ has a  basis $$\{~ z_{\xi_1}^{r_1}\cdots z_{\xi_m}^{r_m} ~|~ m\geq 0,\, \xi_1\vartriangleleft \cdots \vartriangleleft \xi_m \trianglelefteq \xi,\, \xi_i \in \Xi,  \, r_i< h(\xi_i),  \, i=1,\ldots, m ~\}.$$
		\item For each $\xi\in\Xi$, the subalgebra $R^{\trianglelefteq \xi}$ is a  left (resp. right) coideal of $R$, and it is a  subcoalgebra of $R$ in the case that  $\tau^2$ is the identity map of $R\otimes R$.
	    \item For each $\xi\in \Xi$ with $h(\xi)<\infty$,  it follows that  $z_\xi^{h(\xi)}\in \bigcup_{\delta\vartriangleleft \xi} R^{\trianglelefteq \delta}$.
	     \item For all $\xi,\eta\in \Xi$ with $\eta \vartriangleleft \xi$, it follows that  $[z_\xi, z_\eta]_\tau \in \bigcup_{\delta\vartriangleleft \xi} R^{\trianglelefteq \delta}$ (resp. $[z_\xi, z_\eta]_{\tau^{-1}} \in \bigcup_{\delta\vartriangleleft \xi} R^{\trianglelefteq \delta}$).
		\item  For each $\xi \in \Xi$ with $h(\xi) <\infty$, the scalar $a_{\xi}$ such that $\tau(z_\xi\otimes z_\xi) = a_{\xi}\cdot z_\xi\otimes z_\xi$ is a root of unity. More precisely,  if  $k$ is of characteristic $0$ then $a_{\xi}$ is of order $h(\xi)$, and in particular, $a_{\xi}\neq 1$; and if $k$ is of characteristic $p>0$ then  $a_{\xi} $ is of order $h(\xi)/p^s$ for some integer $s\geq 0$.
	\end{enumerate}
\end{proposition}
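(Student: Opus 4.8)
The plan is to reduce the statement to Proposition \ref{PBW-diagonal} and then replay the argument of Theorem \ref{braided-bialgebra-1} in the degenerate case $Y_1=\emptyset$, $Y_2=X$ (so that $B=k$ and $A=R$), the only genuinely new ingredient being the reconciliation of the abstract braiding $\tau$ with an $X$-diagonal one. First I would apply Lemma \ref{homogenous-diagonal} to the generating homogeneous braided subspace $V$ of diagonal type to obtain a homogeneous basis $X$ of $V$ for which $\tau|_{V\otimes V}$ is $X$-diagonal, and equip $X$ with a well order refining the degree (fix an admissible well order on $\Gamma$, order by it first and then arbitrarily within each degree). Extending the $X$-diagonal braiding on $kX=V$ to the braiding $\tau_V$ on $k\langle X\rangle$, I realize $R=k\langle X\rangle/I$ for a homogeneous ideal $I$ through a surjection $\pi$.

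The key step is to show that the given $\tau$ is exactly the braiding induced by $\tau_V$ on the quotient. Since $R$ is generated as an algebra by $V$ and $\mu_R$ commutes with $\tau$, the value of $\tau$ on any $\overline{u}\otimes\overline{v}$ with $u,v\in\langle X\rangle$ is forced by $\tau|_{V\otimes V}$ through the braided commutation relations, and that forcing is precisely the diagonal combinatorics defining $\tau_V$; hence $(\pi\otimes\pi)\circ\tau_V=\tau\circ(\pi\otimes\pi)$. In particular $\tau_V$ preserves $\ker(\pi\otimes\pi)=I\otimes k\langle X\rangle+k\langle X\rangle\otimes I$, which is exactly the hypothesis needed to invoke Proposition \ref{PBW-diagonal}. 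This identification also delivers the extra assertion of part (1): every element of $B(\{z_\xi\},h,<)$ lies in a single multidegree component $k\langle X\rangle_w/I$, and an $X$-diagonal braiding acts on such components by a scalar, so $\tau$ is diagonal with respect to this PBW basis.

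Next I would lift the comultiplication. Using connectedness, write $\Delta_R(\overline{x})=1\otimes\overline{x}+\overline{x}\otimes 1+\sum_i \overline{a}_i\otimes\overline{b}_i$ with $\overline{a}_i,\overline{b}_i\in R_+$ of degree $<\deg(x)$; since $R$ is generated by $X$ and letters of degree $<\deg(x)$ are $<x$, each factor lifts into $k\langle X_{<x}\rangle$, so defining $\Delta$ multiplicatively gives a bounded $\tau_V$-comultiplication with $(\pi\otimes\pi)\circ\Delta=\Delta_R\circ\pi$, whence $\Delta(I)\subseteq I\otimes k\langle X\rangle+k\langle X\rangle\otimes I$. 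With $\C_I=\D_I$, the height and commutator formulas, and the root-of-unity statement all furnished by Proposition \ref{PBW-diagonal}, I set $\Xi=\N_I$, $z_\xi=[\xi]_\rho+I$ (with $\rho=\tau^{-1}$ resp. $\rho=\tau$ according to the left/right convention of Theorem \ref{braided-bialgebra-1}), $h=h_I|_{\N_I}$ and $\vartriangleleft=\,<_{\lex}|_{\N_I}$, and then transcribe that proof: (1) is Corollary \ref{PBW-relative} (or Proposition \ref{PBW-diagonal}(4)); (2) follows from the closure of $R^{\trianglelefteq\xi}$ under multiplication, via Lemma \ref{bracketing-diagonal-more}(1) and Lemma \ref{rearragement}, combined with the $B$-freeness in (1); (4)--(5) come from Corollary \ref{Lyndon-compare}(3,4); and (6) is Proposition \ref{PBW-diagonal}(5), noting $a_\xi=\tau_{\xi,\xi}$ because $z_\xi\in k\langle X\rangle_\xi/I$ is multidegree-homogeneous.

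For (3), the coideal claim follows from the one-sided boundedness of $\Delta$ through Proposition \ref{comultiplication-diagonal}, exactly as in Theorem \ref{braided-bialgebra-1}; the subcoalgebra claim is where $\tau^2=\mathrm{id}$ enters, for then $\tau^{-1}=\tau$, so $[\,\cdot\,]_\tau=[\,\cdot\,]_{\tau^{-1}}$ and both the left and the right forms of Proposition \ref{comultiplication-diagonal} apply to the same bracketing, controlling both tensor legs of $\Delta([wu^n]_\tau)$ and forcing $\Delta_R(R^{\trianglelefteq\xi})\subseteq R^{\trianglelefteq\xi}\otimes R^{\trianglelefteq\xi}$. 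The main obstacle is the identification $\tau=\overline{\tau_V}$ of the second paragraph: every step downstream is a faithful replay of the $\tau=\tau_\chi$ case, but that identification is precisely what replaces the hypothesis $\tau=\tau_\chi$ and at the same time secures the new diagonality-of-the-PBW-basis assertion, so it must be established carefully from the principle that the braiding of a braided algebra is determined by its restriction to a generating braided subspace.
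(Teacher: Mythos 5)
Your proposal is correct and follows essentially the same route as the paper: choose an $X$-diagonal homogeneous basis via Lemma \ref{homogenous-diagonal}, order it degree-first, present $R=k\langle X\rangle/I$, lift $\Delta_R$ to a bounded $\tau_V$-comultiplication, and read off parts (1)--(6) from Proposition \ref{PBW-diagonal}, Lemma \ref{rearragement}, Lemma \ref{bracketing-diagonal-more} and Proposition \ref{comultiplication-diagonal} with $\Xi=\N_I$, $z_\xi=[\xi]_\rho+I$, $h=h_I$, $\vartriangleleft=<_{\lex}$. The one point you elaborate beyond the paper --- that $\tau$ agrees with the braiding induced by $\tau_V$ because a braiding commuting with multiplication is determined on a generating braided subspace --- is exactly the justification the paper leaves implicit, and your multidegree argument for the diagonality of the PBW basis matches what the paper dismisses as clear.
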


\begin{proof}
	Let $V\subseteq R_+$ be a homogeneous braided subspace of $R$  which is of diagonal type and generates $R$. By Lemma \ref{homogenous-diagonal}, one may fix a homogeneous basis $X$  of $V$ so that the braiding $\tau_V$ on $V$ is $X$-diagonal. Further, equip $X$ with a well order $<$ as follows. First fix an admissible well order $<_\Gamma$ on $\Gamma$ and a well order $<_\gamma$ on $X_\gamma:=\{~x\in X~|~\deg(x)=\gamma~\}$ for each  $\gamma\in \Gamma$; then for each $x_1,x_2\in X$,
	\begin{eqnarray*}\label{definition-deglex}
		x_1 < x_2\,\, \Longleftrightarrow \,\, \left\{
		\begin{array}{llll}
			\deg(x_1)<_\Gamma\deg(x_2), \quad \text{or} && \\
			\deg(x_1) =\deg(x_2)\, \text{ and } x_1 <_{\deg(x_1)} x_2.
		\end{array}\right.
	\end{eqnarray*}
	Consider $R$ as a $\Gamma$-graded quotient algebra $k\langle X\rangle/ I$ for some homogeneous ideal $I$ of $k\langle X\rangle$, where $k\langle X\rangle$ is $\Gamma$-graded by that of $kX=V$.
	By the counitity of $R$, one may lift the comultiplication map of $R$ to a $\tau$-comultiplication $\Delta: k\langle X\rangle\to k\langle X\rangle \otimes^{\tau_V} k\langle X\rangle$ such that 
	\[
	\Delta(x)  ~ \in ~  1\otimes x + x\otimes 1 +   \big(k\langle X \rangle_+ \otimes k\langle X \rangle_+\big)_{\deg(x)}, \quad x\in X,
	\]
	and $\Delta(I)\subseteq k\langle X \rangle\otimes I+I\otimes k\langle X \rangle.$ Clearly, $\Delta$ is bounded. 
	Now let $\Xi:=\N_I$ and for each $\xi\in \N_I$, let
	\[
	z_\xi:= [\xi]_\tau +I \in R \quad \Big(~ \text{resp. } z_\xi:= [\xi]_{\tau^{-1}} +I \in R ~\Big).
	\]
	Let $h:\Xi\to \mathbb{N}_\infty$ and $\vartriangleleft$ be the restriction of  $h_I:\mathbb{L}(X) \to\mathbb{N}_\infty$ and $<_{\lex}$ to $\Xi$ respectively. 
	
	The first statement of Part (1) follows directly from Proposition \ref{PBW-diagonal} (4), and the second one is clear. Let $\xi\in \Xi$. Note that  $[k\langle X|I\rangle^{\trianglelefteq \xi}]_\tau+I$ (resp. $[k\langle X|I\rangle^{\trianglelefteq \xi}]_{\tau^{-1}}+I$) is closed under multiplication  by  Proposition \ref{PBW-diagonal} (1,2) and Lemma \ref{rearragement}. So  $R^{\trianglelefteq \xi}$ is the image of $[k\langle X|I\rangle^{\trianglelefteq \xi}]_\tau+I$ (resp. $[k\langle X|I\rangle^{\trianglelefteq \xi}]_{\tau^{-1}}+I$)  under the canonical projection $k\langle X\rangle \to R$. Therefore, one has Part (2)  since the displayed set is  linearly independent according to  Part (1). Furthermore, by Lemma \ref{bracketing-diagonal-more} (1) and Proposition \ref{PBW-diagonal} (2),   
	$$[k\langle X\rangle^{\trianglelefteq \xi}]_\tau +I= [k\langle X|I\rangle^{\trianglelefteq \xi}]_\tau+I \quad  (\text{resp.~} [k\langle X\rangle^{\trianglelefteq \xi}]_{\tau^{-1}} +I= [k\langle X|I\rangle^{\trianglelefteq \xi}]_{\tau^{-1}} +I ).$$ Then Part (3) is easy to see by Proposition \ref{comultiplication-diagonal}. Finally, Parts (4),  (5) and  (6) are direct consequences of Proposition \ref{PBW-diagonal}  (2), (3) and (5) respectively.
\end{proof}

Let $V$ be a vector space and $\mathcal{X}$ a partially ordered basis of $V$. A braiding $\tau$ on $V$ is said to be \emph{categorical with respect to $\mathcal{X}$} (or simply \emph{$\mathcal{X}$-categorical}) if 
\[
\tau(x\otimes y) ~\in ~ \sum_{u, z\in \mathcal{X},~ u\leq y, z\leq x}k\cdot (u\otimes z), \quad x, ~ y\in \mathcal{X}.
\] 
Every $\mathcal{X}$-diagonal braiding on $V$ is clearly $\mathcal{X}$-categorical. A braided vector space $(V,\tau)$ is called \emph{of categorical type} if $\tau$ is categorical with respect to some well-ordered basis of $V$.

\begin{lemma}\label{homogenous-categorical}
	Let $(V,\tau)$ be a $\Gamma$-graded braided vector space of categorical type, where $\Gamma$ is an abelian monoid. Then $\tau$ is  categorical with respect to  some well-ordered homogeneous basis  of $V$.
\end{lemma}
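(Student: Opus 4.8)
The plan is to mimic the proof of Lemma \ref{homogenous-diagonal}, passing from a given well-ordered basis to its homogeneous components, but now I must simultaneously manufacture a compatible well order so that the \emph{categorical} (rather than merely diagonal) inequalities survive. So I would fix a well-ordered basis $\mathcal{X}=(\mathcal{X},\le)$ of $V$ with $\tau$ being $\mathcal{X}$-categorical, and for $x\in\mathcal{X}$ write $x=\sum_\gamma x_\gamma$ for its homogeneous decomposition. The first step is the elementary observation that, since $(V,\tau)$ is $\Gamma$-graded one has $\tau(V_\alpha\otimes V_\beta)=V_\beta\otimes V_\alpha$, so $\tau(x_\alpha\otimes y_\beta)$ is exactly the $V_\beta\otimes V_\alpha$-bihomogeneous component of $\tau(x\otimes y)$ for all $x,y\in\mathcal{X}$ and all $\alpha,\beta\in\Gamma$. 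Applying this to the categorical expansion $\tau(x\otimes y)=\sum_{u\le y,\,z\le x}c_{u,z}\,u\otimes z$ and extracting the $(\beta,\alpha)$-component yields $\tau(x_\alpha\otimes y_\beta)=\sum_{u\le y,\,z\le x}c_{u,z}\,u_\beta\otimes z_\alpha$, which is the formula everything will be read off from.

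Next I would build the homogeneous basis greedily, keeping track of where each vector comes from. For each $\gamma\in\Gamma$ select a basis of $V_\gamma$ from the components $\{x_\gamma:x\in\mathcal{X}\}$ by transfinite recursion along $\le$: put $x_\gamma$ into $\mathcal{Y}_\gamma$ exactly when $x_\gamma\notin\Span\{x'_\gamma:x'<x\}$. A routine transfinite induction shows that $\mathcal{Y}_\gamma$ is a basis of $V_\gamma$, that the rule $s(x_\gamma):=x$ is a well-defined injection $s\colon\mathcal{Y}_\gamma\to\mathcal{X}$ (the \emph{source} map), and, crucially, that $x_\gamma\in\Span\{\,y\in\mathcal{Y}_\gamma:s(y)\le x\,\}$ for every $x\in\mathcal{X}$. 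Set $\mathcal{Y}:=\bigsqcup_\gamma\mathcal{Y}_\gamma$, a homogeneous basis of $V$, and let $s$ denote the resulting source map on all of $\mathcal{Y}$. Since each $x\in\mathcal{X}$ has only finitely many nonzero homogeneous components, every fibre $s^{-1}(x)$ is finite; hence $\mathcal{Y}$ can be ordered by declaring $y\preceq y'$ whenever $s(y)<s(y')$ and breaking the finitely many ties inside a common fibre by any fixed order. The lexicographic combination of the well order on $\mathcal{X}$ with the finite fibres is again a well order, so $(\mathcal{Y},\preceq)$ is a well-ordered homogeneous basis.

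It then remains to verify $\mathcal{Y}$-categoricity and to pin down the one genuinely delicate point. Given $p=a_\alpha\in\mathcal{Y}_\alpha$ and $q=b_\beta\in\mathcal{Y}_\beta$ with sources $a=s(p)$ and $b=s(q)$, the formula from the first step gives $\tau(p\otimes q)=\sum_{u\le b,\,z\le a}c_{u,z}\,u_\beta\otimes z_\alpha$, and by the span property each $u_\beta$ (with $u\le b$) lies in $\Span\{q'\in\mathcal{Y}_\beta:s(q')\le b\}$ while each $z_\alpha$ (with $z\le a$) lies in $\Span\{p'\in\mathcal{Y}_\alpha:s(p')\le a\}$. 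The decisive point is that a fibre of $s$ contains at most one vector in any prescribed degree, so $q'\in\mathcal{Y}_\beta$ with $s(q')\le b=s(q)$ forces $q'\preceq q$, and likewise $p'\preceq p$; consequently $\tau(p\otimes q)\in\sum_{q'\preceq q,\,p'\preceq p}k\,(q'\otimes p')$, which is precisely categoricity with respect to $(\mathcal{Y},\preceq)$. I expect the main obstacle to be exactly this bookkeeping: arranging the source map and the tie-breaking so that the two \emph{one-sided} inequalities $u\le b$ and $z\le a$ of the original condition descend \emph{simultaneously} to $q'\preceq q$ and $p'\preceq p$. The finiteness of the fibres and the one-vector-per-degree property are what make the transfer clean, and confirming that $\preceq$ is a genuine well order (not merely a total order) is the remaining detail demanding care.
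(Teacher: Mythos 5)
Your proof is correct and follows essentially the same route as the paper: you select exactly the homogeneous components that fail to lie in the span of the components of earlier basis vectors (the paper's ``$\gamma$-critical'' elements) and then transport the original well order to this homogeneous basis so that the one-sided categoricity inequalities descend. The only cosmetic difference is the choice of well order on the new basis --- the paper sorts primarily by degree (via an arbitrary well order on $\Gamma$) and then by the original order within each degree, whereas you sort primarily by the source element and break ties inside the finite fibres --- and both choices work for the verification.
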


\begin{proof}
	
	Let $\mathcal{X}$ be a well-ordered basis of $V$ such that $\tau$ is $\mathcal{X}$-categorical. For each $u\in V$ and  $\gamma\in \Gamma$, we denote by $u_\gamma$ the $\gamma$-th component of $u$. An element  $x\in \mathcal{X}$ is called $\gamma$-critical if $x_\gamma$ is not a linear combination of the set $\{~y_\gamma ~|~y\in X, y<x ~\}$. Let $\mathcal{X}_{\gamma}'$ be the set of the $\gamma$-th component of all $\gamma$-critical elements of $\mathcal{X}$. It is easy to see that $\mathcal{X}_{\gamma}'$ is a basis of $V_\gamma$ for each $\gamma\in \Gamma$. So $\mathcal{X}_h':=\bigcup_{\gamma\in \Gamma} \mathcal{X}_\gamma'$ is a homogeneous basis of $V$. Equip $\mathcal{X}_h'$ with a well order $<$ defined as follows. First choose an arbitrary well order $<_\Gamma$ on $\Gamma$; then for $x_\gamma, y_\delta\in \mathcal{X}_h'$, 
	$$x_\gamma<y_\delta \Longleftrightarrow \gamma<_\Gamma \delta ~ \text{ or } ~ \gamma=\delta ~ \text{ but } ~ x<y.$$
	Since $\tau(V_\gamma\otimes V_\delta) =V_\delta\otimes V_\gamma$ for all $\gamma, \delta\in \Gamma$, it follows readily that $\tau$ is $\mathcal{X}_h'$-categorical. 
\end{proof}

The following result is inspired from \cite[Theorem 1.3]{Kh1}.

\begin{proposition}\label{braided-bialgebra-3}
Let $R=(R,\tau)$ be a connected $\Gamma$-graded braided bialgebra, where $\Gamma$ is a nontrivial abelian monoid that has admissible well orders. Assume that $R$ is generated by a homogeneous braided subspace of categorical type. Let $A\supseteq B$ be  homogeneous left (resp. right) coideal  subalgebras of $R$. Then there exists a family $\{z_\xi\}_{\xi\in \Xi}$ of homogeneous elements in $A_+$ and a map $h:\Xi\to \mathbb{N}_\infty$ such that $(\{z_\xi\}_{\xi\in \Xi}, h, <)$ is a system of PBW generators of $A$ over $B$ for each total order $<$ on $\Xi$.
\end{proposition}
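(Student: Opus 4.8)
The plan is to reduce the categorical case to the already-settled diagonal case by a filtered--graded argument, mirroring the set-up in the proof of Theorem \ref{braided-bialgebra-1} but inserting one extra step that diagonalizes the braiding. First I would use Lemma \ref{homogenous-categorical} to fix a well-ordered homogeneous basis $X$ of a generating braided subspace with $\tau_V$ being $X$-categorical, arranged exactly as in Theorem \ref{braided-bialgebra-1} so that $X=X_1\cup X_2\cup X_3$ with $Y_1:=X_1$ and $Y_2:=X_1\cup X_2$ closed in the sense of Definition \ref{definition-closed} and generating $B$ and $A$ respectively. Realizing $R=k\langle X\rangle/I$ for a homogeneous ideal $I$ and lifting $\Delta_R$ to a one-sided bounded $\tau_V$-comultiplication $\Delta$ with $\Delta(I)\subseteq I\otimes k\langle X\rangle+k\langle X\rangle\otimes I$ and the coideal-type conditions relative to $Y_1,Y_2$ proceeds verbatim; the essential new difficulty is that, because $\tau_V$ is only categorical, the diagonal machinery of Sections 2--3 (in particular Proposition \ref{PBW-diagonal} and Corollary \ref{PBW-relative}) cannot be invoked directly.

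To remove this obstruction I would introduce an auxiliary filtration on $k\langle X\rangle$ by a well-ordered abelian monoid refining both the $\Gamma$-grading and the well order on $X$, chosen so that on the associated graded the categorical braiding is replaced by its diagonal \emph{leading part} $\bar\tau_V$: for letters $x>y$ the term $\tau_V(x\otimes y)$ has $\bar\tau_V(x\otimes y)=c_{x,y}\,y\otimes x$ as its top piece, with every $c_{x,y}\neq 0$ because invertibility of $\tau_V$ forces the diagonal coefficients of a triangular braiding to be nonzero. Passing to the associated graded turns $I$ into its leading-term ideal $\bar I$, and the key claim I would establish is that the leading part $\bar\Delta$ of $\Delta$ is a genuine one-sided bounded $\bar\tau_V$-comultiplication on $k\langle X\rangle$ satisfying $\bar\Delta(\bar I)\subseteq \bar I\otimes k\langle X\rangle+k\langle X\rangle\otimes \bar I$ together with the coideal conditions relative to $Y_1,Y_2$. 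With this diagonal data in hand Corollary \ref{PBW-relative} applies and yields a system of PBW generators $\{[\xi]_{\bar\tau_V^{\pm1}}+\bar J_2\}_{\xi\in\Xi}$, with $\Xi=\N_{\bar J_2}\setminus\langle Y_1\rangle$ and heights $h_{\bar J_2}$, of $\gr A$ over $\gr B$ for each total order on $\Xi$.

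Finally I would descend with Lemma \ref{PBW-tranfer}. Setting $z_\xi:=[\xi]_{\tau_V^{\pm1}}+J_2\in A_+$, the triangularity of $\tau_V$ guarantees that the leading (associated-graded) term of $z_\xi$ is precisely the diagonal generator $[\xi]_{\bar\tau_V^{\pm1}}+\bar J_2$; hence $(\{\overline{z_\xi}\}_{\xi\in\Xi},h,<)$ is a system of PBW generators of $\gr A$ over $\gr B$ for every total order $<$, and Lemma \ref{PBW-tranfer} transfers this to the asserted system of PBW generators $(\{z_\xi\}_{\xi\in\Xi},h,<)$ of $A$ over $B$. Note that this route produces only the part (1)-type conclusion, which is all that is claimed here, and makes no attempt at the height/root-of-unity or subcoalgebra refinements of Theorem \ref{braided-bialgebra-1}.

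The hard part will be the second step, the construction of the diagonalizing filtration. Since $R$ is not assumed primitively generated, each $\Delta(x)$ carries correction terms $\sum a_i\otimes b_i$ with $a_i,b_i$ of strictly smaller $\Gamma$-degree, and the filtration must be engineered so that these terms neither obstruct the diagonalization of $\tau_V$ on the associated graded nor violate the bound defining $\bar\Delta$; reconciling the additive, order-refining weight needed to diagonalize the braiding with the comultiplication bound is exactly the issue that is absent in the primitively generated diagonal theory and that, I expect, forces this statement into its own dedicated section. Should a single filtration fail to achieve both at once, the fallback is to rework the braided-bracketing calculus of Section 2 directly for categorical braidings---where the leading words of $\tau_V$-brackets with respect to $<_{\glex}$ behave exactly as in the diagonal case, while the braided Leibniz and Jacobi identities now hold only up to $<_{\glex}$-lower-order terms that have to be tracked by hand.
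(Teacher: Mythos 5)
Your proposal follows essentially the same route as the paper: Section 8 proves this proposition by constructing (Lemma \ref{filtration-braided}) a braided bialgebra filtration of $R$ indexed by the free abelian monoid $\mathbb{M}$ on the well-ordered basis $X$, whose associated graded is a connected $(\mathbb{M}\times\Gamma)$-graded braided bialgebra with braiding induced by a bicharacter, and then applying Theorem \ref{braided-bialgebra-1} together with Lemma \ref{PBW-tranfer}. The difficulty you flag --- reconciling the diagonalizing filtration with the comultiplication bound --- dissolves there because every letter occurring in the correction terms of $\Delta(x)$ has $\Gamma$-degree strictly smaller than $\deg(x)$, so $\Delta$ automatically respects the $\mathbb{M}$-filtration and no reworking of the bracket calculus is needed.
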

\begin{proof}
The proof is  by the filtered-graded method. Theorem \ref{braided-bialgebra-1} is used in an essential way. Details will be addressed in Section \ref{section-proof-braided-bialgebra-3}.
\end{proof}

\begin{remark}
Following Milnor-Moore \cite[Definition 4.16]{MM}, we may define the notion of \emph{braided quasi-bialgebra} to be a braided algebra $(R, \tau)$ together with two homomorphisms of algebras $\Delta: R\to R\otimes^\tau R$ and $\varepsilon: R\to k$ such that $(\varepsilon \otimes \id_R) \circ \Delta =\id_R$ and $(\id_R \otimes \varepsilon) \circ \Delta =\id_R$. Here, we don't require $\Delta$ to be coassociative in the sense that $(\Delta\otimes \id_R) \circ \Delta = (\id_R \otimes \Delta) \circ \Delta$. Our argument shows that the results presented in this section 
hold in the context of $\Gamma$-graded braided quasi-bialgebras.
\end{remark}

\section{Coideal subalgebras of  pointed Hopf algebras}

In this section, we study the structure of  coideal subalgebras of pointed Hopf algebras. The key point is that by the bosonizations (or the Radford biproducts) of appropriate braided Hopf algebras, the problems may reduce to the one handled in the previous section.

Let $C$ be a coalgebra. The \emph{coradical} of  $C$ is denoted by ${\rm Corad}(C)$. It  is defined to be the sum of all simple subcoalgebras of $C$.  Let $G(C)$ be the set of all group elements of $C$. Clearly, $kG(C) \subseteq {\rm Corad}(C)$. We call $C$ \emph{pointed} if $kG(C) = {\rm Corad}(C)$. If $C=\bigoplus_{\gamma\in \Gamma}C_\gamma$ is a  $\Gamma$-graded coalgebra with $\Gamma$  an abelian monoid that has admissible well orders, then ${\rm Corad} (C)= {\rm Corad}(C_0)$, and hence $C$ is pointed if and only if $C_0$ is so. A bialgebra (Hopf algebra) is called pointed if it is so as a coalgebra. Note that a pointed bialgebra $H$ is a Hopf algebra if and only if $G(H)$ is a group.

Let $K$ be a Hopf algebra. For an  algebra $H$ that contains $K$ as a  subalgebra, the \emph{(left) adjoint action}  of  $K$ on $H$ is the linear map  $\ad= \ad_H: K\otimes H\to H$ given by	
\[
\ad(a\otimes x):=\sum a_{(1)} \cdot x \cdot \mathcal{S}_K(a_{(2)}) , \quad a\in K,~ x\in H.
\]
Note that $H$ becomes a (left) $K$-module algebra under this action.

The following theorem  is one of the main results of this paper.  By a \emph{semi-invariant} of a Hopf algebra $H$ we mean an element $x\in H$ such that $kx$ is stable under the adjoint action of $kG(H)$ on $H$. A module $M$ over an algebra $A$ is called \emph{locally finite} if  $\dim (A\cdot v)< \infty$ for every $v\in M$.

\begin{theorem}\label{pointed-Hopf-1}
Let $H$ be a pointed Hopf algebra with  $G=G(H)$ abelian. Assume that one of the following two conditions hold: 
(1) $H$ is locally finite as a $kG$-module under the adjoint action of $kG$ on $H$, and the base field $k$ is algebraically closed; (2) $H$ is generated over  $kG$ by a set of semi-invariants of $H$.
Let $A\supseteq B$ be left (resp. right) coideal subalgebras of $H$ that contain $G$. Then there exists a family $\{z_\xi\}_{\xi\in \Xi}$ of elements in $A$ and a map $h:\Xi\to \mathbb{N}_\infty$ such that $(\{z_\xi\}_{\xi\in \Xi}, h, <)$ is a system of PBW generators of $A$ over $B$ for each total order $<$ on $\Xi$.
\end{theorem}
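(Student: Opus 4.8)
The plan is to pass to the associated graded Hopf algebra with respect to the coradical filtration, bosonize, and then invoke the braided machinery of the previous section. Let $\{H_n\}_{n\geq 0}$ be the coradical filtration of $H$. Since $H$ is pointed, $H_0 = kG$, and the coradical filtration is simultaneously an algebra $\mathbb{N}$-filtration and a coalgebra $\mathbb{N}$-filtration, so $\gr H$ is a graded pointed Hopf algebra with $(\gr H)_0 = kG$. By Radford's biproduct theorem \cite{Rad85}, $\gr H \cong R \# kG$, where $R := (\gr H)^{\mathrm{co}\, kG}$ is a connected $\mathbb{N}$-graded braided Hopf algebra in the category ${}^{kG}_{kG}\YD$ of Yetter--Drinfeld modules over $kG$, its braiding being the Yetter--Drinfeld braiding $\tau(v\otimes w) = \sum (v_{[-1]}\rightharpoonup w)\otimes v_{[0]}$. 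Because $A \supseteq B \supseteq kG$ are coideal subalgebras, the induced filtrations $A_n = A\cap H_n$, $B_n = B\cap H_n$ make $\gr A \supseteq \gr B$ homogeneous coideal subalgebras of $\gr H$ containing $kG$; under the bosonization these correspond to homogeneous braided left (resp.\ right) coideal subalgebras $\bar A \supseteq \bar B$ of $R$, with $\gr A = \bar A \# kG$ and $\gr B = \bar B \# kG$. By Lemma \ref{PBW-tranfer} it then suffices to produce a system of PBW generators of $\gr A$ over $\gr B$.

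The next step is to verify that $R$ is generated by a homogeneous braided subspace of categorical type, so that Proposition \ref{braided-bialgebra-3} applies. Under hypothesis (2), each semi-invariant generator $x$ satisfies $\ad(g)(x) = \chi_x(g)\,x$ for a character $\chi_x$ of $G$, so the images of these generators in $R$ are common eigenvectors for the $G$-action and the Yetter--Drinfeld braiding is diagonal on their homogeneous span; this span generates $R$, which is thus generated by a braided subspace of diagonal, hence categorical, type. Under hypothesis (1), local finiteness of the adjoint $kG$-action descends to $R_+$, and since $k$ is algebraically closed and $G$ is abelian the commuting family $\{g\rightharpoonup(-)\}_{g\in G}$ can be simultaneously triangularized on each finite-dimensional $G$-stable subspace; choosing a homogeneous basis of $R_+$ adapted to the resulting generalized weight flag and ordering it accordingly, Lemma \ref{homogenous-categorical} shows that $\tau$ is categorical with respect to a well-ordered homogeneous basis, so $R_+$ itself is a generating braided subspace of categorical type. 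In either case Proposition \ref{braided-bialgebra-3} yields a family $\{z_\xi\}_{\xi\in\Xi}$ of homogeneous elements of $\bar A_+$ and a height map $h$ such that $(\{z_\xi\}_{\xi\in\Xi}, h, <)$ is a system of PBW generators of $\bar A$ over $\bar B$ for every total order $<$ on $\Xi$.

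It then remains to transfer this PBW system across the bosonization and lift it through the coradical filtration. For a PBW monomial $M = z_{\xi_1}^{r_1}\cdots z_{\xi_m}^{r_m}$, the right smash-product structure is transparent, $(M\#1)(\bar b\#g) = M\bar b\#g$, whereas the left structure $(\bar b\#g)(M\#1) = \bar b\,(g\rightharpoonup M)\#g$ involves the $G$-action; since the $z_\xi$ are constructed from a categorical (triangular) braiding, $g\rightharpoonup M$ equals $M$ up to a nonzero scalar plus a combination of $\vartriangleleft$-lower monomials, and a triangularity argument shows that $\{M\#1\}$ is simultaneously a left and a right $\gr B$-basis of $\gr A$, i.e.\ $(\{z_\xi\#1\}_{\xi\in\Xi}, h, <)$ is a system of PBW generators of $\gr A$ over $\gr B$. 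Choosing lifts $\tilde z_\xi\in A$ of the $z_\xi\#1$ and applying Lemma \ref{PBW-tranfer} to the coradical filtration then produces the desired system of PBW generators of $A$ over $B$. I expect the main obstacle to be hypothesis (1): unlike the diagonal situation of (2), here the Yetter--Drinfeld braiding need not be diagonalizable, and the crux is to exploit local finiteness and algebraic closedness to put it into categorical (upper-triangular) form compatibly with the grading, and then to check that this triangularity is preserved by the Lyndon-word construction so that the PBW transfer across the smash product goes through. The coideal-subalgebra correspondence under bosonization, while requiring care with the left/right conventions, should be comparatively routine.
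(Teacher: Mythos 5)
Your proposal follows essentially the same route as the paper: pass to $\gr H$ along the coradical filtration, use the Radford biproduct to reduce to the coinvariant braided Hopf algebra $R=(\gr H)^{\mathrm{co}\,kG}$ and the corresponding coideal subalgebras $A\cap R\supseteq B\cap R$ (the paper's Lemma \ref{subalgebra-correspongding}), verify that $R$ is generated by a braided subspace of diagonal type under hypothesis (2) and of categorical type under hypothesis (1) via a triangularizing flag for the locally finite abelian $G$-action (the paper's Lemma \ref{categorical-criterion}), apply Proposition \ref{braided-bialgebra-3}, and lift back through the smash product and the filtration by Lemma \ref{PBW-tranfer}. The points you flag as delicate — the left module structure across the smash product and the categorical form of the braiding in case (1) — are exactly the ones the paper isolates into Lemma \ref{subalgebra-correspongding} and Lemma \ref{categorical-criterion}, and your sketches of how to handle them agree with the paper's arguments.
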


\begin{proof}
Let $\mathcal{F}=(H_{(n)})_{n\geq 0}$ be the coradical filtration of $H$. Since $H$ is pointed, $\mathcal{F}$ is a Hopf algebra filtration of $H$, i.e. $\mathcal{F}$ is  an algebra  filtration and a coalgebra filtration of $H$, and $\mathcal{S}_H(H_{(n)})\subseteq H_{(n)}$ for all $ n\geq 0$. Thus $Q:=\gr(H, \mathcal{F}) = \bigoplus_{n\geq 0} H_{(n)}/H_{(n-1)}$ is an $\mathbb{N}$-graded Hopf algebra, where $H_{(-1)}:=0$.  By definition, $Q_0=kG$. Note that  $H_{(n)}$ are all $kG$-submodules of $H$ (equipped with the adjoint action of $kG$ on it).  If the condition (1) hold, then $Q$ is  locally finite as a $Q_0$-module under the adjoint action of $Q_0$ on $Q$. Now assume the condition (2) hold. Then it is easy to see that $H$ equals to the sum of its  one-dimensional $kG$-submodules. So $H_{(n)}/H_{(n-1)}$ all equal to the sum of their one-dimensional $kG$-submodules by \cite[Lemma 9.1]{AnFu}. Thus, $Q$ is generated over $Q_0$ by its semi-invariants in this setting. Since $ \gr(A, \mathcal{F}|_A)\supseteq\gr(B,\mathcal{F}|_B)$ are both homogeneous left (resp. right) coideal subalgebras of $Q$ that contain $Q_0$, the result follows from Lemma \ref{PBW-tranfer} and Proposition \ref{pointed-Hopf-2} given below.
\end{proof}

\begin{proposition}\label{pointed-Hopf-2}
Let $H=\bigoplus_{\gamma\in \Gamma}H_\gamma$ be a $\Gamma$-graded Hopf algebra with  $H_0=kG$ for some abelian group $G$, where $\Gamma$ is a nontrivial abelian monoid that has admissible well orders. Assume one of the following two conditions hold: (1) $H$ is locally finite as an $H_0$-module under the adjoint action of $H_0$ on $H$, and the base field $k$ is algebraically closed; (2) $H$ is generated over  $H_0$ by a set of semi-invariants of $H$.
Let $A\supseteq B$ be homogeneous left (resp. right) coideal subalgebras of $H$ that contain $H_0$. 
Then there exists a family $\{z_\xi\}_{\xi\in \Xi}$ of homogeneous elements of $A_+$ and a map $h:\Xi\to \mathbb{N}_\infty$ such that $(\{z_\xi\}_{\xi\in \Xi}, h, <)$ is a system of PBW generators of $A$ over $B$ for each total order $<$ on $\Xi$.
\end{proposition}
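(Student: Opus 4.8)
The plan is to reduce the statement to the braided setting of Section~4 through the Radford biproduct, and then to transport the resulting PBW basis back to $H$. Since $G$ is a group and $H_0=kG$ is a Hopf subalgebra concentrated in degree $0$, the graded projection $\pi\colon H\to kG$ that annihilates $H_+$ is a homomorphism of Hopf algebras splitting the inclusion $kG\hookrightarrow H$. Hence by \cite{Rad85} the algebra $H$ is isomorphic to a bosonization $R\#kG$, where $R:=H^{\mathrm{co}\,\pi}$ is a braided Hopf algebra in the Yetter--Drinfeld category ${}^{kG}_{kG}\YD$. The $\Gamma$-grading of $H$ restricts to $R$ and, since $R_0=R\cap kG=k$, makes $R$ a connected $\Gamma$-graded braided Hopf algebra; the braiding $\tau$ is the Yetter--Drinfeld braiding, which, as $G$ is abelian, reads $\tau(x\otimes y)=(x_{[-1]}\cdot y)\otimes x_{[0]}$ on homogeneous elements (a $G$-graded $G$-module being the same as a Yetter--Drinfeld module over $kG$).

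First I would verify that $R$ is generated by a homogeneous braided subspace of categorical type in either setting, so that Proposition~\ref{braided-bialgebra-3} applies. Under condition~(2) one may pick the semi-invariant generators of $H$ over $kG$ to be group-homogeneous; their images span a braided subspace of $R_+$ on which the adjoint $G$-action is diagonalizable, so there $\tau$ is diagonal, hence categorical. Under condition~(1), $R$ inherits local finiteness as a $kG$-module, and since $kG$ is commutative and $k$ is algebraically closed, a locally finite module admits a complete flag of $\ad kG$-submodules; triangularizing degree by degree exhibits $R_+$ as of categorical type. In both cases Lemma~\ref{homogenous-categorical} lets me take the witnessing basis homogeneous, and $R_+$ is then a generating braided subspace of categorical type.

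Next I would set up the correspondence between the coideal subalgebras of $H$ containing $kG$ and the braided coideal subalgebras of $R$, which I expect to be the principal obstacle. Treating the right case (the left case being symmetric, using the left coinvariants and the opposite projection), consider $E\colon H\to R$, $E(h)=\sum h_{(1)}\mathcal{S}_{kG}(\pi(h_{(2)}))$, and put $S_A:=E(A)$, $S_B:=E(B)$. Because $A\supseteq kG$ is an $\ad kG$-stable subalgebra with $\Delta_H(A)\subseteq A\otimes H$, for $a\in A$ one has $a_{(1)}\in A$ and $\mathcal{S}_{kG}(\pi(a_{(2)}))\in kG\subseteq A$, so $S_A\subseteq A\cap R$; the reconstruction $a=\sum E(a_{(1)})\,\pi(a_{(2)})$ then yields $A=S_A\cdot kG$ and, via the smash-coproduct formula for $\Delta_H$ on $R\#kG$, translates $\Delta_H(A)\subseteq A\otimes H$ into the braided right coideal condition $\Delta_R(S_A)\subseteq S_A\otimes R$. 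The delicate points are to match the left/right comodule conventions with the bosonization, to confirm that $S_A,S_B$ are homogeneous subalgebras with $S_A\supseteq S_B$, and to establish the freeness $A\cong S_A\otimes kG$ as vector spaces.

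Finally I would apply Proposition~\ref{braided-bialgebra-3} to the pair $S_A\supseteq S_B$ of homogeneous braided left (resp. right) coideal subalgebras of $R$, obtaining a family $\{z_\xi\}_{\xi\in\Xi}$ of homogeneous elements of $(S_A)_+\subseteq A_+$ and a map $h\colon\Xi\to\mathbb{N}_\infty$ such that $(\{z_\xi\}_{\xi\in\Xi},h,<)$ is a system of PBW generators of $S_A$ over $S_B$ for every total order $<$. To descend this to $A$ over $B$, I would use that the PBW monomials in the $z_\xi$ form a right $S_B$-basis of $S_A$, while $B=S_B\cdot kG$ and $kG$ normalizes $R$ inside $H$; thus $A=S_A\cdot kG=\bigoplus_w w\,B$, so those same monomials form a right --- and symmetrically a left --- $B$-basis of $A$. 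This shows that $(\{z_\xi\}_{\xi\in\Xi},h,<)$ is a system of PBW generators of $A$ over $B$ for every total order, completing the argument.
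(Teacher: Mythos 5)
Your proposal is correct and follows essentially the same route as the paper: pass to the coinvariants $R=H^{\mathrm{co}\,kG}$ via the Radford biproduct, transport the pair $A\supseteq B$ to the pair $A\cap R\supseteq B\cap R$ of homogeneous coideal subalgebras of $R$ (the paper packages your ``delicate points'' into Lemma \ref{subalgebra-correspongding}, quoting the graded version of the known bijection $A\mapsto A\cap R$, $F\mapsto FK$ together with the freeness $A\cong (A\cap R)\otimes kG$), verify that $(R,\tau_{R,R})$ is of categorical type under either hypothesis exactly as you do (semi-invariant eigenbasis under (2); socle-filtration/flag triangularization under (1), which is Lemma \ref{categorical-criterion}), and then invoke Proposition \ref{braided-bialgebra-3}. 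The only cosmetic difference is that the paper reduces the left case to the right case using bijectivity of the antipode rather than running a symmetric left-handed argument.
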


The remaining of this section is devoted to prove this proposition.

Firstly, let us recall some well-known facts that we need in our context. Let $\Gamma$ be an abelian monoid. Let $K$ be a Hopf algebra with bijective antipode. Let $H=\bigoplus_{\gamma\in \Gamma} H_\gamma$ be a $\Gamma$-graded Hopf algebra with $H_0=K$. Let $\pi:H\to K$ be the canonical projection, which is a homomorphism of Hopf algebras. Then $(H, \ad, \rho)$ is a $\Gamma$-graded Yetter-Drinfeld module over $K$, where $\ad:K\otimes H\to H$ is the adjoint action of $K$ on $H$, and where $\rho: H\to K\otimes H$ is the linear map given by 
$$\rho(x):= \sum \pi(x_{(1)}) \otimes x_{(2)}, \quad x\in H.$$
Let $R:=H^{{\rm co}\, K}$ be the right coinvariants of $H$ with respect to $K$, i.e.
$$R:=\{~x\in H ~|~ \sum x_{(1)}\otimes \pi(x_{(2)}) =x\otimes 1 ~\}.$$ 
Clearly, $R=\bigoplus_{\gamma\in \Gamma}R_\gamma$, where $R_\gamma= R\cap H_\gamma$. It turns out that $R$ is naturally 
a $\Gamma$-graded  Hopf algebra in ${}^K_K\mathcal{Y}\mathcal{D}$. In particular, $R$ is a homogeneous subalgebra of $H$ as well as a homogeneous Yetter-Drinfeld submodule of $(H, \ad, \rho)$. 
Moreover, the multiplication map
\begin{eqnarray*}\label{Bosonization}
\Phi: R ~\#~K \to H, \quad r\otimes a\mapsto ra
\end{eqnarray*}
is an isomorphism of $\Gamma$-graded Hopf algebras with inverse given by
\begin{eqnarray*}
\Psi: H\to R ~\#~K, \quad x\mapsto \sum \big(x_{(1)}\cdot \mathcal{S}_K(\pi(x_{(2)}))\big) \otimes \pi (x_{(3)}),
\end{eqnarray*}
 where $R~\#~ K$ is the \emph{bosonization}  of $R$ by $K$. Note that $R~\# ~ K=R\otimes K= \bigoplus_{\gamma\in \Gamma}R_\gamma\otimes K$ as a $\Gamma$-graded vector space, and its algebra (resp. coalgebra) structure is given by the smash product (resp. coproduct) of $R$ and $K$. 
We refer to \cite{HS, Rad85} for above-mentioned notions and facts. 

Let  $\mathfrak{E}_r(H)$ be the set of all homogeneous right coideal subalgebras of $H$ that contain $K$, and  $\mathfrak{F}_r(R)$ the set of all homogeneous right coideal subalgebras of $R$ which are also $K$-submodules of $R$. Here, we don't require elements of $\mathfrak{F}_r(R)$ to be Yetter-Drinfeld submodules of $R$.

\begin{lemma}\label{subalgebra-correspongding}
With the above notation, the map $\mathfrak{E}_r(H) \to \mathfrak{F}_r(R)$ given by $ A\mapsto A\cap R$, and the map $\mathfrak{F}_r(R) \to \mathfrak{E}_r(H)$ given by $F\mapsto FK$ form mutually inverse bijections of sets. In addition, for each $A\in \mathfrak{E}_r(H)$, it follows that $(A\cap R)\otimes K$ is a homogeneous right coideal subalgebra of $R ~\# ~ K$ and the multiplication map $(A\cap R) ~\otimes ~ K\to A$ is an isomorphism of $\Gamma$-graded algebras.
\end{lemma}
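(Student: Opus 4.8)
The plan is to transport the whole problem through the bosonization isomorphism $\Phi\colon R\#K\xrightarrow{\sim} H$ and its inverse $\Psi$. Since $\Phi$ is an isomorphism of $\Gamma$-graded Hopf algebras carrying $R\#1$ onto $R$ and $1\#K$ onto $K$, it sets up an inclusion-preserving bijection between $\mathfrak{E}_r(H)$ and the set of homogeneous right coideal subalgebras $\widetilde A$ of $R\#K$ that contain $1\#K$; moreover $\Phi(F\#K)=FK$ and $\Psi(A\cap R)=\widetilde A\cap(R\#1)$ for $\widetilde A=\Psi(A)$. Thus the entire lemma reduces to the single structural assertion: \emph{every homogeneous right coideal subalgebra $\widetilde A\supseteq 1\#K$ of $R\#K$ has the form $\widetilde A=F\#K$ for $F:=\{\,r\in R\mid r\#1\in\widetilde A\,\}$, and this $F$ is a homogeneous, $K$-stable right coideal subalgebra of $R$.} Granting this, the two composite maps are identities (using $\widetilde A=F\#K$ in one direction and the direct-sum decomposition $R\#K=R\otimes K$, which gives $(F\#K)\cap(R\#1)=F\#1$, in the other), and the final ``in addition'' clause is immediate since the multiplication $(A\cap R)\otimes K\to A$ is just the restriction of the graded-algebra isomorphism $\Phi$ to $\widetilde A=F\#K$.

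The easy half is to verify, for $F\in\mathfrak{F}_r(R)$, that $F\#K$ really is a homogeneous right coideal subalgebra containing $1\#K$. Using the smash-product formula $(r\#a)(s\#b)=\sum r\,(a_{(1)}\cdot s)\#a_{(2)}b$ one sees $F\#K$ is a subalgebra precisely because $F$ is a $K$-stable subalgebra; and using the smash-coproduct formula $\Delta_{R\#K}(r\#1)=\sum\bigl(r^{(1)}\#(r^{(2)})_{[-1]}\bigr)\otimes\bigl((r^{(2)})_{[0]}\#1\bigr)$ together with $\Delta_R(F)\subseteq F\otimes R$ one sees it is a right coideal. This direction is routine bookkeeping.

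The heart of the proof, and the main obstacle, is the reverse inclusion $\widetilde A\subseteq F\#K$, where I would exploit the grading in an essential way. Let $p=\varepsilon_R\otimes\id_K\colon R\#K\to K$ be the canonical Hopf projection and let $\delta_K:=(\id\otimes p)\circ\Delta_{R\#K}$ be the associated right $K$-coaction; since $\widetilde A$ is a right coideal and $p$ is a coalgebra map, $\delta_K$ maps $\widetilde A$ into $\widetilde A\otimes K$. The key point is that, because $K$ lives in degree $0$ and $\varepsilon_R$ kills $R_+$, the coaction $\delta_K$ is \emph{degree preserving}: $\delta_K(\widetilde A_\gamma)\subseteq \widetilde A_\gamma\otimes K$. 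Extracting the top-degree term of the smash coproduct then yields, for a homogeneous $x=\sum_j r_j\#a_j\in\widetilde A_\gamma$ (with the $r_j\in R_\gamma$), the clean identity $\delta_K(x)=\sum_j\bigl(r_j\#(a_j)_{(1)}\bigr)\otimes(a_j)_{(2)}$, the lower-degree contributions cancelling. Now applying functionals $\phi\in K^{*}$ to the second tensorand and then the antipode-twisted multiplication $y\mapsto\sum y_{(0)}\mathcal{S}_K(y_{(1)})$ (which carries $\widetilde A$ into $\widetilde A$ and sends an element to $(\id_R\otimes\varepsilon_K)(\,\cdot\,)\#1$), one produces $\bigl(\sum_j\phi(a_j)\,r_j\bigr)\#1\in\widetilde A$ for every $\phi$; choosing the $\phi$ dual to a linearly independent family $\{a_j\}$ gives each $r_i\#1\in\widetilde A$, i.e. $r_i\in F$, whence $x=\sum_j(r_j\#1)(1\#a_j)\in F\#K$.

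Finally I would record that this same circle of ideas shows $F\in\mathfrak{F}_r(R)$: homogeneity and the subalgebra property pass from $\widetilde A\cap(R\#1)$; $K$-stability follows because $\widetilde A$ and $R\#1$ are both stable under conjugation by $1\#K$ (this conjugation is exactly the Yetter--Drinfeld action of $K$ on $R$); and the right-coideal property $\Delta_R(F)\subseteq F\otimes R$ is obtained by applying $(\id_R\otimes\varepsilon_K)\otimes\id$ to the smash coproduct of $r\#1$ for $r\in F$ and using the already-proved equality $(\id_R\otimes\varepsilon_K)(\widetilde A)=F$. The only genuinely delicate steps are the degree-preserving coaction identity and the extraction argument above; everything else is bookkeeping with the bosonization structure maps.
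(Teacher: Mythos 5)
Your argument is correct, but it is not the route the paper takes: the paper disposes of the first statement in a single sentence by invoking the (graded version of the) correspondence in \cite[Lemma 12.4.3 (2)]{HS}, and the second statement then follows at once because $\Phi$ is an isomorphism of $\Gamma$-graded Hopf algebras. What you have written is, in effect, a self-contained proof of that cited lemma in the graded setting, using the standard devices: the Hopf projection $p=\varepsilon_R\otimes\id_K$, the induced right $K$-coaction $\delta_K=(\id\otimes p)\circ\Delta_{R\#K}$, and the untwisting map $y\mapsto\sum y_{(0)}\,(1\#\mathcal{S}_K(y_{(1)}))$, which together let you peel off the $K$-leg of an element of $\widetilde A$ and conclude $\widetilde A=F\#K$; the verifications of $K$-stability via conjugation by $1\#K$ and of the right-coideal property of $F$ via $(\id_R\otimes\varepsilon_K)\otimes\id$ are also sound. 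One correction of emphasis: the identity $\delta_K(r\# a)=\sum (r\# a_{(1)})\otimes a_{(2)}$ is exact and does not use the grading at all --- it follows from the counit axiom $(\id_K\otimes\varepsilon_R)\circ\rho=\eta_K\circ\varepsilon_R$ for the $K$-coaction on $R$, which collapses the coaction leg in the smash coproduct --- so there are no ``lower-degree contributions'' to cancel, and the grading enters only in checking that $A\cap R$, $F$ and $F\#K$ are homogeneous. The trade-off is clear: the paper's citation is economical but opaque, while your argument makes visible exactly where each hypothesis (that $\widetilde A$ contains $1\#K$, is a subalgebra, and is a right coideal) is used, at the cost of redoing bookkeeping that the literature already supplies.
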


\begin{proof}
The first statement is simply the graded version of \cite[Lemma 12.4.3 (2)]{HS}. Since $\Phi: R ~\#~K \to H$ is an isomorphism of $\Gamma$-graded Hopf algebras, the second statement follows immediately.
\end{proof}

Let $G$ be an abelian group and  $V\in {}^{kG}_{kG}\mathcal{Y}\mathcal{D}$ a Yetter-Drinfeld module over $kG$. For each $g\in G$, let $V_g=\{~v\in V ~|~ \rho(v)=g\otimes v~\}$, where $\rho$ is the coaction of $kG$ of $V$.  Note that $V_g$ are all $kG$-submodules of $V$ and $V=\bigoplus_{g\in G} V_g$. In addition, the induced braiding  on $V$ satisfies 
\[
\tau_{V,V} (v\otimes w) = (g\cdot w)\otimes v, \quad g\in G,~ v\in V_g, ~w\in W.
\]

\begin{lemma}\label{categorical-criterion}
Let $G$ be an abelian group. Assume that $k$ is algebraically closed. 
\begin{enumerate}
\item Let $M$ be a locally finite $kG$-module. Then $M$ has a well-ordered basis $\mathcal{X}$ such that for each $x\in \mathcal{X}$, the subspace spanned by $\mathcal{X}_{\leq x}=\{~y\in \mathcal{X} ~|~ y\leq x ~\}$ is a $kG$-submodule of $M$.
\item Let $V$ be a Yetter-Drinfeld module over $kG$ which is locally finite as a $kG$-module. Then the braiding $\tau_{V,V}$ on $V$ is categorical with respect to some well-ordered basis of $V$.
\end{enumerate}
\end{lemma}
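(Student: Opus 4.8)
The plan is to establish part (1) first --- it carries all the real content --- and then to deduce part (2) by assembling homogeneous pieces. The crux of part (1) is an observation I would isolate as the base case: \emph{every nonzero finite-dimensional $kG$-module has a one-dimensional submodule}. Indeed, such a module contains a simple submodule $S$; by Schur's lemma $\mathrm{End}_{kG}(S)$ is a finite-dimensional division algebra over the algebraically closed field $k$, hence equals $k$. Since $G$ is abelian, left multiplication by each $g\in G$ commutes with the $kG$-action, so it is a $kG$-endomorphism of $S$ and thus acts as a scalar; therefore every line in $S$ is a submodule and $\dim_k S=1$. Because $M$ is locally finite, every nonzero submodule of $M$ contains a nonzero finite-dimensional submodule and hence a one-dimensional submodule.

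With this in hand I would build a complete flag of $M$ by a Zorn's lemma argument. Consider pairs $(\mathcal{Y},\leq)$ in which $\mathcal{Y}\subseteq M$ is linearly independent, $\leq$ is a well-order on $\mathcal{Y}$, and $\mathrm{span}(\mathcal{Y}_{\leq y})$ is a $kG$-submodule for every $y\in\mathcal{Y}$; order these pairs by end-extension (inclusion in which the smaller set is an initial segment). Every chain has the union as an upper bound, so Zorn's lemma yields a maximal pair $(\mathcal{X},\leq)$. The span $N:=\mathrm{span}(\mathcal{X})$ is a union of a well-ordered chain of submodules, hence a submodule; if $N\neq M$ then $M/N$ is nonzero and locally finite, so it has a one-dimensional submodule whose generator lifts to some $x\in M$ with $N+kx$ a submodule. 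Appending $x$ at the top of $\mathcal{X}$ (which keeps all initial-segment spans submodules) contradicts maximality, so $N=M$ and $\mathcal{X}$ is the required basis.

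For part (2) I would invoke the structure of Yetter--Drinfeld modules over $kG$ recalled just before the lemma: $V=\bigoplus_{g\in G}V_g$ with each $V_g$ a $kG$-submodule, and $\tau_{V,V}(v\otimes w)=(g\cdot w)\otimes v$ for $v\in V_g$. Each $V_g$ is locally finite, so part (1) furnishes a well-ordered basis $\mathcal{X}_g$ of $V_g$ whose initial segments span submodules. Fixing any well-order on $G$, I would form $\mathcal{X}=\bigsqcup_{g\in G}\mathcal{X}_g$ as an ordinal sum (all of $\mathcal{X}_g$ preceding $\mathcal{X}_{g'}$ whenever $g<g'$, with the given order inside each block). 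This is a well-ordered homogeneous basis, and for $y\in V_g$ the span of $\mathcal{X}_{\leq y}$ equals $\bigl(\bigoplus_{g'<g}V_{g'}\bigr)\oplus\mathrm{span}\bigl((\mathcal{X}_g)_{\leq y}\bigr)$, a sum of submodules and hence a submodule. Consequently, for homogeneous $x\in V_g$ and any $y\in\mathcal{X}$ one has $\tau_{V,V}(x\otimes y)=(g\cdot y)\otimes x$ with $g\cdot y\in\mathrm{span}(\mathcal{X}_{\leq y})$ and second tensor factor exactly $x$, which is precisely the categorical condition.

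The main obstacle is the infinite-dimensional case of part (1): the finite-dimensional statement amounts to simultaneous triangularization of the commuting operators $\{g\}_{g\in G}$, but passing to a full well-ordered basis of an arbitrary locally finite $M$ requires the transfinite bookkeeping above, together with the check that quotients stay locally finite so that the one-dimensional-submodule step can be iterated. Part (2) is then routine given the recalled braiding formula, the only point to verify being that homogeneity of $\mathcal{X}$ is compatible with the flag property, which the ordinal-sum construction guarantees.
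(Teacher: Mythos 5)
Your proof is correct. For part (2) you do essentially what the paper does: decompose $V=\bigoplus_{g\in G}V_g$, apply part (1) to each $kG$-submodule $V_g$, fix a well order on $G$, take the ordinal sum of the blocks $\mathcal{X}_g$, and read off categoricality from the formula $\tau_{V,V}(v\otimes w)=(g\cdot w)\otimes v$ together with the fact that initial-segment spans are submodules. For part (1), however, your route is genuinely different. The paper builds the $\mathbb{N}$-indexed socle (Loewy) filtration $F_0(M)\subseteq F_1(M)\subseteq\cdots$, where each layer is the sum of all one-dimensional submodules of the corresponding quotient; it invokes the fact that such a sum is a \emph{direct} sum of one-dimensional submodules (citing Anderson--Fuller, Proposition 9.3) and that local finiteness forces $M=\bigcup_n F_n(M)$, and then orders a basis block by block, the order inside each block being arbitrary because every chosen basis vector already spans a submodule of its layer. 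You instead isolate the single statement that every nonzero locally finite $kG$-module contains a one-dimensional submodule (your Schur-lemma argument for this is the same one underlying the paper's citation of [HS, Lemma 5.4.12]) and then run a transfinite flag construction: Zorn's lemma on end-extensions of well-ordered partial flags, with the maximality step using that quotients of locally finite modules are locally finite. Your version avoids the semisimplicity of the Loewy layers entirely, at the cost of the standard transfinite bookkeeping (a union of an end-extension chain of well-orders is a well-order; the span of a well-ordered flag is a directed union of submodules), and it produces a basis of arbitrary ordinal type rather than one organized in $\omega$ blocks. Both constructions deliver exactly the property the lemma asserts, so either is acceptable.
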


\begin{proof}
(1) Define an increasing sequence $F_0(M) \subseteq F_1(M) \subseteq \cdots$ of $kG$-submodules of $M$ as follows. Firstly, let $F_0(M)$ be the sum of all one-dimensional $kG$-submodules of $M$; then for  $n\geq 1$, inductively set $F_n(M)$ to be the $kG$-submodule of $M$ such that $F_n(M)/F_{n-1}(M)$ is the sum of all one-dimensional $kG$-submodules of $M/F_{n-1}(M)$.  Since $G$ is abelian and $k$ is algebraically closed, every finite-dimensional simple $kG$-module is one-dimensional, see \cite[Lemma 5.4.12]{HS}. Therefore,
$$M=\bigcup_{n\geq 0} F_n(M).$$
Let $F_{-1}(M)=0$. Note that $Q_n(M):=F_{n}(M)/F_{n-1}(M)$ is a direct sum of one-dimensional $kG$-submodules by \cite[Proposition 9.3]{AnFu}. So one may fix a set $\mathcal{X}_n\subseteq F_n(M)$ such that $\{~\bar{x}:= x+F_{n-1}(M) ~\}_{x\in \mathcal{X}_n}$ is a basis of $Q_n(M)$ and each $\bar{x}$ spans a $kG$-submodule of $Q_n(M)$. Clearly, $\mathcal{X}_n$ are mutually disjoint. It is also easy to check that $\mathcal{X}=\bigcup_{n\geq 0}\mathcal{X}_n$ is a basis of $M$. Now fix a well order $<_n$ on $\mathcal{X}_n$ for each $n\geq 0$. Then equip  $\mathcal{X}$ with a well order $<$ as follows: for each $x_1,\, x_2\in \mathcal{X}$, 
	\begin{eqnarray*}
	\label{definition-deglex}
	x_1 < x_2\,\, \Longleftrightarrow \,\, \left\{
	\begin{array}{llll}
		x_1\in \mathcal{X}_i \text{ and } x_2\in \mathcal{X}_j \text{ with } i<j, \quad \text{or} \\
		x_1, x_2\in \mathcal{X}_i \text{ and } x_1<_i x_2	. 
	\end{array}\right.
\end{eqnarray*}
Readily, the subspace spanned by $\mathcal{X}_{\leq x}$ is a $kG$-submodule of $M$ for each $x\in \mathcal{X}$.

(2) By (1), one may fix a well-ordered basis $(\mathcal{X}_g, <_g)$ of $V_g$ for each $g\in G$ such that the set $\mathcal{X}_{g,\leq_g x}$ spans a $kG$ submodule of $V_g$ for any $x\in \mathcal{X}_g$. Clearly, $\mathcal{X}=\bigcup_{g\in G}\mathcal{X}_g$ is a basis of $V$. Now fix a well order $<_G$ on $G$, and then equip  $\mathcal{X}$ with a well order $<$ as follows: for each $x_1,\, x_2\in \mathcal{X}$, 
\begin{eqnarray*}
	\label{definition-deglex}
	x_1 < x_2\,\, \Longleftrightarrow \,\, \left\{
	\begin{array}{llll}
		x_1\in \mathcal{X}_g \text{ and } x_2\in \mathcal{X}_h \text{ with } g<_G h, \quad \text{or} \\
		x_1, x_2\in \mathcal{X}_g \text{ and } x_1<_g x_2	. 
	\end{array}\right.
\end{eqnarray*}
It is easy to check that $\tau_{V,V}$ is categorical with respect to $(\mathcal{X},<)$.
\end{proof}

\begin{proof}[Proof of Proposition \ref{pointed-Hopf-2}]
Since the antipode of $H$ is bijective, we may assume $A\supseteq B$ are both homogeneous right coideal subalgebras of $H$. Let $R=H^{{\rm co}\, kG}$ be the right invariants of $H$ with respect to $kG$. So $R$ is naturally a $\Gamma$-graded Hopf algebra in ${}^{kG}_{kG}\mathcal{Y}\mathcal{D}$. Note that $A\cap R\supseteq B\cap R$ are homogeneous right coideal subalgebras of $R$. By Lemma \ref{subalgebra-correspongding}, it suffices to show there exists a family $\{z_\xi\}_{\xi\in \Xi}$ of homogeneous elements in $(A \cap R)_+$ and a map $h:\Xi\to \mathbb{N}_\infty$ such that $(\{z_\xi\}_{\ xi\in \Xi}, h, <)$ is a system of PBW generators of $A\cap R$ over $B\cap R$ for each total order $<$ on $\Xi$. Then by Proposition \ref{braided-bialgebra-3}, it remains to show that the braided vector space $(R,\tau_{R,R})$ is of categorical type in both settings. 

To see this, note that $R$ is a homogeneous Yetter-Drinfeld submodule of $H$.
If the condition (1) hold, then  $R$ is locally finite as a $kG$-module, hence the desired statement hold by Lemma \ref{categorical-criterion} (2). Now assume the condition (2) holds. Then it is clear that $(H_\gamma)_g$  equals to the sum of its one-dimensional $kG$-submodules for each $\gamma\in \Gamma$ and each $g\in G$. It follows that each $(R_\gamma)_g$ is a direct sum of a family of its one-dimensional $kG$-submodules by \cite[Lemma 9.1]{AnFu}. Therefore, one may choose for each  $(R_\gamma)_g$ a basis $\mathcal{X}_\gamma$  which consists of semi-invariants of $H$. Clearly, $\mathcal{X}:=\bigcup_{\gamma\in \Gamma, ~ g\in G} (\mathcal{X}_\gamma)_g$ is a basis of $R$ and  $\tau_{R,R}$  is $\mathcal{X}$-diagonal.  The desired statement follows  by fixing any well order on $\mathcal{X}$.
\end{proof}

\section{Coideal subalgebras of connected Hopf algebras}

This section is devoted to study the structure of coideal subalgebras of connected  Hopf algebras.
In addition to Theorem B stated in the introduction, we generalize some of the main results of \cite{BG, Zh, LZ, ZSL3}. Particularly, we don't use any geometric fact, unlike that of \cite{BG, Zh}.

Recall that a coalgebra $C$ is called \emph{connected} if its coradical is one-dimensional. Note that if $C=\bigoplus_{\gamma\in \Gamma}C_\gamma$ is a connected $\Gamma$-graded coalgebra with $\Gamma$  an abelian monoid that has admissible well orders, then  $C$ is connected. A bialgebra (Hopf algebra) is called connected if it is  so as a coalgebra. Note that a connected bialgebra is necessarily a connected Hopf algebra.

\begin{theorem}\label{connected-graded-Hopf-algebra-PBW}
Assume that the base field $k$ is of characteristic $0$. Let $H$ be a connected $\Gamma$-graded Hopf algebra, where $\Gamma$ is a  nontrivial abelian monoid that has admissible well orders. Let $A\supseteq B$ be homogeneous left (resp. right) coideal subalgebras of $H$. Then there exists a family $\{z_\xi\}_{\xi\in \Xi}$ of homogeneous elements of $A_+$ and a total order $\vartriangleleft$ on $\Xi$ such that
\begin{enumerate}	
		\item  $(\{z_\xi\}_{\xi\in \Xi}, <)$ is a system of PBW generators of $A$ over $B$ for each total order $<$ on $\Xi$.
		%\item The braiding of $R$ is diagonal with respect $\{z_\xi\}_{\xi\in \Xi}$.
		\item  For each $\xi\in \Xi$, the subalgebra $A^{\trianglelefteq \xi}$ of $A$ generated by $\{~z_\eta~|~\eta\in \Xi,~ \eta\trianglelefteq \xi ~\}$ over $B$ has  $$\{~ z_{\xi_1}^{r_1}\cdots z_{\xi_m}^{r_m} ~|~ m\geq 0,\, \xi_1\vartriangleleft \cdots \vartriangleleft \xi_m \trianglelefteq \xi,\, \xi_i \in \Xi,  \, r_i\geq0,  \, i=1,\ldots, m ~\}$$ as a basis of left $B$-module as well as of right $B$-module. 
		\item  For each $\xi\in \Xi$, the subalgebra $A^{\trianglelefteq \xi}$ is a left (resp. right) coideal of $H$, and  it is a Hopf  subalgebra of $H$ in the case that $A, B$ are both Hopf subalgebras of $H$.
		\item For all $\xi,\eta\in \Xi$ with $\eta \vartriangleleft \xi$, it follows that $[z_\xi, z_\eta] \in \bigcup_{\delta\vartriangleleft \xi} A^{\trianglelefteq \delta}$.
	\end{enumerate}
%Consequently, one has $\gkdim A =\gkdim B +\#(\Xi)$.
\end{theorem}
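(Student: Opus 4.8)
The plan is to deduce everything from Theorem~\ref{braided-bialgebra-1} by regarding $H$ as a connected $\Gamma$-graded braided bialgebra whose braiding $\tau$ is the flipping map of $H\otimes H$. With the flipping braiding one has $H\otimes^\tau H=H\otimes H$ as ordinary algebras and coalgebras, so the braided bialgebra axioms collapse to the ordinary ones and $(H,\tau)$ is indeed a connected $\Gamma$-graded braided bialgebra. Moreover $\tau=\tau_{\varepsilon_\Gamma}$ is the canonical braiding attached to the trivial bicharacter $\chi=\varepsilon_\Gamma$ of $\Gamma$ (Example~\ref{bicharacter}), since $\tau_{\varepsilon_\Gamma}(u\otimes v)=\varepsilon_\Gamma(\alpha,\beta)\,v\otimes u=v\otimes u$ for homogeneous $u,v$. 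In particular $\tau^{-1}=\tau$, and for all $x,y\in H$ the $\tau$-commutator $[x,y]_\tau=[x,y]_{\tau^{-1}}=xy-yx$ is the ordinary commutator.

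First I would apply Theorem~\ref{braided-bialgebra-1} to $(H,\tau)$ and the pair $A\supseteq B$, obtaining a family $\{z_\xi\}_{\xi\in\Xi}$ of homogeneous elements of $A_+$, a height map $h:\Xi\to\mathbb{N}_\infty$ and a total order $\vartriangleleft$ on $\Xi$ satisfying parts (1)--(6) there. The crucial simplification in the connected characteristic-zero setting is that \emph{every} height is infinite. Indeed, suppose $h(\xi)<\infty$ for some $\xi\in\Xi$. Since $k$ has characteristic $0$, Theorem~\ref{braided-bialgebra-1}(6) forces $a_\xi=\chi(\deg(z_\xi),\deg(z_\xi))\neq 1$; but $\chi=\varepsilon_\Gamma$ is trivial, so $a_\xi=\varepsilon_\Gamma(\deg(z_\xi),\deg(z_\xi))=1$, a contradiction. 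Hence $h(\xi)=\infty$ for all $\xi\in\Xi$, and the PBW bases of Theorem~\ref{braided-bialgebra-1}(1,2) become exactly those in parts (1) and (2) of the present statement, with the exponents $r_i$ ranging over all of $\mathbb{N}$. Part (4) here is then immediate from Theorem~\ref{braided-bialgebra-1}(5) together with the identification of $[\,\cdot\,,\,\cdot\,]_\tau$ with the ordinary commutator, and the left (resp. right) coideal assertion of part (3) is precisely the first half of Theorem~\ref{braided-bialgebra-1}(3).

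It remains to upgrade the subcoalgebra conclusion of Theorem~\ref{braided-bialgebra-1}(3) to the Hopf subalgebra assertion in part (3). Since $\chi^2=\varepsilon_\Gamma$ holds trivially, that result already yields, when $A,B$ are subcoalgebras (in particular, when they are Hopf subalgebras), that each $A^{\trianglelefteq\xi}$ is a homogeneous subcoalgebra of $H$. Being simultaneously a homogeneous subalgebra and subcoalgebra, $A^{\trianglelefteq\xi}$ is a homogeneous subbialgebra of $H$; and since $(A^{\trianglelefteq\xi})_0\subseteq H_0=k$ contains $1$, it is connected. The only point requiring an argument beyond Theorem~\ref{braided-bialgebra-1} is that a connected homogeneous subbialgebra $D\subseteq H$ is stable under $\mathcal{S}_H$, hence a Hopf subalgebra. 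I expect this to be the main (though mild) obstacle, and would settle it by induction on degree: for homogeneous $x\in D_+$ the antipode axiom gives $\mathcal{S}_H(x)=-x-\sum \mathcal{S}_H(x_{(1)})\,x_{(2)}$, where the sum runs over the components of $\Delta_H(x)-1\otimes x-x\otimes 1\in D\otimes D$, each tensor factor being homogeneous of strictly smaller positive degree. As $\Delta_H(D)\subseteq D\otimes D$ and the lower-degree antipode values lie in $D$ by the inductive hypothesis, we conclude $\mathcal{S}_H(x)\in D$. This establishes part (3) in full and completes the proof.
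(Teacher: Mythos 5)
Your proposal is correct and follows essentially the same route as the paper: view $H$ as a connected $\Gamma$-graded braided bialgebra via the trivial bicharacter, apply Theorem \ref{braided-bialgebra-1}, and use part (6) in characteristic zero to force $h(\xi)=\infty$ for all $\xi$. Your closing induction showing that a connected homogeneous subbialgebra is antipode-stable is a detail the paper leaves implicit, and it is valid since the admissible well order makes the degree induction well-founded.
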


\begin{proof}
Consider $H$ as a connected $\Gamma$-graded braided bialgebra with braiding the flipping map, i.e., the one induced by the trivial bicharacter of $\Gamma$. By Theorem \ref{braided-bialgebra-1}, the result follows.
Particularly, Theorem \ref{braided-bialgebra-1} (6) tells us that the hidden map $h:\Xi\to \mathbb{N}_\infty$ is given by $h(\xi)=\infty$ for each $\xi\in \Xi$.
\end{proof}

\begin{corollary}\label{polynomial-generator-graded}
Assume that $k$ is of characteristic $0$. Let $H$ be a connected $\Gamma$-graded Hopf algebra, where $\Gamma$ is a  nontrivial abelian monoid that has admissible well orders. Let $A\supseteq B$ be homogeneous left (resp. right) coideal subalgebras of $H$.  If $A$ is commutative, then $A$ is isomorphic as a $\Gamma$-graded algebra to the graded polynomial algebra over $B$ in some family of graded variables. 
\end{corollary}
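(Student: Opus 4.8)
The plan is to read the result off directly from Theorem \ref{connected-graded-Hopf-algebra-PBW}, using commutativity only at the very end. First I would apply that theorem to the pair $A\supseteq B$ to obtain a family $\{z_\xi\}_{\xi\in\Xi}$ of homogeneous elements of $A_+$ together with a total order $\vartriangleleft$ on $\Xi$, such that $(\{z_\xi\}_{\xi\in\Xi}, <)$ is a system of PBW generators of $A$ over $B$ for every total order $<$ on $\Xi$. Since $k$ has characteristic $0$, the hidden height map is identically $\infty$ (as recorded in part (2) of that theorem, where the exponents $r_i$ range over all of $\mathbb{N}$), so by Definition \ref{PBW-definition} the set
\[
B(\{z_\xi\}_{\xi\in\Xi},<) = \{\, z_{\xi_1}^{r_1}\cdots z_{\xi_m}^{r_m} \mid m\geq 0,\ \xi_1<\cdots<\xi_m,\ \xi_i\in\Xi,\ r_i\geq 0 \,\}
\]
is a basis of $A$ both as a left and as a right $B$-module.

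Next I would exploit the commutativity of $A$. As $B$ is a subalgebra of the commutative algebra $A$, it is itself commutative, and each $z_\xi$ is central in $A$. I would then form the $\Gamma$-graded polynomial algebra $P:=B[\,t_\xi \mid \xi\in\Xi\,]$ over $B$, where $t_\xi$ is a central indeterminate placed in degree $\deg(z_\xi)$. Because the elements $z_\xi$ commute with one another and with $B$, there is a well-defined homomorphism of $\Gamma$-graded algebras $\phi:P\to A$ which restricts to the inclusion on $B$ and sends $t_\xi\mapsto z_\xi$; it is degree-preserving by construction and since $B$ is a homogeneous subalgebra. The final step is to check that $\phi$ is bijective by comparing bases: the ordered monomials $t_{\xi_1}^{r_1}\cdots t_{\xi_m}^{r_m}$ with $\xi_1<\cdots<\xi_m$ and $r_i\geq 0$ form a basis of $P$ as a $B$-module (by freeness of the polynomial algebra over the commutative ring $B$), and $\phi$ carries this basis bijectively onto $B(\{z_\xi\}_{\xi\in\Xi},<)$. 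Hence $\phi$ is an isomorphism of left $B$-modules; being also a homomorphism of algebras, it is an isomorphism of $\Gamma$-graded algebras, which is exactly the claim.

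There is essentially no serious obstacle here: the corollary is a direct specialization of Theorem \ref{connected-graded-Hopf-algebra-PBW} once commutativity is invoked both to guarantee that the candidate map $\phi$ is well defined and to match the monomial $B$-basis of the polynomial algebra with the PBW basis of $A$. The only point that requires a moment's care is that the \emph{same} ordered monomials serve simultaneously as a basis on the $P$-side and on the $A$-side, so that $\phi$ transports one basis onto the other; the fact, supplied by the theorem, that $B(\{z_\xi\}_{\xi\in\Xi},<)$ is at once a left and a right $B$-basis makes this matching unambiguous and removes any worry about sidedness when identifying $\phi$ as a $B$-module isomorphism.
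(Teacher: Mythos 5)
Your proposal is correct and follows exactly the route the paper takes: the paper's entire proof is the one-line remark that the corollary is a direct consequence of Theorem \ref{connected-graded-Hopf-algebra-PBW} (1), and your argument simply spells out the routine verification (heights all infinite in characteristic $0$, commutativity making the map $\phi$ from the graded polynomial algebra well defined, and the PBW basis matching the monomial basis). No gaps.
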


\begin{proof}
It is a direct consequence of Theorem \ref{connected-graded-Hopf-algebra-PBW} (1).
\end{proof}

\begin{definition}\label{definition-Ore-extension}
Let $R$ be an algebra. An \emph{Ore extension}  of  $R$  is a quadruple $(S,x, \phi, \delta)$ consists of an algebra $S$, an element $x\in S$, an automorphism $\phi$ of $R$ and  a $\phi$-derivation $\delta$  of $R$ such that 
\begin{enumerate}
	\item $S$ contains $R$ as a subalgebra;
	\item $S$ is a free left $R$-module with basis $\{1, x, x^2,\ldots\}$;
	\item $xr =\phi(r) x +\delta(r)$ for all $r\in R$.
\end{enumerate}
We write $S=R[x; \phi, \delta]$ to denote such an Ore extension of $R$. Frequently, we simply call the algebra $S$ itself an Ore extension of $R$. An Ore extension of $R$ is called \emph{of derivation type} if $\phi=\id_R$.

In the context of graded setting, one defines the notion of \emph{graded Ore extensions} of graded algebras in a similar way by requiring $x$, $\phi$ and $\delta$ to be homogeneous of appropriate degree.
\end{definition}

\begin{theorem}\label{IHOE-graded}
Assume that $k$ is of characteristic $0$. Let $H$ be a connected $\Gamma$-graded Hopf algebra, where $\Gamma$ is a  nontrivial abelian monoid that has admissible well orders. Let $A\supseteq B$ be homogeneous left (resp. right) coideal subalgebras of $H$, which  are of finite GK dimension $m$ and $n$ respectively. Then there is a sequence  $B=K_0 \subset K_1 \subset \cdots \subset K_{m-n} = A$ of homogeneous left (resp. right) coideal subalgebras of $H$ such that each $K_i$ is a graded Ore extension of $K_{i-1}$ of derivation type. Moreover, if $A$ and $B$ are Hopf subalgebras of $H$ then $K_i$ can be chosen to be Hopf subalgebras of $H$.
\end{theorem}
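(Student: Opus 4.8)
The plan is to extract the chain directly from the PBW datum of Theorem~\ref{connected-graded-Hopf-algebra-PBW} and to recognise each step as a derivation-type Ore extension. First I would apply Theorem~\ref{connected-graded-Hopf-algebra-PBW} to $A\supseteq B$, obtaining homogeneous elements $\{z_\xi\}_{\xi\in\Xi}\subseteq A_+$ and a total order $\vartriangleleft$ on $\Xi$ satisfying its properties (1)--(4); in this characteristic-zero setting every height is infinite. Since the heights are infinite, Lemma~\ref{PBW-GK-dimension} gives $m=\gkdim A\geq \gkdim B+\#(\Xi)=n+\#(\Xi)$, so $\Xi$ is finite; write $r:=\#(\Xi)$ and $\Xi=\{\xi_1\vartriangleleft\cdots\vartriangleleft\xi_r\}$. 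Put $K_0:=B$ and $K_i:=A^{\trianglelefteq\xi_i}$ for $1\leq i\leq r$. By property (2) the monomials in $z_{\xi_1},\dots,z_{\xi_i}$ form a $B$-basis of $K_i$, so $K_0\subset K_1\subset\cdots\subset K_r=A$ are strict inclusions; by property (3) each $K_i$ is a homogeneous left (resp.\ right) coideal subalgebra of $H$, and a Hopf subalgebra once $A,B$ are.

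I would then show $K_i=K_{i-1}[z_{\xi_i};\id,\delta_i]$ is a graded Ore extension of derivation type. Isolating the highest power of $z_{\xi_i}$ in the PBW basis of property (2) exhibits $K_i=\bigoplus_{s\geq0}K_{i-1}z_{\xi_i}^{\,s}$ as a free left $K_{i-1}$-module on $\{z_{\xi_i}^{\,s}\}$, which is the module requirement of Definition~\ref{definition-Ore-extension}. As the braiding is the flip, property (4) reads $[z_{\xi_i},z_{\xi_j}]\in\bigcup_{\delta\vartriangleleft\xi_i}A^{\trianglelefteq\delta}=K_{i-1}$ for $j<i$, so the inner derivation $\delta_i:=[z_{\xi_i},-]$ sends each $z_{\xi_j}$ ($j<i$) into $K_{i-1}$. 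Because $\delta_i$ is a derivation and $K_{i-1}$ is generated over $B$ by $z_{\xi_1},\dots,z_{\xi_{i-1}}$, it remains to check $\delta_i(B)\subseteq K_{i-1}$; once this holds, $z_{\xi_i}r=rz_{\xi_i}+\delta_i(r)$ with $\delta_i$ a homogeneous $K_{i-1}$-derivation of degree $\deg(z_{\xi_i})$, so the automorphism part is the identity.

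For $\delta_i(B)\subseteq K_{i-1}$ I would return to the realisation underlying Theorem~\ref{braided-bialgebra-1}: with $B=k\langle Y_1\rangle/J_1$ and $Y_1$ closed, $B$ is generated by the $z_\eta$ with $\eta\in\N_{J_1}$, each $\eta$ a Lyndon word on $Y_1$. Since a Lyndon word begins with its largest letter and $Y_1$ is closed while $\xi_i\notin\langle Y_1\rangle$, one gets $\eta<_{\lex}\xi_i$; Corollary~\ref{Lyndon-compare} (4) then puts $[z_{\xi_i},z_\eta]$ into $A^{\trianglelefteq\xi_i\eta}$, and as $\xi_i$ is a proper prefix of $\xi_i\eta$ one has $\xi_i\eta<_{\lex}\xi_i$. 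Hence every Lyndon atom $\gamma\leq_{\lex}\xi_i\eta$ satisfies either $\gamma\in\Xi$ with $\gamma\vartriangleleft\xi_i$ (so $z_\gamma\in K_{i-1}$) or $\gamma\in\langle Y_1\rangle$ (so $z_\gamma\in B$), giving $A^{\trianglelefteq\xi_i\eta}\subseteq K_{i-1}$ and thus $[z_{\xi_i},z_\eta]\in K_{i-1}$. To fix $r=m-n$ I would apply Theorem~\ref{connected-graded-Hopf-algebra-PBW} to $B\supseteq k$ (here $k=H_0$), concluding via Lemma~\ref{PBW-GK-dimension} that $B$ has at most $n$ PBW generators over $k$ and is therefore affine; hence all $K_i$ are affine, and by the standard fact that a derivation-type Ore extension of an affine algebra raises $\gkdim$ by exactly one, $m=\gkdim A=\gkdim B+r=n+r$, so $r=m-n$.

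The main obstacle is the inclusion $\delta_i(B)\subseteq K_{i-1}$: property (4) of Theorem~\ref{connected-graded-Hopf-algebra-PBW} only controls brackets among the $z_\xi$ with $\xi\in\Xi$, so handling $[z_{\xi_i},b]$ for $b\in B$ forces a descent back into the Lyndon-word combinatorics (the closedness of $Y_1$ and the inequality $\xi_i\eta<_{\lex}\xi_i$) rather than an argument at the level of the abstract PBW datum alone. A secondary care point is that the GK-dimension increment must be justified through derivation type plus affineness, since the inequality of Lemma~\ref{PBW-GK-dimension} can fail strictly for general Ore extensions.
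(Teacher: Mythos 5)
Your proof is correct and follows essentially the same route as the paper: extract the PBW datum from Theorem \ref{connected-graded-Hopf-algebra-PBW}, set $K_i=A^{\trianglelefteq \xi_i}$, use property (2) for freeness of $K_i$ over $K_{i-1}$ and property (4) for the derivation, and recover $r=m-n$ from the affineness of $B$ together with the fact that a derivation-type Ore extension of an affine algebra raises GK dimension by exactly one (the paper cites \cite[Proposition 3.5]{KL} for the same purpose). Your additional verification that $\delta_i(B)\subseteq K_{i-1}$ --- descending to the Lyndon-word realization, using the closedness of $Y_1$ to get $\eta<_{\lex}\xi_i$ and then Corollary \ref{Lyndon-compare}(4) with $\xi_i\eta<_{\lex}\xi_i$ --- fills in a step that the paper's proof leaves implicit when it defines $\delta_i$ on all of $K_{i-1}$ by appealing to property (4) alone (which, as stated, only controls brackets among the $z_\xi$ with $\xi\in\Xi$), so this is a welcome elaboration rather than a deviation.
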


\begin{proof}
Firstly, fix a homogeneous system of PBW generators $(\{z_\xi\}_{\xi\in \Xi}, \vartriangleleft)$ of $A$ over $B$  satisfying Theorem \ref{connected-graded-Hopf-algebra-PBW} (2, 3, 4). By Lemma \ref{PBW-GK-dimension}, one has $\#(\Xi)\leq m-n$. 
Thus, we may assume $\Xi= \{1,\ldots, l\}$ which is equipped with the natural order. Now let $K_0= B$; for $i=1,\ldots, l$, let $K_i$ be the subalgebra of $A$ generated by  $z_1,\ldots, z_i$ over $B$.  By Theorem \ref{connected-graded-Hopf-algebra-PBW} (2), the set $$\{1, z_i, z_i^2, \ldots ~\}$$ is a homogeneous basis of $K_i$ as a $\Gamma$-graded left $K_{i-1}$-module. According to Theorem \ref{connected-graded-Hopf-algebra-PBW} (4), one may define a homogeneous linear map $\delta_i:K_{i-1} \to K_{i-1}$ of degree $\deg(z_i)$  by $$f\mapsto z_i \cdot f -f\cdot z_i, \quad f\in K_{i-1}.$$ It is easy to check that $\delta_i$ is a derivation of $K_{i-1}$. Therefore, one has 
$$K_i=K_{i-1}[z_i; \id, \delta_i], \quad i=1,\ldots, l.$$ 
Finally, apply Theorem \ref{connected-graded-Hopf-algebra-PBW} and Lemma \ref{PBW-GK-dimension} to the pair $B\supseteq k$, one deduces that $B$ is finitely generated over $k$. So $\gkdim A=\gkdim B+ l$ by \cite[Proposition 3.5]{KL}, hence $l=m-n$. 
\end{proof}

The next result generalizes \cite[Theorem 6.9]{Zh} in two aspects. The first one is that  we extend the context from connected Hopf algebras of finite GK dimension to their coideal subalgebras of arbitrary GK dimension; the other one is that we don't ask $k$ to be algebraically closed. Note that \cite[Theorem 6.9]{Zh} is in some sense the starting point of the recent research line on connected Hopf algebras.
See \cite[Theorem 1.1]{BG} and \cite[Proposition 3.4]{ZSL3} for other weaker generalizations.

\begin{theorem}\label{GKdim-transfer}
Assume that $k$ is of characteristic $0$. Let $H$ be a connected  Hopf algebra. Let $A$ be a one-sided coideal subalgebra of $H$.  Let $\mathcal{F}=(H_{(n)})_{n\geq 0}$ be the coradical filtration of $H$. Then $\gr(A,\mathcal{F}|_A)$ is a graded polynomial algebra over $k$ with $ \gkdim A =\gkdim \gr(A,\mathcal{F}|_A) \in \mathbb{N}_\infty$. 
\end{theorem}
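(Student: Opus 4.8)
The plan is to pass to the associated graded Hopf algebra with respect to the coradical filtration, prove that it is \emph{commutative}, and then read off the graded polynomial structure of $\gr(A,\mathcal{F}|_A)$ as a special case of Corollary \ref{polynomial-generator-graded}. Since $H$ is connected it is pointed with trivial group of group-likes, so $\mathcal{F}=(H_{(n)})_{n\geq 0}$ is a Hopf algebra filtration and $Q:=\gr(H,\mathcal{F})$ is a connected $\mathbb{N}$-graded Hopf algebra with $Q_0=k$. As $\mathcal{F}$ is in particular a coalgebra filtration and $A$ is a one-sided coideal subalgebra of $H$, the subspace $\gr(A,\mathcal{F}|_A)$ is a homogeneous one-sided coideal subalgebra of $Q$ containing $k=Q_0$. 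Moreover $Q$ is \emph{coradically graded}: its own coradical filtration coincides with the given grading, i.e. $Q_{(n)}=\bigoplus_{i\leq n}Q_i$, and consequently $Q_1=P(Q)$ (all primitives sit in degree $1$).

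The decisive step, which I expect to be the main obstacle, is the commutativity of $Q$; granting it, the subalgebra $\gr(A,\mathcal{F}|_A)$ is commutative as well. To prove it I would work with the iterated reduced comultiplication. Writing $\overline{\Delta}(x):=\Delta(x)-x\otimes 1-1\otimes x$ on the augmentation ideal $Q^{+}$ and $\overline{\Delta}^{(m)}$ for its $m$-fold iterate into $(Q^{+})^{\otimes(m+1)}$ (with $\overline{\Delta}^{(0)}=\mathrm{id}$), the coradical filtration is recovered as $Q_{(m)}\cap Q^{+}=\ker\overline{\Delta}^{(m)}$. Since $Q$ is coradically graded, for homogeneous $x$ of degree $n$ every tensor factor occurring in $\overline{\Delta}^{(n-1)}(x)$ has degree $1$, so $\phi_n:=\overline{\Delta}^{(n-1)}|_{Q_n}$ maps $Q_n$ into $P(Q)^{\otimes n}$, and it is injective because $Q_n\cap Q_{(n-1)}=0$. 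The key computation is that the $\phi_n$ assemble into an algebra homomorphism from $Q$ into the shuffle algebra on $P(Q)$: for homogeneous $x,y$ of degrees $i,j$ one has that $\phi_{i+j}(xy)$ equals the sum, over all $(i,j)$-shuffles, of the corresponding interleavings of $\phi_i(x)$ and $\phi_j(y)$. I would prove this formula by induction on $i+j$ using the co-Leibniz expansion of $\overline{\Delta}$ on a product; the base cases (e.g. $\phi_2(xy)=x\otimes y+y\otimes x$ for $x,y\in P(Q)$, and $\phi_3(xy)=x\otimes y'\otimes y''+y'\otimes x\otimes y''+y'\otimes y''\otimes x$ for $x\in P(Q)$, $\overline{\Delta}(y)=\sum y'\otimes y''$) already exhibit the shuffle. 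Because the shuffle product is commutative, $\phi_{i+j}(xy)=\phi_{i+j}(yx)$, and injectivity of $\phi_{i+j}$ forces $xy=yx$; hence $Q$ is commutative. This is exactly the point where the \emph{coradical} (rather than an arbitrary) filtration is essential: the braided structure theorem only delivers $[z_\xi,z_\eta]$ in lower terms, never zero, so commutativity is a genuinely new input, and the careful bookkeeping of the iterated reduced coproduct of a product is the delicate part.

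With commutativity established, Corollary \ref{polynomial-generator-graded}, applied to the connected $\mathbb{N}$-graded Hopf algebra $Q$ and the pair $\gr(A,\mathcal{F}|_A)\supseteq k$, shows at once that $\gr(A,\mathcal{F}|_A)$ is a graded polynomial algebra $k[\{z_\xi\}_{\xi\in\Xi}]$ over $k$; in particular $\gkdim\gr(A,\mathcal{F}|_A)=\#(\Xi)\in\mathbb{N}_\infty$. For the equality of Gelfand--Kirillov dimensions I would use the filtered--graded method. If $\#(\Xi)<\infty$, then $\gr(A,\mathcal{F}|_A)$ is finitely generated and the standard comparison \cite{KL} gives $\gkdim A=\gkdim\gr(A,\mathcal{F}|_A)$. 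If $\#(\Xi)=\infty$, then Lemma \ref{PBW-tranfer} lifts the all-infinite-height PBW basis of $\gr(A,\mathcal{F}|_A)$ to a system of PBW generators of $A$ over $k$, and Lemma \ref{PBW-GK-dimension} yields $\gkdim A\geq\#(\Xi)=\infty$, so $\gkdim A=\infty=\gkdim\gr(A,\mathcal{F}|_A)$. In either case $\gkdim A=\gkdim\gr(A,\mathcal{F}|_A)\in\mathbb{N}_\infty$, which completes the proof.
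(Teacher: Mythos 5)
Your proposal is correct and follows the same overall route as the paper: pass to $Q=\gr(H,\mathcal{F})$, observe that $\gr(A,\mathcal{F}|_A)\supseteq k$ are homogeneous one-sided coideal subalgebras of the commutative connected graded Hopf algebra $Q$, invoke Corollary \ref{polynomial-generator-graded} to get the graded polynomial structure, and finish with the filtered-graded comparison of GK dimensions from \cite{KL}. The one place you diverge is the commutativity of $Q$: the paper simply cites \cite[Proposition 6.4]{Zh}, whereas you give a self-contained proof via the embedding $\phi=(\phi_n)$ of the coradically graded algebra $Q$ into the shuffle algebra on $P(Q)=Q_1$. Your degree bookkeeping is sound -- for homogeneous $x,y$ of degrees $i,j$, the $Q_1^{\otimes(i+j)}$-component of $\Delta^{(i+j-1)}(x)\Delta^{(i+j-1)}(y)$ forces each tensor slot to receive a degree-$1$ factor from exactly one of $x,y$, which is precisely the $(i,j)$-shuffle of $\phi_i(x)$ and $\phi_j(y)$, and injectivity of $\phi_{i+j}$ (from $Q_{i+j}\cap Q_{(i+j-1)}=0$) then yields $xy=yx$. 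This buys a proof that does not outsource the key input to \cite{Zh}, at the cost of some extra bookkeeping; your handling of the GK-dimension equality (splitting into $\#(\Xi)$ finite versus infinite, using Lemma \ref{PBW-tranfer} and Lemma \ref{PBW-GK-dimension} in the infinite case) is a harmless repackaging of the paper's appeal to \cite[Lemma 6.5, Proposition 6.6]{KL}.
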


\begin{proof}
By \cite[Proposition 6.4]{Zh}, $\gr(H,\mathcal{F})$ is a commutative connected $\mathbb{N}$-graded Hopf algebra. Note that $C:=\gr(A,\mathcal{F}|_A)\supseteq k$ are  homogeneous one-sided coideal subalgebras of $\gr(H,\mathcal{F})$, so $C$ is a graded polynomial algebra over $k$ by Corollary \ref{polynomial-generator-graded}. Since $\gkdim A\geq \gkdim C$ by \cite[Lemma 6.5]{KL}, to see the result, one may assume $\gkdim C<\infty$. Then $C$ is finitely generated and $\gkdim C\in \mathbb{N}$ by \cite[Example 3.6]{KL}. Thus, $\gkdim A=\gkdim C\in \mathbb{N}$ by \cite[Proposition 6.6]{KL}.
\end{proof}

The following result tells us that some fundamental ring-theoretical and homological properties are enjoyed by coideal subalgebras of connected Hopf algebras of finite GK dimension over a field of characteristic zero. 
It generalizes  \cite[Theorem 4.3]{BG} from algebraically closed fields of characteristic zero to arbitrary fields of characteristic zero, and also generalizes \cite[Theorem 3.15]{ZSL3} from connected Hopf algebras to their  coideal subalgebras. The unexplained terminology used in the theorem is standard, and can be found for example in  \cite{BZ,  RRZ}.

\begin{proposition}\label{homological-property}
Assume that $k$ is of characteristic $0$. Let $H$ be a connected  Hopf algebra. Let $A$ be a one-sided coideal subalgebra of $H$ of finite GK dimension $d$. Then
	\begin{enumerate}
		\item $A$ is a finitely generated domain over $k$.
		\item $A$ is universally noetherian (i.e., $A\otimes R$ is noetherian for any noetherian algebra $R$).
		\item $A$ is Auslander regular, (GK-)Cohen-Macaulay and skew $d$-Calabi-Yau.
		\item $A$ is of global dimension $d$ and Krull dimension $\leq d$.
		\item $A$ is Artin-Schelter regular of dimension $d$ (as an augmented algebra by $(\varepsilon_H)|_A$) .
	\end{enumerate}
\end{proposition}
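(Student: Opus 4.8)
The plan is to reduce everything to the associated graded algebra with respect to the coradical filtration, where the structure is completely transparent. Write $\mathcal{F}=(H_{(n)})_{n\geq 0}$ for the coradical filtration of $H$ and set $C:=\gr(A,\mathcal{F}|_A)$. Since $H$ is connected it is in particular pointed, so $\mathcal{F}$ is a Hopf algebra filtration and $\mathcal{F}|_A$ is an exhaustive $\mathbb{N}$-filtration making $A$ a filtered algebra with $A\cap H_{(0)}=k$. By Theorem \ref{GKdim-transfer}, $C$ is a graded polynomial algebra over $k$ with $\gkdim A=\gkdim C=d$; as $d<\infty$ the number of polynomial generators is exactly $d$, so $C\cong k[t_1,\ldots,t_d]$ as a connected $\mathbb{N}$-graded algebra. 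The whole proof then consists of recording that $C$ enjoys all of (1)--(5), which is classical for a commutative polynomial ring, and transporting each property back to $A$ by the filtered-graded machinery.

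For the ring-theoretic statements (1), (2) and the dimension bounds in (4): $C$ is a finitely generated domain, and both finite generation and the domain property lift from $\gr A$ to $A$ because the filtration is exhaustive and $\gr A$ is a domain. The polynomial ring $C$ is universally noetherian by the Hilbert basis theorem, since for any noetherian $R$ the algebra $C\otimes R$ is a polynomial ring over the noetherian ring $R$; universal noetherianity then transfers to $A$ via the filtration on $A\otimes R$ whose associated graded is $C\otimes R$, and I would invoke the transfer results in the form already used in \cite{BG, ZSL3}. For the remaining bounds, $\gldim A\leq \gldim C=d$ and the Krull dimension of $A$ is at most that of $C$, which equals $d$, by the standard filtered-graded inequalities (see \cite{BZ}).

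For the homological statements (3) and the sharpening of (4), (5): the polynomial ring $C$ is Auslander regular, (GK-)Cohen--Macaulay, $d$-Calabi--Yau, and, as a connected graded algebra, Artin--Schelter regular of dimension $d$. Auslander regularity and Cohen--Macaulayness pass from $\gr A$ to $A$ by the filtered-graded transfer theorems (see \cite{BZ}). Artin--Schelter regularity of $C$ lifts to AS-regularity of $A$: here I use that $A$ is an augmented $k$-algebra via $\varepsilon:=(\varepsilon_H)|_A$, whose augmentation ideal is $A_+=\ker\varepsilon$, compatibly with the connected filtration since $A\cap H_{(0)}=k$. This yields that $A$ is AS-regular of dimension $d$, which in turn forces $\gldim A=d$, completing (4) and (5). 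Finally, being noetherian and AS-regular, $A$ is skew $d$-Calabi--Yau by the characterization of Reyes--Rogalski--Zhang \cite{RRZ}, completing (3).

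The genuinely delicate point, and the step I expect to be the main obstacle, is the transfer of Artin--Schelter regularity from $C$ to $A$, and hence the exact computation $\gldim A=d$ together with the skew Calabi--Yau property, since the filtered-graded method a priori yields only inequalities for homological dimensions. One must check that connectedness of the filtration, which makes $A$ an augmented algebra with augmentation ideal $A_+$, together with the fact that $\gr A$ is AS-regular of finite global dimension, suffices to pin the groups ${\rm Ext}^i_A(k,A)$ exactly rather than merely bounding the projective dimension of $k$. Verifying that the relevant comparison degenerates, so that AS-regularity is inherited and the dimension is exactly $d$, is where the real content lies; the Cohen--Macaulay property of $A$ is what guarantees the homological dimension cannot drop below $\gkdim A=d$.
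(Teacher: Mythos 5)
Your proposal is correct and follows essentially the same route as the paper: the paper's proof consists precisely of invoking Theorem \ref{GKdim-transfer} to identify $\gr(A,\mathcal{F}|_A)$ with a polynomial ring of GK dimension $d$ and then declaring the rest ``literally the same as'' the filtered-graded transfer arguments of \cite[Theorem 3.15]{ZSL3} and \cite[Theorem 4.3]{BG}, which are exactly the lifting steps you spell out. Your identification of the transfer of AS-regularity and the exact equality $\gldim A=d$ as the delicate point is also where those cited arguments do their real work, so nothing is missing.
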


\begin{proof}
With Theorem \ref{GKdim-transfer} at hand, the proof is literally the same as that of \cite[Theorem 3.15]{ZSL3}, following the  idea  of Brown and Gilmartin in \cite[Theorem 4.3]{BG}.
\end{proof}

\begin{remark}
The Nakayama automorphism of the algebra $A$ as stated in Proposition \ref{homological-property} has a closed formula in terms of the antipode of $H$ and the character of $A$ determined by the homological integral of $A$. We refer to \cite[Proposition 4.6]{BG} and \cite[Theorem 3.6]{LiWu} for details.
\end{remark}

\begin{remark}
Due to Theorem \ref{GKdim-transfer}, one may define the invariants of signature and lantern for  coideal subalgebras of connected Hopf algebras over arbitrary fields of characteristic zero, as that have done in \cite[Section 5]{BG} with the additional assumption that the base field is algebraically closed. Note that the results in loc. cit. hold literally in this more general context.
\end{remark}

\begin{lemma}\label{gkdim-compare}
Assume that $k$ is of characteristic $0$. Let $H$ be a connected Hopf algebra. Let $A\supseteq B$ be  left (resp. right) coideal subalgebras of $H$. Let $(\{z_\xi\}_{\xi\in \Xi}, \vartriangleleft)$ be a system of PBW generators of $A$ over $B$ that satisfying  Theorem \ref{connected-graded-Hopf-algebra-PBW} (2, 4). Then $\gkdim A= \gkdim B+ \#(\Xi)$.
\end{lemma}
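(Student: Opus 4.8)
The plan is to match the lower bound for $\gkdim A$ coming from the PBW basis with an upper bound obtained by realising $A$ as a finite iterated Ore extension of $B$, exactly along the lines of the proof of Theorem \ref{IHOE-graded}. Since the system is written $(\{z_\xi\}_{\xi\in\Xi},\vartriangleleft)$, by Definition \ref{PBW-definition} we have $h(\xi)=\infty$ for every $\xi\in\Xi$; this is forced anyway in characteristic $0$ by Theorem \ref{connected-graded-Hopf-algebra-PBW}. First I would invoke Lemma \ref{PBW-GK-dimension} to obtain $\gkdim A\ge \gkdim B+\#\{\xi\in\Xi\mid h(\xi)=\infty\}=\gkdim B+\#(\Xi)$. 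This already disposes of the degenerate cases: if $\#(\Xi)=\infty$ the inequality gives $\gkdim A=\infty$, while if $\gkdim B=\infty$ then $\gkdim A=\infty$ because $B$ is a subalgebra of $A$; in both cases the two sides of the asserted identity equal $\infty$. Hence I may assume $\gkdim B<\infty$ and $\#(\Xi)=l<\infty$, and it remains to prove $\gkdim A\le\gkdim B+l$.

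For the upper bound, enumerate $\Xi=\{\xi_1\vartriangleleft\cdots\vartriangleleft\xi_l\}$ and put $K_0=B$ and $K_i=A^{\trianglelefteq\xi_i}$, so that $K_l=A$. Property (2) of Theorem \ref{connected-graded-Hopf-algebra-PBW} shows that $K_i$ is a free left $K_{i-1}$-module with basis $\{1,z_{\xi_i},z_{\xi_i}^2,\dots\}$, by regrouping the monomial basis of $K_i$ over $B$ according to the exponent of $z_{\xi_i}$. Property (4) gives $[z_{\xi_i},z_{\xi_j}]\in\bigcup_{\delta\vartriangleleft\xi_i}A^{\trianglelefteq\delta}=K_{i-1}$ for $j<i$. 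The aim is then to show that $\delta_i:=[z_{\xi_i},-]$ restricts to a derivation of $K_{i-1}$, which would identify $K_i=K_{i-1}[z_{\xi_i};\id,\delta_i]$ as a derivation-type Ore extension, the same structure exploited in Theorem \ref{IHOE-graded}.

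The main obstacle is precisely this derivation-preservation, namely $\delta_i(K_{i-1})\subseteq K_{i-1}$. As $\delta_i$ is a derivation of $A$, by the Leibniz rule it is enough to check it on algebra generators of $K_{i-1}$, that is on $z_{\xi_1},\dots,z_{\xi_{i-1}}$ and on $B$. The first family is handled by property (4), but the commutators $[z_{\xi_i},b]$ with $b\in B$ are not governed by (4): a priori the expansion of $z_{\xi_i}b$ along the free left basis could involve $z_{\xi_i}$ to powers $\ge2$, and ruling this out is the delicate point. I would control it through the coalgebra side, using that $B$ and $A$ are coideal subalgebras of the connected Hopf algebra $H$: the coproduct of $z_{\xi_i}$ is primitive modulo strictly smaller subalgebras (this is the content behind Theorem \ref{connected-graded-Hopf-algebra-PBW} (3)), which forces $z_{\xi_i}b\in K_{i-1}+K_{i-1}z_{\xi_i}$ and hence $[z_{\xi_i},b]\in K_{i-1}$. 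An alternative that sidesteps the commutation analysis is to pass to the coradical filtration $\mathcal F$ and apply Theorem \ref{GKdim-transfer}, which identifies $\gr(A,\mathcal F|_A)$ and $\gr(B,\mathcal F|_B)$ with graded polynomial algebras and thereby reduces the required estimate to the commutative graded setting.

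Finally, to convert the tower of Ore extensions into the GK-dimension identity I would first note that $B$ is finitely generated: by Theorem \ref{GKdim-transfer} the algebra $\gr(B,\mathcal F|_B)$ is a polynomial algebra of finite GK dimension, hence finitely generated, and lifting a set of generators shows $B$ itself is finitely generated. Consequently each $K_i=K_{i-1}[z_{\xi_i};\id,\delta_i]$ is finitely generated, and \cite[Proposition 3.5]{KL} yields $\gkdim K_i=\gkdim K_{i-1}+1$ at each step. Iterating over $i=1,\dots,l$ gives $\gkdim A=\gkdim B+l=\gkdim B+\#(\Xi)$, which together with the lower bound from Lemma \ref{PBW-GK-dimension} proves the claim.
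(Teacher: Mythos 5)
Your proposal follows essentially the same route as the paper: the lower bound from Lemma \ref{PBW-GK-dimension}, reduction to the case where $\gkdim B$ and $\#(\Xi)$ are both finite, finite generation of $B$ over $k$ (the paper cites Proposition \ref{homological-property}(1), which itself rests on the same Theorem \ref{GKdim-transfer} you invoke), and then the iterated Ore-extension tower of Theorem \ref{IHOE-graded} combined with \cite[Proposition 3.5]{KL}. The extra care you take over the commutators $[z_{\xi_i},b]$ with $b\in B$ addresses a point the paper leaves implicit in its appeal to ``the argument of Theorem \ref{IHOE-graded}'', but it does not change the overall strategy.
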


\begin{proof}
By Lemma \ref{PBW-GK-dimension}, one has $\gkdim A\geq  \gkdim B+ \#(\Xi)$. So to see the desired equality, one may assume $\gkdim B$ and $\#(\Xi)$ are finite.  By Proposition \ref{homological-property} (1), $B$  is finitely generated over $k$. The argument of Theorem \ref{IHOE-graded} then almost literally deduces the desired equality.
\end{proof}

\begin{proposition}\label{connected-Hopf-algebra-PBW}
Assume that  $k$ is of characteristic $0$. Let $H$ be a connected  Hopf algebra. Let $A\supseteq B$ be left (resp. right) coideal subalgebras of $H$. Then there exists a family $\{z_\xi\}_{\xi\in \Xi}$ of  elements in $A\cap \ker(\varepsilon_H)$ such that $(\{z_\xi\}_{\xi\in \Xi}, <)$ is a system of PBW generators of $A$ over $B$ for each total order $<$ on $\Xi$. Moreover, one may requires that  $\gkdim A= \gkdim B +\#(\Xi)$.
\end{proposition}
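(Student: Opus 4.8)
The plan is to deduce the proposition from the graded Theorem~\ref{connected-graded-Hopf-algebra-PBW} by passing to the coradical filtration, transferring the PBW basis downward with Lemma~\ref{PBW-tranfer}, and then reading off the Gelfand--Kirillov dimension formula from Theorem~\ref{GKdim-transfer} and Lemma~\ref{gkdim-compare}. First I would let $\mathcal{F}=(H_{(n)})_{n\geq 0}$ be the coradical filtration of $H$. Since $H$ is connected, hence pointed, $\mathcal{F}$ is a Hopf algebra filtration, so $Q:=\gr(H,\mathcal{F})$ is a connected $\mathbb{N}$-graded Hopf algebra; as $\mathbb{N}$ is a nontrivial abelian monoid with admissible well orders, the hypotheses of the graded theory are satisfied. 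The induced filtrations make $\gr(A,\mathcal{F}|_A)\supseteq \gr(B,\mathcal{F}|_B)$ homogeneous left (resp. right) coideal subalgebras of $Q$, exactly as in the proof of Theorem~\ref{pointed-Hopf-1}. Thus Theorem~\ref{connected-graded-Hopf-algebra-PBW} furnishes a family $\{\bar z_\xi\}_{\xi\in\Xi}$ of homogeneous elements of $\gr(A,\mathcal{F}|_A)_+$ and a total order $\vartriangleleft$ on $\Xi$ such that $(\{\bar z_\xi\}_{\xi\in\Xi},<)$ is a system of PBW generators of $\gr(A,\mathcal{F}|_A)$ over $\gr(B,\mathcal{F}|_B)$ for every total order $<$, moreover satisfying conditions (2) and (4) there. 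Note that all heights are infinite because $k$ has characteristic $0$, by Theorem~\ref{braided-bialgebra-1}~(6).

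Next I would lift each generator. Each $\bar z_\xi$ is homogeneous of positive degree, so I may choose a representative in $A$ and, subtracting a suitable scalar multiple of $1\in B$ (which lies in a strictly lower filtration layer and hence does not alter the leading term), arrange a lift $z_\xi\in A\cap \ker(\varepsilon_H)$ with $\overline{z_\xi}=\bar z_\xi$. Applying Lemma~\ref{PBW-tranfer} to the pair $B\subseteq A$, once for each total order $<$ on $\Xi$, then shows that $(\{z_\xi\}_{\xi\in\Xi},<)$ is a system of PBW generators of $A$ over $B$ for every $<$, which is the first assertion.

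For the dimension count I would route the computation entirely through the graded object $Q$, so as to avoid re-verifying conditions (2) and (4) for the lifted, no longer homogeneous, generators. Applying Theorem~\ref{GKdim-transfer} to each of the one-sided coideal subalgebras $A$ and $B$ gives $\gkdim A=\gkdim \gr(A,\mathcal{F}|_A)$ and $\gkdim B=\gkdim \gr(B,\mathcal{F}|_B)$ in $\mathbb{N}_\infty$. On the other hand, $Q$ is itself a connected Hopf algebra and $\{\bar z_\xi\}_{\xi\in\Xi}$ satisfies conditions (2) and (4); hence Lemma~\ref{gkdim-compare}, applied to the pair $\gr(B,\mathcal{F}|_B)\subseteq \gr(A,\mathcal{F}|_A)$ inside $Q$, yields $\gkdim \gr(A,\mathcal{F}|_A)=\gkdim \gr(B,\mathcal{F}|_B)+\#(\Xi)$. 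Combining the three identities gives $\gkdim A=\gkdim B+\#(\Xi)$, and since $\#(\Xi)$ is precisely the cardinality of the lifted family, the ``moreover'' clause holds for the system just constructed.

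The genuinely delicate point is organizational rather than computational: the key is to perform the GK-dimension bookkeeping on the graded side $Q$, where the PBW system supplied by the graded theorem already carries conditions (2) and (4) verbatim, instead of attempting to check those conditions for the inhomogeneous lifts $z_\xi$ directly. The remaining inputs are routine consequences of $\mathcal{F}$ being a Hopf filtration, namely that $\gr(A,\mathcal{F}|_A)\supseteq \gr(B,\mathcal{F}|_B)$ are homogeneous coideal subalgebras of the connected graded $Q$ and that positive-degree generators admit lifts normalizable into $\ker(\varepsilon_H)$; one need only take care that the \emph{same} index set $\Xi$ governs both the transferred PBW basis and the dimension formula, and that Lemma~\ref{PBW-tranfer} is invoked uniformly over all total orders $<$ on $\Xi$.
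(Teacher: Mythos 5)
Your proposal is correct and follows essentially the same route as the paper: pass to the coradical filtration, apply Theorem~\ref{connected-graded-Hopf-algebra-PBW} to $\gr(A,\mathcal{F}|_A)\supseteq\gr(B,\mathcal{F}|_B)$ inside the connected graded Hopf algebra $\gr(H,\mathcal{F})$, use Lemma~\ref{gkdim-compare} on the graded side for the cardinality formula, and then descend via Lemma~\ref{PBW-tranfer} and Theorem~\ref{GKdim-transfer}. Your explicit normalization of the lifts into $A\cap\ker(\varepsilon_H)$ by subtracting $\varepsilon_H(z)\cdot 1$ is a detail the paper leaves implicit, but it is exactly what is intended.
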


\begin{proof}
Let $\mathcal{F}$ be the coradical filtration of $H$. Note that $\gr(A,\mathcal{F}|_A) \supseteq \gr(B,\mathcal{F}|_B)$ are homogeneous left (resp. right) coideal subalgebras of $\gr(H,\mathcal{F})$. By Theorem \ref{connected-graded-Hopf-algebra-PBW} and Lemma \ref{gkdim-compare}, one may fix a family $\{z_\xi\}_{\xi\in \Xi}$ of  elements of $A\cap \ker(\varepsilon_H)$ such that $(\{\overline{z_\xi}\}_{\xi\in \Xi}, <)$ is a system of PBW generators of  $\gr(A,\mathcal{F}|_A)$ over $\gr(B,\mathcal{F}|_B)$ for each total order $<$ on $\Xi$, and moreover $\gkdim \gr(A,\mathcal{F}|_A) = \gkdim \gr(B,\mathcal{F}|_B) +\#(\Xi)$. The result then follows by Lemma \ref{PBW-tranfer} and Theorem \ref{GKdim-transfer}.
\end{proof}

\begin{corollary}\label{polynomial-generator}
	Assume that $k$ is of characteristic $0$. Let $H$ be a connected  Hopf algebra. Let $A\supseteq B$ be left (resp. right) coideal subalgebras of $H$.  If $A$ is commutative, then $A$ is isomorphic as an algebra to the polynomial algebra over $B$ in some family of variables. 
\end{corollary}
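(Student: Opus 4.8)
The plan is to derive this as an immediate consequence of Proposition \ref{connected-Hopf-algebra-PBW}, exactly as Corollary \ref{polynomial-generator-graded} was deduced from Theorem \ref{connected-graded-Hopf-algebra-PBW}. First I would apply Proposition \ref{connected-Hopf-algebra-PBW} to the pair $A\supseteq B$ to obtain a family $\{z_\xi\}_{\xi\in\Xi}$ of elements of $A$ such that $(\{z_\xi\}_{\xi\in\Xi}, <)$ is a system of PBW generators of $A$ over $B$ for every total order $<$ on $\Xi$. Fixing any such order, the defining property (Definition \ref{PBW-definition}, in the case of infinite heights encoded by the pair notation) says that the set of ordered monomials
\[
B(\{z_\xi\}_{\xi\in\Xi}, <)=\{~ z_{\xi_1}^{r_1}\cdots z_{\xi_m}^{r_m} ~|~ m\geq 0,\; \xi_1< \cdots < \xi_m,\; \xi_i\in\Xi,\; r_i\geq 0 ~\}
\]
is simultaneously a basis of $A$ as a left and as a right $B$-module.

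Next I would use the hypothesis that $A$ is commutative. Since $B$ is a subalgebra of the commutative algebra $A$, it is central in $A$, so $A$ is a commutative $B$-algebra; moreover the generators $z_\xi$ pairwise commute. Let $P:=B[t_\xi : \xi\in\Xi]$ be the polynomial algebra over $B$ in a family of indeterminates indexed by $\Xi$. By the universal property of $P$ as the free commutative $B$-algebra on the set $\Xi$, the assignment $t_\xi\mapsto z_\xi$ extends uniquely to a homomorphism $\Phi\colon P\to A$ of $B$-algebras.

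Finally I would check that $\Phi$ is bijective. The algebra $P$ has as a $B$-basis the ordered monomials $t_{\xi_1}^{r_1}\cdots t_{\xi_m}^{r_m}$ with $\xi_1<\cdots<\xi_m$ and $r_i\geq 0$, and $\Phi$ carries this basis bijectively onto the set $B(\{z_\xi\}_{\xi\in\Xi}, <)$, which is a $B$-basis of $A$ by the first step. Hence $\Phi$ is an isomorphism of left $B$-modules; being also a ring homomorphism, it is an isomorphism of algebras, and $A\cong B[t_\xi : \xi\in\Xi]$ as required.

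The argument is essentially formal once Proposition \ref{connected-Hopf-algebra-PBW} is in hand, so there is no genuine obstacle; the only points requiring care are that the ``pair'' notation in Proposition \ref{connected-Hopf-algebra-PBW} guarantees all PBW heights equal $\infty$ (so that the exponents $r_i$ range over all of $\mathbb{N}$ and the PBW monomials coincide with ordinary polynomial monomials), and that the commutativity of $A$ is precisely what makes the ordering in the PBW basis immaterial and the $B$-algebra map $\Phi$ well defined.
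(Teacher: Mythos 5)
Your proposal is correct and follows exactly the route the paper intends: the paper's proof is the one-line statement that the corollary is a direct consequence of Proposition \ref{connected-Hopf-algebra-PBW}, and your argument simply spells out the details (all heights are infinite, commutativity makes the ordered PBW monomials coincide with the monomial basis of the polynomial algebra over $B$). No gaps.
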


\begin{proof}
	It is a direct consequence of Proposition \ref{connected-Hopf-algebra-PBW}.
\end{proof}

Connected Hopf algebras over fields of positive characteristic is much more complicated than that over fields of characteristic zero. Particularly noteworthy is the fact that there are plenty of finite-dimensional examples in the positive characteristic setting, while there are no such examples other than the base field in the zero characteristic setting. It is well-known that  finite-dimensional connected Hopf algebras over a field of characteristic $p>0$ are of dimension powers of $p$, we extend this fact to their  coideal subalgebras in the following result.

\begin{proposition}
Assume that $k$ is of characteristic $p>0$. Let $H$ be a connected  Hopf algebra. Let $A$ be a finite-dimensional one-sided coideal subalgebra of $H$.  Then $\dim A= p^n$ for some $n\geq 0$.
\end{proposition}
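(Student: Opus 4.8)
The plan is to pass to the associated graded Hopf algebra with respect to the coradical filtration and then invoke Theorem \ref{braided-bialgebra-1}. First I would let $\mathcal{F}=(H_{(n)})_{n\geq 0}$ be the coradical filtration of $H$. Since $H$ is connected it is in particular pointed, so $\mathcal{F}$ is a Hopf algebra filtration and $Q:=\gr(H,\mathcal{F})$ is a connected $\mathbb{N}$-graded Hopf algebra, exactly as in the proof of Theorem \ref{GKdim-transfer}. The induced filtration $\mathcal{F}|_A$ makes $\gr(A,\mathcal{F}|_A)$ a homogeneous one-sided coideal subalgebra of $Q$, and because $A$ is finite-dimensional the filtration on $A$ stabilizes, so $\dim_k \gr(A,\mathcal{F}|_A)=\dim_k A$. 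It therefore suffices to prove the claim for the homogeneous coideal subalgebra $S:=\gr(A,\mathcal{F}|_A)$ of $Q$.

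Next I would regard $Q$ as a connected $\mathbb{N}$-graded braided bialgebra with braiding the flipping map, i.e. the braiding $\tau_{\varepsilon_{\mathbb{N}}}$ induced by the trivial bicharacter $\varepsilon_{\mathbb{N}}$ of $\mathbb{N}$, just as in the proof of Theorem \ref{connected-graded-Hopf-algebra-PBW}. Since $k\cdot 1$ is a homogeneous coideal subalgebra contained in $S$, I may apply Theorem \ref{braided-bialgebra-1} to the pair $S\supseteq k$. This produces a family $\{z_\xi\}_{\xi\in\Xi}$ of homogeneous elements, a height map $h:\Xi\to\mathbb{N}_\infty$ and orders such that $(\{z_\xi\}_{\xi\in\Xi},h,<)$ is a system of PBW generators of $S$ over $k$, for every total order $<$ on $\Xi$. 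By Definition \ref{PBW-definition} the PBW monomials $z_{\xi_1}^{r_1}\cdots z_{\xi_m}^{r_m}$ with $\xi_1<\cdots<\xi_m$ and $0\le r_i<h(\xi_i)$ form a $k$-basis of $S$. As $\dim_k S<\infty$, the index set $\Xi$ must be finite and every height must satisfy $h(\xi)<\infty$; counting the monomials then gives
\[
\dim_k A=\dim_k S=\prod_{\xi\in\Xi}h(\xi).
\]

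Finally I would extract the $p$-power structure from the braiding. Because $\tau=\tau_{\varepsilon_{\mathbb{N}}}$, the scalar attached to each generator in Theorem \ref{braided-bialgebra-1}(6) is $a_\xi=\varepsilon_{\mathbb{N}}(\deg(z_\xi),\deg(z_\xi))=1$, which has multiplicative order $1$. Invoking Theorem \ref{braided-bialgebra-1}(6) in characteristic $p>0$, the order of $a_\xi$ equals $h(\xi)/p^{s_\xi}$ for some integer $s_\xi\ge 0$; hence $1=h(\xi)/p^{s_\xi}$, that is $h(\xi)=p^{s_\xi}$ for each $\xi\in\Xi$. Substituting into the dimension formula yields $\dim_k A=\prod_{\xi\in\Xi}p^{s_\xi}=p^{n}$ with $n=\sum_{\xi\in\Xi}s_\xi$, as desired. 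The genuine content of the argument is concentrated in Theorem \ref{braided-bialgebra-1}; once that structure theorem is in hand, the only points requiring care are the standard reduction to the associated graded (so that the flipping braiding and the height count are both available) and the observation that the trivial braiding forces each $a_\xi=1$, which is precisely what pins every height to a power of $p$. I do not expect a serious obstacle beyond these bookkeeping steps.
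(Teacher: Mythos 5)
Your argument is correct and follows the paper's own proof essentially verbatim: pass to $\gr(H,\mathcal{F})$ with the flipping braiding induced by the trivial bicharacter, apply Theorem \ref{braided-bialgebra-1}(1),(6) to the pair $\gr(A,\mathcal{F}|_A)\supseteq k$, and use $\dim A=\dim\gr(A,\mathcal{F}|_A)$. The only difference is that you spell out the bookkeeping (finiteness of $\Xi$ and of the heights, the count $\dim A=\prod_\xi h(\xi)$, and $a_\xi=1$ forcing $h(\xi)=p^{s_\xi}$) that the paper leaves implicit in its citation of parts (1) and (6).
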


\begin{proof}
Let $\mathcal{F}$ be the coradical filtration of $H$. Consider $\gr(H, \mathcal{F})$ as a connected $\mathbb{N}$-graded braided bialgebra with braiding the flipping map, i.e., the one induced by the trivial bicharacter of $\mathbb{N}$. Since $\gr(A,\mathcal{F}|_A) \supseteq k$ are homogeneous one-sided coideal subalgebras of $\gr(H, \mathcal{F})$, $\dim \gr(A, \mathcal{F}|_A)$ is a power of $p$ by Theorem \ref{braided-bialgebra-1} (1, 6). The result follows because $\dim A= \dim \gr(A, \mathcal{F}|_A)$.
\end{proof}

\section{Proof of Proposition \ref{PBW-diagonal}}

\label{section-proof-PBW-generator}

This section is devoted to prove Proposition \ref{PBW-diagonal}. It is divided into five separate Propositions below. We prove the  left case in full detail. The right case can be done similarly.  The arguments have benefited from some of the ideas in \cite{Kh,Uf,ZSL3}. Actually, the argument is modified from that of \cite[Proposition 2.4]{ZSL3}.
The notation and conventions employed in  Secton \ref{section-PBW-generator} are retained. 

Throughout,  $\Gamma=(\Gamma,<)$ is a nontrivial well-ordered  abelian monoid, $k\langle X\rangle$ is $\Gamma$-graded with each letter homogeneous of positive degree, $I$ is an ideal of $k\langle X\rangle $, $\tau$ is an $X$-diagonal  braiding on $kX$, $\Delta$ is a left bounded $\tau$-comultiplication on $k\langle X\rangle$ such that $\Delta(I) ~\subseteq~ I\otimes k\langle X\rangle + k\langle X\rangle \otimes I$, and $\rho:=\tau^{-1}$.

The following auxiliary result will be helpful. Recall that $\langle X\rangle^{\prec u}$ denotes the set of words that $\prec u$ and $\langle X\rangle^{\prec u}_\gamma:= \langle X\rangle^{\prec u}\cap \langle X\rangle_\gamma$ for all words $u\in \langle X\rangle$ and all index $\gamma\in \Gamma$.

\begin{lemma}\label{killing-functional}
For each pair of integers $l, n\geq1$ and each $I$-irreducible Lyndon word $u\in \N_I$, 
$$
k\langle X\rangle^{\prec u^n}_{\deg(u^l)}  \subseteq [k\mathcal{E}]_\rho + I \cap k\langle X \rangle_{\leq\deg(u^l)} +  k\langle X \rangle_{<\deg(u^l)},
$$	 
where $\mathcal{E}$ is the set of $I$-irreducible words on $X$ of $\deg(u^l)$ other than $u^l$.	
\end{lemma}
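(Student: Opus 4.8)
My plan is to reduce the containment to a statement about individual words and then run a downward induction. It suffices to show that every word $w$ with $\deg(w)=\gamma:=\deg(u^l)$ and $w\prec u^n$ lies in
$N:=[k\mathcal{E}]_\rho + I\cap k\langle X\rangle_{\leq\gamma} + k\langle X\rangle_{<\gamma}$,
since such $w$ span $k\langle X\rangle^{\prec u^n}_{\deg(u^l)}$. I would carry out the induction on $w$ with respect to $\prec$ restricted to the words of degree $\gamma$. The point that makes this legitimate is that on a fixed degree the relation $\prec$ coincides with the graded lex order $<_{\glex}$, which is a well order by construction; so even though $\prec$ itself is not well-founded on all of $\langle X\rangle$, its restriction to $\langle X\rangle_\gamma$ is, and every reduction step I perform will either stay inside degree $\gamma$ while strictly decreasing for $\prec$, or drop the degree strictly below $\gamma$.

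Before the induction I would record the observation that singles out $u^l$ as the one word to be excluded: since $u$ is a nonempty word, $u^l$ and $u^n$ are $\prec$-comparable only when $l=n$, in which case they are equal, so $u^l\not\prec u^n$ in all cases. Consequently the hypothesis $w\prec u^n$ forces $w\neq u^l$, which is precisely what lets each $I$-irreducible $w$ of degree $\gamma$ fall into $\mathcal{E}$.

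The inductive step then splits into two cases. If $w$ is $I$-irreducible, it is an $I$-irreducible word of degree $\gamma$ distinct from $u^l$, i.e. $w\in\mathcal{E}$, so $[w]_\rho\in[k\mathcal{E}]_\rho\subseteq N$; by Lemma~\ref{bracketing-diagonal} the difference $w-[w]_\rho$ is a combination of words in $\langle X\rangle_w$ that are $\prec w$, all of degree $\gamma$, and each such word is $\prec u^n$ by transitivity of $\prec$, hence lies in $N$ by induction. If instead $w$ is $I$-reducible, I would write $w=\lw(g)$ for a nonzero $g\in I$; because $<_{\glex}$ refines the grading, every word occurring in $g$ has degree $\leq\gamma$, so $g\in I\cap k\langle X\rangle_{\leq\gamma}$, while $w$ minus the appropriate scalar multiple of $g$ is a combination of words $<_{\glex}w$, each of which is either of degree $<\gamma$ (absorbed into $k\langle X\rangle_{<\gamma}$) or of degree $\gamma$ and $\prec w$ (hence $\prec u^n$, handled by the inductive hypothesis). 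In both cases $w\in N$.

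The only genuinely delicate points are bookkeeping ones: verifying that the ideal element produced in the reducible case has degree bounded by $\gamma$, so that it sits in $I\cap k\langle X\rangle_{\leq\gamma}$ rather than merely in $I$, and justifying the induction through the coincidence of $\prec$ with $<_{\glex}$ on a single degree. I expect the main conceptual step — rather than a true obstacle — to be recognizing that the hypothesis $w\prec u^n$ is used only through its two consequences $w\neq u^l$ and the transitive, $\prec$-downward closure of the competing words; in particular neither the comultiplication $\Delta$ nor the $I$-irreducibility of $u$ needs to enter this auxiliary lemma, which keeps the argument short.
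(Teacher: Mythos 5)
Your proposal is correct and follows essentially the same route as the paper's proof: induction on words of degree $\deg(u^l)$ with respect to $\prec$ (identified with the restriction of the well order $<_{\glex}$ to that degree), splitting into the $I$-irreducible case handled via Lemma \ref{bracketing-diagonal} and the $I$-reducible case handled via a leading-word reduction into $I\cap k\langle X\rangle_{\leq\deg(u^l)}$. Your side remarks — that $u^l\not\prec u^n$ for purely combinatorial reasons and that neither $\Delta$ nor the irreducibility of $u$ is actually used — are also consistent with how the paper argues.
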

\begin{proof}
Let $\gamma=\deg(u^l)$. We show it by induction on words   $w\in \langle X\rangle^{\prec u^n}_{\gamma}$ with respect to $\prec$, which is exactly the restriction of the well order $<_{\glex}$ on $\langle X\rangle^{\prec u^n}_{\gamma}$. First note that $u^l\not\in \langle X\rangle^{\prec u^n}_{\gamma}$. Assume $w$ is the smallest word in $\langle X\rangle^{\prec u^n}_{\gamma}$. If $w$ is $I$-reducible then $w\in  I \cap k\langle X \rangle_{\leq \gamma} +  k\langle X \rangle_{<\gamma}$; if $w$ is $I$-irreducible then  $w=[w]_\rho$ by Lemma \ref{bracketing-diagonal} . So in both cases, one has $w\in [k\mathcal{E}]_\rho + I \cap k\langle X \rangle_{\leq \gamma} +  k\langle X \rangle_{<\gamma}$. 

For words $w\in \langle X\rangle^{\prec u^n}_{\gamma}$ other than the smallest one, there also have two situations. If $w$ is $I$-reducible, there exists an element $g_w\in I \cap k\langle X\rangle_{\leq \gamma}$ such that $\mathrm{LW}(g_w)=w$ and $w=g_w+(w-g_w)$; If $w$ is $I$-irreducible, then  $w=[w]_\rho+(w-[w]_\rho)$. Note that words occur in $w-g_w$ and $w-[w]_\rho$ that of degree $\gamma$ are all $\prec w$, so $w\in [k\mathcal{E}]_\rho + I \cap k\langle X \rangle_{\leq \gamma} +  k\langle X \rangle_{<\gamma}$ by  the induction hypothesis.	
\end{proof}

\begin{lemma}\label{comparing}
Let $\gamma\in \Gamma$ be an index,  $u\in \mathbb{L}(X)$ a Lyndon word on $X$ and  $i, N\geq 0$ two  integers. Assume that $\deg(u^i)<\gamma\leq \deg(u^N)$. Then
$ [k\langle X\rangle_{\gamma-\deg(u^i)}^{\vartriangleleft u}u^i]_\rho \subseteq k\langle X\rangle^{\prec u^N}_{\gamma}$.
\end{lemma}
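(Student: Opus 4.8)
The plan is to reduce the asserted inclusion of subspaces to a statement about a single word and then control its $\rho$-bracketing via Lemma~\ref{bracketing-diagonal} (note $\rho=\tau^{-1}$ is $X$-diagonal, so that lemma applies). Since $[-]_\rho$ is linear and the subspace $k\langle X\rangle_{\gamma-\deg(u^i)}^{\vartriangleleft u}u^i$ is spanned by the words $wu^i$ with $w\in \langle X\rangle^{\vartriangleleft u}_{\gamma-\deg(u^i)}$, it suffices to prove $[wu^i]_\rho\in k\langle X\rangle^{\prec u^N}_{\gamma}$ for each such $w$. By Lemma~\ref{bracketing-diagonal} one has $[wu^i]_\rho\in wu^i+k\langle X\rangle^{\prec wu^i}_{wu^i}$, and every word occurring here shares the letter count of $wu^i$ and hence has degree $\gamma$. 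By transitivity of $\prec$, the whole claim therefore collapses to the single comparison $wu^i\prec u^N$.

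To establish $wu^i\prec u^N$, I would first observe that $w\neq 1$: the hypothesis $\deg(u^i)<\gamma$ forces $\deg(w)=\gamma-\deg(u^i)>0$. Writing the Lyndon decomposition $w=w_1\cdots w_r$ with $w_1\leq_{\lex}\cdots\leq_{\lex} w_r<_{\lex} u$ (the last inequality because $w\in\langle X\rangle^{\vartriangleleft u}$), I note that $wu^i=w_1\cdots w_r\,\underbrace{u\cdots u}_{i}$ is again a nondecreasing product of Lyndon words, since $w_r<_{\lex} u$. By uniqueness of the Lyndon decomposition (Proposition~\ref{fact-Lyndon}~(L4)) this is the Lyndon decomposition of $wu^i$, whose first Lyndon atom is $w_1<_{\lex} u$. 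Lemma~\ref{lex-order-2} then yields $wu^i<_{\lex} u^n$ for all $n\geq 1$, in particular $wu^i<_{\lex} u^N$. Finally, since $\deg(wu^i)=\gamma\leq\deg(u^N)$ by hypothesis, Lemma~\ref{lex-order-1} upgrades this to $wu^i\prec u^N$, which completes this step.

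Combining the two steps finishes the argument: every word occurring in $[wu^i]_\rho$ has degree $\gamma$ and is $\preceq wu^i\prec u^N$, hence lies in $\langle X\rangle^{\prec u^N}_{\gamma}$. The genuinely delicate point is the passage from the pseudo-lexicographic inequality to the order $\prec$. Because $<_{\lex}$ is compatible with left but not right concatenation, one cannot simply append $u^i$ to an inequality satisfied by $w$; this is exactly why I would route the comparison through the Lyndon-atom criterion of Lemma~\ref{lex-order-2} rather than a direct concatenation estimate, and why the degree hypothesis feeding Lemma~\ref{lex-order-1} is indispensable. The remaining hypothesis $\deg(u^i)<\gamma$ is used precisely to exclude $w=1$, for which the inclusion would fail whenever $i<N$.
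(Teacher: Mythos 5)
Your proof is correct and follows essentially the same route as the paper's: both arguments reduce to the word-level comparison $wu^i\prec u^N$ obtained from the Lyndon decomposition of $wu^i$ (the paper cites Proposition~\ref{fact-Lyndon} (L7) directly where you route through Lemma~\ref{lex-order-2}, itself a consequence of (L7)) combined with the degree bound via Lemma~\ref{lex-order-1}, and both control the $\rho$-bracketing via Lemma~\ref{bracketing-diagonal}. The only cosmetic difference is that the paper establishes the stability $k\langle X\rangle^{\prec u^N}_{\gamma}=[k\langle X\rangle^{\prec u^N}_{\gamma}]_\rho$ by an induction with respect to $\prec$, whereas you apply Lemma~\ref{bracketing-diagonal} directly to each generator $wu^i$, which is marginally more economical.
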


\begin{proof}
It is easy to see that $k\langle X\rangle_{\gamma-\deg(u^i)}^{\vartriangleleft u}u^i \subseteq k\langle X\rangle^{\prec u^N}_{\gamma}$ by Proposition \ref{fact-Lyndon} (L7) and Lemma \ref{lex-order-1}. Also, One has $k\langle X\rangle^{\prec u^N}_{\gamma}=[k\langle X\rangle^{\prec u^N}_{\gamma}]_\rho$ by Lemma \ref{bracketing-diagonal}  and an easy induction argument on words of degree $\gamma$ with respect to $\prec$. The result then follows immediately.
\end{proof}

\begin{proposition}\label{rephrase-1}
Every	$I$-restricted word on $X$ is $I$-irreducible, that is $\C_I=\D_I$.
\end{proposition}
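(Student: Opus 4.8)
The plan is to argue by contradiction through a minimal counterexample together with the reduced coproduct. Assume $\C_I\neq\D_I$ and pick $w$ minimal with respect to $<_{\glex}$ among the words lying in $\C_I$ but not in $\D_I$. Writing its Lyndon decomposition as $w=w_1^{r_1}\cdots w_m^{r_m}$ with $w_1<_{\lex}\cdots<_{\lex}w_m$ and each $w_i^{r_i}$ being $I$-irreducible, I first observe that $m\geq 2$: otherwise $w=w_1^{r_1}$ is $I$-irreducible, contradicting $w\notin\D_I$. Set $v:=w_1^{r_1}\cdots w_{m-1}^{r_{m-1}}$, so that $w=v\,w_m^{r_m}$ with $v\neq 1$ and $v\in\langle X\rangle^{\vartriangleleft w_m}$. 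Since $v$ is again $I$-restricted and $v<_{\glex}w$, minimality gives $v\in\D_I$, while $w_m^{r_m}\in\D_I$ by hypothesis. Hence, by Lemma \ref{basis-quotient-algebra}, the element $e:=([w_m^{r_m}]_\rho+I)\otimes([v]_\rho+I)$ is a genuine basis vector of $A\otimes A$. Put $\gamma:=\deg w$, $\alpha:=\deg w_m^{r_m}$ and $\beta:=\deg v$, so $\gamma=\alpha+\beta$ with $\alpha,\beta>0$.

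Next I would introduce the reduced coproduct $\theta:=(\pi_I\otimes\pi_I)\circ\Delta$, where $\pi_I\colon k\langle X\rangle\to A$ is the projection. The hypothesis $\Delta(I)\subseteq I\otimes k\langle X\rangle+k\langle X\rangle\otimes I$ forces $\theta(I)=0$, so $\theta$ factors through $A$ and $\theta(p)$ depends only on $p+I$. The whole argument consists in computing the coefficient of $e$ in $\theta([w]_\rho)$ in two incompatible ways. On the direct side I apply Proposition \ref{comultiplication-diagonal} (left version) with its Lyndon word taken to be $w_m$, its prefix word taken to be $v$, and its exponent taken to be $r_m$. The unique main-sum term that can feed $e$ is the top one, $i=r_m$, namely $\tau_{v,w_m}^{r_m}\,[w_m^{r_m}]_\rho\otimes[v]_\rho$; the main-sum terms with $i<r_m$ carry a first slot $[w_m^i]_\rho+I\neq[w_m^{r_m}]_\rho+I$, the term $[w]_\rho\otimes 1$ carries a second slot $1+I\neq[v]_\rho+I$, and the low-total-degree remainder misses $e$ because one of its two slots then lands in a strictly lower piece of the filtered basis of $A$ from Lemma \ref{basis-quotient-algebra}. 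The delicate term is $f\in(\sum_i[k\langle X\rangle_+^{\vartriangleleft w_m}w_m^i]_\rho\otimes k\langle X\rangle_+)_\gamma$: its first slots are brackets $[b\,w_m^i]_\rho$ with $b\vartriangleleft w_m$ nonempty, and a slotwise degree count disposes of all of them except those whose first slot has degree exactly $\alpha$; for the latter, Lemma \ref{comparing} (with $N=r_m$) places $[b\,w_m^i]_\rho$ inside $k\langle X\rangle^{\prec w_m^{r_m}}_\alpha$, and then Lemma \ref{killing-functional} (with $n=l=r_m$) shows its $\pi_I$-image involves no $[w_m^{r_m}]_\rho+I$. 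Thus the coefficient of $e$ equals $\tau_{v,w_m}^{r_m}\neq 0$.

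On the reduced side I use that $w$ is $I$-reducible, so Lemma \ref{basis-quotient-algebra} expands $[w]_\rho+I=\sum_{u\in\D_I,\,u<_{\glex}w}c_u([u]_\rho+I)$, whence $\theta([w]_\rho)=\sum_u c_u\,\theta([u]_\rho)$. I would then prove that the coefficient of $e$ in each $\theta([u]_\rho)$ vanishes, which gives $\tau_{v,w_m}^{r_m}=0$, the sought contradiction. Words $u$ with $\deg u<\gamma$ drop out by the filtered degree count already used for the remainder term. For $u\in\D_I$ with $\deg u=\gamma$ and $u\prec w$, I write $u=v'\,t^s$ for its top Lyndon atom $t$ and apply Proposition \ref{comultiplication-diagonal} once more. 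The main-sum contributions cannot hit $e$: a split $(t^i,\,v't^{s-i})$ reducing to $(w_m^{r_m},\,v)$ would force $t=w_m$ and, via the comparison of Lyndon decompositions in Proposition \ref{fact-Lyndon} (L7) together with the fact that $v$ has top atom $w_{m-1}<_{\lex}w_m$, would force $i=s$ and $u=w$, contrary to $u\prec w$. The terms $[u]_\rho\otimes 1$ and the low-degree remainder are handled as before.

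The main obstacle, and the place where all the combinatorial weight sits, is the error term $f_u$ on this reduced side when the top atom $t$ of $u$ satisfies $t>_{\lex}w_m$. In that regime the coarse shape of Proposition \ref{comultiplication-diagonal} does allow $[w_m^{r_m}]_\rho$ (with $w_m<_{\lex}t$) to appear in the first slot of $f_u$, and since $w_m^{r_m}\prec t^N$ by Lemma \ref{lex-order-2}, Lemma \ref{comparing} alone no longer excludes it. Closing this case is exactly where I expect the real work to lie: one must control the Lyndon-atom structure of the second tensor slot paired with such a first slot, showing that it can only reduce to residues whose top atom is $\geq_{\lex}t>_{\lex}w_{m-1}$ and hence never to $[v]_\rho+I$, by sharpening the bookkeeping inside Proposition \ref{comultiplication-diagonal} and feeding it, once more, through Lemmas \ref{comparing} and \ref{killing-functional}. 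Granting this vanishing, the two computations of the coefficient of $e$ contradict each other, so $\C_I\setminus\D_I$ is empty and $\C_I=\D_I$.
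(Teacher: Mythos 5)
Your direct-side computation is sound, and the reduction to showing that the coefficient of $e=([w_m^{r_m}]_\rho+I)\otimes([v]_\rho+I)$ vanishes in each $\theta([u]_\rho)$, $u\in \D_I$, $u\prec w$, $\deg(u)=\deg(w)$, is a legitimate strategy. But the case you flag at the end --- the error term $f_u$ when the top Lyndon atom $t$ of $u$ satisfies $t>_{\lex}w_m$ --- is a genuine, unfilled gap, and it sits at the heart of the argument rather than at its periphery. Such $u$ really do occur: by Proposition \ref{fact-Lyndon} (L7), $u\prec w$ only constrains the \emph{first} position where the two nondecreasing atom sequences differ, so the later (hence largest) atoms of $u$ may well exceed $w_m$. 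For those $u$, the first slots of $f_u$ are brackets $[b\,t^i]_\rho$ with $b\in\langle X\rangle_+^{\vartriangleleft t}$, and since $w_m^{r_m}\prec t^N$, Lemma \ref{killing-functional} applied to $t$ excludes only $[t^{\,\cdot}]_\rho$ from their reduction, not $[w_m^{r_m}]_\rho$. Your proposed repair --- controlling the second tensor slot --- cannot be extracted from Proposition \ref{comultiplication-diagonal} as stated: for a merely left-bounded $\Delta$ the second factor of $f$ is an arbitrary element of $k\langle X\rangle_+$, so there is no a priori reason its reduction avoids $[v]_\rho+I$. Closing this case would require a genuinely stronger two-sided refinement of that proposition, which you neither state nor prove.

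It is worth seeing how the paper sidesteps exactly this difficulty. Instead of a minimal reducible restricted word, it proves the (equivalent) linear independence of $\{[w]_\rho+I\}_{w\in\C_I}$ by induction on degree: given a putative relation $g=\sum_{w}\lambda_w[w]_\rho+g'\in I$, it chooses $u$ to be the pseudo-lexicographically \emph{largest} Lyndon atom occurring across the \emph{entire support} $\{w:\lambda_w\neq 0\}$, writes every summand as $[w_iu^i]_\rho$ with $w_i\in\langle X\rangle^{\vartriangleleft u}$, and applies the functional dual to $[u^l]_\rho$ to the first slot only. Because of this global maximality, every error term of Proposition \ref{comultiplication-diagonal} lands in $k\langle X\rangle^{\prec u^N}$ and is killed by Lemma \ref{killing-functional}; the regime ``top atom bigger than the chosen one'' simply cannot arise. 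In your setup the role of the support is played by the basis expansion of $[w]_\rho+I$ over $\D_I$, whose top atoms you cannot control, which is precisely why your local choice of $w_m$ fails where the paper's global choice succeeds. I would recommend reorganizing the proof around a maximal-atom choice over a whole linear combination rather than a minimal counterexample word.
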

\begin{proof}
We may assume $I\neq k\langle X\rangle$. By  Lemma \ref{bracketing-diagonal}, Lemma \ref{basis-quotient-algebra} and that $\D_I\subseteq \C_I$, it suffices to show that the cosets of elements in $[\C_I]_\rho$ are linearly independent in $k\langle X\rangle/I$. For $\gamma\geq \Gamma$, let  $$\C_I^\gamma := \{~ w\in \C_I  ~| ~  \deg(w) =  \gamma ~\}$$  and $U_{\gamma}$ the subspace of $k\langle X \rangle_{\leq \gamma}$ spanned by $\{~ [w]_\rho  ~| ~ w\in \C_I,~ \deg(w)\leq \gamma ~\}$.  To see the result it suffices to show $U_{\gamma} \cap I = 0$ for each $\gamma\in \Gamma$.  We proceed by induction on $\gamma$. It is clear that $U_{ 0}\cap I=0$.
	
Assume $\gamma > 0 $. Suppose  $ U_{\gamma} \cap I$ has a nonzero element $g$. Then,
	\[
	g =  \sum_{w\in \C_I^\gamma} \lambda_w [w]_\rho +g', \quad \lambda_w\in k
	\]
with some $\lambda_w$ nonzero and $g'\in k\langle X\rangle_{<\gamma}$. Choose $u$ as the pseudo-lexicographically biggest Lyndon word occurring as a Lyndon atom of some words $w\in \C_I^\gamma$ with $\lambda_w\neq 0$. Then $u$ occurs in the Lyndon decompositions of words $w\in \C_I^\gamma$ with $\lambda_w\neq 0$ only at the end.  Let $l$ be the maximal number of occurrences of $u$ in a Lyndon decomposition of word $w\in \C_I^\gamma$ with $\lambda_w\neq 0$. Thus we can express  $g$ as:
	\[
	g =  \sum_{i=0}^l  \sum_{w_i\in P_i} a_{w_i} [w_iu^{i}]_\rho +g',
	\]
where $a_{w_i} := \lambda_{w_iu^{i}} $ and $$P_i = \{~w \in \C_I^{\gamma-\deg(u^i)} \cap \langle X\rangle^{\vartriangleleft u}~| ~ \lambda_{wu^i} \neq 0 ~\}.$$ 
Here $\gamma-\deg(u^i)$ denotes the unique index $\beta_i\in \Gamma$ such that $\beta_i+\deg(u^i)=\gamma$.
Note that $P_l \neq \emptyset. $
If  $\gamma= \deg(u^l)$, then $P_l=\{1\}$ and $\lw(f) =u^l$ by Proposition \ref{fact-Lyndon} (L7) and Lemma \ref{bracketing-diagonal} , which contradicts the assumption that $l<h_I(u)$. Thus we must have $\deg(u^l)<\gamma$, and  the words $w_l\in P_l$ are all of positive degree $\gamma-\deg(u^l)$. By Proposition \ref{comultiplication-diagonal} and Lemma \ref{comparing},
	\begin{eqnarray*}
		\Delta(g) &= & \sum_{i=0}^{l}  \sum_{w_i\in P_i} a_{w_i} ~ \Delta([w_iu^i]_\rho) +\Delta(g')  \\
		&\in& \sum_{i=0}^l  \sum_{w_i \in P_i}  \sum_{j=0}^{i} \binom{i}{j}_{\tau_{u,u}} a_{w_i} ~\tau_{w_i,u}^j ~  [u^j]_\rho \otimes [w_iu^{i-j}]_\rho
		+ \sum_{i=0}^l  \sum_{w_i \in P_i} a_{w_i}  [w_iu^i]_\rho\otimes 1 \\
		&&+ \Big (k\langle X \rangle_+^{\prec u^N} \otimes k\langle X\rangle_+\Big)_{\gamma} + \Big(k\langle X\rangle\otimes k\langle X\rangle\Big)_{<\gamma}, \quad  \text{for integers $N\gg l$}.
	\end{eqnarray*}
By Lemma \ref{basis-quotient-algebra}, one may define a linear functional $\phi: k\langle X\rangle \to k$  as follows:
\begin{itemize}
	\item $\phi(I) =0$, $\phi([w]_\rho)=0$ for $w\in \D_I \backslash \{ u^l \}$ and  $\phi([u^l]_\rho) =1$.
\end{itemize}
Note that $\phi(k\langle X\rangle_{<\deg(u^l)})=0$, by Lemma \ref{basis-quotient-algebra} again. So $\phi(k\langle X\rangle^{\prec u^N}_{\deg(u^l)}) =0$ by Lemma \ref{killing-functional}. Hence
	\[
	\tilde{g}:=(\phi\otimes\id) \Big(\Delta(f) \Big) ~ \in ~ \sum_{w_l\in P_l} a_{w_l}\tau_{w_l,u}^l ~ [w_l]_\rho + k\langle X\rangle_{<\gamma-\deg(u^l)}.
	\]
Clearly, $\lw(\tilde{g}) \in P_l$ by Lemma \ref{bracketing-diagonal}, which is $I$-irreducible by induction,  so $\tilde{g} \not\in I$. But $\Delta(g)\in I \otimes  k\langle X\rangle+k\langle X\rangle \otimes I$. It  follows that $\tilde{g} \in I$, a contradiction.
\end{proof}

\begin{proposition} \label{rephrase-2}
	For each Lyndon word $u$ on $X$ of finite height $n\geq 1$, $$[u]^n_\rho\in [k\langle X|I\rangle^{\vartriangleleft u}_{\deg(u^n)}]_\rho + k\langle X\rangle_{<\deg(u^n)} +I .$$
%	where $\langle X|I\rangle^{\vartriangleleft u}_{\deg(u^n)}:= \D_I \cap \langle X\rangle^{\vartriangleleft u}_{\deg(u^n)}$.
\end{proposition}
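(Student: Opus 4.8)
The plan is to prove Proposition \ref{rephrase-2} by a downward induction argument that mirrors the structure of Proposition \ref{rephrase-1}, exploiting the comultiplication $\Delta$ to extract a contradiction from any violation. Concretely, set $\gamma:=\deg(u^n)$ and suppose, for contradiction, that $[u]^n_\rho$ does \emph{not} lie in $[k\langle X|I\rangle^{\vartriangleleft u}_\gamma]_\rho + k\langle X\rangle_{<\gamma} + I$. By Lemma \ref{basis-quotient-algebra}, the cosets $\{[w]_\rho + I\}_{w\in \D_I}$ form a basis of $R$, so modulo $I$ and modulo lower-degree terms the element $[u^n]_\rho$ must involve some $I$-irreducible word $w$ of degree $\gamma$ whose Lyndon decomposition is \emph{not} $\vartriangleleft u$; equivalently, $w$ has a Lyndon atom that is $\geq_{\lex} u$. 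The point of the height hypothesis $n = h_I(u)$ is precisely that $u^n$ is $I$-reducible, so $u^n$ itself is \emph{not} among the basis words $\D_I$, and I want to show that no \emph{other} bad word can appear either.

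First I would pin down the leading behaviour. By Lemma \ref{bracketing-diagonal}, $[u^n]_\rho \in u^n + k\langle X\rangle^{\prec u^n}_\gamma$, and since $u$ is Lyndon of height $n$, the word $u^n$ is $I$-reducible; rewriting $u^n$ modulo $I$ against the basis of Lemma \ref{basis-quotient-algebra} produces a combination of $[w]_\rho$ with $w \in \D_I$, $\deg(w)\le\gamma$, all of which satisfy $w \prec u^n$ (hence $w <_{\glex} u^n$). The essential claim I must establish is that every such $I$-irreducible word $w$ of degree exactly $\gamma$ that occurs in fact has all Lyndon atoms $<_{\lex} u$, i.e. $w \in \langle X|I\rangle^{\vartriangleleft u}_\gamma$; the words of strictly smaller degree are harmlessly absorbed into $k\langle X\rangle_{<\gamma}$. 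By Lemma \ref{lex-order-2}, for a word $w$ of degree $\gamma = \deg(u^n)$ with $w \prec u^n$, the condition $w \in \langle X\rangle^{\vartriangleleft u}$ is equivalent to its first Lyndon atom being $<_{\lex} u$; so the whole difficulty is to rule out an $I$-irreducible word $w$ of degree $\gamma$, $w\prec u^n$, whose first Lyndon atom is $\geq_{\lex} u$.

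The mechanism to rule this out is the comultiplication, exactly as in Proposition \ref{rephrase-1}. Suppose such a bad $w$ appears with nonzero coefficient; among all bad words choose the one maximizing the number $l$ of trailing occurrences of $u$ in its Lyndon decomposition, write $w = w' u^l$ with $w' \in \langle X\rangle^{\vartriangleleft u}$, and argue $l<n$ (since $l=n$ would force $w=u^n \notin \D_I$). I would then apply the expansion of Proposition \ref{comultiplication-diagonal} to $[u^n]_\rho$ and to each $[w'u^l]_\rho$ appearing, together with Lemma \ref{comparing} to control the off-diagonal terms inside $(k\langle X\rangle_+^{\prec u^N}\otimes k\langle X\rangle_+)_\gamma$ for $N\gg l$. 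Next I would construct, via Lemma \ref{basis-quotient-algebra}, the linear functional $\phi$ with $\phi(I)=0$, $\phi([u^l]_\rho)=1$ and $\phi([v]_\rho)=0$ for all other $v\in\D_I$; by Lemma \ref{killing-functional}, $\phi$ kills $k\langle X\rangle^{\prec u^N}_{\deg(u^l)}$. Applying $(\phi\otimes\id)$ to $\Delta$ of the relation and using $\Delta(I)\subseteq I\otimes k\langle X\rangle + k\langle X\rangle\otimes I$ produces an element $\tilde g \in I$ whose leading word is the maximal bad $w'$ — which is $I$-irreducible by the maximality/induction setup — giving $\tilde g\notin I$, the desired contradiction.

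The main obstacle I anticipate is the bookkeeping of the binomial coefficients $\binom{i}{j}_{\tau_{u,u}}$ and the scalar factors $\tau^j_{w,u}$ that appear in the $\Delta$-expansion, and making sure the functional $\phi$ isolates a \emph{single} nonzero contribution in the $w'u^l$ layer rather than producing cancellation. In Proposition \ref{rephrase-1} this worked because of the genuine linear independence of the basis; here the subtlety is that $[u^n]_\rho$ is the specific element whose reducibility I am exploiting, so I must verify that the coefficient of $[u^l]_\rho\otimes[w']_\rho$ coming from the $u^n$ term is nonzero — and this is exactly where the characteristic hypothesis enters, since in characteristic zero $\binom{n}{l}_{\tau_{u,u}}$ with $\tau_{u,u}$ of order $n$ (part (5) of Proposition \ref{PBW-diagonal}) must be controlled. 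I expect the cleanest route is to first prove the statement (this Proposition \ref{rephrase-2}) using only the structural extraction, deferring the precise order-of-$\tau_{u,u}$ analysis to the later proposition, and here merely arguing that the obstruction word's first atom cannot be $\geq_{\lex} u$ without contradicting $I$-irreducibility of the extracted $\tilde g$.
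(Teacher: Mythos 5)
Your overall template --- take a defining relation $g$ with leading word $u^n$, expand $g-[u^n]_\rho$ against the basis of Lemma \ref{basis-quotient-algebra}, apply $\Delta$, extract with a dual functional, and contradict $I$-irreducibility --- is indeed the paper's strategy. But there is a genuine gap where you identify the ``bad'' words and decide which Lyndon word to peel off. You assert that for a word $w$ of degree $\deg(u^n)$ with $w\prec u^n$, membership in $\langle X\rangle^{\vartriangleleft u}$ is equivalent to the \emph{first} Lyndon atom of $w$ being $<_{\lex}u$. That is false: $\langle X\rangle^{\vartriangleleft u}$ requires \emph{all} atoms to be $<_{\lex}u$ (equivalently, the atoms being nondecreasing, the \emph{last} one), whereas Lemma \ref{lex-order-2} only says that $w\prec u^n$ forces the \emph{first} atom to be $<_{\lex}u$. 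So every word occurring in the expansion automatically has first atom $<_{\lex}u$, and under your equivalence there would be nothing left to prove; the genuine obstruction is a word such as $w=w_1v$ with $w_1<_{\lex}u\leq_{\lex}v$ (for instance $w=y\cdot xxy$ against $u=xy$, $n=2$, when $x>y$ and degree is length: here $w\prec u^2$ but the last atom $xxy>_{\lex}u$). For such $w$ the large atom $v$ need not equal $u$, so your reduction ``write $w=w'u^l$ with $w'\in\langle X\rangle^{\vartriangleleft u}$'' is unavailable ($w$ may have no trailing $u$ at all, and $w'=w\notin\langle X\rangle^{\vartriangleleft u}$), Proposition \ref{comultiplication-diagonal} cannot be applied to $[w]_\rho$ in the form you need, and the functional dual to $[u^l]_\rho$ does not isolate the offending term.

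The paper's proof repairs exactly this point: it takes $v$ to be the pseudo-lexicographically \emph{largest} Lyndon atom occurring among the top-degree basis words in $g-[u^n]_\rho$, so that those words legitimately take the form $w_iv^i$ with $w_i\in\langle X\rangle^{\vartriangleleft v}$, lets $l$ be the maximal exponent occurring, disposes of the easy case $\deg(v^l)=\deg(u^n)$ via Proposition \ref{fact-Lyndon} (L7), and in the remaining case shows $v<_{\lex}u$ by contradiction using the functional dual to $[v^l]_\rho$ --- not to $[u^l]_\rho$ --- together with the inclusion $\langle X\rangle^{\prec u^n}_{\deg(v^l)}\subseteq\langle X\rangle^{\prec v^N}_{\deg(v^l)}$, which is what lets Lemma \ref{killing-functional} kill all parasitic left tensor legs. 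Your closing worry about $\binom{n}{l}_{\tau_{u,u}}$ vanishing is also moot in that argument: even when $u=v$ and the quantum binomial coefficient is zero, the surviving terms $a_{w_l}\rho^{l}_{w_l,v}[w_l]_\rho$ with $a_{w_l}\neq 0$ have $I$-irreducible leading word in $Q_l$, so the contradiction needs no input about the order of $\tau_{u,u}$ and no characteristic hypothesis.
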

\begin{proof}
By assumption,  there is a nonzero monic polynomial $g\in I \cap k\langle X\rangle_{\leq \deg(u^n)}$ with $\lw(g) =u^n$. If $g-[u^n]_\rho\in k\langle X\rangle_{<\deg(u^n)} + I $ then we are done. So we may assume $g- [u^n]_\rho\not\in k\langle X\rangle_{<\deg(u^n)} + I$. By Lemma \ref{bracketing-diagonal}  and Lemma \ref{basis-quotient-algebra},  it is easy to see that $g - [u^n]_\rho$ has an expression of the form
\[
g - [u^n]_\rho = \sum_{i=0}^l  \sum_{w_i\in Q_i} a_{w_i} [w_iv^{i}]_\rho + g'+ g'' , \quad  a_{w_i} \in k,
\]
where $g'\in k\langle X\rangle_{<\deg(u^n)}$, $g''\in k\langle X\rangle_{\leq\deg(u^n)}\cap I$ is zero or with leading word $<_{\lex } u^n$,   $v\in \N_I$, $l\geq 1$ and
$$Q_i:= \{~ w \in \D_I \cap \langle X\rangle^{\vartriangleleft v}_{\deg(u^n)- \deg(v^i)} ~| ~ wv^i\in \D_I, \, wv^i \prec u^n~\}, \quad i=0,\ldots, l.$$
with $Q_l\neq \emptyset$ and  $a_{w_l}\neq 0$ for some $w_l\in Q_{l}$. Note that $l< h_I(v)$. 

First consider the case that $\deg(v^l) = \deg(u^n)$. According to the construction, $v^l<_{\lex} u^n$. Then by Proposition \ref{fact-Lyndon} (L7), one has $v<_{\lex} u$ and hence the desired formula holds.

Next we proceed to deal with the case that $\deg(v^l) < \deg(u^n)$ or equivalently  $1\not\in Q_l$. It suffices to show $v<_{\lex}u$. Assume not. By Proposition \ref{comultiplication-diagonal} and Lemma \ref{comparing},
	\begin{eqnarray*}
		\Delta(g-g'') &=& \Delta([u^n]_\rho) + \sum_{i=0}^l   \sum_{w_i\in Q_i} a_{w_i} ~ \Delta([w_iv^{i}]_\rho)+ \Delta(g') \\
		&\in& \sum_{i=0}^n \binom{n}{i}_{\tau_{u,u}} ~  [u^i]_\rho \otimes [u^{n-i}]_\rho
		+  \Big (k\langle X \rangle_+^{\prec u^n} \otimes k\langle X\rangle_+\Big)_{\deg(u^n)} \\
		&& + \sum_{i=0}^l  \sum_{w_i \in Q_i}  \sum_{j=0}^{i} \binom{i}{j}_{\tau_{v,v}} a_{w_i} ~\tau_{w_i,v}^j ~  [v^j]_\rho \otimes [w_iv^{i-j}]_\rho
		+ \sum_{i=0}^l  \sum_{w_i \in Q_i}  a_{w_i} [w_iv^i]_\rho\otimes 1 \\
		&&+ \Big (k\langle X \rangle_+^{\prec v^N} \otimes k\langle X\rangle_+\Big)_{\deg(u^n)} + \Big (k\langle X\rangle \otimes k\langle X\rangle\Big)_{<\deg(u^n)}, \quad \text{for integers $N\gg l$}.
	\end{eqnarray*}
By Lemma \ref{basis-quotient-algebra}, one may define a linear functional $\phi: k\langle X\rangle \to k$  as follows:
\begin{itemize}
	\item $\phi(I) =0$, $\phi([w]_\rho)=0$ for $w\in \D_I \backslash \{ v^l \}$ and  $\phi([v^l]_\rho) =1$.
\end{itemize}
Note that $\phi(k\langle X\rangle_{<\deg(v^l)})=0$, by Lemma \ref{basis-quotient-algebra} again. Next we claim that $$\langle X \rangle^{\prec u^n}_{\deg(v^l)} \subseteq \langle X \rangle^{\prec v^N}_{\deg(v^l)}.$$ 
Indeed, if $w\in \langle X \rangle^{\prec u^n}_{\deg(v^l)}$ then its first Lyndon atom $w_1$ satisfies that  $w_1<_{\lex} u\leq_{\lex} v$ by Lemma \ref{lex-order-2}. So $w<_{\lex} v^N$ by Proposition \ref{fact-Lyndon} (L7). Since $\deg(w) =\deg(v^l) < \deg(v^N)$, one has $w\prec v^N$ by Lemma \ref{lex-order-1}. So $\phi(k\langle X \rangle^{\prec u^n}_{\deg(v^l)}) =\phi(k\langle X \rangle^{\prec v^N}_{\deg(v^l)}) =0$ by Lemma \ref{killing-functional}. 
	
Now	we break the discussion into two cases. If $u<_{\lex} v$ then 
	\begin{equation*}
		\tilde{g}:=	(\phi \otimes \id) \Big(\Delta(g-g'') \Big) ~ \in ~ \sum_{w_l\in Q_l} a_{w_l} \rho_{w_l,v}^l ~ [w_l]_\rho + k\langle X\rangle_{<\deg(u^n)-\deg(v^l)};
	\end{equation*}
	and if $u=v$ then  $n = h_I(u)=h_I(v)>l$, and hence one have  that
	\[
	\tilde{g}:=(\phi\otimes \id) \Big(\Delta(g-g'') \Big) ~ \in ~
	\binom{n}{l}_{\tau_{u,u}} ~ [u^{n-l}]_\rho + \sum_{w_l\in Q_l} a_{w_l} \rho_{w_l,v}^l ~  [w_l]_\rho + k\langle X\rangle_{<\deg(u^n)-\deg(v^l)}.
	\]
	In both cases, the leading word of $\tilde{g}$ is in $ Q_l\cup\{u^{n-l}\} $ by Lemma \ref{bracketing-diagonal} , so $\tilde{g} \not\in I$.  But $g-g''\in I$, so $\Delta(g-g'')\in I\otimes k\langle X\rangle+k\langle X\rangle \otimes I$. Therefore, $\tilde{g} \in I$, a contradiction.
\end{proof}

\begin{proposition}\label{rephrase-3}
 For each pair of $I$-irreducible Lyndon words  $u,v\in \N_I$ with $u>_{\lex} v$,
\[
[[u]_\rho,[v]_\rho]_\rho ~\in~ [k\langle X|I\rangle_{\deg(uv)}^{\trianglelefteq uv}]_\rho +k\langle X\rangle_{<\deg(uv)} + I.
\]
%where $\langle Y|J\rangle_{\deg(uv)}^{\trianglelefteq uv}:= \D_I \cap \langle Y\rangle_{\deg(uv)}^{\trianglelefteq uv}$.
\end{proposition}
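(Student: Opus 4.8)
The plan is to derive this as a purely combinatorial consequence of the two results already proved in this section, bypassing any further use of the comultiplication. Recall that Proposition \ref{rephrase-1} gives $\C_I=\D_I$ and Proposition \ref{rephrase-2} gives the ``soft'' reduction of Lyndon powers; together these are precisely the hypotheses of Proposition \ref{PBW-generator}, so Lemma \ref{rearragement} is available in the present setting. Set $\gamma:=\deg(uv)$ and $g:=[[u]_\rho,[v]_\rho]_\rho$. Since $u,v$ are Lyndon with $u>_{\lex}v$, Lemma \ref{bracketing-diagonal-more} (2) shows $g\in [k\langle X\rangle^{\trianglelefteq uv}_{uv}]_\rho$; that is, $g$ is a finite $k$-linear combination of elements $[w]_\rho$ where each $w\in \langle X\rangle^{\trianglelefteq uv}_{uv}$ is a word of degree $\gamma$ all of whose Lyndon atoms are $\leq_{\lex}uv$. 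Thus it suffices to prove, for each such $w$, that
\[
[w]_\rho \in [k\langle X|I\rangle^{\trianglelefteq uv}_{\gamma}]_\rho + k\langle X\rangle_{<\gamma} + I.
\]

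First I would dispose of the case $w\in \D_I$: then $w\in \D_I\cap \langle X\rangle^{\trianglelefteq uv}_{\gamma}=\langle X|I\rangle^{\trianglelefteq uv}_{\gamma}$, so $[w]_\rho$ already lies in the first summand. Now suppose $w$ is $I$-reducible. Writing the Lyndon decomposition $w=w_1\cdots w_s$ with $w_1\leq_{\lex}\cdots\leq_{\lex}w_s$ by Proposition \ref{fact-Lyndon} (L4), the definition of the $\tau$-bracketing on nondecreasing products gives $[w]_\rho=[w_1]_\rho\cdots[w_s]_\rho$. The sequence $(w_1,\ldots,w_s)$ is already its own nondecreasing rearrangement, and since $w\notin\D_I$ it lies outside $\mathcal{P}_I$; hence the second alternative of Lemma \ref{rearragement} applies and yields
\[
[w]_\rho \in \sum_{(u_1,\ldots,u_t)\in \mathcal{P}_I,\ (u_1,\ldots,u_t)<_R(w_1,\ldots,w_s)} k\cdot [u_1]_\rho\cdots[u_t]_\rho + k\langle X\rangle_{<\gamma}+I.
\]

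The only genuine point to verify, and thus the heart of the argument, is that every sequence $(u_1,\ldots,u_t)\in\mathcal{P}_I$ occurring in this sum yields a word $u_1\cdots u_t$ whose Lyndon atoms are all $\leq_{\lex}uv$, so that $[u_1]_\rho\cdots[u_t]_\rho=[u_1\cdots u_t]_\rho$ indeed lands in $[k\langle X|I\rangle^{\trianglelefteq uv}_\gamma]_\rho$. This I expect to read off directly from the definition of $<_R$: comparing two nondecreasing sequences from the right, $(u_1,\ldots,u_t)<_R(w_1,\ldots,w_s)$ forces the largest atom to satisfy $u_t\leq_{\lex}w_s\leq_{\lex}uv$, so all atoms of $u_1\cdots u_t$ are $\leq_{\lex}uv$; moreover $u_1\cdots u_t\in\D_I$ by the definition of $\mathcal{P}_I$ and has degree $\gamma$ because $<_R$ preserves total degree, so $u_1\cdots u_t\in\langle X|I\rangle^{\trianglelefteq uv}_\gamma$. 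Summing over the finitely many words $w$ appearing in the expansion of $g$ then gives the assertion. In contrast to the proofs of Propositions \ref{rephrase-1} and \ref{rephrase-2}, this argument needs no auxiliary functional and no appeal to $\Delta(I)\subseteq I\otimes k\langle X\rangle + k\langle X\rangle\otimes I$; those ingredients have already been absorbed into the reductions furnished by Proposition \ref{rephrase-2} and Lemma \ref{rearragement}.
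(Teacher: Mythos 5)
Your proof is correct and follows essentially the same route as the paper's: reduce via Lemma \ref{bracketing-diagonal-more}~(2) to showing $[k\langle X\rangle_{\deg(uv)}^{\trianglelefteq uv}]_\rho \subseteq [k\langle X|I\rangle_{\deg(uv)}^{\trianglelefteq uv}]_\rho +k\langle X\rangle_{<\deg(uv)} + I$, which the paper likewise deduces from Propositions \ref{rephrase-1} and \ref{rephrase-2} together with Lemma \ref{rearragement}. You have merely spelled out the $<_R$-bookkeeping that the paper leaves implicit.
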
 

\begin{proof}
By Lemma \ref{bracketing-diagonal-more} (2), one has $[[u]_\rho,[v]_\rho]_\rho ~\in~ [k\langle X\rangle_{\deg(uv)}^{\trianglelefteq uv}]_\rho $. By Proposition \ref{rephrase-1}, Proposition \ref{rephrase-2} and Lemma \ref{rearragement}, one has $[k\langle X\rangle_{\deg(uv)}^{\trianglelefteq uv}]_\rho \subseteq [k\langle X|I\rangle_{\deg(uv)}^{\trianglelefteq uv}]_\rho +k\langle X\rangle_{<\deg(uv)} + I.$ The result follows.
\end{proof}

\begin{proposition}\label{rephrase-4}
For each  total order  $<$ on $\N_I$, the triple   $(\{[u]_\rho+I\}_{u\in \N_I}, h_I, <)$ is a system of PBW generators of the quotient algebra $R:=k\langle X\rangle/ I$.
\end{proposition}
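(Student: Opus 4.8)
The plan is to deduce this statement directly from the abstract PBW criterion of Proposition \ref{PBW-generator}, by checking that its two standing hypotheses are satisfied in the present situation. All the genuine work will already have been carried out in Propositions \ref{rephrase-1} and \ref{rephrase-2}, so what remains is essentially a formal invocation.

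First I would recall what Proposition \ref{PBW-generator} demands of the pair $(I,\rho)$: that $\C_I=\D_I$, and that for every Lyndon word $u$ of finite height $n:=h_I(u)$ one has the membership $[u]^n_\rho \in [k\langle X\rangle^{\vartriangleleft u}_{\deg(u^n)}]_\rho + k\langle X\rangle_{<\deg(u^n)} + I$. The first of these is exactly the content of Proposition \ref{rephrase-1}. For the second, Proposition \ref{rephrase-2} yields the \emph{a priori} stronger membership $[u]^n_\rho \in [k\langle X|I\rangle^{\vartriangleleft u}_{\deg(u^n)}]_\rho + k\langle X\rangle_{<\deg(u^n)} + I$; since $\langle X|I\rangle^{\vartriangleleft u}_{\deg(u^n)} = \D_I\cap \langle X\rangle^{\vartriangleleft u}_{\deg(u^n)} \subseteq \langle X\rangle^{\vartriangleleft u}_{\deg(u^n)}$, we have $[k\langle X|I\rangle^{\vartriangleleft u}_{\deg(u^n)}]_\rho \subseteq [k\langle X\rangle^{\vartriangleleft u}_{\deg(u^n)}]_\rho$, and hence the displayed formula required by Proposition \ref{PBW-generator} follows at once.

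With both hypotheses verified, Proposition \ref{PBW-generator} applies to $\rho=\tau^{-1}$ and the ideal $I$, and gives that $(\{[u]_\rho+I\}_{u\in\N_I}, h_I, <)$ is a system of PBW generators of $R=k\langle X\rangle/I$ for every total order $<$ on $\N_I$, which is precisely the assertion. Here one should also note, for consistency with Definition \ref{PBW-definition}, that $h_I(u)\geq 2$ for each $u\in \N_I$ by Definition \ref{definition-height}, so the map $h_I$ indeed takes values $\geq 2$ on the relevant index set.

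As for where the difficulty lies: there is essentially no obstacle remaining at this stage. The entire substance of the argument resides in establishing the two inputs---the combinatorial identity $\C_I=\D_I$ (Proposition \ref{rephrase-1}) and the power-reduction formula for Lyndon words of finite height (Proposition \ref{rephrase-2})---both of which exploit the one-sided boundedness of $\Delta$ together with the hypothesis $\Delta(I)\subseteq I\otimes k\langle X\rangle + k\langle X\rangle\otimes I$ through the diagonal-type coproduct expansion of Proposition \ref{comultiplication-diagonal}. Once these are in hand, Proposition \ref{rephrase-4} is a one-line consequence, and the only point requiring attention is the harmless matching of the ambient spaces $[k\langle X|I\rangle^{\vartriangleleft u}_{\deg(u^n)}]_\rho$ and $[k\langle X\rangle^{\vartriangleleft u}_{\deg(u^n)}]_\rho$ via the inclusion noted above.
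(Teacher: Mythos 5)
Your proposal is correct and follows exactly the paper's own (one-line) proof: it cites Proposition \ref{rephrase-1} for $\C_I=\D_I$, Proposition \ref{rephrase-2} for the power-reduction hypothesis (noting the harmless inclusion $[k\langle X|I\rangle^{\vartriangleleft u}_{\deg(u^n)}]_\rho \subseteq [k\langle X\rangle^{\vartriangleleft u}_{\deg(u^n)}]_\rho$), and then invokes Proposition \ref{PBW-generator}. Nothing is missing.
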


\begin{proof}
It is  a direct consequence of Proposition \ref{rephrase-1}, Proposition \ref{rephrase-2} and Proposition \ref{PBW-generator}.
\end{proof}

\begin{proposition}\label{rephrase-5}
For each Lyndon word $u\in \N_I$ with $h_I(u) <\infty$, the scalar $\tau_{u,u}$  is a root of unity. More precisely,  if  $k$ is of characteristic $0$ then $\tau_{u,u}$ is of order $h_I(u)$, and in particular, $\tau_{u,u}\neq 1$; and if $k$ is of characteristic $p>0$ then  $\tau_{u,u} $ is of order $h_I(u)/p^s$ for some integer $s\geq 0$.
\end{proposition}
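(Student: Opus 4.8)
The plan is to reduce the statement to the vanishing of certain Gaussian binomial coefficients and then read off the order of $\tau_{u,u}$ from standard combinatorics. Write $q:=\tau_{u,u}$ and $n:=h_I(u)$; recall that $n\geq 2$ by Definition \ref{definition-height}, and that $u^j$ is $I$-irreducible (so $u^j\in\D_I$) for every $0\leq j<n$. The key claim I would establish is that
\[
\binom{n}{i}_{q}=0 \quad\text{in } k \text{ for every } 0<i<n.
\]
Granting this, the order of $q$ follows from the $q$-Lucas theorem: since $\binom{n}{1}_q=(n)_q=0$, in characteristic $0$ one gets $q\neq 1$ (as $n\cdot 1_k\neq 0$) and hence $q^n=1$, and writing $n=dn_1$ with $d$ the order of $q$, the vanishing of all $\binom{n}{i}_q$ forces the ordinary binomials $\binom{n_1}{i_1}$ to vanish for $0<i_1<n_1$, whence $n_1=1$ and $d=n$. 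In characteristic $p>0$ the same computation forces $\binom{n_1}{i_1}\equiv 0 \pmod{p}$ for $0<i_1<n_1$, so by the ordinary Lucas theorem $n_1=p^s$ and $d=n/p^s$ (with $q=1$ permitted, in which case $d=1$). I would either cite these as well-known properties of Gaussian binomials or dispatch them in a short lemma.

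To prove the claim I would exploit $\Delta$ exactly as in Propositions \ref{rephrase-1} and \ref{rephrase-2}. Since $u^n$ is $I$-reducible, Proposition \ref{rephrase-2} yields $g:=[u^n]_\rho-r\in I$ for some $r=r_1+r_2$ with $r_1\in[k\langle X|I\rangle^{\vartriangleleft u}_{\deg(u^n)}]_\rho$ and $r_2\in k\langle X\rangle_{<\deg(u^n)}$. Fixing $0<i<n$ and using Lemma \ref{basis-quotient-algebra}, I would define the linear functional $\phi\colon k\langle X\rangle\to k$ by $\phi(I)=0$ and $\phi([w]_\rho)=\delta_{w,u^i}$ for $w\in\D_I$. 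The properties I need are that $\phi$ annihilates $k\langle X\rangle_\beta$ for $\beta\neq\deg(u^i)$ (degree reasons) and, crucially, that $\phi\big(k\langle X\rangle^{\prec u^N}_{\deg(u^i)}\big)=0$ for every $N\geq 1$, which is exactly Lemma \ref{killing-functional}. Applying $(\phi\otimes\id)$ to $\Delta(g)\in I\otimes k\langle X\rangle+k\langle X\rangle\otimes I$ lands the result in $I$.

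Next I would compute $(\phi\otimes\id)(\Delta(g))=(\phi\otimes\id)(\Delta([u^n]_\rho))-(\phi\otimes\id)(\Delta(r))$ via Proposition \ref{comultiplication-diagonal} with $w=1$. In $\Delta([u^n]_\rho)=\sum_{j=0}^n\binom{n}{j}_q[u^j]_\rho\otimes[u^{n-j}]_\rho+f+g'$ the functional picks out only the $j=i$ term, giving $\binom{n}{i}_q[u^{n-i}]_\rho$: indeed $\phi([u^j]_\rho)=\delta_{i,j}$ for $0\leq j\leq n-1$, while the $j=n$ leg $[u^n]_\rho$ is annihilated for degree reasons after reduction; the summands of $f$ have left legs in $[k\langle X\rangle_+^{\vartriangleleft u}u^j]_\rho$, whose words have first Lyndon atom $<_{\lex}u$, hence lie in some $k\langle X\rangle^{\prec u^N}$ by Lemma \ref{lex-order-2} and are killed by $\phi$, while $g'$ contributes only to $k\langle X\rangle_{<\deg(u^{n-i})}$. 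The same two observations show $(\phi\otimes\id)(\Delta(r))\in k\langle X\rangle_{<\deg(u^{n-i})}$: the left legs arising from $r_1$ again have first atom $<_{\lex}u$ (apply Proposition \ref{comultiplication-diagonal} to the largest Lyndon atom of each $v\vartriangleleft u$, noting $\phi([v]_\rho)=0$ since $\deg(v)\neq\deg(u^i)$), and the lower-degree part $r_2$ is harmless. Combining, $\binom{n}{i}_q[u^{n-i}]_\rho\in I+k\langle X\rangle_{<\deg(u^{n-i})}$; since $n-i<n$ we have $u^{n-i}\in\D_I$, so $[u^{n-i}]_\rho+I$ is a basis vector of $k\langle X\rangle/I$ independent of the lower-degree classes (Lemma \ref{basis-quotient-algebra}), forcing $\binom{n}{i}_q=0$.

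The main obstacle I anticipate is the bookkeeping of the correction term $(\phi\otimes\id)(\Delta(r))$: one must verify that applying $\Delta$ to the $\vartriangleleft u$ part $r_1$ produces no left leg that $\phi$ detects, which is where the precise shape of Proposition \ref{comultiplication-diagonal} together with the characterization of $\prec u^N$ in Lemma \ref{lex-order-2} do the real work. By contrast, the passage from vanishing $q$-binomials to the exact order is routine once the $q$-Lucas theorem is invoked, although the characteristic-$p$ case must be treated separately since there $q=1$ is allowed.
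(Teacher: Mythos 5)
Your proposal is correct and follows essentially the same route as the paper's proof: Proposition \ref{rephrase-2} supplies the relation $g\in I$, the functionals dual to the PBW basis elements $[u^i]_\rho$ extract the vanishing of Gaussian binomial coefficients via Proposition \ref{comultiplication-diagonal}, Lemma \ref{comparing} and Lemma \ref{killing-functional}, and elementary Lucas-type combinatorics then pins down the order of $\tau_{u,u}$. The only cosmetic difference is in the endgame: the paper uses just the two functionals dual to $[u]_\rho$ and $[u^{tq^s}]_\rho$ together with the bespoke identity $\binom{tq^sa}{tq^s}_{\tau_{u,u}}=\binom{a}{1}=a$, whereas you extract all $\binom{n}{i}_{q}$ for $0<i<n$ and invoke the $q$-Lucas theorem wholesale.
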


\begin{proof}
	Let $n:=h_I(u)$, which is $\geq 2$. By Proposition \ref{rephrase-2}, there is an element $g\in I\backslash \{0\}$ of the form
	\[
	g =  [u^n]_\rho + \sum_{w\in Q} a_{w} [w]_\rho +g', \quad a_{w} \in k,
	\]
	with $Q:= \langle X|I\rangle^{\vartriangleleft u}_{\deg(u^n)}=\D_I\cap \langle X\rangle_{\deg(u^n)}^{\vartriangleleft u}$ and $ g'\in k\langle X\rangle_{<\deg(u^n)}$. Note that $\lw(g)=u^n$ by Lemma \ref{bracketing-diagonal}. Apply Proposition \ref{comultiplication-diagonal} and Lemma \ref{comparing},
	\begin{eqnarray*}
		\Delta(g) &=& \Delta([u^n]_\rho) + \sum_{w\in Q} a_{w} [w]_\rho +\Delta(g')\\
		&\in& \sum_{i=0}^n \binom{n}{i}_{\tau_{u,u}} ~  [u^i]_\rho \otimes [u^{n-i}]_\rho
		+ \sum_{w\in Q} a_w (1\otimes [w]_\rho +[w]_\rho\otimes 1) \\
		&& + \Big (k\langle X \rangle_+^{\prec u^n} \otimes k\langle X\rangle_+\Big)_{\deg(u^n)} + \Big (k\langle X\rangle \otimes k\langle X\rangle\Big)_{<\deg(u^n)}.
	\end{eqnarray*}
	By Lemma \ref{basis-quotient-algebra},  one may define a linear functional  $\phi: k\langle X\rangle \to k$ as follows.
	\begin{itemize}
		\item $\phi(I) = 0$, $\phi([w]_\rho) =0$ for $w \in \D_I \backslash \{ u \}$ and $\phi([u]_\rho) =1$.
	\end{itemize}
	Note that $\phi(k\langle X \rangle_{<\deg(u)}) =0$. So $\phi(k\langle X \rangle^{\prec u^n}_{\deg(u)}) =0$ by Lemma \ref{killing-functional}. Hence, 
	\begin{equation*}\label{equality-formula}
		\tilde{g}:= (\phi\otimes \id )(\Delta(g)) ~\in~	\binom{n}{1}_{\tau_{u,u}}[u^{n-1}]_\rho + k\langle X\rangle_{<\deg(u^{n-1})}. %\tag{PI}
	\end{equation*}
	Since $\Delta(g)\in I \otimes  k\langle X\rangle  +  k\langle X\rangle \otimes I$, $\tilde{g}\in I$. But $\lw([u^{n-1}]_\rho) =u^{n-1}$ is $I$-irreducible. So $$1+\tau_{u,u}+\cdots +\tau_{u,u}^{n-1}=\binom{n}{1}_{\tau_{u,u}}= 0$$
	in $k$ and it follows that $\tau_{u,u}$ is a root of unity, say of order $t$, which divides $n$.
	
	If $k$ is of characteristic $0$, then clearly $\tau_{u,u}\neq 1$, in this case we write $n=tq^sa$ with $q=s=1$; if $k$ is of positive characteristic $p$, then we write $n=tq^sa$ with $a$ doesn't divisible by $q:=p$. It remains to show $a=1$. Assume not, let $\psi: k\langle X\rangle \to k$ be the functional given by 
	\begin{itemize}
		\item $\psi(I)   =0$, $\psi([w]_\rho) =0$ for $w \in \D_I \backslash \{u^{tq^s} \}$ and $\psi([u^{tq^s}]_\rho) =1$.
	\end{itemize}
	Note that $\psi(k\langle X \rangle_{<\deg(u^{tq^s})})=0$ and thereof $\psi(k\langle X \rangle^{\prec u^n}_{\deg(u^{tq^s})}) =0$ by Lemma \ref{killing-functional}. So 
	\[
	\hat{g}:=(\psi\otimes \id)(\Delta(g)) ~\in~\binom{tq^sa}{tq^s}_{\tau_{u,u}}[u^{tq^s(a-1)}]_\rho+  k\langle X\rangle_{<\deg(u^{tq^s(a-1)})}.
	\]
	Since $\hat{g} \in I$ but $\lw([u^{tq^s(a-1)}]_\rho) = u^{tq^s(a-1)}$ is $I$-irreducible, one has
	\[
	a=\binom{a}{1} = \binom{q^sa}{q^s} =  \binom{tq^sa}{tq^s}_{\tau_{u,u}} =0,
	\]
	in $k$,	which is absurd. Here, the third equality is because $\tau_{u,u}$ is of order $t$. 
\end{proof}	

\section{Proof of Proposition \ref{braided-bialgebra-3}}
\label{section-proof-braided-bialgebra-3}

This section is devoted to prove Proposition \ref{braided-bialgebra-3} by the filtered-graded method. The argument has benefited from some of the ideas in the proof of \cite[Proposition 7.3]{Kh1}.

Let $\Gamma=(\Gamma, <)$ be a well-ordered abelian monoid. Recall that a \emph{braided algebra (resp. coalgebra, bialgebra) $\Gamma$-filtration} of a braided algebra (resp. coalgebra, bialgebra) $R$ is by definition an algebra (resp. coalgebra, algebra and coalgebra) $\Gamma$-filtration $\mathcal{F}=(F_\gamma(R))_{\gamma\in \Gamma}$ such that
$$\tau(F_\alpha(R) \otimes F_\beta(R)) = F_\beta(R) \otimes F_\alpha(R), \quad \alpha, \beta\in \Gamma.$$
In these cases,  $\gr(R,\mathcal{F})$ is naturally a $\Gamma$-graded braided algebra (resp. coalgebra, bialgebra).

\begin{lemma}\label{filtration-braided}
Let $R$ be a connected $\Gamma$-graded braided bialgebra, where $\Gamma$ is a nontrivial abelian monoid that has admissible well orders. Assume $R$ is generated by a homogeneous braided subspace of categorical type. Then there is a  well-ordered abelian monoid $\mathbb{M}$, a bicharacter $\chi$ of $\mathbb{M}\times\Gamma $ and  a braided bialgebra $\mathbb{M}$-filtration $\mathcal{F}=(F_{\alpha}(R))_{\alpha\in \mathbb{M}}$ of $R$ such that $F_{\alpha}(R)$ are all homogeneous and 
$$\gr(R,\mathcal{F})=\bigoplus_{(\alpha,\gamma)\in \mathbb{M}\times\Gamma }\big(F_\alpha(R)/F_\alpha^-(R)\big)_\gamma$$ is a connected $(\mathbb{M}\times\Gamma)$-graded braided bialgebra with braiding  $\tau_\chi$, the one induced by $\chi$.
\end{lemma}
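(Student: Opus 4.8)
The plan is to \emph{upper-triangularize} the categorical braiding and then annihilate its strictly-lower categorical terms by passing to a suitable associated graded object, so that the residual braiding is diagonal, hence induced by a bicharacter. First I would invoke Lemma \ref{homogenous-categorical} to fix a well-ordered homogeneous basis $\mathcal{X}$ of the generating braided subspace $V$ for which $\tau$ is $\mathcal{X}$-categorical. Since $\tau$ preserves the $\Gamma$-grading, for $x,y\in\mathcal{X}$ one has
\[
\tau(x\otimes y)=c_{x,y}\,y\otimes x+\sum_{(u,z)}c^{u,z}_{x,y}\,u\otimes z ,
\]
where each term of the sum obeys $u\leq y$, $z\leq x$ with $\deg u=\deg y$, $\deg z=\deg x$, and $(u,z)\neq(y,x)$ (so $u<y$ or $z<x$). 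The crucial structural observation is that the off-diagonal terms lie in the \emph{same} total $\Gamma$-degree as the leading term $y\otimes x$ but are strictly smaller in the $\mathcal{X}$-order; I want a filtration that sees exactly this, leaving the $\mathcal{X}$-diagonal braiding with coefficients $c_{x,y}$.

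Next I would build $\mathbb{M}$ as the free abelian monoid on the set $\mathcal{X}$, with weight $w(x)=e_x$, so that the $\mathbb{M}$-bidegree of a word records the full multiset of generators occurring in it; the filtration $F_\alpha(R)$ is the span of the images of words of weight $\leq\alpha$, which is automatically $\Gamma$-homogeneous. The point is to equip $\mathbb{M}$ with an admissible well order that refines the map $\deg_\Gamma\colon\mathbb{M}\to\Gamma$ (sum of the $\Gamma$-degrees of the factors) in such a way that (i) a single generator $x$ of $\Gamma$-degree $\gamma$ dominates every product of $\geq 2$ generators of strictly smaller $\Gamma$-degree whose total degree is again $\gamma$, and (ii) among monomials of equal $\deg_\Gamma$ and equal ``factor profile'' the $\mathcal{X}$-order of the factors is respected. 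Property (i) is the mechanism that will make the comultiplication compatible with the filtration; property (ii) is what forces the off-diagonal braiding terms into a strictly lower filtration layer.

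With this in hand I would check the three conditions of a braided bialgebra $\mathbb{M}$-filtration. The algebra condition $F_\alpha F_\beta\subseteq F_{\alpha+\beta}$ is immediate since $w$ is additive on words. The braiding condition $\tau(F_\alpha\otimes F_\beta)=F_\beta\otimes F_\alpha$ follows from categoricity together with property (ii): the leading term $y\otimes x$ has the same bidegree as $x\otimes y$ while every off-diagonal term has strictly smaller $\mathbb{M}$-weight, so in $\gr(R,\mathcal{F})$ only the diagonal part $c_{x,y}\,\bar y\otimes\bar x$ survives and $\bar\tau|_{\gr V}$ is $\mathcal{X}$-diagonal. The coalgebra condition rests on the \emph{key lemma}: for a generator $x$ one has $\Delta(x)-1\otimes x-x\otimes 1\in R_+\otimes R_+$, and since $R$ is connected graded every such term $a\otimes b$ has $\deg a+\deg b=\deg x$ with $a,b$ products of $\geq 1$ generators each of strictly smaller $\Gamma$-degree; thus $a\otimes b$ is a sum of tensors of $\geq 2$ generators of lower degree and equal total degree, so by property (i) its weight is $<w(x)$. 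Because $\Delta$ is an algebra map and $\mathcal{F}$ is an algebra filtration compatible with $\tau$, verifying the bound on generators propagates to all of $R$.

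Finally I would produce the bicharacter. The associated graded braiding $\bar\tau$ is diagonal on the $(\mathbb{M}\times\Gamma)$-homogeneous images of the words and commutes with the multiplication of $\gr(R,\mathcal{F})$; commuting with multiplication forces the diagonal scalars to be bimultiplicative in each bidegree slot, so they assemble into a bicharacter $\chi$ of $\mathbb{M}\times\Gamma$ with $\bar\tau=\tau_\chi$, while connectedness and $\Gamma$-homogeneity of the $F_\alpha$ are inherited from $R$. I expect the \emph{main obstacle} to be the simultaneous construction and verification of the admissible well order on $\mathbb{M}$: one must reconcile property (i) (which wants generators to outrank lower-degree products of the same total degree, i.e.\ ``fewer factors is larger'') with well-foundedness. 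This reconciliation hinges on the fact that, in a well-ordered abelian monoid with admissible order, every element admits only a bounded number of positive summands (if $\gamma_0$ is the least positive element, any $k$-fold decomposition forces $\gamma\geq k\gamma_0$), so at each fixed value of $\deg_\Gamma$ the factor count is bounded and no infinite descending chain can arise. Establishing this boundedness cleanly, and then checking admissibility of the resulting order, is the technical heart of the proof.
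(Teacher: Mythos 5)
Your proposal follows the same route as the paper: pass to a well-ordered homogeneous basis $X$ of the generating subspace via Lemma \ref{homogenous-categorical}, take $\mathbb{M}$ to be the free abelian monoid on $X$, filter $R$ by the weight $D(u)=\sum e_x$ of words, check the algebra/coalgebra/braiding compatibilities exactly as you describe, and read off the bicharacter from the surviving diagonal coefficients $p_{x,y}$. The slot-wise formulation of the braiding condition ($\tau(F_\alpha\otimes F_\beta)=F_\beta\otimes F_\alpha$ rather than a bound on the ``total weight'' of a tensor) is what one actually needs, but your categorical estimates ($u\leq y$, $z\leq x$, one strict, $\Gamma$-degrees preserved) do yield $u\otimes z\in F_{e_y}\otimes F_{e_x}^-+F_{e_y}^-\otimes F_{e_x}$, so that is only a matter of phrasing.

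The one genuine problem is in the step you yourself single out as the technical heart: the construction of the admissible well order on $\mathbb{M}$. Your justification rests on the claim that in a well-ordered abelian monoid, $\gamma\geq k\gamma_0$ (with $\gamma_0$ the least positive element) forces $k$ to be bounded. This inference is invalid: in a well-ordered monoid the strictly increasing sequence $(k\gamma_0)_{k\geq 1}$ may be bounded above — already in $\Gamma=\mathbb{N}^2$ with the lexicographic order one has $k\cdot(0,1)<(1,0)$ for every $k$ — so ``$\gamma\geq k\gamma_0$ for all $k$'' is not a contradiction, and the well-foundedness of a ``total-degree-first, then fewer-factors-is-larger'' order is not established this way (the statement allows arbitrary nontrivial $\Gamma$ with an admissible well order, not just $\mathbb{N}^\theta$). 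The paper sidesteps the issue entirely by choosing a different order on $\mathbb{M}$: rank the letters by $\Gamma$-degree and then by the basis order, and compare two exponent vectors at the highest-ranked letter where they disagree. This order makes no attempt to refine $\deg_\Gamma$ and involves no factor counting; it is a well order for standard reasons, and your property (i) holds for it trivially, because every term of $\Delta(x)-1\otimes x-x\otimes 1$ has weight supported on letters of $\Gamma$-degree strictly below $\deg(x)$, hence strictly below $e_x$ by comparison at the letter $x$. Replacing your order by this one repairs the argument; the rest of your proof then goes through as in the paper.
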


\begin{proof}
	Let $V\subseteq R_+$ be a homogeneous braided subspace of $R$  which is of categorical type and generates $R$. By Lemma \ref{homogenous-categorical}, one may choose a well-ordered homogeneous basis  $X$ of $V$  so that for all $x,y\in X$, there exists a unique nonzero scalar $p_{x,y}\in k$ such that
	\begin{eqnarray*}
		\tau_V(x\otimes y) \in  p_{x,y} y\otimes x+\sum_{z_1\in X,~ z_1< y} k\cdot z_1\otimes x + \sum_{z_2\in X,~ z_2 < x} k\cdot y\otimes z_2  + \sum_{z_1,\, z_2\in X, ~ z_1<y,\, z_2<x} k\cdot z_1\otimes z_2,
	\end{eqnarray*}
	where $\tau_V$ denotes the induced braiding on $kX=V$. Since $\tau_V(V_\gamma\otimes V_\delta) = V_\delta\otimes V_\gamma$ for all $\gamma, \delta\in \Gamma$, one may assume $\deg(z_1)=\deg(y)$ and $\deg(z_2)=\deg(x)$ in the above formula. 
	
	Let $\mathbb{M}$ be the free abelian monoid on $X$ with the canonical basis denoted by $\{e_x\}_{x\in X}$. Every element of $\mathbb{M}$ has a unique expression of the form 
	$\sum_{x\in X}m_x e_x$, where $m_x$ are nonnegative integers and they are nonzero for only finitely many $x$. 
	Equip $\mathbb{M}$ with the admissible well order $<_{\mathbb{M}}$ defined as follows. First fix  an admissible well order $<_\Gamma$ on $\Gamma$; then for $\alpha= \sum_{x\in X}m_x e_x, ~ \beta= \sum_{x\in X}n_x e_x \in \mathbb{M}$,
	\begin{eqnarray*}
		\alpha<_{\mathbb{M}} \beta ~ \Longleftrightarrow  ~  
		\exists ~ y\in X \text{ such that }\left\{
		\begin{array}{llll}
		m_y<n_y  \text{ and }  m_x=n_x \text{ for all } x\in X \text{ with }&& \\
		\deg(y) <_\Gamma \deg(x) \text{ or } \deg(y) = \deg(x) \text { but } y<x.
		\end{array}\right.
	\end{eqnarray*}
	Let $D: \langle X\rangle \to \mathbb{M}$ be the monoid map given by  $D(x)= e_x$ for all $x\in X$.
	
	Consider the free algebra $k\langle X\rangle$ as a $\Gamma$-graded braided algebra with braiding the one induced by $\tau_V$.
	Let $\tilde{\mathcal{F}} =(\tilde{F}_{\alpha}(k\langle X\rangle))_{\alpha\in \mathbb{M}}$ be the  $\mathbb{M}$-filtration on $k\langle X\rangle$ given by
	\begin{eqnarray*}
		\tilde{F}_{\alpha}(k\langle X\rangle) := \sum_{u\in \langle X\rangle,~ D(u)\leq \alpha} k u.
	\end{eqnarray*}
	It is easy to see that $\tilde{\mathcal{F}} $ is an algebra $\mathbb{M}$-filtration of $k\langle X\rangle$ and $\tilde{F}_\alpha(k\langle X\rangle)$ are all homogeneous.  
	
	Let $\chi$ be the bicharacter of $\mathbb{M}\times\Gamma$ given by 
	\[
	\chi((e_x,\gamma), (e_y, \delta) )=p_{x,y}, \quad \gamma,\, \delta \in \Gamma, ~~ x,\, y\in X.
	\]
	Then for all words $u, v\in \langle X\rangle$,  one has
	\begin{eqnarray*}\label{key-bicharcater}
		\tau_{V}(u\otimes v) &\in& \chi((D(u),\deg(u)), (D(v),\deg(v)))\cdot v\otimes u \\ && + ~ \tilde{F}_{D(v)}(k\langle X\rangle)_{\deg(v)} \otimes \tilde{F}_{D(u)}^-(k\langle X\rangle)_{\deg(u)} + \tilde{F}_{D(v)}^-(k\langle X\rangle)_{\deg(v)} \otimes \tilde{F}_{D(u)}(k\langle X\rangle)_{\deg(u)}
	\end{eqnarray*}
	by an  evident induction on the length of $u$ and $v$. So
	$$\tau_V(\tilde{F}_{\alpha}(k\langle X\rangle)_\gamma\otimes \tilde{F}_{\beta}(k\langle X\rangle)_\delta) \subseteq  \tilde{F}_{\beta}(k\langle X\rangle)_\delta \otimes \tilde{F}_{\alpha}(k\langle X\rangle)_\gamma, \quad (\alpha,\gamma), ~(\beta,\delta) \in \mathbb{M}\times\Gamma.$$ 
	Similarly, one has $\tau_V^{-1} (\tilde{F}_{\beta}(k\langle X\rangle)_\delta \otimes \tilde{F}_{\alpha}(k\langle X\rangle)_\gamma) \subseteq \tilde{F}_{\alpha}(k\langle X\rangle)_\gamma\otimes \tilde{F}_{\beta}(k\langle X\rangle)_\delta$. Hence,
	\[
	\tau_V(\tilde{F}_{\alpha}(k\langle X\rangle)_\gamma\otimes \tilde{F}_{\beta}(k\langle X\rangle)_\delta) =  \tilde{F}_{\beta}(k\langle X\rangle)_\delta \otimes \tilde{F}_{\alpha}(k\langle X\rangle)_\gamma, \quad (\alpha,\gamma), ~(\beta,\delta) \in \mathbb{M}\times\Gamma.
	\]
	Therefore, $\tilde{\mathcal{F}}$ is a braided algebra $\mathbb{M}$-filtration of $k\langle X\rangle$. In addition,   $\gr(k\langle X\rangle, \tilde{\mathcal{F}})$ is readily a connected $(\mathbb{M}\times \Gamma)$-graded braided algebra with braiding the one induced by $\chi$.
	
	Note that the canonical map $\pi: (k\langle X\rangle, \tau_V) \to (R, \tau)$ is a surjective homomorphism of $\Gamma$-graded braided algebras. Let $\mathcal{F}:=(F_{\alpha}(R))_{\alpha\in \mathbb{M}}$ be the $\mathbb{M}$-filtration of $R$ given by $$F_{\alpha}(R):=\pi(\tilde{F}_{\alpha}(k\langle X\rangle)).$$
	Clearly, $\mathcal{F}$ is a braided algebra $\mathbb{M}$-filtration of $R$ and $F_\alpha(R)$ are all homogeneous. In addition, $\gr(R,\mathcal{F})$ is a connected $(\mathbb{M}\times \Gamma)$-graded braided algebra with braiding $\tau_\chi$, since the canonical map 
	$$\gr(\pi):\gr(k\langle X\rangle, \tilde{\mathcal{F}}) \to \gr(R,\mathcal{F})$$  induced from $\pi$ is a surjective homomorphism of $(\mathbb{M}\times \Gamma)$-graded braided algebras. By the counitity of $R$,  there is  a homomorphism of $\Gamma$-graded algebras $\Delta: k\langle X\rangle\to k\langle X\rangle \otimes^{\tau_V} k\langle X\rangle$ such that 
	$$\Delta(x)  ~ \in ~  1\otimes x + x\otimes 1 +   \big(k\langle X \rangle_+ \otimes k\langle X \rangle_+\big)_{\deg(x)}, \quad x\in X$$
	and the following diagram commutes
	\begin{eqnarray*}
		\xymatrix{
			k\langle X\rangle \ar[r]^-{\Delta} \ar[d]_\pi & k\langle X\rangle\otimes^{\tau_V}k\langle X\rangle \ar[d]^{\pi\otimes \pi} \\
			R\ar[r]^-{\Delta_R} &  R\otimes^{\tau} R.
		}
	\end{eqnarray*}
	For all words $u\in \langle X\rangle$, one has 
	\[
	\Delta(u) \in \sum_{\beta_1,\, \beta_2\in \mathbb{M}, ~\beta_1+\beta_2 \leq D(u)} \Big(\tilde{F}_{\beta_1}(k\langle X\rangle) \otimes \tilde{F}_{\beta_2}(k\langle X\rangle)\Big)_{\deg(u)}
	\]
	by an easy induction on the length of $u$. It follows that 
	\[
	\Delta(\tilde{F}_\alpha(k\langle X\rangle)) \subseteq \sum_{\beta_1,\, \beta_2\in \mathbb{M}, ~\beta_1+\beta_2 \leq \alpha} \tilde{F}_{\beta_1}(k\langle X\rangle) \otimes \tilde{F}_{\beta_2}(k\langle X\rangle), \quad \alpha \in \mathbb{M}.
	\]
	Thus, by the commutative square given above,
	$$\Delta(F_\alpha(R)) \subseteq \sum_{\beta_1,\, \beta_2\in \mathbb{M}, ~\beta_1+\beta_2 \leq \alpha} F_{\beta_1}(R) \otimes F_{\beta_2}(R), \quad \alpha \in \mathbb{M}.$$
	So $\mathcal{F}$ is also a coalgebra $\mathbb{M}$-filtration of $R$ and hence the result follows.
\end{proof}

\begin{proof}[Proof of Proposition \ref{braided-bialgebra-3}]
Fix a well-ordered abelian monoid $\mathbb{M}$, a bicharacter $\chi$ of $\mathbb{M}\times\Gamma $ and  a braided bialgebra $\mathbb{M}$-filtration $\mathcal{F}=(\mathcal{F}_{\alpha}(R))_{\alpha\in \mathbb{M}}$ of $R$ as that in Lemma \ref{filtration-braided}. Clearly, $\mathbb{M}\times \Gamma$ has admissible well orders. Note that $\gr(A,\mathcal{F}|_A) \supseteq \gr(B,\mathcal{F}|_B)$ are homogeneous left (resp. right) coideal subalgebras of $\gr(R,\mathcal{F})$. The  result then follows from Lemma \ref{PBW-tranfer} and Theorem \ref{braided-bialgebra-1} (1).	
\end{proof}

\appendix

\section{Lyndon ideals of free algebras}

In this appendix, we introduce the class of Lyndon ideals for free algebras, study the combinatorial and homological properties of the corresponding quotient algebras, and relate them to the class of Artin-Schelter regular algebras. Partial results of this appendix have been published in Chinese version \cite{Zhou}. Gateva-Ivanova has been obtained similar results in \cite{Ga}.  However, compare to her argument, which takes advantage of a deep result of \cite{Ga0}, ours is  more direct and concrete. 

%However, we include them here for completeness and  reader's convenience. 

Throughout, let $\Gamma=\mathbb{N}^\theta$ for some integer $\theta\geq 1$ and equip it with an admissible well order. Let $|\cdot|: \Gamma\to \mathbb{N}$ be the monoid homomorphism given by $(\gamma_1,\ldots,\gamma_\theta)\mapsto \gamma_1+\cdots +\gamma_\theta$. We assume that the free algebra $k\langle X\rangle$ is  $\Gamma$-graded with each letter homogeneous of positive degree. 

For an ideal $I$ of $k\langle X\rangle$, we denote by $I^*$ the \emph{homogeneous closure} of $I$. It is the smallest homogeneous ideal of $k\langle X\rangle $ that contains $I$. It is not hard to see that $I^*$ is spanned by the set of highest nonzero homogeneous components of elements of $I$, and moreover
\[
\B_{I^*}=\B_I, \quad \C_{I^*} =\C_I, \quad  \D_{I^*} =\D_I, \quad \N_{I^*} = \N_I,   \quad \text{and} \quad \OO_{I^*} = \OO_I.
\]

\begin{definition}\label{definition-Lyndon-ideal}
An ideal $I$ of $k\langle X\rangle$ is called \emph{Lyndon} if $\OO_I$ consists of Lyndon words, or equivalently  
$\B_I=\D_I$. Clearly, $I$ is Lyndon if and only if its homogenization $I^*$ is so.
\end{definition}

A $\Gamma$-graded vector space $V=\bigoplus_{\gamma\in \Gamma}V_\gamma$ is called \emph{locally finite} if $\dim V_\gamma <\infty$ for each $\gamma\in \Gamma$. In this case, the \emph{Hilbert series of $V$} is defined to be the power series $${\rm H}_V(\textbf{\rm t}) = \sum_{\gamma\in \Gamma} \dim(V_\gamma) ~ \textbf{\rm t}^\gamma \in \mathbb{Z}[[t_1,\ldots, t_\theta]],$$ where $\textbf{\rm t}^\gamma$ is an abbreviation of $t_1^{\gamma_1}\cdots t_\theta^{\gamma_\theta}$ for $\gamma=(\gamma_1,\ldots, \gamma_\theta)$.

\begin{proposition}\label{Lyndon-Hilbert-series}
	Let $I$ be a homogeneous Lyndon  ideal of $k\langle X\rangle$.  Then $A= k\langle X\rangle/ I$ is locally finite if and only if $\N_I$ is locally finite; and in this case the Hilbert series of $A$ is $${\rm H}_A(t) =  \prod_{u \in \N_I} \big (1-\textbf{\rm t}^{\deg (u)}  \big )^{-1}.$$
\end{proposition}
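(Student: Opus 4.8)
The plan is to reduce the Hilbert series of $A=k\langle X\rangle/I$ to a purely combinatorial count of $I$-irreducible words, and then to use the Lyndon hypothesis together with the unique factorization of words into Lyndon atoms to split that count as a product over $\N_I$. First I would fix the flipping braiding $\rho$ on $kX$ (any $X$-diagonal braiding will do) and invoke Lemma \ref{basis-quotient-algebra}, which asserts that $\{\,[w]_\rho+I\,\}_{w\in\D_I}$ is a basis of $A$; since $[w]_\rho$ is homogeneous of degree $\deg(w)$, this gives $\dim A_\gamma=\#(\D_I\cap\langle X\rangle_\gamma)$ for every $\gamma\in\Gamma$. Hence $A$ is locally finite exactly when $\D_I$ contains only finitely many words in each degree, and in that case ${\rm H}_A(\textbf{\rm t})=\sum_{w\in\D_I}\textbf{\rm t}^{\deg(w)}$ as a formal power series.

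Next, because $I$ is Lyndon we have $\B_I=\D_I$ by Definition \ref{definition-Lyndon-ideal}, so the sum may be taken over $\B_I$. By the Lyndon decomposition, Proposition \ref{fact-Lyndon} (L4), every word $u$ factors uniquely as a nondecreasing product $u=w_1^{m_1}\cdots w_s^{m_s}$ with $w_1<_{\lex}\cdots<_{\lex}w_s$ in $\mathbb{L}$ and each $m_i\geq 1$, and $u\in\B_I$ precisely when all the Lyndon atoms $w_i$ lie in $\N_I$. This yields a degree-preserving bijection between $\B_I$ and the set of finitely supported multisets on $\N_I$, under which $\deg(u)=\sum_i m_i\deg(w_i)$. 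Reading the factor $(1-\textbf{\rm t}^{\deg(u)})^{-1}=\sum_{m\geq 0}\textbf{\rm t}^{m\deg(u)}$ as the generating series for the multiplicity of the atom $u\in\N_I$, expansion of the product $\prod_{u\in\N_I}(1-\textbf{\rm t}^{\deg(u)})^{-1}$ collects, in the coefficient of $\textbf{\rm t}^\gamma$, exactly the multisets of total degree $\gamma$, i.e.\ exactly the words of $\B_I$ of degree $\gamma$. This is the asserted formula, valid once local finiteness guarantees finite coefficients.

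The one genuinely delicate point, which I expect to be the main obstacle, is the local-finiteness equivalence in the multigraded setting $\Gamma=\mathbb{N}^\theta$; I would handle it with the total-degree homomorphism $|\cdot|:\Gamma\to\mathbb{N}$. The inclusion $\N_I\subseteq\D_I=\B_I$ makes one direction immediate: if $A$ is locally finite then so is $\N_I$. For the converse, fix $\gamma\in\Gamma$; since each letter has positive degree, $|\deg(x)|\geq 1$, so any $u\in\B_I$ with $\deg(u)=\gamma$ has at most $|\gamma|$ Lyndon atoms counted with multiplicity, each lying in $\N_I$ with total degree $\leq|\gamma|$. As the simplex $\{\gamma'\in\mathbb{N}^\theta:|\gamma'|\leq|\gamma|\}$ is finite and $\N_I$ is locally finite, the set $\{u\in\N_I:|\deg(u)|\leq|\gamma|\}$ is finite; there are therefore only finitely many multisets of size $\leq|\gamma|$ drawn from it, hence only finitely many $u\in\B_I$ of degree $\gamma$. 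Thus local finiteness of $\N_I$ forces that of $A$, which both completes the equivalence and ensures the displayed product is a well-defined power series. Assembling the three steps gives the proposition.
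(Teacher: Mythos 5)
Your proposal is correct and follows essentially the same route as the paper: the paper likewise reduces to $\dim A_\gamma=\#\{u\in\B_I\mid\deg(u)=\gamma\}$ (you derive this via Lemma \ref{basis-quotient-algebra}, the paper cites it as the standard Gr\"obner-basis fact) and then leaves the Lyndon-decomposition/multiset count as ``an easy combinatoric argument,'' which is exactly what you spell out. Your careful treatment of the local-finiteness equivalence via the total-degree map is a correct filling-in of a point the paper's proof passes over silently.
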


\begin{proof}
	It is well-known that $\dim A_\gamma$ equals the number of $I$-irreducible words of degree $\gamma$, which is $\#\{ u\in \B_I ~|~ \deg(u)=\gamma \}$ for each $\gamma\in \Gamma$. The result follows by an easy combinatoric argument.
\end{proof}

\begin{lemma}\label{Homogenization}
Let $I$ be an ideal of $k\langle X\rangle$ and let $A= k\langle X\rangle/ I$.  Let $\F:=(F_\gamma (A))_{\gamma\in \Gamma}$ be the $\Gamma$-filtration of $A$ given by $F_\gamma (A):= (k\langle X\rangle_{\leq \gamma} +I) /I$. Then $\gr(A, \mathcal{F}) \cong k\langle X\rangle/ I^*$ as $\Gamma$-graded algebras.
\end{lemma}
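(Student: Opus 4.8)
The plan is to exhibit a natural surjective homomorphism of $\Gamma$-graded algebras $\bar\pi\colon k\langle X\rangle \to \gr(A,\mathcal{F})$ and then to identify its kernel with $I^*$, using the leading-component description of $I^*$ recalled just before the statement.

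First I would construct $\bar\pi$ on homogeneous elements. Let $\pi\colon k\langle X\rangle \to A$ be the canonical projection. By the very definition of $\mathcal{F}$ one has $\pi(k\langle X\rangle_{\leq\gamma}) = F_\gamma(A)$, and since the order on $\Gamma$ is total, $F_\gamma^-(A) = (k\langle X\rangle_{<\gamma}+I)/I$. Thus $\pi$ is a filtered map, and setting $\bar\pi(f) := \overline{\pi(f)} = \pi(f) + F_\gamma^-(A)$ for $f\in k\langle X\rangle_\gamma$ and extending linearly gives a well-defined linear map that is homogeneous of degree $0$. That $\bar\pi$ is multiplicative and unital is immediate from the algebra-filtration axioms: for $f\in k\langle X\rangle_\alpha$ and $g\in k\langle X\rangle_\beta$ the product rule in $\gr(A,\mathcal{F})$ gives $\bar\pi(f)\bar\pi(g)=\overline{\pi(fg)}=\bar\pi(fg)$. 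Surjectivity is equally routine: any class in $F_\gamma(A)/F_\gamma^-(A)$ is represented by $\pi(p)$ with $p\in k\langle X\rangle_{\leq\gamma}$, and replacing $p$ by its degree-$\gamma$ homogeneous component alters $\pi(p)$ only by an element of $F_\gamma^-(A)$, so the class already lies in the image of $\bar\pi$.

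The heart of the argument is the kernel computation. For homogeneous $f$ of degree $\gamma$, the condition $\bar\pi(f)=0$ means exactly $\pi(f)\in F_\gamma^-(A)$, that is $f\in k\langle X\rangle_{<\gamma}+I$. I would then show this is equivalent to $f$ lying in the degree-$\gamma$ component of $I^*$, invoking the description of $I^*$ as the span of the highest nonzero homogeneous components of elements of $I$. Indeed, if $f = h + g$ with $h\in k\langle X\rangle_{<\gamma}$ and $g\in I$, then $g = f - h$ has $f$ as its degree-$\gamma$ component and nothing of higher degree, so $f$ is the top homogeneous component of $g\in I$ and hence $f\in I^*$; conversely, if $f$ is the top component of some $g\in I$ of degree $\gamma$, then $g - f\in k\langle X\rangle_{<\gamma}$ exhibits $f = g-(g-f)\in I + k\langle X\rangle_{<\gamma}$. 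Since $\bar\pi$ is an algebra homomorphism homogeneous of degree $0$, its kernel is a homogeneous ideal; the degreewise identification $\ker(\bar\pi)_\gamma = I^*_\gamma$ therefore yields $\ker(\bar\pi)=I^*$, and $\bar\pi$ descends to the desired isomorphism $k\langle X\rangle/I^* \cong \gr(A,\mathcal{F})$ of $\Gamma$-graded algebras.

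I do not anticipate a genuine obstacle, as this is the standard filtered-to-graded comparison; the only point that requires care is to match the correct description of $I^*$ against the \emph{increasing} filtration $\mathcal{F}$ and its symbols $\overline{(\cdot)}$. Concretely, it is the \emph{highest} homogeneous components of elements of $I$ that govern $\ker(\bar\pi)$, and keeping this straight (rather than any lower-degree symbol) is what makes the degree-$\gamma$ vanishing criterion align cleanly with $I^*_\gamma$. The remaining verifications — well-definedness, multiplicativity, surjectivity — are formal and follow the pattern already used for $\gr(\pi)$ in the proof of Lemma \ref{braided-bialgebra-3}.
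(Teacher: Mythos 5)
Your proof is correct, but the decisive step is genuinely different from the paper's. Both arguments begin with the same surjective graded algebra map $k\langle X\rangle\to\gr(A,\mathcal{F})$ (the paper's $\phi$, defined on the generators $x\in X$ and checked to kill $I^*$); the difference lies in how injectivity of the induced map $k\langle X\rangle/I^*\to\gr(A,\mathcal{F})$ is obtained. The paper does \emph{not} compute the kernel: it only observes that the kernel contains $I^*$ and then invokes Gr\"obner basis theory to exhibit the $I$-irreducible words of degree $\gamma$ as a basis of both $(k\langle X\rangle/I^*)_\gamma$ and $\gr(A,\mathcal{F})_\gamma$, so the induced surjection matches bases and is an isomorphism in each degree. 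You instead identify the kernel outright: for homogeneous $f$ of degree $\gamma$, $\bar\pi(f)=0$ iff $f\in k\langle X\rangle_{<\gamma}+I$, which you correctly translate into $f$ being the highest nonzero homogeneous component of an element of $I$, hence into membership in $I^*_\gamma$ under the leading-component description recalled before the lemma. Your route is more elementary and self-contained (no appeal to $\D_I$ or to Lemma \ref{basis-quotient-algebra}), while the paper's recycles machinery it needs elsewhere anyway. One caution, which concerns the paper rather than your argument: what both proofs actually establish is the isomorphism of $\gr(A,\mathcal{F})$ with $k\langle X\rangle$ modulo the \emph{span of highest homogeneous components} of elements of $I$; the paper's parenthetical assertion that this span coincides with the smallest homogeneous ideal containing $I$ is the one fragile point (for $I=(x+x^2)$ in $k[x]$ the span of top components is $(x^2)$ while the smallest homogeneous ideal containing $I$ is $(x)$), so your kernel computation is sound precisely for the description of $I^*$ that the paper's own proof also relies on.
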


\begin{proof}
	Let  $\phi: k\langle X\rangle \to \gr(A, \mathcal{F})$ be the $\Gamma$-graded algebra homomorphism given by
	$$\phi(x)= (x+I) + F^-_{\deg(x)}(A) \in \gr(A, \mathcal{F})_{\deg(x)}, \quad x\in X.$$
	It is easy to check that $\phi$ is a surjective homomorphism  with kernel contains $I^*$. Let $\widetilde{\phi}: k\langle X\rangle/ I^* \to \gr(A, \mathcal{F})$ be the map induced by $\phi$. By  standard Gr\"{o}bner basis theory,  $\{u+I^*~|~ u\in \D_I \cap \langle X\rangle_\gamma \}$ is a basis of  $(k\langle X \rangle /I^*)_\gamma$ and $\{~ (u+I) +F^-_\gamma(A) ~|~ u\in \D_I \cap \langle X\rangle_\gamma~\}$ is a basis of $\gr(A, \mathcal{F})_\gamma$, for each  $\gamma \in \Gamma$. Since $\widetilde{\phi}$ sends $u+I^*$ to $(u+I) +F^-_\gamma(A)$ for any $u\in \D_I \cap \langle X\rangle_\gamma$, the restriction of $\widetilde{\phi}$ on the $\gamma$-th component  is an isomorphism. Since $\gamma$ is arbitrary, the result follows.
\end{proof}

\begin{proposition}\label{Lyndon-GKdim}
Let $I$ be a Lyndon ideal of $k\langle X\rangle$ and let $A= k\langle X\rangle/ I$. Then $\gkdim A = \#(\N_I).$
In particular, $\gkdim A$ is either infinite or a natural number.
\end{proposition}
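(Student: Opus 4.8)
The plan is to compare $A$ with its associated graded algebra and to read off the Gelfand--Kirillov dimension from a Hilbert series count. Let $\F=(F_\gamma(A))_{\gamma\in\Gamma}$ be the $\Gamma$-filtration of Lemma \ref{Homogenization}, so that $\gr(A,\F)\cong k\langle X\rangle/I^*$ as $\Gamma$-graded algebras; write $B:=k\langle X\rangle/I^*$. Since $I$ is Lyndon if and only if $I^*$ is, and since $\N_{I^*}=\N_I$ and $\D_{I^*}=\D_I$, the ideal $I^*$ is a \emph{homogeneous} Lyndon ideal with the same $I$-irreducible Lyndon words. It therefore suffices to compute $\gkdim B$ and to compare $\gkdim A$ with $\gkdim B$.

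First I would record that for a Lyndon ideal every $u\in\N_I$ has \emph{infinite} height. Indeed, if $u^n$ were $I$-reducible it would contain an obstruction as a factor; by hypothesis that obstruction is a Lyndon word, hence by Proposition \ref{fact-Lyndon}(L5) it is a factor of some Lyndon atom of $u^n$, i.e. a factor of $u$. But every factor of the $I$-irreducible word $u$ is $I$-irreducible, while an obstruction is $I$-reducible, a contradiction. Thus $h_I\equiv\infty$ on $\N_I$, and likewise $h_{I^*}\equiv\infty$ on $\N_{I^*}=\N_I$. By Lemma \ref{basis-quotient-algebra} (normal-form theory) $\{\,w+I^*\mid w\in\D_{I^*}\,\}$ is a basis of $B$, and since $\D_{I^*}=\B_{I^*}$ consists exactly of the nondecreasing products of words in $\N_{I^*}$, the triple $(\{u+I^*\}_{u\in\N_{I^*}},h_{I^*},<_{\lex})$ is a system of PBW generators of $B$ in which every height is $\infty$. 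Taking $z_u:=u+I\in A$, whose leading symbol is $u+I^*$, Lemma \ref{PBW-tranfer} shows $(\{z_u\}_{u\in\N_I},h_I,<_{\lex})$ is a system of PBW generators of $A$ over $k$ with all heights infinite, so Lemma \ref{PBW-GK-dimension} yields the lower bound $\gkdim A\ge\#(\N_I)$; in particular $\gkdim A=\infty=\#(\N_I)$ when $\N_I$ is infinite.

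For the matching upper bound I would show $\gkdim A\le\gkdim B=\#(\N_I)$. The inequality $\gkdim A\le\gkdim\gr(A,\F)$ is the standard filtered-to-graded estimate, and it survives the passage to a well-ordered $\Gamma$-filtration with possibly infinite-dimensional pieces: for a finite-dimensional $V\ni 1$ the leading-symbol space $\mathrm{in}(V)\subseteq B$ satisfies $\dim\mathrm{in}(V)=\dim V$ and $\mathrm{in}(V^n)\subseteq\mathrm{in}(V)^n$, whence $\dim V^n\le\dim\mathrm{in}(V)^n$. To compute $\gkdim B$ I would use Proposition \ref{Lyndon-Hilbert-series}: the formula $\mathrm{H}_B(\mathbf{t})=\prod_{u\in\N_I}(1-\mathbf{t}^{\deg u})^{-1}$ gives $\dim B_\gamma=\#\{(e_u)_{u\in\N_I}\mid \sum_u e_u\deg(u)=\gamma\}$. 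Since the $\Gamma$-grading refines the $\mathbb{N}$-grading induced by $|\cdot|$, the algebra $B$ is $\mathbb{N}$-graded via $|\cdot|$, and when $\#(\N_I)=d<\infty$ it is locally finite with $\sum_{|\gamma|\le n}\dim B_\gamma=\#\{(e_u)\mid \sum_u e_u|\deg(u)|\le n\}$; as every $|\deg(u)|\ge1$, this count is $\Theta(n^{d})$, so $\gkdim B=d$ (this is precisely the growth of a weighted polynomial ring in $d$ variables). Combining the two bounds gives $\gkdim A=\#(\N_I)$, which is a natural number or $\infty$.

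The main obstacle is the exact growth computation turning the multigraded product formula of Proposition \ref{Lyndon-Hilbert-series} into $\gkdim B=\#(\N_I)$: one must pass to the single total-degree variable through $|\cdot|$, check local finiteness in the finite case, and identify the resulting growth order with the number of generators. A secondary point needing care is the filtered-to-graded inequality for a $\Gamma$-filtration whose pieces $F_\gamma(A)$ need not be finite-dimensional when the admissible well order on $\Gamma$ does not refine $|\cdot|$; the leading-symbol argument above is exactly what makes the estimate $\dim V^n\le\dim\mathrm{in}(V)^n$ valid in that generality, so that one never has to bound growth directly in the possibly pathological $\Gamma$-order.
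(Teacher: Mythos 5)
Your overall strategy matches the paper's: the lower bound $\gkdim A\ge\#(\N_I)$ comes from the PBW basis indexed by $\N_I$ together with Lemma \ref{PBW-GK-dimension} (and your explicit check that every $u\in\N_I$ has infinite height, via Proposition \ref{fact-Lyndon} (L5) and the fact that obstructions of a Lyndon ideal are Lyndon, is a correct elaboration of what the paper leaves implicit), while the upper bound is read off from the Hilbert series of $k\langle X\rangle/I^*$ and transferred back to $A$ through the filtration of Lemma \ref{Homogenization}. The gap is in the transfer step. The inequality $\gkdim A\le\gkdim\gr(A,\F)$ is \emph{not} the standard filtered-to-graded estimate: for an arbitrary exhaustive filtration the estimate that holds is the opposite one, $\gkdim\gr(A,\F)\le\gkdim A$ (this is \cite[Lemma 6.5]{KL}, which the paper itself invokes in Theorem \ref{GKdim-transfer} for exactly that direction). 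The containment $\mathrm{in}(V^n)\subseteq\mathrm{in}(V)^n$ that you use to force the reverse inequality is false in general: if $a,b\in V$ and the product $\bar a\,\bar b$ of their symbols vanishes in $\gr(A,\F)$ --- which happens precisely when $ab$ lands in strictly lower filtration degree than $\gamma(a)+\gamma(b)$, e.g.\ when $I$ contains an inhomogeneous relation whose leading form kills $\bar a\,\bar b$ --- then the symbol of $ab$ is a nonzero element of lower degree that need not lie in $\mathrm{in}(V)^n$, and $\dim V^n$ can strictly exceed $\dim\mathrm{in}(V)^n$. (A minimal illustration: $k[x]$ with $F_0=k$ and $F_n=k[x]$ for all $n\ge1$ has $\gkdim\gr=0<1=\gkdim$.) So as written the upper bound is not established.

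The gap is repairable, and the repair is what the paper does: first use the lower bound to reduce to the case $\#(\N_I)<\infty$; then $\gr(A,\F)\cong k\langle X\rangle/I^*$ is \emph{finitely generated} (by the residues of $\N_{I^*}=\N_I$, via Lemma \ref{basis-quotient-algebra}), and for a filtration whose associated graded algebra is finitely generated one has the genuine equality $\gkdim A=\gkdim\gr(A,\F)$ by \cite[Proposition 6.6]{KL}. The value $\gkdim k\langle X\rangle/I^*$ is then computed essentially as you propose, by regrading along $|\cdot|$ and counting lattice points in the weighted simplex coming from Proposition \ref{Lyndon-Hilbert-series} (the paper quotes \cite[Proposition 2.21]{ATV2} for this step). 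If you want to avoid \cite[Proposition 6.6]{KL}, you could instead bound directly: any finite-dimensional $V$ lies in some $F_\gamma(A)$, hence $\dim V^n\le\dim F_{n\gamma}(A)=\#\{w\in\D_I:\deg(w)\le n\gamma\}$; but then you must still control how the admissible well order on $\Gamma$ interacts with total degree, since $\delta\le n\gamma$ does not imply $|\delta|\le n|\gamma|$ (consider the lexicographic order on $\mathbb{N}^2$), so the leading-symbol shortcut cannot be left as stated.
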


\begin{proof}
First note that $(\{u+I\}_{u\in \N_I}, <_{\lex})$ is a system of PBW generators of $A$. By Lemma \ref{PBW-GK-dimension}, one has $\gkdim A \geq \#(\N_I)$. So to see the desired equality, one may assume $\N_I$ is  finite. 

First we show the case that $I$ is homogeneous. By changing of grading along the map $|\cdot|:\Gamma \to \mathbb{N}$, naturally one has an $\mathbb{N}$-graded algebra  $A^{\rm tot}=\bigoplus_{n\in \mathbb{N}}(A^{\rm tot})_n$,  which is $A$ as an algebra, and where $(A^{\rm tot})_n:=\sum_{\gamma\in \Gamma, ~|\gamma|=n} A_\gamma$. Note that 
$H_{A^{\rm tot}} (t) =\prod_{u \in \N_I} \big (1-t^{|\deg (u)|}\big )^{-1}$. Then
\begin{equation*}\label{Hilbert-GKdim}
\gkdim A = \gkdim A^{\rm tot} = \overline{\lim_{n\to \infty}} \log_n \dim ((A^{\rm tot})_{\leq n}) = \#(\N_I). %\tag{$*$}
\end{equation*}
Here, the last two equalities are by \cite[Lemma 6.1 (b)]{KL} and  \cite[Proposition 2.21]{ATV2}, respectively.
	
Now we show the general case. Let $\F$ be the $\Gamma$-filtration of $A$  given in Lemma \ref{Homogenization}. By Lemma \ref{Homogenization} and the proved claim above,   $\gr(A, \F) \cong k\langle X\rangle/I^*$ is finitely generated and 
$$\gkdim \gr(A, \F) = \gkdim k\langle X\rangle/I^* = \#(\N_{I^*}) = \#(\N_I).$$
Therefore, $\gkdim A = \gkdim \gr(A, \F) = \#(\N_I)$ by \cite[Proposition 6.6]{KL}.
\end{proof}

To detect the homological properties of $K\langle X\rangle/ I$ with $I$ Lyndon, we need some preparations.

\begin{lemma}\label{Lemma-Shirshov-factor-dichotomy}
	Let $u\in \langle X\rangle$ be a Lyndon word of length $\geq 2$. If $v$ is a Lyndon factor of $u$, then $v$ is either a factor of $u_L$, or a factor of $u_R$, or a prefix of $u$ with $|v|> |u_L|$. 
\end{lemma}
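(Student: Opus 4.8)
The plan is to analyze the Shirshov factorization $u=u_Lu_R$ and locate a Lyndon factor $v$ of $u$ relative to the cut point between $u_L$ and $u_R$. Since $v$ is a factor of $u$, write $u=wvw'$ for some words $w,w'$, and let $j=|w|$ and $j'=|wv|$ be the starting and ending positions of this occurrence of $v$. Set $\ell:=|u_L|$. There are exactly three cases according to how the interval $[j,j']$ sits relative to the cut at position $\ell$: either $j'\le \ell$ (the occurrence lies entirely in the prefix $u_L$, so $v$ is a factor of $u_L$), or $j\ge \ell$ (the occurrence lies entirely in the suffix $u_R$, so $v$ is a factor of $u_R$), or $j<\ell<j'$ (the occurrence straddles the cut). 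The first two cases give the conclusion immediately, so the content is in the straddling case.

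In the straddling case I would argue that $v$ must in fact be a prefix of $u$, i.e. $j=0$. The key tool is the characterization of Lyndon words in Proposition \ref{Proposition-character-Lyndon}, together with the fact that $u_R$ is the pseudo-lexicographically largest proper suffix of $u$. Suppose for contradiction that $j>0$, so that $v$ is a proper factor straddling the cut but not a prefix. Then the suffix $s$ of $u$ that begins where $v$ begins (at position $j$) is a proper suffix of $u$, and since $v$ is a prefix of $s$ with $v$ Lyndon and straddling past position $\ell$, one can compare $s$ with $u_R$: because $v$ extends beyond the cut and $v$ is Lyndon (hence by Proposition \ref{Proposition-character-Lyndon} strictly larger than all of its proper suffixes), the suffix $s$ will be forced to be pseudo-lexicographically $\ge_{\lex} u_R$ while being a proper suffix strictly shorter in a way that contradicts the maximality of $u_R$ among proper suffixes, or else forces $s=u_R$ which would put $j=\ell$, excluded in this case. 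This yields $j=0$, so $v$ is a prefix of $u$; and since the occurrence straddles the cut, $j'>\ell$, giving $|v|=j'>\ell=|u_L|$ as required.

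The main obstacle I anticipate is making the comparison argument in the straddling case fully rigorous, because the relevant order here is the \emph{pseudo}-lexicographic order $<_{\lex}$ rather than the ordinary lexicographic order, and $<_{\lex}$ behaves asymmetrically with respect to prefixes (a word can be $<_{\lex}$ than its own prefix). Consequently I cannot simply invoke ``the largest suffix starts with the largest letter'' type reasoning; I must be careful about the clause in the definition of $<_{\lex}$ concerning proper prefixes. The cleanest route is likely to use the Shirshov factorization characterization via Proposition \ref{fact-Lyndon} (L2, L3) and Lemma \ref{lex-order-2}: if $v$ straddles the cut and is Lyndon, then the suffix of $u$ starting at $v$ is a proper suffix pseudo-lexicographically at least as large as $u_R$, contradicting that $u_R$ is the \emph{largest} proper suffix unless that suffix equals $u_R$ exactly. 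I would spell this out by examining the first Lyndon atom of the competing suffix and invoking Proposition \ref{fact-Lyndon} (L6, L7) to control the comparison. Once the prefix conclusion $j=0$ is secured, the length inequality $|v|>|u_L|$ is automatic from the straddling hypothesis.
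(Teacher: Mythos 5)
Your case split and the reduction to the straddling case are exactly right, and the paper's proof makes the same reduction: the only configuration to exclude is $u_L=pl$, $v=lr$, $u_R=rq$ with $p,l,r\neq 1$. But the heart of your argument — that the suffix $s=vw'=lu_R$ of $u$ is ``forced to be pseudo-lexicographically $\geq_{\lex}u_R$'' — is asserted, not proved, and it is precisely the nontrivial content of the lemma. Note that a priori the opposite holds: $lu_R$ is a \emph{proper} suffix of $u$ (because $p\neq 1$), so by the very definition of $u_R$ as the pseudo-lexicographically largest proper suffix you already know $lu_R\leq_{\lex}u_R$. To get a contradiction you need a structural reason forcing $lu_R>_{\lex}u_R$ strictly, and no amount of massaging the order axioms, Proposition \ref{fact-Lyndon} (L2, L3, L6, L7) or Lemma \ref{lex-order-2} will produce it: (L6) would require the tail $q$ to be a nonempty Lyndon word with $r>_{\lex}q$, which need not hold, and comparing first Lyndon atoms only tells you how $s$ relates to $u_R$ when you already know the atoms of $s$.

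The missing ingredient is Proposition \ref{fact-Lyndon} (L1), which you never invoke. Since $v=lr$ and $u_R=rq$ are both Lyndon and overlap in the nonempty word $r$, (L1) glues them into a single Lyndon word $lrq=lu_R$. A Lyndon word is $>_{\lex}$ every proper suffix of itself (Proposition \ref{Proposition-character-Lyndon}(2)), so $lu_R>_{\lex}u_R$; being also a proper suffix of $u$, this contradicts the maximality of $u_R$. (Equivalently: $lu_R$ is a proper Lyndon suffix of $u$ strictly longer than $u_R$, whereas $u_R$ is the longest such.) This one application of (L1) is the entire proof in the paper; without it, your straddling case does not close.
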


\begin{proof}
	We only need to exclude the possibility that there are factorizations $u_L=pl,\, v=lr,\, u_R=rq$ with $p,l,r\neq 1$. Otherwise, one would have a proper Lyndon suffix $lu_R$ of $u$ by Proposition \ref{fact-Lyndon} (L1), which contradicts the assumption that $u_R$ is the longest proper Lyndon suffix of $u$.
\end{proof}

We call a set of words on $X$ an \emph{antichain} if it contains no  proper factors of any of its elements, and call a set of Lyndon words on $X$  \emph{closed} if  it  contains all Lyndon factors of any of its elements. 
For a closed set $U$ and an antichain $V$ of Lyndon words on $X$, let
\begin{align*} 
	\Phi_{\mathbb{L}}(U) &:= \left \{\, v\in \L\backslash U ~ | ~ \text{every proper Lyndon factors of $v$ belongs to $U$} \, \right \}  \\
	\Psi_{\mathbb{L}}(V) &:= \{\ u\in \L ~|~V \text{ contains no  factor of } u\ \}.
	%\\ \overline{\Phi}_{\mathbb{L}}(U) &:=\left  \{ \, v\in \L \backslash U  ~| ~ |v|=1 \text{ or } |v|\geq 2 \text{ but } v_L , v_R \in  U\, \right \},  \\ 
	%\Upsilon_{\mathbb{L}}(U) &:= \left\{\, (u,v)\in U\times U \left | \begin{array}{l} u>_{\lex} v \text{ and there exists no }  \\w \in U \text{ such that}\, u>_{\lex}w>_{\lex}v\end{array} \right. \, \right\}. 
\end{align*}
Clearly, $\Phi_{\mathbb{L}}(U)$ is an antichain and $\Psi_{\mathbb{L}}(V)$ is closed, and
%and $$\Phi_{\mathbb{L}}(U) \subseteq \overline{\Phi}_{\mathbb{L}}(U).$$ We refer  to  $\Psi_\mathbb{L}(V)$ as \emph{the closed set of Lyndon words corresponding to} $V$, and to $\Phi_\mathbb{L}(U)$ as \emph{the minimal set of Lyndon words corresponding to} $U$.It is also easy to check that 
\begin{equation*}\label{minimal-closed-correspondence}
	\Phi_\mathbb{L} \big (\Psi_\mathbb{L}(V)\big) =V  \quad \text{ and } \quad \Psi_\mathbb{L} \big (\Phi_\mathbb{L}(U)\big) =U. %\tag{MC}
\end{equation*}
Note that for any ideal $I$ of $k\langle X\rangle$,  $\N_I$ is a closed set of Lyndon words and $\OO_I$ is an antichain of words. Moreover, one has $\Phi_\mathbb{L}(\N_I)= \OO_I\cap\mathbb{L}$ and $\Psi_\mathbb{L}(\OO_I\cap\mathbb{L}) =\N_I$.

\begin{lemma}\label{lemma-bound-Psi-construction}
	Let $U$ be a closed set of  Lyndon words on $X$. Then for each pair $u,v\in U$ with $u>_{\lex} v$, if there's no $w\in U$ such that $u >_{\lex}w>_{\lex} v$ then
	\[
	\sh(uv)=(u,v) \quad \text{ and }\quad uv\in \Phi_\mathbb{L}(U).
	\]
	Consequently, if $U$ is finite then $\#(\Phi_{\mathbb{L}}(U) \backslash X) \geq \#(U) - 1$.
\end{lemma}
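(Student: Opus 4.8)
The plan is to verify the two asserted properties for a single adjacent pair $u >_{\lex} v$ in $U$, and then to deduce the counting statement by running this over the consecutive pairs of the $<_{\lex}$-sorted list of $U$.

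First I would establish that $uv$ is Lyndon and that $\sh(uv)=(u,v)$. The first is immediate from Proposition \ref{fact-Lyndon} (L1) with $w_2=1$, since $u>_{\lex}v$. For the second I would invoke (L3): if $u$ is a letter there is nothing to check, and if $|u|\geq 2$ it suffices to see $u_R\leq_{\lex}v$. Now $u_R$ is a proper Lyndon suffix of $u$ by (L2), hence a Lyndon factor of $u\in U$, so $u_R\in U$ by closedness, and $u>_{\lex}u_R$ by Proposition \ref{Proposition-character-Lyndon}. If we had $u_R>_{\lex}v$, then $u_R$ would be an element of $U$ strictly between $v$ and $u$, contradicting adjacency; thus $u_R\leq_{\lex}v$ and $\sh(uv)=(u,v)$.

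To obtain $uv\in\Phi_{\mathbb{L}}(U)$ I must show $uv\in\mathbb{L}\setminus U$ and that every proper Lyndon factor of $uv$ lies in $U$. Since $u$ is a proper prefix and $v$ a proper suffix of the Lyndon word $uv$, one has $v<_{\lex}uv<_{\lex}u$, so adjacency forces $uv\notin U$. For the factors I would apply Lemma \ref{Lemma-Shirshov-factor-dichotomy} to $uv$, whose Shirshov factors are now known to be $u$ and $v$: any proper Lyndon factor is a factor of $u$, a factor of $v$, or a prefix of $uv$ of length $>|u|$. In the first two cases it lies in $U$ by closedness, so everything reduces to excluding the third case.

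That exclusion will be the main obstacle, and it is exactly where adjacency is used decisively (the statement genuinely fails for non-adjacent pairs). Suppose $w=uv'$ is a Lyndon prefix with $v'$ a nonempty proper prefix of $v$, and let $a_1$ be the first Lyndon atom of $v'$. I would derive $v<_{\lex}a_1<_{\lex}u$ with $a_1\in U$, contradicting adjacency. The inequality $a_1>_{\lex}v$ follows from Lemma \ref{lex-order-2}: as $v'$ is a proper prefix of $v$ it satisfies $v'>_{\lex}v^n$ for all $n$, so its first atom is not $<_{\lex}v$, and being itself a proper prefix of $v$ it is in fact $>_{\lex}v$. The inequality $a_1<_{\lex}u$ is the crux: writing the Lyndon decomposition $v'=a_1\cdots a_s$ with $a_1\leq_{\lex}\cdots\leq_{\lex}a_s$, if instead $a_1\geq_{\lex}u$ then $(u,a_1,\ldots,a_s)$ would be a nondecreasing sequence of Lyndon words, hence by uniqueness of the Lyndon decomposition (L4) the decomposition of $w=ua_1\cdots a_s$; having at least two terms, $w$ could not be Lyndon, a contradiction. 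Finally $a_1\in U$, being a Lyndon factor of $v'$ and hence of $v\in U$. This rules out the third case, whence every proper Lyndon factor of $uv$ lies in $U$ and $uv\in\Phi_{\mathbb{L}}(U)$.

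For the consequence, I would enumerate $U=\{v_1<_{\lex}\cdots<_{\lex}v_N\}$ and apply the above to each consecutive pair $v_{i+1}>_{\lex}v_i$, which is adjacent by construction. This yields words $v_{i+1}v_i\in\Phi_{\mathbb{L}}(U)$, each of length $\geq 2$ and therefore not in $X$. They are pairwise distinct: since $\sh(v_{i+1}v_i)=(v_{i+1},v_i)$ and the Shirshov factorization of a word is unique, any coincidence $v_{i+1}v_i=v_{j+1}v_j$ forces $i=j$. Hence $\Phi_{\mathbb{L}}(U)\setminus X$ has at least $N-1=\#(U)-1$ elements.
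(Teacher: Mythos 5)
Your proof is correct, and it reaches the one genuinely delicate point of the lemma --- ruling out a proper Lyndon prefix $w=uv'$ of $uv$ with $|w|>|u|$ --- by a different mechanism than the paper's. The paper takes such a $w$ to be a \emph{minimal} offending factor, so that $w\in\Phi_{\mathbb{L}}(U)$ and its Shirshov factors $w_L,w_R$ automatically lie in $U$; it then splits into the cases $|w_L|<|u|$, $|w_L|=|u|$, $|w_L|>|u|$ and in each case exhibits $w_L$ or $w_R$ as an element of $U$ strictly between $v$ and $u$. You instead examine the first Lyndon atom $a_1$ of the overhang $v'$: it lies in $U$ simply because it is a Lyndon factor of $v$, it is $>_{\lex}v$ because it is a proper prefix of $v$, and it is $<_{\lex}u$ because otherwise uniqueness of the Lyndon decomposition (L4) would force $u\,a_1\cdots a_s$ to be the decomposition of the Lyndon word $w$, which is absurd. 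This buys you a cleaner argument --- no minimality device, no case split, and membership of the sandwiched word in $U$ comes for free from closedness rather than from $w\in\Phi_{\mathbb{L}}(U)$ --- at the price of invoking (L4), whereas the paper stays entirely within the Shirshov-factorization bookkeeping it reuses elsewhere (e.g.\ in Lemma \ref{behavior-ambiguity-Lyndon}). Two cosmetic remarks: the detour through Lemma \ref{lex-order-2} to get $a_1>_{\lex}v$ is unnecessary, since $a_1$ is a proper prefix of $v$ and hence $>_{\lex}v$ directly from the definition of the pseudo-lexicographic order; and your distinctness argument for the words $v_{i+1}v_i$ via uniqueness of the Shirshov factorization correctly fills in what the paper dismisses as ``clear.''
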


\begin{proof}
	The equality $\sh(uv)=(u,v)$ is a direct consequence of Proposition \ref{fact-Lyndon} (L3) because obviously  $u_R \leq_{\lex} v$ when $|u|\geq 2$.  Assume $uv\not\in \Phi_\mathbb{L}(U)$. Since $u>_{\lex}uv>_{\lex}v$, $uv\not\in U$. Thus $\Phi_\mathbb{L}(U)$ contains a word $w$ that is a proper Lyndon factor of $uv$.  By Lemma \ref{Lemma-Shirshov-factor-dichotomy}, $w$ is a prefix of $uv$  and $|w|> |u|$. If $|w_L|<|u|$ then $u>_{\lex}w_R >_{\lex} v$, a contradiction; if $|w_L|=|u|$ then  $u=w_L>_{\lex} w_R >_{\lex} v$, a contradiction; and if $|w_L|>|u|$ then $u>_{\lex} w_L  >_{\lex} v$, a contradiction too. The last statement is clear.
\end{proof}

Let $V$ be an antichain of words on $X$ and let $p\geq 2$ be an integer. A \emph{$p$-ambiguity on $V$}  is a word $w\in \langle X\rangle$ which has a decomposition 
$w=v_1 v_2\cdots v_{p+1}$ such that 
\begin{itemize}
	\item $v_1\in X \backslash V$ and $v_2,\ldots, v_{p+1}$ are nonempty words that have no factors in $V$;  
	\item for all $i$, $v_iv_{i+1}$ has a factor in $V$ but $v_it$ has no factor in $V$ for any proper prefix $t$ of $v_{i+1}$.
\end{itemize}
Note that $w$ has at most one decomposition as above. We shall write $\mathcal{A}_{0}(V)=\{1\}$, $\mathcal{A}_{1}(V)=X\backslash V$, $\mathcal{A}_{2}(V)= V\backslash X$,  and  write $\mathcal{A}_n(V)$ for the set of all $(n-1)$-ambiguities on $V$ for $n\geq 3$.

\begin{lemma}\label{behavior-ambiguity-Lyndon}
	Let $V$ be an antichain of Lyndon words on $X$ and let $U=\Psi_{\mathbb{L}}(V)$. Then 
	\begin{enumerate}
		\item $\mathcal{A}_n(V)=\emptyset$ for $n>\#(V\backslash X)+1$.
		\item $\mathcal{A}_n(V) \subseteq \{\, u_1u_2\cdots u_{n} ~ | ~ u_1,\ldots, u_{n}\in U, \, u_1>_{\lex} \cdots >_{\lex} u_{n}  \, \}$ for $n\geq 1$.
		\item $\mathcal{A}_n(V) \supseteq \left\{\, u_1 u_2\cdots u_{n}  \left | \begin{array}{l} u_1,\ldots, u_{n}\in U, \, u_1>_{\lex} \cdots >_{\lex} u_{n} \text{ but }\\
			\not\exists ~ w \in U  \text{ with }\, u_i>_{\lex}w>_{\lex}u_{i+1} \text{ for some } i
		\end{array} \right. \, \right\}$ for $n\geq 2$.
	\end{enumerate}
\end{lemma}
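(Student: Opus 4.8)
# Proof Plan for Lemma \ref{behavior-ambiguity-Lyndon}

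The plan is to analyze the three claims about the sets $\mathcal{A}_n(V)$ of $(n-1)$-ambiguities by exploiting the combinatorics of Lyndon words, particularly the Shirshov factorization and the results collected in Proposition \ref{fact-Lyndon}. The central observation driving all three parts is that the elements of $\Psi_{\mathbb{L}}(V)$ are precisely the Lyndon words having no factor in $V$, and that an ambiguity decomposes naturally into strictly decreasing runs of such words.

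For part (2), I would argue by induction on $n$, tracking how each block $v_i$ in an $(n-1)$-ambiguity $w=v_1v_2\cdots v_n$ relates to $U=\Psi_{\mathbb{L}}(V)$. The defining conditions say that each $v_i$ has no factor in $V$, hence by Proposition \ref{fact-Lyndon} (L5) every Lyndon atom of $v_i$ lies in $U$; the overlap conditions (that $v_iv_{i+1}$ has a factor in $V$ but no shorter extension does) should force each $v_i$ to itself be a single Lyndon word in $U$, and force the pseudo-lexicographic inequalities $u_1 >_{\lex} \cdots >_{\lex} u_n$. Here I expect to use Lemma \ref{Lemma-Shirshov-factor-dichotomy} to locate where a $V$-factor of $v_iv_{i+1}$ can sit across the boundary, together with (L3) and (L6) to pin down the lexicographic ordering. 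Part (3) is the converse-type inclusion: given a strictly decreasing chain $u_1 >_{\lex} \cdots >_{\lex} u_n$ in $U$ with no element of $U$ interpolating between consecutive terms, I would invoke Lemma \ref{lemma-bound-Psi-construction} to get $\sh(u_iu_{i+1})=(u_i,u_{i+1})$ and $u_iu_{i+1}\in\Phi_{\mathbb{L}}(U)=V\cup(\text{letters})$, which supplies exactly the required $V$-factor straddling each boundary while the minimality hypothesis guarantees no spurious earlier $V$-factor appears; this verifies the ambiguity conditions.

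Part (1) is then a counting consequence. A nonempty $(n-1)$-ambiguity produces, via part (2), a strictly $<_{\lex}$-decreasing sequence $u_1 >_{\lex} \cdots >_{\lex} u_n$ of elements of $U$, and each consecutive pair contributes an element of $\Phi_{\mathbb{L}}(U)$, i.e. essentially a distinct non-letter word of $V$ by the boundary factor it creates. If $n-1$ exceeds $\#(V\backslash X)+1$, a pigeonhole argument on these $n-1$ boundaries against the available elements of $V\backslash X$ forces a repetition that contradicts either the antichain property of $V$ or the strict decrease, so no such ambiguity exists. The bookkeeping should mirror the final-sentence bound of Lemma \ref{lemma-bound-Psi-construction}, namely $\#(\Phi_{\mathbb{L}}(U)\backslash X)\geq \#(U)-1$.

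The hard part will be part (2), specifically showing that each block $v_i$ collapses to a single Lyndon word and that the prefix-minimality condition ``$v_it$ has no factor in $V$ for proper prefixes $t$ of $v_{i+1}$'' translates cleanly into the strict lexicographic descent. The subtlety is that an ambiguity is defined by overlapping obstructions rather than directly by Lyndon structure, so the main work is the dictionary between the two viewpoints; Lemma \ref{Lemma-Shirshov-factor-dichotomy} together with (L1)--(L3) and (L6) should be exactly the tools that make this translation rigorous, and I would spend the bulk of the argument there.
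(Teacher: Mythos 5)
Your plan for part (3) matches the paper's argument, but part (2) --- which you rightly identify as the heart of the lemma --- rests on a false premise: that the blocks $v_1,\ldots,v_n$ of an $(n-1)$-ambiguity are themselves Lyndon words of $U$ and that $w=v_1\cdots v_n$ is already the desired strictly decreasing factorization. This is not true, and no amount of (L1)--(L3), (L6) or Lemma \ref{Lemma-Shirshov-factor-dichotomy} will make it true. Concretely, take $X=\{a<b\}$ and $V=\{bbaba,\ baa\}$, an antichain of Lyndon words. Then $w=bbabaa$ is a $2$-ambiguity with blocks $v_1=b$, $v_2=baba$, $v_3=a$, and $v_2=baba$ is not even Lyndon (its proper suffix $ba$ is a prefix of it, so $baba<_{\lex}ba$). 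The factorization demanded by part (2) is the different one $w=bba\cdot ba\cdot a$, which re-brackets across the block boundaries. The paper's proof produces this factorization by a right-to-left greedy construction: set $t_{n+1}=1$ and recursively let $u_i$ be the longest Lyndon suffix of $v_it_{i+1}$, writing $v_it_{i+1}=t_iu_i$; one then proves by downward induction --- using the boundary obstructions $w_i=s_iv_{i+1}\in V$, Proposition \ref{fact-Lyndon} (L1) and the Shirshov factorization --- that $|u_i|>|t_{i+1}|$ and $u_1>_{\lex}\cdots>_{\lex}u_n$, whence $t_1=1$, $w=u_1\cdots u_n$, and each $u_i$ lies in $U$ because it is a factor of $v_it_{i+1}$, which has no $V$-factor by the minimality clause in the definition of ambiguity. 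This re-bracketing step is the missing idea in your proposal.

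A secondary problem is your counting in part (1). You claim each consecutive pair $u_i>_{\lex}u_{i+1}$ of the chain from part (2) ``contributes an element of $\Phi_{\mathbb{L}}(U)$''; but $u_iu_{i+1}\in\Phi_{\mathbb{L}}(U)=V$ only when no element of $U$ interpolates between $u_i$ and $u_{i+1}$ (Lemma \ref{lemma-bound-Psi-construction}), which is the hypothesis of part (3), not a consequence of part (2). As stated your count would at best bound $n$ by $\#(U)$, not by $\#(V\backslash X)+1$. The paper instead argues directly and independently of part (2): each boundary of the ambiguity carries a unique obstruction $w_i=s_iv_{i+1}\in V\backslash X$ with $s_i$ a nonempty suffix of $v_i$, and Proposition \ref{Proposition-character-Lyndon} forces $w_1>_{\lex}s_2>_{\lex}w_2>_{\lex}\cdots>_{\lex}w_{n-1}$, so the $w_i$ are $n-1$ pairwise distinct elements of $V\backslash X$, giving the stated bound.
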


\begin{proof}
	(1) The case that $V\backslash X=\emptyset$  is clear.  So let us assume $V\backslash X\neq \emptyset$. For $n\geq 3$ with  $\mathcal{A}_n(V)\neq \emptyset$, choose an element 
	$w\in \mathcal{A}_n(V)$  with the required decomposition $w=v_1 v_2\cdots v_{n}$. Let $w_i=s_iv_{i+1}$ be the unique suffix of $v_iv_{i+1}$ that lies in $V$ for $i=1,\ldots, n-1$. Note that  $s_i$ is a nonempty suffix of $v_i$. So 
	\[
	w_1>_{\lex} s_2 >_{\lex} w_2 >_{\lex} s_3>_{\lex} \cdots >_{\lex} s_{n-1} >_{\lex} w_{n-1}
	\]
	by Proposition \ref{Proposition-character-Lyndon}. Consequently, $n\leq \#(V\backslash X)+1$ and then the result follows.
	
	(2) The cases that $n=1$ and $n=2$ are clear. Suppose $n\geq 3$ and let $w\in \mathcal{A}_n(V)$ be with the required decomposition $w = v_1v_2\cdots v_{n}$.   Let $t_{n+1} =1$, and then recursively define $u_i$ to be the longest Lyndon suffix of $v_it_{i+1}$ and define $t_i$ by $v_it_{i+1} = t_iu_i$ for $i=n, n-1, \cdots, 1$. By construction,  $u_n\in U$ and 
	\[
	w=v_1v_2\cdots v_n t_{n+1}= v_1\cdots v_{n-1}t_n u_n = \cdots =t_1u_1\cdots u_n.
	\]  We claim that 
	\[|u_{1}| > |t_{2}|, \ldots, |u_n|>|t_{n+1}| \quad \text{and} \quad u_{1}>_{\lex} \cdots >_{\lex}u_n.\]
	It follows that $|t_1|<|v_1|,\ldots, |t_n|<|v_n|$. So $u_1,\cdots, u_{n-1}\in U$, $t_1=1$ and hence $w= u_1u_2\cdots u_n.$
	
	Now we proceed to show the claim above.  For $i\in \{1,\ldots, n-1\}$, let $w_i=s_iv_{i+1}$ be the unique suffix of $v_iv_{i+1}$ that lies in $V$. Note that $s_i$ is a nonempty suffix of $v_i$. Obviously, one has $|u_n|>|t_{n+1}|$. Assume $l\in \{1,\ldots, n-1\}$ such that $|u_{l+1}| > |t_{l+2}|, \ldots, |u_n|>|t_{n+1}|$ and  $u_{l+1}>_{\lex} \cdots >_{\lex}u_n$. Since $u_{l+1} = a_{l+1}t_{l+2}$ for some nonempty suffix $a_{l+1}$ of $v_{l+1}$,  it follows that $w_lt_{l+2} = s_l v_{l+1}t_{l+2}$ is a  Lyndon word by Proposition \ref{fact-Lyndon} (L1). Let $z_l$ be the shortest Lyndon  suffix of $w_lt_{l+2}$ that are of length greater than $ |u_{l+1}|$. By the construction of $u_{l+1}$, one has $|z_l|> |v_{l+1} t_{l+2}|$, $(z_l)_R = u_{l+1}$ and  $(z_l)_L$ is a Lyndon suffix of $v_lt_{l+1}$ of length greater than $t_{l+1}$. By the construction of $u_l$, it contains $(z_l)_L$ as a suffix. Consequently,  $|u_l| \geq  |(z_l)_L| >|t_{l+1}|$ and $u_l\geq_{\lex} (z_l)_L >_{\lex} (z_l)_R= u_{l+1}$. The claim then follows by induction.
	
	(3) Let $u_1, \ldots,  u_{n} \in U$ with $u_1>_{\lex} \cdots >_{\lex} u_{n}$ but there's no $w\in U$ between $u_i$ and $u_{i+1}$ in pseudo-lexicographical order for each $i=1,\ldots, n-1$.  By Lemma \ref{lemma-bound-Psi-construction}, one has 
	$$u_iu_{i+1} \in \Phi_\mathbb{L}(U) = V, \quad i=1,\ldots, n-1.$$ The case that $n=2$ is then clear. For $n\geq 3$, we write $u_1=xs_1$ with $x$ a letter. To see that  $u_1u_2\cdots u_n = x (s_1u_2) u_3\cdots u_{n}$ is an $(n-1)$-ambiguity on $V$, it remains to show that $s_1u_2r$ has no factor in $V$ for any proper prefix $t$ of $u_3$. Otherwise,  there is a nonempty suffix $l$ of $s_1$ and a nonempty prefix $r$ of $u_3$ such that $w=lu_2r\in V$. By Lemma \ref{Lemma-Shirshov-factor-dichotomy}, either $w_L = lu_2r'$ or $w_R= l'u_2r$. Accordingly, one has $u_1>_{\lex } w_L >_{\lex } u_3$ or $u_1>_{\lex }w_R >_{\lex} u_3$, which is impossible because $u_2\not \in \{w_L, w_R\}$ but $u_2$ is the unique element of $\Psi_{\mathbb{L}}(V)$ that lie pseudo-lexicographic  strictly between $u_1$ and $u_3$. 
\end{proof}

\begin{proposition}\label{global-dimension-Lyndon-reduction-0}
	Let $I$ be a Lyndon ideal of $k\langle X\rangle$ and let $A= k\langle X\rangle/ I$.
	\begin{enumerate}
		\item The projective dimension of $A$ as an $A$-bimodule, the  left global dimension of $A$ and the right global dimension of $A$ are all less than or equal to $\min\{ \, \#(\N_I), ~  \#(\OO_I\backslash X) +1 \, \}$.
		\item If $\N_I$ is finite then $A$ is \emph{homologically smooth} in the sense that $A$ has a bounded projective resolution as an $A$-bimodule with each term finitely generated as an $A$-bimodule.
	\end{enumerate}
\end{proposition}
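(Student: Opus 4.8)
The plan is to construct an Anick-type free resolution whose terms are indexed by the chain sets $\mathcal{A}_n(\OO_I)$ and then read off the length bound directly from Lemma \ref{behavior-ambiguity-Lyndon}. Since $I$ is Lyndon, the obstruction set $V:=\OO_I$ is an antichain consisting of Lyndon words, and $\N_I=\Psi_{\mathbb{L}}(V)$. First I would reduce to the homogeneous case: by the remark preceding Definition \ref{definition-Lyndon-ideal} the homogenization $I^*$ is again Lyndon with $\N_{I^*}=\N_I$ and $\OO_{I^*}=\OO_I$, and by Lemma \ref{Homogenization} one has $\gr(A,\mathcal{F})\cong k\langle X\rangle/I^*$ for the degree filtration $\mathcal{F}$. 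Standard filtered--graded comparison then bounds the left and right global dimensions of $A$ and the bimodule projective dimension of $A$ by the corresponding invariants of $k\langle X\rangle/I^*$, and transports homological smoothness from $\gr(A,\mathcal{F})$ to $A$; so it suffices to treat $A$ homogeneous, and regrading along $|\cdot|\colon\Gamma\to\mathbb{N}$ as in the proof of Proposition \ref{Lyndon-GKdim} makes it a connected $\mathbb{N}$-graded algebra.

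For the homogeneous case I would invoke the Anick resolution attached to the reduced Gr\"obner basis of $I$ with respect to $<_{\glex}$, whose $n$-th chains are exactly the words in $\mathcal{A}_n(V)$. This yields a free resolution of the trivial module
\begin{equation*}
\cdots \longrightarrow A\otimes k\mathcal{A}_n(V) \longrightarrow \cdots \longrightarrow A\otimes k\mathcal{A}_1(V) \longrightarrow A\otimes k\mathcal{A}_0(V) \longrightarrow k \longrightarrow 0,
\end{equation*}
together with its two-sided analogue $\cdots \to A\otimes k\mathcal{A}_n(V)\otimes A \to \cdots \to A\otimes A\to A\to 0$ resolving $A$ as an $A$-bimodule. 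For a connected graded algebra the left (resp. right) global dimension equals the projective dimension of the trivial module, which is at most the length of the one-sided resolution; likewise the projective dimension of $A$ as an $A^e$-module is at most the length of the bimodule resolution.

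It then remains to bound the lengths, and here Lemma \ref{behavior-ambiguity-Lyndon} does all the work. Part (1) gives $\mathcal{A}_n(V)=\emptyset$ for $n>\#(\OO_I\backslash X)+1$, while part (2) embeds $\mathcal{A}_n(V)$ into the set of strictly $<_{\lex}$-decreasing $n$-tuples of elements of $\N_I$, so that $\mathcal{A}_n(V)=\emptyset$ whenever $n>\#(\N_I)$. Hence each resolution above has length at most $\min\{\#(\N_I),\#(\OO_I\backslash X)+1\}$, which gives part (1) of the proposition. For part (2), if $\N_I$ is finite then part (2) of Lemma \ref{behavior-ambiguity-Lyndon} forces every $\mathcal{A}_n(V)$ to be finite, so the two-sided Anick resolution is bounded with each term free of finite rank as an $A$-bimodule; this is precisely homological smoothness.

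The main obstacle I expect is the construction and verification of the Anick resolutions themselves, in particular identifying the $n$-th term with the free module on $\mathcal{A}_n(V)$ and checking exactness of the differentials, and, in the two-sided case, assembling the bimodule differential so that it lifts the one-sided one. The combinatorial heart of this identification, namely that the $n$-chains are governed by decreasing chains of $I$-irreducible Lyndon words, is already secured by Lemma \ref{behavior-ambiguity-Lyndon}; what requires care is the homological bookkeeping, together with the filtered--graded transfer of global dimension and of homological smoothness from $\gr(A,\mathcal{F})$ back to $A$.
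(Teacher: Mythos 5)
Your proposal is correct and follows essentially the same route as the paper: the two-sided Anick resolution with $n$-th term $A\otimes k\mathcal{A}_n(\OO_I)\otimes A$, with the length bound read off from Lemma \ref{behavior-ambiguity-Lyndon} (1,2). The only differences are cosmetic: the paper applies the Chouhy--Solotar bimodule resolution directly to the (possibly non-homogeneous) presentation, so no homogenization/filtered--graded reduction is needed, and it bounds the one-sided global dimensions by the bimodule projective dimension rather than via the one-sided resolution of the trivial module.
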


\begin{proof}

By the  Anick resolution of $A$ as an $A$-bimodule (see \cite[Theorem 4.1, Theorem 4.2]{CS}), there is a projective resolution of $A$ in the category of $A$-bimodules of the form:
\begin{equation*}\label{Anick-resolution}
	\big ( \cdots \to A\otimes k\mathcal{A}_3(V) \otimes A \xrightarrow{\delta_3} A\otimes k\mathcal{A}_2(V) \otimes A \xrightarrow{\delta_2} A\otimes k\mathcal{A}_1(V) \otimes A \xrightarrow{\delta_1}   A\otimes A \to 0 ~  \big ) ~ \xlongrightarrow{\mu} ~  A,  \tag{AR}
\end{equation*}
where $V:=\OO_I$ and $\mu:A\otimes A\to A$ is the multiplication map. By assumption, $V$ consists of Lyndon words and $\Psi_{\mathbb{L}} (V) =\N_I$. By  Lemma \ref{behavior-ambiguity-Lyndon} (1,2), the  projective dimension of $A$ as an $A$-bimodule  is less than or equal to $ \min\{ \, \#(\Psi_{\mathbb{L}}(V)), ~  \#(V \backslash X)+1 \, \}$. 
It is easy to check that the left and right global dimensions of $A$ are both less than or equal to the  the  projective dimension of $A$ as an $A$-bimodule. If $\N_I$ is finite, the Anick resolution tells us that $A$ is homologically smooth.	
\end{proof}

\begin{theorem}\label{global-dimension-Lyndon-reduction}
Let $I$ be a homogeneous Lyndon ideal of $k\langle X\rangle$ with $\N_I$ finite. Let $A:= k\langle X\rangle/ I$.
\begin{enumerate}
	\item $A$ is homologically smooth in the graded sense.
	\item $\uExt_{A\otimes A^o}^d (A, k) = \uExt_A^d({}_Ak,{}_Ak) = \uExt_A^d(k_A,k_A) = k(l)$, where $d=\#(\N_I)$ and $l=\sum_{u\in \N_I} \deg(u)$ and the notation  $(l)$ is the degree $l$-shifting on $\mathbb{Z}^s$-graded modules. 
	\item The projective dimension of $A$ as an $A$-bimodule, the  left  global dimension of $A$ and the right global dimension of $A$ all equal to $d=\#(\N_I)$.
\end{enumerate}
\end{theorem}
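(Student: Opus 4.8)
The plan is to run everything off the Anick bimodule resolution (AR) constructed in the proof of Proposition \ref{global-dimension-Lyndon-reduction-0}, combined with the combinatorial control of the ambiguity sets $\mathcal{A}_n(V)$ provided by Lemma \ref{behavior-ambiguity-Lyndon}, where $V:=\OO_I$. Since $I$ is Lyndon, $V$ consists of Lyndon words and $\Psi_{\mathbb{L}}(V)=\N_I=:U$ is finite of cardinality $d$. First I would pin down the top of the resolution. By Lemma \ref{behavior-ambiguity-Lyndon}(2), every element of $\mathcal{A}_n(V)$ is a product $u_1\cdots u_n$ of strictly $<_{\lex}$-decreasing elements of $U$; hence $\mathcal{A}_n(V)=\emptyset$ for $n>d$, each $\mathcal{A}_n(V)$ is finite, and for $n=d$ the unique strictly decreasing arrangement of all $d$ elements of $U$ is, by Lemma \ref{behavior-ambiguity-Lyndon}(3) (the condition on intermediate words being vacuous), the only element of $\mathcal{A}_d(V)$. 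Thus $\mathcal{A}_d(V)=\{w_0\}$ is a singleton with $\deg(w_0)=\sum_{u\in\N_I}\deg(u)=l$. Consequently (AR) is a resolution of length exactly $d$ by finitely generated free graded $A$-bimodules; as $A$ is connected graded this already yields Part (1), homological smoothness in the graded sense. Reading off the length (together with $\#\N_I\le \#(\OO_I\backslash X)+1$ from Lemma \ref{lemma-bound-Psi-construction}) recovers the upper bound $\le d$ of Proposition \ref{global-dimension-Lyndon-reduction-0}(1).

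For Part (2), I would apply $\Hom_{A\otimes A^{o}}(-,k)$ to (AR). Using the adjunction $\Hom_{A\otimes A^{o}}(A\otimes k\mathcal{A}_n(V)\otimes A,\,k)\cong (k\mathcal{A}_n(V))^{*}$, the bimodule Ext groups are the cohomology of the finite complex $(k\mathcal{A}_0(V))^{*}\to\cdots\to(k\mathcal{A}_d(V))^{*}$. The crux is a pure degree count: the top differential $\delta_d$ is degree-preserving, its source is generated in internal degree $l$, while every generator $c\in\mathcal{A}_{d-1}(V)$ of its target has $\deg(c)<l$ (omitting one positive-degree Lyndon word). Writing $\delta_d(1\otimes w_0\otimes 1)=\sum_j a_j\otimes c_j\otimes b_j$, degree-preservation forces $\deg(a_j)+\deg(b_j)=l-\deg(c_j)>0$, so in each summand $a_j\in A_+$ or $b_j\in A_+$; hence the transpose of $\delta_d$ vanishes after composing with the augmentation $\mathrm{aug}:A\to k$. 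Since $\mathcal{A}_{d+1}(V)=\emptyset$ there is no outgoing differential, so $\uExt^{d}_{A\otimes A^{o}}(A,k)=(k\mathcal{A}_d(V))^{*}=k(l)$. The identical argument applied to the one-sided Anick resolutions of ${}_Ak$ and $k_A$, now with the functionals $a\otimes c\mapsto \mathrm{aug}(a)\,\phi(c)$, gives $\uExt_A^{d}({}_Ak,{}_Ak)=\uExt_A^{d}(k_A,k_A)=k(l)$.

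For Part (3), I would combine Part (2) with the upper bounds. Because $A$ is connected graded, its left (resp. right) global dimension equals the projective dimension of ${}_Ak$ (resp. $k_A$), which equals $\sup\{n:\uExt_A^{n}(k,k)\ne 0\}$; by Part (2) this is $\ge d$, and by Proposition \ref{global-dimension-Lyndon-reduction-0}(1) it is $\le d$, whence it is $d$. Similarly $\uExt^{d}_{A\otimes A^{o}}(A,k)\ne 0$ forces $\mathrm{pd}_{A\otimes A^{o}}(A)\ge d$, matching the upper bound $\le d$.

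I expect the only genuinely delicate point to be the identification of the top ambiguity set $\mathcal{A}_d(V)$ as a singleton concentrated in degree $l$ — this is precisely where the finiteness of $\N_I$ and the two-sided estimate of Lemma \ref{behavior-ambiguity-Lyndon}(2,3) are indispensable. Once that is secured, the vanishing of the top differential is forced by the internal-degree gap alone, so no explicit manipulation of Anick's differentials is required.
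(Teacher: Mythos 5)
Your proposal is correct and follows essentially the same route as the paper: identify $\mathcal{A}_d(\OO_I)$ as a singleton in degree $l$ and $\mathcal{A}_{d-1}(\OO_I)$ as products omitting one Lyndon atom via Lemma \ref{behavior-ambiguity-Lyndon} (2,3), conclude that the degree-preserving top differential of the Anick resolution lands in $A_+\otimes k\mathcal{A}_{d-1}\otimes A + A\otimes k\mathcal{A}_{d-1}\otimes A_+$ so its transpose against $k$ vanishes, and then combine the resulting nonvanishing of $\uExt^d$ with the upper bound of Proposition \ref{global-dimension-Lyndon-reduction-0} (1). The only ingredient you use tacitly that the paper makes explicit (citing \cite[Section 5]{CS}) is that the Anick differentials can be chosen homogeneous, which you already rely on for the degree count.
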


\begin{proof}
Let $\N_I=\{\, u_1<_{\lex} \cdots <_{\lex} u_d \, \}$ and $V:=\OO_I$. By construction, all differentials $\delta_n$ in the Anick resolution (\ref{Anick-resolution}) may be chosen to  preserve degrees (see \cite[Section 5]{CS}).  So $A$ is homologically smooth in the graded sense. This proves Part (1).  By Lemma \ref{behavior-ambiguity-Lyndon} (2,3), one has 
	$\mathcal{A}_n(V) = \emptyset$ for $n\geq d+1$, $\mathcal{A}_{d} (V) = \{\, u_1u_2\cdots u_d\}$ and $\mathcal{A}_{d-1}(V) \subseteq \{\, u_1\cdots \widehat{u_i}\cdots u_d ~|~ i=1,\ldots, d \,\}$. It follows that 
	$$
	{\rm im}~ \delta_d \subseteq A_{+} \otimes  k\mathcal{A}_{d-1}(V) \otimes A + A \otimes k\mathcal{A}_{d-1}(V) \otimes A_{+}.
	$$ 
	So  $\uHom_{A\otimes A^o}(\delta_d, k)=0$ and hence 
	$$
	\uExt_{A\otimes A^o}^d(A, k) = \uHom_{A\otimes A^o}(A\otimes k(-l)\otimes A, k) =  k(l).
	$$
	Apply the functor $-\otimes_A{}_Ak$ (resp. $k_A\otimes_A-$) on the two-sided Anick resolution  (\ref{Anick-resolution}), one obtains a projective resolution of ${}_Ak$ (resp. $k_A$) in the category of  $\mathbb{Z}^s$-graded left (resp. right) $A$-modules. Using these resolutions, a similar argument as that for the two-sided case yields that 
	$$\uExt_A^d({}_Ak,{}_Ak) = \uExt_A^d(k_A,k_A)=k(l).$$ This proves Part (2). Part (3) is a direct consequence of Proposition \ref{global-dimension-Lyndon-reduction-0} (1) and Part (2).
\end{proof}

Finally, we relate  Lyndon ideals to Artin-Schelter regular algebras, which  play an important role in the realm of noncommutative algebraic geometry. Recall that
a connected $\Gamma$-graded algebra $A$ is called \textit{Artin-Schelter regular} (AS-regular, for short) of dimension $d$ if 
\begin{itemize}
	\item $A$ has finite global dimension $d$;
	\item $A$ has finite Gelfand-Kirillov dimension;
	\item $A$ is Gorenstein; that is, 
	$\uExt^i_A(k_A,A)=\left\{\begin{array}{ll}0,&i\neq d,\\k(l),& i=d,\end{array}\right.$ for some $l\in\Gamma$.
	\end{itemize}
The  index $l$ is called the \textit{Gorenstein parameter} of $A$.

\begin{proposition}
	Let $k\langle x_1,\ldots, x_n\rangle$ be $\mathbb{N}$-graded by $\deg(x_i) =1$. Let $I$ be a homogeneous Lyndon ideal of $k\langle x_1,\ldots, x_n\rangle$ contains no linear polynomial and let $A:=k\langle x_1,\ldots, x_n\rangle/I$. 
	Assume that $A$ is AS-regular of global dimension $d$ and Gorenstein parameter $l$. Then 
	\[
	n\leq d \leq l \leq F_{d-n+3} +n-3,
	\]
	where $F_{r}$ denotes the $r$-th Fibonacci number for $r\geq 0$.
\end{proposition}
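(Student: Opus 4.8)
The plan is to translate the statement into a purely combinatorial inequality about the closed set $\N_I$ of $I$-irreducible Lyndon words, and then to establish a Fibonacci-type bound on the lengths of its members. First I would record the two structural invariants. Since $A$ is AS-regular it has finite Gelfand--Kirillov dimension, so Proposition \ref{Lyndon-GKdim} forces $\N_I$ to be finite and gives $\gkdim A=\#(\N_I)$. Theorem \ref{global-dimension-Lyndon-reduction} (3) then identifies the global dimension as $d=\#(\N_I)$, while Theorem \ref{global-dimension-Lyndon-reduction} (2) computes $\uExt^d_A(k,k)=k(l)$ with $l=\sum_{u\in\N_I}\deg(u)$. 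For a connected graded algebra of finite global dimension the AS-Gorenstein condition with parameter $l$ is equivalent to $\uExt^d_A(k,k)=k(l)$, so this $l$ is precisely the Gorenstein parameter. As $\deg(x_i)=1$ we have $\deg(u)=|u|$ for every word, whence $l=\sum_{u\in\N_I}|u|$.

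With these identifications the lower bounds are immediate. Because $I$ contains no linear polynomial, no letter is $I$-reducible, so all $n$ letters lie in $\N_I$ and $d=\#(\N_I)\ge n$; and since each $u\in\N_I$ has $|u|\ge 1$ we get $l=\sum_{u\in\N_I}|u|\ge\#(\N_I)=d$. It remains to bound $l$ above by $F_{d-n+3}+n-3$, with the convention $F_0=F_1=1$ and $F_r=F_{r-1}+F_{r-2}$ (so $F_2=2$, $F_3=3$, $F_4=5$, $\dots$). I would list the members of $\N_I$ of length $\ge 2$ by nondecreasing length as $v_1,\dots,v_q$, put $a_i:=|v_i|$ with $q=d-n$, and prove the key claim $a_i\le F_{i+1}$. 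For this the essential input is that $\N_I$ is closed: every Lyndon factor of an $I$-irreducible word is again an $I$-irreducible Lyndon word. By Proposition \ref{fact-Lyndon} (L2) the Shirshov factorization $\sh(v_i)=((v_i)_L,(v_i)_R)$ writes $v_i$ as a product of two Lyndon factors with $(v_i)_L>_{\lex}(v_i)_R$; thus $(v_i)_L$ and $(v_i)_R$ are two \emph{distinct} elements of $\N_I$, both strictly shorter than $v_i$, and $a_i=|(v_i)_L|+|(v_i)_R|$. Among the elements of $\N_I$ of length $<a_i$ the two largest lengths are at most $a_{i-1}$ and $a_{i-2}$ (setting $a_0=a_{-1}=1$ to account for the letters), so $a_i\le a_{i-1}+a_{i-2}$. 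An induction starting from $a_1=2=F_2$ then yields $a_i\le F_{i+1}$, and summing gives $l=n+\sum_{i=1}^{q}a_i\le n+\sum_{i=1}^{q}F_{i+1}=n+F_{q+3}-3=F_{d-n+3}+n-3$, where I use $\sum_{j=0}^{m}F_j=F_{m+2}-1$.

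The main obstacle is the bookkeeping behind the recursion $a_i\le a_{i-1}+a_{i-2}$: one must argue that the two Shirshov halves, being distinct elements of the closed set $\N_I$ of length $<a_i$, cannot both attain the single maximal available length, and one must treat the tie cases (when several $v_j$ share a common length) and the base cases $i=1,2$ with care. Here the strict inequality $(v_i)_L>_{\lex}(v_i)_R$ forcing distinctness, together with the presence of the length-one letters, is exactly what makes the Fibonacci recursion both valid and tight. I would also observe that whenever $q\ge 1$ a Lyndon word of length $\ge 2$ exists, which already requires at least two letters, so $n\ge 2$ in all cases where the bound is not the trivial equality $l=n$; the extremal configuration is realised by the Fibonacci chain $s_1=x_2$, $s_2=x_2x_1$, $s_k=s_{k-1}s_{k-2}$, which shows that the inequality $l\le F_{d-n+3}+n-3$ is sharp.
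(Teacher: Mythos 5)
Your proposal is correct and follows essentially the same route as the paper: identify $d=\#(\N_I)$ and $l=\sum_{u\in\N_I}\deg(u)$ via Proposition \ref{Lyndon-GKdim} and Theorem \ref{global-dimension-Lyndon-reduction}, then bound the lengths of the elements of the closed set $\N_I$ by Fibonacci numbers and sum using $\sum_{i=0}^{r}F_i=F_{r+2}-1$. The only difference is that the paper simply cites \cite[Proposition 7.3]{GF} for the bound $\deg(u_i)\leq F_i$, whereas you re-derive it from the Shirshov factorization, the closedness of $\N_I$, and the distinctness $(v_i)_L>_{\lex}(v_i)_R$ --- your recursion $a_i\leq a_{i-1}+a_{i-2}$ and its handling of repeated lengths are sound, so this is a self-contained (and welcome) substitute for the citation.
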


\begin{proof}
	By the assumption, Proposition \ref{Lyndon-GKdim}  and Theorem \ref{global-dimension-Lyndon-reduction} (2), 
	$\N_I  \supseteq \{x_1,\ldots, x_n\}$ and $\#(\N_I) =d$. Enumerating elements in $\N_I$ in the graded lex order as  
	$$
	x_1 <_{\glex} \cdots <_{\glex} x_{n-2} <_{\glex} x_{n-1}=u_0 <_{\glex} x_n=u_1 <_{\glex} u_2<_{\glex} \cdots <_{\glex} u_{d-n+1}. 
	$$
	Then $\deg(u_i)\leq F_i$ for $i=0,\ldots,d-n+1$ as showed in \cite[Proposition 7.3]{GF}. It follows that
	$$n\leq d\leq  \sum_{u\in \N_I} \deg(u) \leq  n-2 + \sum_{i=0}^{d-n+1} F_i = F_{d-n+3}+n-3.$$ 
	The equality is by the formula $\sum_{i=0}^r F_i = F_{r+2} -1$, which can be simply proved by induction on $r$.
	By Theorem \ref{global-dimension-Lyndon-reduction} (2), $l =\sum_{u\in \N_I} \deg(u) $, the result follows.
\end{proof}

A  $\Gamma$-graded algebra $A$ is called \emph{properly $\Gamma$-graded} if  the support $\{~\gamma\in \Gamma ~|~A_\gamma\neq 0~\}$ spans $\Gamma$ as an abelian monoid. The following result is a restatement of \cite[Theorem 8.1]{ZL}.

\begin{theorem} \label{AS-regular-Lydnon}
	Let $A$ be an AS-regular connected properly $\mathbb{N}^2$-graded algebra with  two generators. Assume that $A$ is a domain of global dimension $\leq 4$ or a domain of global dimension $5$ and GK dimension $\geq 4$. Then there is a homogeneous Lyndon  ideal $I$ of $k\langle x_1,x_2\rangle$ such that $A\cong k\langle x_1,x_2\rangle/I$ as $\mathbb{N}^2$-graded algebras. Here, $k\langle x_1,x_2\rangle $ is $\mathbb{N}^2$-graded by $\deg(x_1)=(1,0)$ and $\deg(x_2)=(0,1)$. \hfill $\Box$
\end{theorem}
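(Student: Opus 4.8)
The plan is to realize $A$ as $k\langle x_1,x_2\rangle/I$ via the tautological surjection and to prove that the kernel $I$ is a \emph{Lyndon} ideal in the sense of Definition~\ref{definition-Lyndon-ideal}, i.e.\ that $\OO_I\subseteq \mathbb{L}$, equivalently $\B_I=\D_I$. Throughout I would fix the $\mathbb{N}^2$-grading with $\deg(x_1)=(1,0)$ and $\deg(x_2)=(0,1)$, an admissible well order on $\mathbb{N}^2$, and the associated graded lex order $<_{\glex}$ on $\langle x_1,x_2\rangle$. Since $A$ is connected, properly $\mathbb{N}^2$-graded and two-generated, proper gradedness forces exactly one generator in each of the degrees $(1,0)$ and $(0,1)$, so $I$ is homogeneous and $A_{(a,b)}$ has as a basis the $I$-irreducible words of content $(a,b)$.

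First I would record the consequences of $A$ being a \emph{domain}. A word of content $(a,0)$ (resp.\ $(0,b)$) is necessarily $x_1^a$ (resp.\ $x_2^b$), and in a domain $x_1^a,x_2^b\neq 0$; hence $\dim A_{(a,0)}=\dim A_{(0,b)}=1$, the pure-power words are $I$-irreducible, no obstruction is a pure power, and every defining relation sits in a bidegree $(a,b)$ with $a,b\geq 1$. A second uniform combinatorial fact is that, inside a fixed bidegree $(a,b)$ with $a,b\geq 1$, the $\prec$-largest (hence $<_{\glex}$-largest) anagram is $x_2^{b}x_1^{a}$, and this word is Lyndon by the suffix criterion of Proposition~\ref{Proposition-character-Lyndon}. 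Thus the ``top'' relation in any bidegree always has a Lyndon leading word; the only way $I$ could fail to be Lyndon is that a non-maximal, non-Lyndon word --- the prototype being $x_1x_2x_1$, whose largest proper suffix $x_2x_1$ satisfies $x_1x_2x_1<_{\lex}x_2x_1$ --- becomes a genuine obstruction, that is, an $I$-reducible word all of whose proper factors are $I$-irreducible.

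To exclude this I would combine the homological rigidity of AS-regular algebras with the combinatorics of Anick's resolution from the appendix. Concretely I would: (i) use Gorenstein duality and self-duality of the minimal bigraded free resolution of $k$, together with the normalizations $H_A(t_1,0)=(1-t_1)^{-1}$ and $H_A(0,t_2)=(1-t_2)^{-1}$ forced by the domain property, to pin down the admissible bidegrees of the Betti numbers $\beta_i$ of $A$; and (ii) match these against the ambiguity sets $\mathcal{A}_n(\OO_I)$ of Lemma~\ref{behavior-ambiguity-Lyndon} and the global-dimension identity $\gldim A=\#(\N_I)$ of Theorem~\ref{global-dimension-Lyndon-reduction}, which becomes available once $I$ is Lyndon. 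The point is that a non-Lyndon obstruction set produces ambiguities $\mathcal{A}_n$ of a different shape, hence a Betti table, Hilbert series, or global dimension incompatible with the prescribed AS-regular data. For the finite list of algebras permitted by the hypotheses ($d\leq 4$, or $d=5$ with $\gkdim\geq 4$), this is most cleanly carried out by importing the explicit presentations from the classification in \cite{ZL} and verifying, relation by relation, that the reduced Gr\"obner basis with respect to $<_{\glex}$ has all leading words Lyndon.

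The main obstacle is precisely step (ii): ruling out the ``bad'' configurations in which a non-maximal anagram such as $x_1x_2x_1$ is \emph{forced} to be an obstruction. This cannot be settled by the Lyndon combinatorics alone, since non-Lyndon obstructions are a priori possible for general two-generated algebras; what makes the theorem true is the interplay between the fine $\mathbb{N}^2$-grading, the domain condition, and the strong homological symmetry of AS-regularity, and it is here that the explicit classification of \cite{ZL} does the decisive work. A fully intrinsic argument would require showing that the bigraded Hilbert series of an AS-regular two-generated domain in the stated range is always compatible with a Lyndon obstruction set, which I expect to be the genuinely hard and case-sensitive part.
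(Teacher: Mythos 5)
The paper offers no argument for this theorem at all: it is stated explicitly as ``a restatement of [ZL, Theorem 8.1]'' (the classification of two-generated AS-regular algebras of dimension five in Zhou--Lu), and the box closes the statement with no further proof. Your proposal, after an extended detour, lands in the same place: the decisive step you identify is ``importing the explicit presentations from the classification in \cite{ZL} and verifying, relation by relation, that the reduced Gr\"obner basis with respect to $<_{\glex}$ has all leading words Lyndon.'' That is exactly the content of the cited result, so your operative strategy coincides with the paper's.

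That said, the intrinsic argument you sketch in steps (i)--(ii) is not a proof and, as you yourself concede, cannot be completed as stated. The claim that ``a non-Lyndon obstruction set produces ambiguities $\mathcal{A}_n$ of a different shape, hence a Betti table, Hilbert series, or global dimension incompatible with the prescribed AS-regular data'' is precisely the assertion that needs proving, and nothing in the appendix supplies it: Lemma~\ref{behavior-ambiguity-Lyndon} and Theorem~\ref{global-dimension-Lyndon-reduction} are only available \emph{after} one knows $\OO_I\subseteq\mathbb{L}$, so invoking $\gldim A=\#(\N_I)$ to rule out non-Lyndon obstructions is circular. Your preliminary observations (the forced generator degrees $(1,0)$ and $(0,1)$, one-dimensionality of $A_{(a,0)}$ and $A_{(0,b)}$ from the domain hypothesis, and the Lyndonness of the $<_{\glex}$-maximal anagram $x_2^bx_1^a$) are correct and harmless, but they do not close the gap. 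The honest version of your proof is simply the citation the paper gives, together with a case-by-case check of the presentations in \cite{ZL} --- which is what Theorem 8.1 of that reference already records.
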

	
%\vskip7mm	
\section*{Acknowledgments}

The author would like to thank Professor James J. Zhang for his suggestion to study coideal subalgebras of connected Hopf algebras by the theory of Lyndon words during the ``Noncommutative Algebraic Geometry Shanghai Workshop'' at Shanghai in November 2019. Thanks also to Professor D.-M. Lu for reading the manuscript, pointing out some typos and providing some suggestions. This work is supported by Shanghai Key Laboratory of Pure Mathematics and Mathematical Practice (Grant Nos. 22DZ2229014), the NSFC (Grant Nos. 12371039 \& 11971141), the Fundamental Research Funds for the Provincial Universities of Zhejiang, and the K.C. Wong Magna Fund in Ningbo University. Finally, the author thanks the referee for his/her careful reading and valuable suggestions.


\begin{thebibliography}{Mathzhou}
		\bibitem{AnFu} F. W. Anderson, K. R. Fuller, \emph{Rings and categories of modules}, second edition, Graduate Texts in Mathematics, Vol. \textbf{13}, Springer-Verlag New York, (1992).
		
		\bibitem{AnSc1} N. Andruskiewitsch, H.-J. Schneider, \emph{Lifting of quantum linear spaces and
		pointed Hopf algebras of order $p^3$}, J. Alg. \textbf{209} (1998), 658–691.
		
		\bibitem{ATV2} M. Artin, J. Tate, M. Van den Bergh, \emph{Modules over regular algebras of dimension $3$}, Invent. Math. \textbf{106} (1991), 335-388.
		
		\bibitem{BeGr} A. Berenstein, J. Greenstein, \emph{Primitively generated Hall algebras}, Pacific J. Math. \textbf{281} (2016), 287-331.
		
		\bibitem{BG} K. A. Brown, P. Gilmartin, \emph{Quantum homogeneous spaces of connected Hopf algebras}, J. Alg. \textbf{454} (2016), 400-432.
		
		\bibitem{BGZ} K. A. Brown, P. Gilmartin, J. J. Zhang, \emph{Connected (graded) Hopf algebras}, Trans. Amer. Math. Soc. \textbf{372} (2019), 3283-3317.
		
		\bibitem{BOZZ} K. A. Brown, S. O'Hagan, J. J. Zhang, G. Zhuang, \emph{Connected Hopf algebras and iterated Ore extensions}, J. Pure Appl. Alg.  \textbf{219} (2015), 2405-2433.
		
	    \bibitem{BZ} K. A. Brown, J. J. Zhang, \emph{Dualizing complexes and twisted Hochschild (co)homology for noetherian Hopf algebras}, J. Alg. \textbf{320} (2008), 1814-1850.
	    
	    \bibitem{BZ2} K. A. Brown, J. J. Zhang,  \emph{Iterated Hopf Ore extensions in positive characteristic}, J. Noncommut. Geom. \textbf{16} (2022), 787–837.
		
		\bibitem{CS} S. Chouhy, A. Solotar, \emph{Projective resolutions of associative algebras and ambiguities}, J. Alg.  \textbf{432} (2015), 22-61.
		
		\bibitem{DeGa} M. Demazure, P. Gabriel, \emph{Groupes Alg\'{e}briques I}, North Holland, Ansterdam, (1970).
		
		\bibitem{ESW19} K. Erdmann, \O. Solberg, X. Wang, \emph{On the structure and cohomology ring of connected Hopf algebras}, J. Alg. \textbf{527} (2019), 366–398.
		
		\bibitem{FMP} V. O. Ferreira, L. S. I. Murakami,  A. Paques, \emph{A Hopf-Galois correspondence for free algebras}, J. Alg. \textbf{276} (2004), 407–416.
		
		\bibitem{Ga0} T. Gateva-Ivanova, \emph{Global dimension of associative algebras}, in: proc. AAECC-6, LNCS, \textbf{357} (1989), 213-229.
		
		\bibitem{Ga}T. Gateva-Ivanova, \emph{Algebras defined by Lyndon words and Artin-Schelter regularity}, Trans. Amer. Math. Soc. Ser. B \textbf{9} (2022), 648-699.
		
		\bibitem{GF}T. Gateva-Ivanova, G. Fl{\o}ystad, \emph{Monomial algebras defined by Lyndon words}, J. Alg. \textbf{403} (2014), 470-496.
		
		\bibitem{HS0} I. Heckenberger, H.-J. Schneider, \emph{Right coideal subalgebras of Nichols algebras and the Duflo order on the Weyl groupoid}, Israel J. Math. \textbf{197} (2013), 139-187.
		
		\bibitem{HS} I. Heckenberger, H.-J. Schneider, \emph{Hopf algebras and root systems}, Mathematical surveys and Monographs, Vol. \textbf{247}, American Mathematical Society, Rhode Island, (2020).
		
		\bibitem{Kh} V. K. Kharchenko, \emph{A quantum analogue of Poincar\'{e}-Birkhoff-Witt theorem}, Alg. Log. \textbf{38} (1999), 259-276.
		
		\bibitem{Kh1} V. K. Kharchenko, \emph{PBW-bases of coideal subalgebras and a freeness theorem}, Trans. Amer. Math. Soc. \textbf{360} (2008), 5121-5143.
		
		\bibitem{Kh2} V. K. Kharchenko, \emph{Right coideal subalgebras in $U_q^+(\mathfrak{s}\mathfrak{o}_{2n+1})$}, J. Eur. Math. Soc. \textbf{13} (2011), 1677-1735.
		
		\bibitem{KhSa} V. K. Kharchenko, A. V. L. Sagahon,  \emph{Right coideal subalgebras in $U_q(\mathfrak{s}\mathfrak{l}_{n+1})$}, J. Alg. \textbf{319} (2008), 2571-2625.
		
		\bibitem{Krae} U. Kr\"{a}hmer, \emph{On the Hochschild (co)homology of quantum homogeneous spaces}, Israel J. Math. \textbf{189} (2012), 237–266.
		
		\bibitem{KL} G. R. Krause, T. H. Lenagan, \emph{Growth of algebras and Gelfand-Kirillov dimension}, revised edition, Graduate Studies in Mathematics, Vol. \textbf{22}, American Mathematical Society, Rhode Island, (1999).
       
        \bibitem{Lec} B. Leclerc, \emph{Dual canonical bases, quantum shuffles and $q$-characters}, Math. Z. \textbf{246} (2004), 691–732.
       
        \bibitem{Letz0} G. Letzter, \emph{Symmetric pairs for quantized enveloping algebras}, J. Alg. \textbf{220} (1999), 729-767.
        
        \bibitem{Letz} G. Letzter, \emph{Coideal subalgebras and quantum symmetric pairs}, in: S. Montgomery, H.-J. Schneider (Eds.) New directions in Hopf algebras, MSRI Publications, \textbf{43} (2002), 117-165.
		
		\bibitem{LZ} C.-C. Li, G.-S. Zhou, \emph{The structure of connected (graded) Hopf algebras revisited}, J. Alg. \textbf{610} (2022), 684-702.
         
        \bibitem{LiWu} L.-Y. Liu, Q.-S. Wu, \emph{Rigid dualizing complexes of quantum homogeneous spaces}, J. Alg. \textbf{353} (2012), 121-141.
		
        \bibitem{Lo} M. Lothaire, \emph{Combinatorics on words}, Encyclopedia in mathematics and its
		applications, \textbf{17} GC Rota, Editor Addison-Wesley Publishing Company, (1983).
		
		\bibitem{Lusz} G. Lusztig, \emph{Finite-dimensional Hopf algebras arising from quantized enveloping algebras}, J.
		Amer. Math. Soc. \textbf{3} (1990), 257–296.
		
		\bibitem{Masu1} A. Masuoka, \emph{On Hopf algebras with cocommutative coradical}, J. Alg. \textbf{144} (1991), 451-466.
         
        \bibitem{MM} J. W. Milnor, J. C. Moore, \emph{On the structure of Hopf algebras}, Ann. Math. \textbf{81} (1965), 211-264.
		
		\bibitem{MuSc} E. M\"{u}ller, H.-J. Schneider, \emph{Quantum homogeneous spaces with faithfully flat module structures}, Israel J. Math. \textbf{111} (1999), 157–190.
		  
		\bibitem{NWW19} Van C. Nguyen, L. Wang, X. Wang, \emph{Primitive deformations of quantum p-groups}, Algebr.
		  Represent. Theory \textbf{22} (2019), 837-865.

        \bibitem{Nord} P. Nordbeck, \emph{Cononical bases for algebraic computations}, Phd Thesis, Lund University, (2001).		
          
        \bibitem{Pogo} B. Pogorelsky, \emph{Right coideal subalgebras of the quantum Borel algebra of type $G_2$}, J. Alg. \text{322} (2009), 2335–2354.
          
        \bibitem{Rad85} D. E. Radford, \emph{The structure of Hopf algebras with a projection}, J. Alg. \textbf{92}  (1985), 322–347.
	
        \bibitem{RRZ} M. Reyes, D. Rogalski, J. J. Zhang, {\em Skew Calabi-Yau algebras and homological identities}, Adv. Math. \textbf{264} (2014), 308-354.
          
        \bibitem{Ring} C. M. Ringel, \emph{PBW-bases of quantum groups}, J. Reine Angew. Math. \textbf{470} (1996), 51–88.

		\bibitem{Skry} S. Skryabin, \emph{Projectivity and freeness over comodule algebras}, Trans. Amer. Math. Soc. \textbf{359} (2007), 2597–2623.
		
        \bibitem{Take} M. Takeuchi, \emph{Survey of braided Hopf algebras}, New trends in Hopf algebra theory (La Falda, 1999), Contemp. Math., vol. \textbf{267}, Amer. Math. Soc., Providence, RI, (2000), 301–323.
        
        \bibitem{Take1} M. Takeuchi, \emph{Relative Hopf modules - equivalences and freeness criteria}, J. Alg. \textbf{60} (1979), 452-471.
		
        \bibitem{Uf} S. Ufer, \emph{PBW bases for a class of braided Hopf algebras}, J. Alg. \textbf{ 280} (2004), 84-119.
		
        \bibitem{WZZ} D.-G. Wang, J. J. Zhang, G. Zhuang, \emph{Connected Hopf algebras of Gelfand-Kirillov dimension four}, Trans. Amer. Math. Soc. \textbf{367} (2015), 5597-5632.
        
        \bibitem{Wang} W. Wang, \emph{Quantum symmetric pairs}, to appear in Proceedings of ICM 2022, arXiv: 2112.10911.
        
        \bibitem{Yana} T. Yanai, \emph{Galois correspondence theorem for Hopf algebra actions}, in: Algebraic structures
        and their representations, 393–411, Contemporary Mathematics, vol. \textbf{376}, AMS, Providence, RI, (2005).

        \bibitem{Zhou} G.-S. Zhou, \emph{An application of Lyndon words in associative algebras} (Chinese), Appl. Math. J. Chinese Univ. Ser. A \textbf{30} (2015), 245-252.

        \bibitem{ZL} G.-S. Zhou, D.-M. Lu, \emph{Artin-Schelter regular algebras of dimension five with two generators}, J. Pure Appl. Alg. \textbf{218} (2014), 937-961.

        \bibitem{ZSL3} G.-S. Zhou, Y. Shen, D.-M. Lu,  \emph{The structure of connected (graded) Hopf algebras},  Adv. Math. \textbf{372} (2020), 107292.
		
        \bibitem{Zh} G. Zhuang, \emph{Properties of pointed and connected Hopf algebras of finite Gelfand-Kirillov dimension}, J. Lond. Math. Soc. \textbf{87} (2013), 877-898.
\end{thebibliography}
\end{document}